\newcommand{\flo}[1]{\left\lfloor\frac{#1}{2}\right\rfloor}
\newcommand{\supp}{\operatorname{supp}}
\newcommand{\ydeal}{(y_1,\ldots,y_{n-1})^{(k)}+(x_{n}^2)}
\renewcommand{\d}{\mathbf{d}}
\newcommand{\y}{\mathbf{y}}
\newcommand{\m}{\mathbf{m}}
\renewcommand{\b}{\mathbf{b}}
\newcommand{\x}{\mathbf{x}}
\renewcommand{\a}{\mathbf{a}}
\newcommand{\stab}{\operatorname{STab}}
\newcommand{\tab}{\operatorname{Tab}}
\newcommand{\dsp}{\displaystyle}
\newcommand{\C}{{\mathbb C}}
\newcommand{\Z}{{\mathbb Z}}
\newcommand{\N}{{\mathbb N}}
\newcommand{\F}{{\mathbb F}}
\theoremstyle{theorem}
\newtheorem*{theoremA*}{Theorem A}
\newtheorem*{theoremB*}{Theorem B}
\newtheorem*{theoremB1*}{Theorem B1}
\newtheorem*{theoremC*}{Theorem C}
\newtheorem*{theoremC1*}{Theorem C1}
\newtheorem*{theoremD*}{Theorem D}
\newtheorem*{theorem*}{Theorem}
\newtheorem*{proposition*}{Proposition}
\newtheorem{theorem}{Theorem}[section]
\newtheorem{proposition}[theorem]{Proposition}
\newtheorem{lemma}[theorem]{Lemma}
\newtheorem{corollary}[theorem]{Corollary}
\newtheorem*{conjecture*}{Conjecture}
\newtheorem{conjecture}[theorem]{Conjecture}
\newtheorem*{fact*}{Fact}
\newtheorem{fact}[theorem]{Fact}
\theoremstyle{definition}
\newtheorem{definition}[theorem]{Definition}
\newtheorem{example}[theorem]{Example}
\newtheorem*{remark*}{Remark}
\newtheorem*{remarks*}{Remarks}
\begin{document}

\title{Principal Radical Systems, Lefschetz Properties, and Perfection of Specht Ideals of Two-Rowed Partitions}
\author{Chris McDaniel and Junzo Watanabe}
%\address{Dept. of Math. and Stat.\\
%University of Massachusetts\\
%Amherst, MA 01003}
%\email{mcdaniel@math.umass.edu}
\date{}
	\maketitle

\abstract{We show that the Specht ideal of a two-rowed partition is perfect over an arbitrary field, provided that the characteristic is either zero or bounded below by the size of the second row of the partition, and we show this lower bound is tight.  We also establish perfection and other properties of certain variants of Specht ideals, and find a surprising connection to the weak Lefschetz property. Our results in particular give a self-contained proof of Cohen-Macaulayness of certain $h$-equals sets, a result previously obtained by Etingof-Gorsky-Losev over the complex numbers using rational Cherednik algebras. }
\vspace{.2cm}

\noindent{\textbf{Keywords:}  Specht ideal, Cohen-Macaulay, weak Lefschetz}

\section{Introduction}
Fix an integer $n$, let $\F$ be any field, and let $R=\F[x_1,\ldots,x_n]$ be the polynomial ring with its standard grading, and equipped with the usual action of the symmetric group $\mathfrak{S}_n$ by permuting the variables.  For any partition $\lambda\vdash n$ the Specht module $V(\lambda)$ over $\F$ is the $\F$-vector space generated by the Specht polynomials of $\lambda$ which are indexed by the set of tableaux $T$ on the Young diagram of $\lambda$.  If $\F$ has characteristic zero, then the Specht modules form a complete list of irreducible $\mathfrak{S}_n$-representations, highlighting their importance in representation theory.  In this paper we take the point of view of commutative algebra, and study the ideals generated by Specht modules called Specht ideals.

Specifically we show that for partitions with two parts $\lambda=(n-k,k)$ (or Young diagrams with two rows), the Specht ideal, which we denote by $\mathfrak{a}(n,k,k)$ is radical and, if the characteristic of $\F$ is zero or sufficiently large, perfect.  Our results are stated in terms of commutative algebra, but they can be interpreted geometrically as follows:
\begin{proposition}
	\label{prop:EGLgen}
	Fix a field $\F$ with $\operatorname{char}(\F)=p\geq 0$, and fix $m\geq 3$.  For each $2\leq h\leq m$, define $X_{m,h}\subset\F^{m}$ as the union of $\mathfrak{S}_{m}$ translates of the linear subspace cut out by the $h-1$ linear equations $x_1=\cdots =x_h$, i.e.
	$$X_{m,h}=\bigcup_{\sigma\in\mathfrak{S}_{m}}\sigma.\left\{(x_1,\ldots,x_m)\in\F^m \ | \ x_1=\cdots =x_h\right\}.$$
	Assume that $h>\flo{m+1}$.  
	\begin{enumerate}
		\item If $p=0$ or $p\geq m-h+1$, then $X_{m,h}$ is Cohen-Macaulay\footnote{meaning that its homogeneous coordinate ring is Cohen-Macaulay}.
		
		\item If $0<p<\flo{m}$, then $X_{m,m-p}$ is not Cohen-Macaulay.
	\end{enumerate}
\end{proposition}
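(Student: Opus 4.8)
The plan is to deduce the proposition from two facts about two-rowed Specht ideals established in the body of the paper: (i)~$\mathfrak{a}(n,k,k)$ is a radical ideal, and (ii)~$\mathfrak{a}(n,k,k)$ is perfect when $\operatorname{char}\F = 0$ or $\operatorname{char}\F \ge k$, and fails to be perfect when $2 \le \operatorname{char}\F < k$. For $h \le c \le m$ set $k_c := m-c+1$, so that $c \mapsto k_c$ carries $\{h,h+1,\dots,m\}$ onto $\{1,2,\dots,m-h+1\}$. Because $c \ge h > \flo{m+1}$, one has both $c-1 \ge k_c$ and $k_c \le \flo{m}$, so each $(c-1,k_c)$ is a genuine two-rowed partition of $m$ and $\mathfrak{a}(m,k_c,k_c)$ is defined; these two inequalities are exactly what the hypothesis $h > \flo{m+1}$ supplies, and both get used below.

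The first step is to identify $X_{m,c}$ with the common zero locus of the Specht polynomials of $(c-1,k_c)$, i.e.\ with $V\!\big(\mathfrak{a}(m,k_c,k_c)\big)$. A Specht polynomial of $(c-1,k_c)$ is, up to sign, a product $\prod(x_i-x_j)$ over $k_c$ pairwise disjoint index pairs. If $a\in\F^m$ has a coordinate value occurring at least $c$ times, then for any such choice of $k_c$ disjoint pairs a pigeonhole count — there are only $m-2k_c$ indices outside the pairs, and $m-2k_c\ge 0$ is equivalent to $c-1\ge k_c$ — forces some pair $\{i,j\}$ to have $a_i=a_j$, so every Specht polynomial vanishes at $a$. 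Conversely, if no value of $a$ occurs $c$ times, then the complete multipartite graph on $\{1,\dots,m\}$ whose parts are the level sets of $a$ (each of size $\le c-1$) has a matching of size at least $\min(\flo{m},\,m-c+1)$, which equals $k_c$ since $k_c\le\flo{m}$; the corresponding $k_c$ disjoint pairs all have distinct coordinate values, so the associated Specht polynomial is nonzero at $a$. Hence $X_{m,c}=V\!\big(\mathfrak{a}(m,k_c,k_c)\big)$, so the homogeneous ideal defining $X_{m,c}$ is $\sqrt{\mathfrak{a}(m,k_c,k_c)}$, which equals $\mathfrak{a}(m,k_c,k_c)$ by (i). Since $R$ is regular, $X_{m,c}$ is Cohen–Macaulay if and only if $\mathfrak{a}(m,k_c,k_c)$ is a perfect ideal.

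Both directions of the stated equivalence now fall out of (ii). If $\operatorname{char}\F = 0$ or $\operatorname{char}\F \ge m-h+1$, then for every $c\in[h,m]$ we have $\operatorname{char}\F = 0$ or $\operatorname{char}\F \ge m-h+1\ge k_c$, so $\mathfrak{a}(m,k_c,k_c)$ is perfect and $X_{m,c}$ is Cohen–Macaulay. Conversely, if $\operatorname{char}\F = p$ is a prime with $p\le m-h$, take $c=h$: then $2\le p<m-h+1=k_h$, so by (ii) the Specht ideal $\mathfrak{a}(m,\,m-h+1,\,m-h+1)$ of the two-rowed partition $(h-1,\,m-h+1)$ is not perfect, whence $X_{m,h}$ is not Cohen–Macaulay; as $h$ lies in the range $[h,m]$, the Cohen–Macaulay property fails somewhere in the range. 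Contrapositively, if $X_{m,c}$ is Cohen–Macaulay for all $h\le c\le m$, then $\operatorname{char}\F = 0$ or $\operatorname{char}\F \ge m-h+1$.

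So, modulo (i) and (ii), the proposition is bookkeeping; within this argument the only point needing care is the matching/pigeonhole lemma of the second paragraph together with the systematic use of ``$c>\flo{m+1}$'' to keep every shape $(c-1,k_c)$ two-rowed and to push the matching bound up to $k_c$. The genuine difficulty is external, residing entirely in input (ii): its ``if'' half is the paper's main perfection theorem and powers the forward implication, while its sharpness half — non-perfection of $\mathfrak{a}(m,\,m-h+1,\,m-h+1)$ at each characteristic $2\le p\le m-h$ — powers the converse and is, I expect, the harder thing to pin down, since it amounts to exhibiting an actual failure of Cohen–Macaulayness (equivalently, the coordinate ring acquiring depth strictly below its Krull dimension) for these specific Specht ideals in small characteristic.
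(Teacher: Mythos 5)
Your reduction has the right shape but rests on an input that the paper does not actually prove. You invoke (ii) in the strong, pointwise form ``$\mathfrak{a}(n,k,k)$ fails to be perfect whenever $2\le\operatorname{char}\F<k$,'' and in the converse you apply it at $c=h$ to conclude that $X_{m,h}$ itself is not Cohen--Macaulay. That pointwise statement is exactly the paper's open Conjecture~\ref{conj:imperfect}; the proven result, Theorem~\ref{thm:C}(5.), is the weaker equivalence ``$p=0$ or $p\ge k+1$ $\Longleftrightarrow$ $\mathfrak{a}(n+1,i+1,i+1)$ is perfect \emph{for every} $1\le i\le k$,'' and the single unconditional non-perfection result (Theorem~\ref{thm:D}) only hits the boundary value $i=p$. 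The footnote to Proposition~\ref{prop:EGLgen} flags precisely this: the paper characterizes when \emph{all} of $X_{m,h},\dots,X_{m,m}$ are Cohen--Macaulay, not when the single variety $X_{m,h}$ is. As written, your converse proves something the authors explicitly disclaim knowing.

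The gap is easy to close without changing your framework. In the converse, do not specialize to $c=h$; instead, argue from the contrapositive of Theorem~\ref{thm:C}(5.) (with $n=m-1$, $k=m-h$, which your inequality $h>\flo{m+1}$ makes admissible since it gives $m-1\ge 2(m-h)+1$): if $0<p<m-h+1$, then some $\mathfrak{a}(m,i+1,i+1)$ with $1\le i\le m-h$ is imperfect, hence $X_{m,m-i}$ is not Cohen--Macaulay for that $i$, and $m-i$ lies in $[h,m]$. Equivalently and more concretely, Theorem~\ref{thm:D} applies (your bounds give $m-1\ge 2p+1$) to show $\mathfrak{a}(m,p+1,p+1)$ is imperfect, so $X_{m,m-p}$ fails to be Cohen--Macaulay and $m-p\in[h,m]$. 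Everything else in your write-up is sound and is a genuine addition: your pigeonhole/matching proof that $X_{m,c}=V(\mathfrak{a}(m,k_c,k_c))$ fills in a set-theoretic identification that the paper states (equation~\eqref{eq:Inh}) by citation to Yanagawa rather than by proof, and your forward direction correctly follows from Theorem~\ref{thm:C}(5.) together with radicality (Theorem~\ref{thm:B}).
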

Over the field $\F=\C$, Proposition \ref{prop:EGLgen}(1.) was obtained by Etingof-Gorsky-Losev \cite[Proposition 3.11]{EGL}, using deep results from the representation theory of rational Cherednik algebras.  Our proof of Proposition \ref{prop:EGLgen} is more elementary in that it uses only basic results from commutative algebra, which we hope will appeal to those uninitiated with rational Cherednik algebras.  We emphasize however that our elementary proof is by no means easy.

%Our assumption that $h>\flo{m+1}$ is the equivalent to the condition that the associated Specht ideals correspond to two-rowed partitions.  Furthermore, Yanagawa has shown \cite{Yana} that the implication \emph{$X_{m,h}$ is Cohen-Macaulay implies $h>\flo{m}$} is independent of the field.  Proposition \ref{prop:EGLgen} then allows one to extend Theorem \eqref{EGL} to arbitrary fields $\F$ in the two-rowed case, and takes a step toward classification of the field characteristics $p\geq 0$ for which $X_{m,h}$ is perfect.  Interestingly, the proof of Theorem \ref{EGL} given by Etingof-Gorsky-Losev uses deep results from the representation theory of rational Cherednik algebras.  Our proof of Proposition \ref{prop:EGLgen} is elementary in the sense that it uses only basic facts from commutative algebra, and does not rely on Cherednik algebras.
%Besides applying to fields of positive characteristic, our proof has a more elementary flavor, using only basic facts from commutative algebra and circumventing the need for rational Cherednik algebras.  
%We emphasize however that our elementary proof is by no means easy.

The study of Specht ideals seems to have been initiated by Yanagawa in his recent paper \cite{Yana}, although they have appeared implicitly in the earlier works of others \cite{BGS,EGL,FS}.  As Yanagawa shows in his paper, the connection between Proposition \ref{prop:EGLgen} and Specht ideals is as follows:  Setting $m=n+1$ and $k+1=m-h+1$, and if $I_{m,h}\subset R$ is the ideal cutting out the union of linear spaces $X_{m,h}$ then we have   
\begin{equation}
\label{eq:Inh}
I_{m,h}=\bigcap_{\sigma\in \mathfrak{S}_m}\sigma.\left(x_1-x_2,\ldots,x_1-x_h\right)=\sqrt{\mathfrak{a}(n+1,k+1,k+1)}.
\end{equation}
In his paper \cite{Yana}, Yanagawa proves that two-rowed Specht ideals are radical by an ingenious but complicated argument.  He then invokes the Etingof-Gorsky-Losev result \cite[Proposition 3.11]{EGL} to prove that the Specht ideal $\mathfrak{a}(n+1,k+1,k+1)$ (although he used different notation) is perfect if the field has characteristic zero.  The present paper grew out of an attempt to understand and simplify Yanagawa's arguments and to find an elementary proof of the Etingof-Gorsky-Losev result in the two-rowed case.  As the reader will surmise, the distinguishing feature of Specht ideals of two-rowed partitions is that their minimal generators are square-free, a fact which will be exploited throughout this paper.   

%Yanagawa also shows in that paper \cite{Yana} that certain Specht ideals, including those corresponding to two-rowed partitions, are always radical.  
%In conjunction with , and we will prove a generalization of that result here.  
Recall that an ideal $I\subset R$ in a Noetherian ring is called \emph{perfect} if its grade is equal to its homological (or projective) dimension.  In a polynomial ring $R$, a homogeneous ideal $I\subset R$ is perfect if and only if its quotient $R/I$ is Cohen-Macaulay. The following is one of the main results of this paper, and is the algebraic analogue of Proposition \ref{prop:EGLgen}.
\begin{theorem}
	\label{thm:SpechtP}
	Let $\F$ be any field of characteristic $p\geq 0$, and fix positive integers $n,k$ satisfying $n\geq 2k+1$.
	\begin{enumerate}
		\item If $p=0$ or $p\geq k+1$, then the Specht ideal $\mathfrak{a}(n+1,k+1,k+1)$ is perfect.
		\item If $n\geq 2p+1$, then the Specht ideal $\mathfrak{a}(n+1,p+1,p+1)$ is not perfect. 
	\end{enumerate} 
\end{theorem}
In his paper \cite{Yana} Yanagawa has conjectured that the Specht ideal $\mathfrak{a}(n+1,k+1,k+1)$ is perfect in characteristic $p$ \emph{if and only if} $p=0$ or $p\geq k+1$.  Theorem \ref{thm:SpechtP} proves one implication and part of the other one in Yanagawa's conjecture.
  
Our proof of Theorem \ref{thm:SpechtP} is inspired by the seminal paper of Hochster-Eagon \cite{HE}, in which they proved perfection of generic determinantal ideals using what they termed a principal radical system.  Our method, which might be more aptly described as a \emph{deconstructed} principal radical system is based on the following elementary facts from commutative algebra:
\begin{lemma}
	\label{lem:HE1}
	Let $I\subset R$ be a homogeneous ideal, and let $x\in R\setminus I$ be a homogeneous polynomial.
	\begin{enumerate}
		\item If $(I:x)=I$ and $I+(x)$ is radical, then $I$ is also radical.
		\item If $(I:x)= I$ and $I+(x)$ is perfect, then $I$ is also perfect.
	\end{enumerate}
\end{lemma}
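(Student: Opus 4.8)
The plan is to work in the quotient ring $A=R/I$, writing $\bar x\in A$ for the image of $x$; the hypothesis $(I:x)=I$ says precisely that $\bar x$ is a homogeneous nonzerodivisor on $A$. One may assume at the outset that $I\neq R$ and that $\deg x\geq 1$, the remaining cases being trivial or vacuous, so that $\bar x$ lies in the irrelevant maximal ideal $\mathfrak m$ of $A$; this is what makes the standard facts about regular elements in the graded setting applicable. I would also record the elementary observation that $(I:x^{m})=I$ for every $m\geq 1$, obtained by iterating $(I:x)=I$.

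For part (1) I would show $\sqrt I=I+x\sqrt I$ and then invoke graded Nakayama. Since $I+(x)$ is radical, $\sqrt I\subseteq\sqrt{I+(x)}=I+(x)$, so any $f\in\sqrt I$ may be written $f=g+xh$ with $g\in I$ and $h\in R$. Then $xh=f-g\in\sqrt I$, hence $x^{m}h^{m}\in I$ for some $m$, and since $(I:x^{m})=I$ this gives $h\in\sqrt I$; thus $\sqrt I\subseteq I+x\sqrt I$, the reverse inclusion being immediate. Passing to $A$, the finitely generated non-negatively graded $A$-module $M:=\sqrt I/I$ then satisfies $M=\bar x M$, and comparing the lowest nonzero graded component of $M$ with that of $\bar xM$ (this is where $\deg x\geq 1$ is used) forces $M=0$, i.e.\ $I=\sqrt I$.

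For part (2), since the text records that $I$ is perfect exactly when $A$ is Cohen-Macaulay, it suffices to prove $\operatorname{depth}A=\dim A$. Because $\bar x$ is a homogeneous non-unit nonzerodivisor on $A$, a maximal $A$-regular sequence inside $\mathfrak m$ may be taken to begin with $\bar x$, so $\operatorname{depth}(A/\bar xA)=\operatorname{depth}(A)-1$; and $\dim(A/\bar xA)=\dim(A)-1$, where $\dim(A/\bar xA)\le\dim(A)-1$ holds because $\bar x$ avoids every associated --- in particular every minimal --- prime of $A$, while $\dim(A/\bar xA)\ge\dim(A)-1$ is a standard consequence of Krull's principal ideal theorem. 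The hypothesis that $I+(x)$ is perfect says $A/\bar xA=R/(I+(x))$ is Cohen-Macaulay, i.e.\ $\operatorname{depth}(A/\bar xA)=\dim(A/\bar xA)$; combining the three equalities yields $\operatorname{depth}A=\dim A$.

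I do not expect a genuine obstacle: the statement is a repackaging of textbook commutative algebra. The only points that need care are the clean reduction to the homogeneous, positive-degree situation (so that graded Nakayama and the depth/dimension bookkeeping are legitimate and no degenerate ring can intrude), and --- should one wish the paper to be self-contained on this point --- deciding whether to cite or to reprove the two standard ingredients that depth drops by exactly one modulo a nonzerodivisor and that $\dim$ drops by at most one modulo a single non-unit.
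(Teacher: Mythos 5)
Your proof is correct, and for part (1.) it is essentially the paper's argument in different clothing: where you pass to $M=\sqrt I/I$ and invoke graded Nakayama, the paper (in its Lemma~\ref{lem:PRS}) iterates the decomposition to produce $g=a+xa_1+\cdots+x^m a_m+x^{m+1}b_m$ and concludes from $\sqrt I\subseteq\bigcap_{m\ge1}\bigl(I+(x^m)\bigr)=I$; for a homogeneous $x$ of positive degree these are the same degree-counting argument. (The paper's lemma carries the auxiliary hypothesis $(I:x)=(I:x^2)$ so as to cover the case $(I:x)\ne I$ as well; when $(I:x)=I$ this is automatic, and your observation that $(I:x^m)=I$ for all $m$ is exactly the point.) For part (2.) the paper just cites \cite[Theorem~17.3]{Matsumura}; your depth-and-dimension accounting is the standard proof of that cited fact and is sound --- in particular the subtle half of $\dim(A/\bar xA)=\dim A-1$, namely $\dim(A/\bar xA)\le\dim A-1$, is correctly justified by noting that a nonzerodivisor misses every minimal prime, so any chain in $A/\bar xA$ lifts to a chain in $A$ that can be extended downward by one step. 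So: same approach for (1.), and a self-contained expansion of the cited reference for (2.).
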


\begin{lemma}
	\label{lem:HE2}
	Suppose that ideals $I,J\subset R$ are homogeneous ideals, both perfect of the same grade $g$ and suppose that $I+J$ has grade $g+1$. 
	Then $I \cap J$ is perfect if and only if $I+J$ is perfect. 
\end{lemma}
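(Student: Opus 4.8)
The plan is to exploit the Mayer--Vietoris short exact sequence of graded $R$-modules
\[
0 \longrightarrow R/(I\cap J) \xrightarrow{\ \bar r\,\mapsto\,(\bar r,\bar r)\ } (R/I)\oplus(R/J) \xrightarrow{\ (\bar a,\bar b)\,\mapsto\,\bar a-\bar b\ } R/(I+J) \longrightarrow 0 ,
\]
together with the standard depth estimates for a short exact sequence $0\to A\to B\to C\to 0$ of finitely generated graded modules, namely $\operatorname{depth}A\ge\min\{\operatorname{depth}B,\operatorname{depth}C+1\}$ and $\operatorname{depth}C\ge\min\{\operatorname{depth}A-1,\operatorname{depth}B\}$, depth being measured against the homogeneous maximal ideal. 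Set $d=\dim R$. Since $I$ and $J$ are perfect of grade $g$, the rings $R/I$ and $R/J$ are Cohen--Macaulay of dimension $d-g$, so $\operatorname{depth}\bigl((R/I)\oplus(R/J)\bigr)=d-g$. Since $\sqrt{I\cap J}=\sqrt I\cap\sqrt J$, every minimal prime of $I\cap J$ is a minimal prime of $I$ or of $J$ and hence has height $g$, so $\dim R/(I\cap J)=d-g$. Finally, as $R$ is Cohen--Macaulay, $\operatorname{grade}(I+J)=g+1$ forces $\operatorname{ht}(I+J)=g+1$, hence $\dim R/(I+J)=d-g-1$.

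For the \emph{if} direction, I would assume $I+J$ is perfect, so $\operatorname{depth}R/(I+J)=\dim R/(I+J)=d-g-1$. The first depth estimate then gives $\operatorname{depth}R/(I\cap J)\ge\min\{d-g,\,(d-g-1)+1\}=d-g$; since depth cannot exceed $\dim R/(I\cap J)=d-g$, equality holds and $R/(I\cap J)$ is Cohen--Macaulay. As $\operatorname{ht}(I\cap J)=g$, this says precisely that $I\cap J$ is perfect of grade $g$.

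For the \emph{only if} direction, I would assume $I\cap J$ is perfect, so $\operatorname{depth}R/(I\cap J)=d-g$. The second depth estimate gives $\operatorname{depth}R/(I+J)\ge\min\{(d-g)-1,\,d-g\}=d-g-1$; since depth cannot exceed $\dim R/(I+J)=d-g-1$, equality holds and $R/(I+J)$ is Cohen--Macaulay. As $\operatorname{ht}(I+J)=g+1$, this says $I+J$ is perfect of grade $g+1$, completing the equivalence.

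The computation is short once the setup is in place, so I do not expect a serious obstacle; the points requiring care are: (i) checking exactness of the Mayer--Vietoris sequence (only surjectivity on the right is not completely formal, but it is immediate); (ii) pinning down the dimension counts $\dim R/(I\cap J)=d-g$ and $\dim R/(I+J)=d-g-1$, which rely on the unmixedness of $I$ and $J$ (a consequence of perfection) together with the equality of grade and height in the Cohen--Macaulay ring $R$; and (iii) recording degenerate cases, e.g.\ $I+J=R$, which is ruled out by the hypothesis $\operatorname{grade}(I+J)=g+1<\infty$.
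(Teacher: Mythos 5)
Your proof is correct and self-contained. The paper does not actually supply a proof of this lemma; it cites Hochster--Eagon \cite[Proposition 18]{HE} and refers the reader there, so there is no in-paper argument to compare against. Your argument via the short exact sequence $0\to R/(I\cap J)\to (R/I)\oplus(R/J)\to R/(I+J)\to 0$ together with the two standard depth bounds is the canonical one, and it is, up to translating between depth conditions and $\operatorname{Ext}$-vanishing (equivalent by local duality / Auslander--Buchsbaum over the polynomial ring $R$), the same argument as in \cite{HE}. The bookkeeping you carry out is exactly what makes both directions close: unmixedness of the perfect ideals $I,J$ gives $\dim R/(I\cap J)=d-g$, and grade equals height in the Cohen--Macaulay ring $R$ gives $\dim R/(I+J)=d-g-1$, so in each direction the lower bound on depth produced by the sequence meets the upper bound given by dimension, yielding Cohen--Macaulayness and hence perfection.
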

Actually we use a slight modification of Lemma \ref{lem:HE1}, but we wrote it that way to emphasize the connection between radical and perfection.  In fact, before we prove that our Specht ideals are perfect, we must first prove that they are radical.  More specifically, the main challenge in applying Lemma \ref{lem:HE1} is finding the right principal ideal $(x)$ with respect to $I$, and it is this issue that Lemma \ref{lem:HE2} addresses.  However, in applying Lemma \ref{lem:HE2}, we must know what amounts to a primary decomposition of our ideal $I$.  Our method then consists of repeated application of Lemma \ref{lem:HE1} and Lemma \ref{lem:HE2} to obtain a sequence of (families of) ideals beginning with the Specht ideal, and ending with some ideal for which perfection is already known, e.g. by induction.  A schematic of our argument with our sequence of ideals is shown below:  
$$\xymatrixcolsep{3pc}\xymatrix{\mathfrak{a}(n+1,k+1,k+1)\ar[r]^-{\text{Lem. \ref{lem:HE1}}} & \mathfrak{a}(n,k,k+1) \ar[r]^-{\text{Lem. \ref{lem:HE2}}} & I(n,k) \ar[r]^-{\text{Lem. \ref{lem:HE2}}} & J(n,k)\ar[dl]^-{\text{Lem. \ref{lem:HE1}}}\\ && I(n-1,k-1)}$$
We give a more detailed description of our method and these ideals listed above, together with the other results of this paper, below.
%To this end, we introduce for $0\leq k\leq d\leq n-k$ the \emph{$d$-shifted Specht ideal} $\mathfrak{a}(n,k,d)$ generated by square-free products of Specht polynomials of type $\lambda=(n-k,k)$ and square-free monomials of degree $d-k$.  These shifted Specht ideals interpolate between Specht ideals, in case $d=k$, and square-free monomial ideals, in case $k=0$, where $\mathfrak{a}(n,0,d)=(x_1,\ldots,x_n)^{(d)}$ the ideal generated by all square-free monomials of fixed degree $d$.  Just as minimal generators for the Specht ideal are indexed by standard Young tableaux  on $\lambda$, we show that minimal generators for the shifted Specht ideal are indexed by standard Young tableaux on a shifted version of $\lambda$.   
%In his paper \cite{Yana}, Yanagawa proves that the ideal $\mathfrak{a}(n,k,k+1)$, which we call a \emph{shifted Specht ideal}, satisfies the following decomposition formula  
%He further proves, by an ingenious but complicated argument, that the following decomposition formula holds: 
%\begin{equation}
%\label{eq:ydecomp}
%\mathfrak{a}(n,k,k+1)=\mathfrak{a}(n,k,k)\cap (x_1,\ldots,x_n)^{(k+1)}
%\end{equation}
%but decomposition \eqref{eq:ydecomp} holds if and only if $\mathfrak{a}(n,k,k+1)$, is radical.  

Taking $I=\mathfrak{a}(n+1,k+1,k+1)$ and $x=x_{n+1}$ in Lemma \ref{lem:HE1}, it is easy to show that $(I:x)=I$, and that $I+(x)=\mathfrak{a}(n,k,k+1)+(x_{n+1})$ where $\mathfrak{a}(n,k,k+1)$ is the ideal generated by square-free products of Specht polynomials of type $\lambda=(n-k,k)$ with a linear monomial in the variables $x_1,\ldots,x_n$.  Generalizing, we introduce, for integers $0\leq k\leq d\leq n-k$, the \emph{$d$-shifted Specht ideal} $\mathfrak{a}(n,k,d)$ generated by square-free products of Specht polynomials of type $\lambda=(n-k,k)$ and square-free monomials of degree $d-k$.  

These shifted Specht ideals interpolate between Specht ideals $\mathfrak{a}(n,k,k)$ in case $d=k$, and square-free monomial ideals in case $k=0$, where $\mathfrak{a}(n,0,d)=(x_1,\ldots,x_n)^{(d)}$ is the ideal generated by all square-free monomials of fixed degree $d$.  Our first step in understanding these shifted Specht ideals is to find a minimal generating set.  Just as minimal generators for the Specht ideal are indexed by standard Young tableaux on $\lambda$, we show that minimal generators for the shifted Specht ideal are indexed by standard Young tableaux on a shifted version of $\lambda$.   
%The shifted Specht ideals interpolate between Specht ideals $\mathfrak{a}(n,k,k)$ if $d=k$ and if $k=0$, the square-free monomial ideal $\mathfrak{a}(n,0,d)=(x_1,\ldots,x_n)^{(d)}$ generated by square-free monomials of fixed degree $d$.  Our first step in understanding these shifted Specht ideals is to find a minimal generating set.  It is well known that a minimal generating set for the Specht ideal $\mathfrak{a}(n,k,,k)$ is indexed by standard tableaux on the Young diagram for $\lambda=(n-k,k)$.  We show that the shifted Specht ideal $\mathfrak{a}(n,k,d)$ has a minimal generating set indexed by standard tableaux on the Young diagram for a shifted version of $\lambda$.
\begin{theorem}
	\label{thm:A}
	A minimal generating set for the shifted Specht ideal $\mathfrak{a}(n,k,d)$ is formed by the shifted Specht polynomials $F_T(d)$ indexed by standard tableaux $T$ on the $d$-shifted shape of $\lambda=(n-k,k)$ obtained from Young diagram of $\lambda$ by moving the last $n-k-d$ boxes on the top row to the first $n-k-d$ boxes on the bottom row, i.e.
	$$\lambda(d)=\underbrace{\ydiagram{0,3}}_{n-k-d}\overbrace{\ydiagram{6,3}}^d.$$
\end{theorem}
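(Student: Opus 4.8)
The plan is to prove that the polynomials $F_T(d)$ with $T$ standard on $\lambda(d)$ form an $\F$-basis of the degree-$d$ graded component of $\mathfrak{a}(n,k,d)$. Since $\mathfrak{a}(n,k,d)$ is by definition generated by the square-free products $F_T(d)$, all homogeneous of degree $d$, it is generated in the single degree $d$, has no nonzero element of smaller degree, and so (graded Nakayama) its minimal number of generators equals $\dim_{\F}(\mathfrak{a}(n,k,d))_d$; any $\F$-basis of that component is then a minimal generating set. Thus the theorem splits into: (a) the standard $F_T(d)$ span $(\mathfrak{a}(n,k,d))_d$, and (b) they are $\F$-linearly independent. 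Throughout I use the antisymmetry $(x_a-x_b)=-(x_b-x_a)$ to put the smaller entry of each length-two column on top, and commutativity of multiplication to forget the ordering of the columns; with these normalizations a ``tableau on $\lambda(d)$'' is just the data of $k$ unordered pairs and $d-k$ singletons drawn from $\{1,\dots,n\}$, all $d+k$ values distinct, and ``standard'' means the pairs $(a_j,b_j)$ can be listed with $a_1<\dots<a_k$, $b_1<\dots<b_k$, each $a_j<b_j$, and every $a_j$ smaller than every singleton.

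For (a) I would transport the classical straightening algorithm to this setting by a change of variables. Adjoin $d-k$ new indeterminates $z_1<\dots<z_{d-k}$, declared larger than all of $x_1,\dots,x_n$; then the generator $\prod_{j=1}^k(x_{a_j}-x_{b_j})\cdot\prod_{i=1}^{d-k}x_{s_i}$ is the image under $z_i\mapsto 0$ of the honest two-rowed Specht polynomial $\prod_{j=1}^k(x_{a_j}-x_{b_j})\cdot\prod_{i=1}^{d-k}(x_{s_i}-z_i)$ attached to a filling of the rectangle $(d,d)$ over the alphabet $\{1<\dots<n<z_1<\dots<z_{d-k}\}$. Applying the ordinary Garnir/Plücker straightening for shape $(d,d)$ writes this Specht polynomial as a $\Z$-linear combination of standard Specht polynomials on that alphabet; specializing each $z_i$ to $0$ then annihilates exactly those standard fillings in which two of the $z_i$ share a column (their Vandermonde factor vanishes), while every surviving standard filling has the $z_i$ in the bottom-right corner, one to a column, hence corresponds --- after erasing the $z_i$ --- to a standard tableau $T$ on $\lambda(d)$ with specialized polynomial $F_T(d)$. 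So every generator lies in the $\Z$-span of the standard $F_T(d)$, proving (a). (When $k=0$ this is merely the sorting of a square-free monomial; when $d=k$ it is the classical straightening for $(n-k,k)$.)

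For (b) I would use triangularity in the basis of square-free degree-$d$ monomials. Expanding a standard $F_T(d)$, every monomial that occurs is $x_U$ for a $d$-subset $U$ obtained from the top row $U_0(T)=\{a_1,\dots,a_k,s_1,\dots,s_{d-k}\}$ of $T$ by replacing some of the $a_j$ by the strictly larger $b_j$; a one-line exchange argument shows any such $U$ dominates $U_0(T)$ in the Gale (entrywise) order on $d$-subsets, and $x_{U_0(T)}$ itself occurs with coefficient $+1$. Since a standard tableau on $\lambda(d)$ is reconstructed from its top row alone --- split it by size into the $a_j$ and the $s_i$, take the complement in $\{1,\dots,n\}$ for the rest, its largest $k$ entries being the $b_j$ --- the map $T\mapsto U_0(T)$ is injective. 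Ordering the standard tableaux by $U_0$ in the Gale order, the square submatrix of coefficients indexed by $T$ and by the monomials $x_{U_0(T)}$ is triangular with $1$'s on the diagonal, hence invertible over $\Z$ and in particular over every field; so the standard $F_T(d)$ are linearly independent over $\F$, which also explains why no restriction on $\operatorname{char}(\F)$ is needed.

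The step demanding the most care is (a): the straightening algorithm for rectangles is classical, but one must verify that the $z$-device transports it faithfully --- that a straightening move never promotes an $x$-entry into the ``$z$-corner'' nor pulls a $z_i$ into the top row, so that the only terms killed by $z_i\mapsto 0$ are the ones claimed and the surviving ones land precisely on the standard tableaux of $\lambda(d)$ rather than on some larger set. Part (b) is comparatively soft, resting only on the Gale-domination exchange and the invertibility of a unitriangular integer matrix. Much of the remaining labor is bookkeeping: keeping straight which boxes of the shifted shape $\lambda(d)$ contribute a difference of variables, which contribute a bare variable, and which (the leftmost ``strip'' boxes) contribute nothing to $F_T(d)$ at all.
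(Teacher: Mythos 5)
Your part (b) (linear independence via Gale-order triangularity on the top-row monomials) is correct and genuinely different from the paper, which proves independence by induction on $n$ via the specialization $x_n\mapsto 0$ (Lemma~\ref{lem:independent}); your triangularity argument is arguably tighter and makes the characteristic-independence transparent. Your part (a), however, has a real gap, and it is exactly the step you yourself flagged as ``demanding the most care.''

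The problem is that straightening the $(d,d)$-rectangle over the alphabet $\{1,\dots,n,z_1,\dots,z_{d-k}\}$ cannot see the $n-k-d$ ``free'' entries of $\lambda(d)$, because they are not present in the rectangle. After specializing $z_i\mapsto 0$, a surviving standard rectangular filling produces a polynomial $\prod_{j=1}^k(x_{u_j}-x_{v_j})\prod_{j>k}x_{u_j}$ whose underlying data (pairs $(u_j,v_j)$, singletons $u_{k+1},\dots,u_d$) satisfies the column and partial-row inequalities among the $d+k$ supported entries, but need \emph{not} satisfy the paper's standardness requirement that the free entries sit to the left of $j_1=v_1$ in the bottom row, equivalently that the $v_j$ are the $k$ largest elements of the complement of the top row. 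Concretely, take $n=5$, $k=1$, $d=2$ and the generator $(x_1-x_2)x_3$; its $z$-lift $\begin{ytableau}1&3\\2&z_1\end{ytableau}$ is already a standard rectangle, so straightening returns it unchanged and specialization gives back $(x_1-x_2)x_3$ --- but the associated tableau on $\lambda(2)$ is $\begin{ytableau}\none&1&3\\4&5&2\end{ytableau}$, which is \emph{not} standard (bottom row $4,5,2$ is not increasing). The unique standard tableau with top row $\{1,3\}$ is $\begin{ytableau}\none&1&3\\2&4&5\end{ytableau}$ with polynomial $(x_1-x_5)x_3$, so the rectangle straightening stops short; one still needs a relation like $(x_1-x_2)x_3=(x_1-x_5)x_3-(x_2-x_5)x_3$, which swaps a paired bottom entry with a free entry. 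In short, your rectangle straightening spans $V(n,k,d)$ by the strictly larger set of ``column-strict'' fillings of the shape $(d,k)$ inside $\lambda(d)$, not by the paper's standard tableaux; a further straightening stage that interchanges bottom-row paired entries with free entries (this is precisely Case~2 of the paper's Lemma~\ref{lem:span}) is missing and must be supplied, for instance by a second downward induction on a suitable dominance order. Until that is done, (a) does not reach the claimed basis, and since the column-strict set outnumbers $\binom{n}{d}-\binom{n}{k-1}$ (Theorem~\ref{thm:dimMinGenSpecht}) whenever $n-k-d>0$, your (a) and (b) together do not yet assemble into a proof of the theorem.
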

The linear span of the shifted Specht polynomials $F_T(d)$, $T\in\operatorname{Tab}(\lambda(d))$ forms an $\mathfrak{S}_n$-representation $V(n,k,d)$ that we call a \emph{shifted Specht module}, which is reducible in general.  We prove that our $d$-shifted Specht ideals satisfy the following decomposition formula, which is crucial in our quest for perfection, and which holds if and only if our shifted Specht ideals are radical.
%We use Theorem \ref{thm:A} to extend Yanagawa's decomposition formula \eqref{eq:ydecomp} to all shifted Specht ideals:
\begin{theorem}
	\label{thm:B1}
	For any integers $k,d$ satisfying $1\leq k< d\leq n-k$, we have 
	\begin{equation}
	\label{eq:nkd}
	\mathfrak{a}(n,k,d)=\mathfrak{a}(n,k,d-1)\cap (x_1,\ldots,x_n)^{(d)}.
	\end{equation}
\end{theorem}
Yanagawa \cite{Yana} has proved Theorem \ref{thm:B1} in the special case $d=k+1$ using a clever argument, which is described in further detail below.  As it turns out, his argument goes through verbatim to prove Theorem \ref{thm:B1} in the general case, and is in fact simplified by Theorem \ref{thm:A}.  It follows directly from Theorem \ref{thm:B1} that our shifted Specht ideals are radical.
\begin{theorem}
	\label{thm:B}
	Fix integers $k,d$ satisfying $0\leq k\leq d\leq n-k$.  Then the shifted Specht ideal $\mathfrak{a}(n,k,d)$ is radical. 
\end{theorem}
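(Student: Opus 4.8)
The plan is to induct on $k$ (the number of boxes in the second row), with the base case $k=0$ being classical: $\mathfrak{a}(n,0,d)=(x_1,\ldots,x_n)^{(d)}$ is the radical of a monomial ideal, hence radical. For the inductive step I would fix $k\geq 1$ and $d$ with $k\leq d\leq n-k$, and try to reduce the statement about $\mathfrak{a}(n,k,d)$ to a statement about shifted Specht ideals with a strictly smaller second row (in fewer variables), so that the inductive hypothesis applies. The natural tool is Lemma \ref{lem:HE1}(1): find a homogeneous $x\in R\setminus\mathfrak{a}(n,k,d)$ with $\big(\mathfrak{a}(n,k,d):x\big)=\mathfrak{a}(n,k,d)$ — equivalently, $x$ a nonzerodivisor modulo the (radical) ideal, i.e. $x$ not in any associated prime — such that $\mathfrak{a}(n,k,d)+(x)$ is again radical. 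The most promising choice, following Yanagawa and the ``principal radical system'' philosophy flagged in the introduction, is a difference of variables like $x=x_{n-1}-x_n$ (or a single variable $x_n$ in the degenerate directions); the effect of adjoining it and/or setting it to zero should be to collapse the generating set described by Theorem \ref{thm:A} to the generators of shifted Specht ideals on smaller shapes.

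The key steps, in order: (i) Use Theorem \ref{thm:A} to pin down $\mathfrak{a}(n,k,d)$ by an explicit minimal generating set $\{F_T(d)\}$ indexed by standard tableaux on $\lambda(d)$, and record how the $F_T(d)$ behave under the substitution $x_n\mapsto x_{n-1}$ (or $x_n\mapsto 0$). (ii) Identify the minimal primes of $\mathfrak{a}(n,k,d)$: by the geometric description in Proposition \ref{prop:EGLgen} and equation \eqref{eq:Inh}, when $d=k$ these are the $\mathfrak{S}_n$-translates of the linear ideal $(x_1-x_2,\ldots,x_1-x_{k+1})$-type primes; for general $d$ one gets, in addition, primes coming from the monomial part $(x_{i_1},\ldots,x_{i_d})$. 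I would describe $\Ass\big(R/\mathfrak{a}(n,k,d)\big)$ explicitly (all primes of the same expected dimension, so the ideal is unmixed) and check directly that the chosen $x$ avoids all of them, giving $\big(\mathfrak{a}(n,k,d):x\big)=\mathfrak{a}(n,k,d)$. (iii) Compute $\mathfrak{a}(n,k,d)+(x)$ modulo $x$: show that $\big(\mathfrak{a}(n,k,d)+(x)\big)/(x)$, viewed inside $R/(x)\cong\F[x_1,\ldots,x_{n-1}]$, is an intersection of (or equals) shifted Specht ideals $\mathfrak{a}(n-1,k',d')$ with $k'<k$, to which the inductive hypothesis applies; conclude $\mathfrak{a}(n,k,d)+(x)$ is radical, and hence by Lemma \ref{lem:HE1}(1) so is $\mathfrak{a}(n,k,d)$. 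It may be necessary, as the introduction warns, to first decompose via a step like Lemma \ref{lem:HE2} before a good $x$ appears, in which case one inserts an intermediate intersection $\mathfrak{a}(n,k,d)=I_1\cap J_1$ into the induction.

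The main obstacle I anticipate is step (iii): showing that specializing the chosen variable difference to zero turns the shifted Specht ideal into a clean intersection of smaller shifted Specht ideals — equivalently, controlling the image of the generating set $\{F_T(d)\}$ under the specialization and proving no ``extra'' generators or spurious components appear. This is exactly the combinatorial heart of the matter, and is where Theorem \ref{thm:A}'s tableau description must be exploited carefully (tracking which standard tableaux on $\lambda(d)$ survive, which become zero, and which reduce to standard tableaux on the smaller shapes). A secondary difficulty is verifying the colon condition $(\mathfrak{a}(n,k,d):x)=\mathfrak{a}(n,k,d)$ without already knowing radicality — this should be finessed by first establishing unmixedness / the list of minimal primes geometrically from Proposition \ref{prop:EGLgen} and \eqref{eq:Inh}, then observing $x$ lies in none of them.
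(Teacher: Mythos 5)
Your high-level strategy---a principal-radical-system induction using Lemma~\ref{lem:HE1}(1)---is indeed the one the paper adopts, but the two technical points you flag as ``difficulties'' are precisely where the proposal has real gaps, and your proposed fixes for them would not work.

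On the colon condition. You suggest establishing $(\mathfrak{a}(n,k,d):x)=\mathfrak{a}(n,k,d)$ by listing the minimal primes (via Proposition~\ref{prop:EGLgen} and \eqref{eq:Inh}) and checking $x$ misses them. This is circular: knowing the minimal primes only controls $\sqrt{\mathfrak{a}(n,k,d)}$, and the colon condition requires $x$ to miss all \emph{associated} primes of $\mathfrak{a}(n,k,d)$, including any embedded ones. You cannot rule out embedded primes without already knowing the ideal is radical, which is what you are trying to prove; and the ideal $\mathfrak{a}(n,k,d)$ for $d>k+1$ is in fact \emph{not} unmixed, so the unmixedness route is a dead end in general. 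The paper finesses this entirely differently: for $d=k$ and $x=x_n$, the change-of-coordinates isomorphism $\Phi$ of Corollary~\ref{cor:CoC} identifies $\Phi(\mathfrak{a}(n,k,k))$ with $\mathfrak{a}(n-1,k-1,k)$, which is generated by polynomials in $x_1,\ldots,x_{n-1}$ only; therefore $x_n=\Phi(x_n)$ is trivially a non-zero-divisor on $R/\Phi(\mathfrak{a}(n,k,k))$, giving $(\mathfrak{a}(n,k,k):x_n)=\mathfrak{a}(n,k,k)$ without any appeal to associated primes. Also note that your first candidate $x=x_{n-1}-x_n$ does \emph{not} avoid the minimal primes: any $\sigma.(x_1-x_2,\ldots,x_1-x_{n-k+1})$ with both $n-1,n\in\{\sigma(1),\ldots,\sigma(n-k+1)\}$ contains it, so it is a zero-divisor already on $R/\sqrt{\mathfrak{a}(n,k,k)}$.

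On computing $\mathfrak{a}(n,k,d)+(x)$. You correctly identify this as the combinatorial heart of the matter, but you leave it open, offering only that the standard-tableau generating set of Theorem~\ref{thm:A} should ``collapse.'' The paper does not compute this collapse directly. Instead it proves a separate structural result first---Theorem~\ref{thm:B1} (= Theorem~\ref{thm:radD}), the decomposition $\mathfrak{a}(n,k,d)=\mathfrak{a}(n,k,d-1)\cap(x_1,\ldots,x_n)^{(d)}$---whose proof (Lemmas~\ref{lem:monomialrad}, \ref{lem:suppNo}, \ref{lem:suppYes}) is the bulk of Section~\ref{sec:radical}. With Theorem~\ref{thm:B1} in hand, the inductive step for $d=k$ is immediate: $\mathfrak{a}(n,k,k)+(x_n)=\mathfrak{a}(n-1,k-1,k)+(x_n)$ by Corollary~\ref{cor:CoC}, and $\mathfrak{a}(n-1,k-1,k)=\mathfrak{a}(n-1,k-1,k-1)\cap(x_1,\ldots,x_{n-1})^{(k)}$ is an intersection of two ideals radical by induction, hence radical. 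The case $d>k$ then follows for free from the same decomposition. Your instinct that an ``intermediate intersection'' should be inserted is correct; the missing ingredient is the proof of that intersection identity itself, and without it the argument does not close. Finally, a minor organizational point: the paper inducts on $n$, not $k$; the Corollary~\ref{cor:CoC} isomorphism drops both $n$ and $k$ by one simultaneously, which is what makes the $n$-induction run cleanly.
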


%Theorem \ref{thm:B1} was proved by Yanagawa \cite{Yana} in the special case $d=k+1$, using an ingenious argument that we describe in general terms below.  To prove that an ideal $I$ satisfies a decomposition formula of the form $I=I'\cap J$ where $J$ is a monomial ideal, first show that the intersection $I'\cap J$ can be generated by products of minimal generators of $I'$ with monomials (not necessarily from $J$), with the additional property that if a sum of such products is in  $I'\cap J$ then each of the summands is also in $I'\cap J$ (perhaps we should call $I'\cap J$ an $I'$ monomial ideal).  Next fix a monomial $m$ and split the minimal generators $V(I')$ into two parts say $V(I')=V_m(I')\oplus V^m(I')$, determined by whether or not the minimal generators contain $m$ in their support.  In our case we can show that if $\nu\in V_m$ then $m\cdot \nu\in I$, and, with more effort, that if $\nu\in V^m$ and $m\cdot\nu\in I'\cap J$ then $m\cdot \nu\in I$.  We highlight this argument here because it applies not only to the proof of Theorem \ref{thm:B1} but also to the proof of Theorem \ref{thm:C}(3.) below, and hence it seems to be important in the theory of two-rowed Specht ideals. 

%Using Theorem \ref{thm:B1} in conjunction with Lemma \ref{lem:HE1}(1.), one can easily prove that shifted Specht ideals, including all two-rowed Specht ideals are radical:

Perfection of shifted Specht ideals is more difficult to prove, and in fact, most shifted Specht ideals are not perfect.  Indeed Theorem \ref{thm:B1} implies that the shifted Specht ideal $\mathfrak{a}(n,k,d)$ does not have pure height and hence cannot be perfect if $d\neq k,k+1$.  To show that the shifted Specht ideal $\mathfrak{a}(n,k,k+1)=\mathfrak{a}(n,k,k)\cap (x_1,\ldots,x_n)^{(k+1)}$ is perfect, we appeal to Lemma \ref{lem:HE2} and introduce the \emph{Specht-monomial ideal} $I(n,k)=\mathfrak{a}(n,k,k)+(x_1,\ldots,x_n)^{(k+1)}$.  

While the Specht-monomial ideal is not radical in general, it does satisfy a decomposition formula similar to \eqref{eq:nkd}, but it depends on the field characteristic.  We were pleased to discover that this dependence on field characteristic is the same one imposed by the weak Lefschetz property of certain monomial complete intersection algebras.  This is summarized in the following, which can be considered the other main result of this paper.
%, and in fact a modified version of Yanagawa's argument outlined above can be used to prove it.  There is one small caveat however:  the decomposition formula for the Specht-monomial ideal depends on the characteristic of the field $\F$.   The surprise here is that this dependence on characteristic is characterized by the weak Lefschetz property of a certain algebra.\footnote{The authors of this paper met at a workshop on Lefschetz properties, and they were especially pleased to find this connection.}  From this decomposition we prove the perfection of the Specht-monomial ideal, and conversely.  This is summarized in the following theorem, which is the other main result of this paper.
\begin{theorem}
	\label{thm:C}
	Let $\F$ be a field with $\operatorname{char}(\F)=p\geq 0$, and let $n$ and $k$ be positive integers satisfying $n\geq 2k+1$.  The following are equivalent:
	\begin{enumerate}
		\item $p=0$ or $p\geq k+1$.
		
		\item The quadratic monomial complete intersection 
		\begin{equation}
		\label{eq:mci}
		C=\frac{\F[x_1,\ldots,x_{2k}]}{(x_1^2,\ldots,x_{2k}^2)}
		\end{equation}
		has the weak Lefschetz property.
		
		\item The Specht-monomial ideal $I(n,k)$ satisfies decomposition 
		\begin{align}
		\label{eq:pdecomp}
		I(n,k)= & I(n-1,k-1)\cap \left(\ydeal\right),\\	
		\nonumber\text{where} \ \ \ & y_i=\begin{cases} x_n-x_i & \text{if} \ 1\leq i\leq n-1\\ x_n & \text{if} \ i=n\\ \end{cases}
		\end{align}
		
		\item The Specht-monomial ideal $I(n,k)$ is perfect.
	\end{enumerate}	
\end{theorem}
We shall break Theorem \ref{thm:C} into three parts, one for each equivalence (1.) $\Leftrightarrow$ (a.) which shall refer to hereafter as Theorem \ref{thm:C}(a.) for a=2,3,4.  Theorem \ref{thm:C}(2.) is due to Kustin-Vraciu \cite{KV}, and we will not re-prove it here.  Theorem \ref{thm:C}(3.) and Lemma \ref{lem:HE2} lead to yet another family of ideals, which remain unnamed:
$$J(n,k)=I(n-1,k-1)+\ydeal.$$
Finally to prove Theorem \ref{thm:C}(4.), and hence also Theorem \ref{thm:SpechtP}(1.) we apply Lemma \ref{lem:HE1} to show that our unnamed ideal $J(n,k)$ is perfect.  Unfortunately, Theorem \ref{thm:SpechtP}(2.) seems to be the best we could do with our method, although we conjecture that the full converse of Theorem \ref{thm:SpechtP}(1.) holds. 
\begin{conjecture}
	\label{conj:WM}
	If $p=\operatorname{char}(\F)$ and $n,k$ are positive integers satisfying $0<p<k+1$ and $n\geq 2k+1$, then the Specht ideal $\mathfrak{a}(n+1,k+1,k+1)$ is not perfect. 
\end{conjecture}
\noindent Theorem \ref{thm:SpechtP}(2.) says Conjecture \ref{conj:WM} holds for $k=p$, but it remains open for $k>p$.  Together with Theorem \ref{thm:SpechtP}, Conjecture \ref{conj:WM} is equivalent to Yanagawa's conjecture \cite[Conjecture 5.5]{Yana}.

Some further remarks on the connection to the weak Lefschetz property are in order here.  Since (shifted) Specht polynomials are square-free they are identified with elements of the algebra $A=\F[x_1,\ldots,x_n]/(x_1^2,\ldots,x_n^2)$, which carries an $\mathfrak{sl}_2$-representation in which the raising operator is multiplication by the sum of variables, and the lowering operator is the corresponding linear partial differential operator.  Moreover surjectivity of this lowering operator in degree $k$ is equivalent to equality of the kernel of that lowering operator with the Specht module $V(n,k,k)$, which is in turn equivalent to the weak Lefschetz property of $C$ in \eqref{eq:mci}.  Surjectivity of the lowering operator on $A$ is key to proving decomposition \eqref{eq:pdecomp}, and in fact reveals a hidden property of the Specht-monomial ideal:  in small positive characteristic, i.e. $0<p<k+1$ the ideal $I(n,k)$ has an embedded prime divisor, a phenomenon which does not occur for the Specht ideals.     

%We make some additional remarks on the connection to Lefschetz properties here.  One of the distinguishing features of Specht ideals of two-rowed partitions is that their minimal generating sets, i.e. the Specht polynomials, are square-free.  As it happens, the set of square-free monomials form a basis for the Artinian algebra $A=\F[x_1,\ldots,x_n]/(x_1^2,\ldots,x_n^2)$, and hence we can identity our Specht polynomials with elements in $A$.  Moreover, $A$ carries an $\mathfrak{sl}_2$-representation in which the raising operator is multiplication by the sum of variables and the lowering operator is the associated linear differential operator.  One can then show that the Specht module $V(n,k,k)$ is equal to the kernel of the lowering operator if and only if the algebra $C$ in \eqref{eq:mci} has the weak Lefschetz property.  This lowering operator plays a key role in our proof of Theorem \ref{thm:C}, and also reveals a hidden property of Specht-monomial ideals: in relatively small characteristic, i.e. $0<p<k+1$, the ideal $I(n,k)$ has an embedded prime divisor.  This is one explanation why imperfection for Specht-monomial ideals is easier to detect than for Specht ideals:  Specht ideals can \emph{never} have embedded prime divisors because they are radical in all characteristics.

The most technically difficult parts of our arguments are the decompositions in Theorem \ref{thm:B1} and Theorem \ref{thm:C}(3.).  The two proofs we give are strikingly similar, and are both based on that ingenious and clever argument of Yanagawa mentioned above.  This argument, in general terms, runs as follows:  
To prove that an ideal $I$ satisfies a decomposition formula of the form $I=I'\cap J$ where $J$ is a monomial ideal, first show that the intersection $I'\cap J$ can be generated by products of minimal generators of $I'$ with monomials (not necessarily from $J$), with the additional property that if a sum of such products is in  $I'\cap J$ then each of the summands is also in $I'\cap J$ (perhaps we should call such $I$ an \emph{$I'$- monomial ideal}).  Next fix a monomial $m$ and split the minimal generators $V(I')$ into two parts say $V(I')=V_m(I')\oplus V^m(I')$, determined by whether or not  $m$ appears in their monomial expansion or not.  In our situation, one can show that if $m$ is not in the support of $\nu\in V^m(I')$, then $m\cdot \nu\in I$, and, with more effort, one can also show that if $\nu\in V_m$ and $m\cdot\nu\in I'\cap J$ then $m\cdot \nu\in I$.  We highlight this argument here because it seems important in the theory of two-rowed Specht ideals.

This paper is organized as follows.  In Section \ref{sec:Specht} we define Specht polynomials, shifted Specht polynomials, and the modules and ideals they generate.  We then prove Theorem \ref{thm:A} and compute the dimensions of our shifted Specht modules  (Theorem \ref{thm:basis} and Theorem \ref{thm:dimMinGenSpecht}).  In Section \ref{sec:Lefschetz} we draw out the connection between Lefschetz properties and Specht modules, decompose the shifted Specht module into its irreducible representations, and derive other useful consequences.  In Section \ref{sec:radical} we prove Theorem \ref{thm:B} and Theorem \ref{thm:B1} (Theorem \ref{thm:rad} and Theorem \ref{thm:radD}).  In Section \ref{sec:perfection} we prove Theorem \ref{thm:C}(4.) (Theorem \ref{thm:perfect}), Theorem \ref{thm:SpechtP} (Theorem \ref{thm:perfectSpecht}), and Theorem \ref{thm:C}(3.) (Theorem \ref{thm:perfectD}), in that order.

\section*{Acknowledgments}
We would like to thank Larry Smith and University of G\"ottingen for their hospitality during our visit in the spring semester of 2019, when our discussions of Specht ideals and principal radical systems first began.  We would also like to thank Larry Smith for pointing out that the idea of the principal radical system was first used by Kiyoshi Oka.
The first author thanks Endicott College for the time and resources necessary to work on this project.   
The second author thanks K.\@ Yanagawa for the introduction to Specht ideals and for his insightful problem.

\tableofcontents

\section{Shifted Specht Polynomials, Modules, and Ideals}
\label{sec:Specht}
Fix a positive integer $n$, a field $\F$, and let $R=\F[x_1,\ldots,x_n]$ be the standard graded polynomial ring in $n$-variables, equipped with the usual action of $\mathfrak{S}_n$ which permutes the variables.  A partition of $n$, denoted $\lambda\vdash n$ is a sequence of non-increasing integers which sum to $n$, i.e. $\lambda=(\lambda_1,\ldots,\lambda_r)$ where $\lambda_1\geq\cdots\geq \lambda_r$ and $\sum_{i=1}^r\lambda_i=n$.  The Young diagram of $\lambda$ is a left-justified array of boxes with $r$-rows and $\lambda_i$ boxes in each row, and a filling of those boxes with distinct numbers $1,\ldots n$ is called a tableau of shape $\lambda$; the set of all tableaux of shape $\lambda$ will be written as $\operatorname{Tab}(\lambda)$.  To each tableau $T\in\tab(\lambda)$ of shape $\lambda$ we associate a polynomial $F_T\in R$ as follows.

First for any subset $S\subset\{1,\ldots,n\}$, define the $S$-Vandermonde polynomial 
$$\Delta_S=\prod_{i<j\in S}(x_i-x_j)$$
with the convention that if $|S|<2$ then $\Delta_S=1$.
Then for any tableau $T$ of shape $\lambda$ with columns $C_1,\ldots,C_{\lambda_1}$, define its Specht polynomial by 
$$F_T=\prod_{i=1}^{\lambda_1}\Delta_{C_i}.$$

The $\F$-linear span of Specht polynomials over $\tab(\lambda)$ is an $\mathfrak{S}_n$-representation called the Specht module, which we denote $V(\lambda)$, i.e.
$$V(\lambda)=\left\langle F_T \ | \ T\in\tab(\lambda)\right\rangle \operatorname{sp}_\F(F_T \ | \ T\in\tab(\lambda));$$
it is well known, in the case $\operatorname{char}(\F)=0$, that $V(\lambda)$ is irreducible, and conversely that every irreducible $\mathfrak{S}_n$-representation is isomorphic to $V(\lambda)$ for some $\lambda\vdash n$, e.g. \cite{Sagan}.  The Specht ideal of $\lambda$ is the ideal in $R$ generated by $V(\lambda)$, i.e.
$$\mathfrak{a}(\lambda)=V(\lambda)\cdot R=\left(F_T \ | \ T\in\tab(\lambda)\right).$$

In his paper \cite{Yana}, Yanagawa has formulated the following conjecture:
\begin{conjecture*}[Yanagawa '19]
	Over any field $\F$, and for any partition $\lambda$, the Specht ideal $\mathfrak{a}(\lambda)$ is radical.
\end{conjecture*}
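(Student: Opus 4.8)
The final statement is Yanagawa's conjecture, which for general $\lambda$ is open; what follows is a program for attacking it by extending the machinery developed in this paper for two-rowed shapes.

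\textbf{Reductions.} Radicality of the homogeneous ideal $\mathfrak{a}(\lambda)$ may be checked after faithfully flat base change to $\bar{\F}$, and since the Specht polynomials have integer coefficients one has $\mathfrak{a}(\lambda)_{\F}=\mathfrak{a}(\lambda)_{\Z}\otimes_{\Z}\F$; a standard generic-flatness argument then reduces the problem to the cases $\F=\Q$ and $\F=\F_p$ for each prime $p$. One should note that there is \emph{no} reduction between $\lambda$ and its conjugate $\lambda'$: the Specht polynomials of the two shapes are genuinely different, so hooks cannot be reduced to two rows, and the conjecture must be approached either shape by shape or uniformly in $\lambda$.

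\textbf{Step 1: identify the radical.} One first wants an explicit description of $\sqrt{\mathfrak{a}(\lambda)}$ as the vanishing ideal of an $\mathfrak{S}_n$-stable union $X_\lambda\subseteq\F^n$ of linear subspaces cut out by coincidence patterns among the coordinates (for $\lambda=(n-k,k)$ this is, after a shift of parameters, the set $X_{m,h}$ of Proposition~\ref{prop:EGLgen}). Granting such a description, $\sqrt{\mathfrak{a}(\lambda)}=\bigcap_{\sigma}\sigma\cdot P$ is a finite intersection of ``diagonal'' primes and hence automatically radical; the real content is to prove $\mathfrak{a}(\lambda)$ \emph{equals} this intersection. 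Since $\mathfrak{a}(\lambda)\subseteq\sqrt{\mathfrak{a}(\lambda)}$ always holds, it suffices to show the two ideals have the same Hilbert function, equivalently to exhibit a single $\F$-spanning set of $R/\mathfrak{a}(\lambda)$ of the expected cardinality in each degree that remains spanning modulo $\sqrt{\mathfrak{a}(\lambda)}$.

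\textbf{Steps 2--3: a principal radical system.} The plan is to extend Theorem~\ref{thm:A} to a characteristic-free straightening basis of $R/\mathfrak{a}(\lambda)$ indexed by standard tableaux (via the Garnir / straightening relations for products of Vandermonde determinants), introduce a family of shifted Specht ideals $\mathfrak{a}(\lambda(d))$ interpolating between $\mathfrak{a}(\lambda)$ and a square-free monomial ideal, prove an analog of Theorem~\ref{thm:B1}, namely a decomposition $\mathfrak{a}(\lambda(d))=\mathfrak{a}(\lambda(d-1))\cap M_d$ with $M_d$ square-free monomial, and then descend on $d$ using Lemma~\ref{lem:HE1}(1.): the base case is a square-free monomial ideal (automatically radical), and at each step one must verify a colon condition $(I:x)=I$ for a well-chosen homogeneous $x$ --- typically a difference of variables realizing the $\mathfrak{sl}_2$-lowering operator as in Lemma~\ref{lem:Dlem} --- together with radicality of $I+(x)$. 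This repeated colon-and-straightening bookkeeping is where essentially all of the work lies.

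\textbf{Main obstacle.} The crux is Step~2 in rank $\geq 3$: for three or more rows the straightening syzygies among products of Vandermonde determinants proliferate, and it is not even clear what the correct family of ``shifted'' ideals interpolating to a square-free monomial ideal should be --- the two-rowed shape $\lambda(d)$ is special precisely because its geometry is transparent. A secondary, structural obstacle is that, unlike perfection --- which genuinely fails in small characteristic (Theorem~\ref{thm:D}) --- radicality is conjectured to hold in \emph{every} characteristic, so the argument cannot invoke any of the representation-theoretic tools available only over $\C$; every step must be executed with characteristic-independent, ultimately monomial-ideal-based, reasoning.
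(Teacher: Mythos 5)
The statement here is Yanagawa's conjecture, which the paper records \emph{as a conjecture} and does not prove in general: it establishes the two-rowed case (Theorem \ref{thm:rad}) and cites the literature for hooks and $(d,d,1)$. You correctly recognize this, so there is no ``paper's proof'' to compare against; your proposal is a program, and it should be judged as one.

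Your Steps~2--3 do track the paper's two-rowed template (standard-tableau basis, a Theorem~\ref{thm:radD}-type decomposition, descent via the principal radical system), but two details are not quite how the paper does it. First, the principal element in the colon recursion is the plain variable $x_n$, not a difference of variables: the change of coordinates $\Phi\colon x_i\mapsto y_i$ of Corollary~\ref{cor:CoC} and the lowering operator of Lemma~\ref{lem:Dlem} are used to \emph{verify} $(I:x_n)=I$ and to identify $I+(x_n)$ with $\mathfrak{a}(n-1,k-1,k)+(x_n)$, not to furnish the element itself; and the induction in Theorem~\ref{thm:rad} is on $n$, with all $d>k$ handled at once by Theorem~\ref{thm:radD} rather than by descending on $d$. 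Second, your Step~1 --- first describe $\sqrt{\mathfrak{a}(\lambda)}$ and then match Hilbert functions --- is not part of the paper's argument and is precisely what the principal radical system is designed to avoid: Lemma~\ref{lem:PRS} propagates radicality through $(I:x)=I$ and $I+(x)$ radical without ever needing an a priori description of $\sqrt{I}$ or its Hilbert function, and inserting Step~1 as a prerequisite would make the program strictly harder. Your closing paragraph correctly names the genuine obstruction for $\geq 3$ rows: there is presently no analogue of the shifted shapes $\lambda(d)$, of the decomposition in Theorem~\ref{thm:radD}, or of the distinguished role of $x_n$, and those are exactly the ingredients a principal radical system would need.
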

This conjecture has been proved for partitions of the form $\lambda=(n-k,1,\ldots,1)$ by Yanagawa-Watanabe \cite{WY}, and for partitions of the form $\lambda=(n-k,k)$ and $\lambda=(d,d,1)$ by Yanagawa \cite{Yana}.  However, even in these simple cases, Yanagawa's proof that $\mathfrak{a}(\lambda)$ is radical is by no means easy.  This paper grew out of an attempt to understand and perhaps simplify Yanagawa's proof in the case of two rowed partitions $\lambda=(n-k,k)$, which we discuss next.

%\subsection{Shifted Specht Polynomials, Modules and Ideals}
Fix an integer $k$ satisfying $1\leq k\leq n-k$, and let $\lambda=(n-k,k)$ be the corresponding partition, which we regard as a left-justified two-rowed Young diagram with $n-k$ boxes in the first row and $k$ boxes in the second.  For each integer $d$ satisfying $1\leq k\leq d\leq n-k$, define the $d$-shifted shape $\lambda(d)$ to be the Young diagram obtained by moving the righter-most $n-d-k$ boxes on the first row to the left of the first boxes in the second row:
$$\lambda=\overbrace{\ydiagram{9,3}}^{n-k}\rightarrow \underbrace{\ydiagram{0,3}}_{n-k-d}\overbrace{\ydiagram{6,3}}^d=\lambda(d).$$
Note that $\lambda(k)$ is $\lambda$ rotated by $180^\circ$.

A tableau $T$ on shape $\lambda(d)$ is a labeling of the boxes of the Young diagram of $\lambda(d)$ with the numbers $\{1,\ldots,n\}$; we say that $T$ is standard if the rows are increasing from left to right, and the columns are increasing from top to bottom.  The set of tableaux on $\lambda(d)$ we denote by $\tab(n,k,d)$, and the set of standard tableaux by $\stab(n,k,d)$.

Given a tableau $T\in\tab(n,k,d)$ on the $d$-shifted shape $\lambda(d)$, such as
\begin{equation}
\label{eq:Tnkd}
T=\begin{ytableau}
\none & \none & \none & i_1 & \cdots & i_k & i_{k+1} & \cdots & i_d\\
i_{d+1} & \cdots & i_{n-k} & j_1 & \cdots & j_k\\
\end{ytableau}
\end{equation}
define the associated $d$-shifted Specht polynomial to be the homogeneous polynomial of degree $d$ by 
$$F_T(d)=(x_{i_1}-x_{j_1})\cdots (x_{i_k}-x_{j_k})\cdot x_{i_{k+1}}\cdots x_{i_d}.$$
Note that if $d=k$, then we recover the usual Specht polynomial:
$$F_T(k)=(x_{i_1}-x_{j_1})\cdots (x_{i_k}-x_{j_k});$$
we sometimes use the alternative notation $F_T$ or $F_T^k$ to remember the degree.  We allow $k=0$, and in this case a tableau $S\in\tab(n,0,d)$ has the form
\begin{equation}
\label{eq:n0d}
S=\begin{ytableau}
\none & \none & \none & \none & \none & \none & i_{1} & \cdots & i_d\\
i_{d+1} & \cdots & i_{n-k} & j_1 & \cdots & j_k\\
\end{ytableau}
\end{equation}
and its associated shifted Specht polynomial is the monomial 
$$M_S=x_{i_1}\cdots x_{i_d},$$
which we sometimes write as $M_S^d$ to remember degree.

The \emph{$d$-shifted Specht module of $\lambda=(n-k,k)$}, denoted by $V(n,k,d)$, is defined to be the $\F$-linear span of the $d$-shifted Specht polynomials, i.e. 
$$V(n,k,d)=\left\langle F_T(d) \ | \ T\in\tab(n,k,d)\right\rangle;$$
like the Specht module, it is also an $\mathfrak{S}_n$-representation, although it is not irreducible for $d>k$, cf. Section \ref{sec:Lefschetz}.  
The \emph{$d$-shifted Specht ideal of $\lambda=(n-k,k)$}, denoted by $\mathfrak{a}(n,k,d)$, is the ideal in $R$ generated by the shifted Specht module, i.e.
$$\mathfrak{a}(n,k,d)=V(n,k,d)\cdot R=\left(F_T(d) \ | \ T\in\tab(n,k,d)\right).$$

%The symmetric group $\mathfrak{S}_n$ acts on $R$ by permuting the variables, and its graded components are finite dimensional representations.  In case $\F=\C$ and $d=k$, the Specht modules $V(n,k,k)$ are irreducible, and conversely every irreducible $\mathfrak{S}_n$-representation is isomorphic to some Specht module.  On the other hand, the shifted Specht modules will not be irreducible in general.  
The remainder of the section will be devoted to finding a basis for the shifted Specht module $V(n,k,d)$ and to computing its dimension.

\subsection{Basis of a Shifted Specht Module}
\begin{theorem}
	\label{thm:basis}
	A basis for the $d$-shifted Specht module $V(n,k,d)$, and hence a minimal generating set of the ideal $\mathfrak{a}(n,k,d)$, are indexed by the standard tableaux on the $d$-shifted shape $\lambda(d)$, i.e.
	$$\left\{F_T(d) \ | \ T\in\stab(\lambda(d))\right\}.$$
\end{theorem}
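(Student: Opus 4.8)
\textbf{Proof proposal for Theorem~\ref{thm:basis}.}

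The plan is to follow the classical two-step template used to establish that standard Specht polynomials form a basis for the Specht module (the ``straightening'' argument), adapted to the shifted setting. First I would show that the shifted Specht polynomials indexed by \emph{standard} tableaux on $\lambda(d)$ span $V(n,k,d)$; then I would show they are linearly independent. Since the $F_T(d)$ over all $T\in\tab(n,k,d)$ span $V(n,k,d)$ by definition, spanning reduces to expressing an arbitrary $F_T(d)$ as an $\F$-linear combination of standard ones. I would do this by the usual two relations satisfied by Vandermonde-type factors: (i) the \emph{column relations}, namely that permuting the entries within a column of $T$ changes $F_T(d)$ only by the sign of the permutation (because $\Delta_{C_i}$ is alternating), so one may assume every column of $T$ is increasing top-to-bottom and, when $k=0$, the top-row entries of the ``monomial part'' $x_{i_{k+1}}\cdots x_{i_d}$ are unordered anyway; and (ii) the \emph{Garnir-type relations} coming from the fact that an antisymmetrization over a set of $k+1$ variables lying in two adjacent columns vanishes. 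Concretely, if $T$ has two adjacent columns and the top row is not increasing across them at some position, I antisymmetrize over the offending entry in the right column together with all entries weakly below it in the left column; the resulting identity rewrites $F_T(d)$ as a signed sum of $F_{T'}(d)$ for tableaux $T'$ that are strictly smaller in a suitable (e.g. column-reading word) order. Iterating, and noting that the degree-$d$ homogeneity and the two-rowed structure keep the monomial-part positions $i_{k+1},\dots,i_d$ consistent throughout, every $F_T(d)$ reduces to a combination of standard ones. I would need to check that the Garnir relations respect the ``shifted'' shape — i.e. that the boxes I antisymmetrize over really do lie in the diagram of $\lambda(d)$ — but since $\lambda(d)$ still has at most two boxes per column this is a finite, shape-by-shape verification.

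For linear independence, I would use a \emph{leading term} (initial monomial) argument with respect to a fixed monomial order on $R$. For a standard tableau $T$ as in~\eqref{eq:Tnkd}, expanding $F_T(d)=(x_{i_1}-x_{j_1})\cdots(x_{i_k}-x_{j_k})\cdot x_{i_{k+1}}\cdots x_{i_d}$ and choosing the order so that each factor $(x_{i_r}-x_{j_r})$ contributes its variable $x_{i_r}$ with larger index in the second (lower) row — equivalently choosing a term order in which $x_1\prec x_2\prec\cdots\prec x_n$ and taking, say, the lexicographically largest monomial — gives the initial monomial $\mathrm{in}(F_T(d))=x_{j_1}\cdots x_{j_k}x_{i_{k+1}}\cdots x_{i_d}$ (or its analogue with the $i_r$'s, depending on the convention). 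The point is that for $T$ standard the multiset $\{j_1,\dots,j_k,i_{k+1},\dots,i_d\}$ — the set of entries not in the ``shifted tail'' of the first row — determines $T$ uniquely: knowing which $d$ numbers appear in the lower row and the shifted first-row boxes, together with the standardness (increasing rows and columns) pins down the filling. Hence distinct standard $T$ have distinct initial monomials, which forces linear independence. I would phrase this cleanly by identifying a standard $T$ with the data of its column set or equivalently the complement of its ``non-shifted'' first-row entries, and verifying the bijection between standard tableaux and their leading monomials.

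The step I expect to be the main obstacle is the spanning/straightening argument, specifically making the Garnir relations work uniformly in the shifted setting while keeping the monomial part $x_{i_{k+1}}\cdots x_{i_d}$ under control: unlike in the classical (unshifted) case where every column is a genuine Vandermonde, here some ``columns'' of $\lambda(d)$ are single boxes contributing bare variables, and the antisymmetrization relations mix the $(x_i-x_j)$ factors with these linear factors. I would handle this by treating a single box with entry $i$ as the degenerate case $\Delta_{\{i\}}\cdot x_i$ is replaced by $x_i$, and checking that the three-term (or $(k+1)$-term) Garnir relation still closes up — this is where Theorem~\ref{thm:A}'s identification of the minimal generating set is morally the same statement, so in fact I may simply invoke Theorem~\ref{thm:A} for the generation half and devote the argument here to linear independence via initial monomials, which is self-contained and short. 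A secondary, purely bookkeeping obstacle is keeping the two conventions ($k=0$ versus $k\ge 1$, and $d=k$ versus $d>k$) in sync; I would dispatch the $k=0$ case first as a warm-up, where $V(n,0,d)=(x_1,\dots,x_n)^{(d)}$ in degree $d$ and the standard tableaux on~\eqref{eq:n0d} biject obviously with squarefree degree-$d$ monomials, and then run the general leading-monomial argument in parallel.
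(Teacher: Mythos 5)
Your linear–independence argument contains a genuine flaw. You propose to order so that each factor $(x_{i_r}-x_{j_r})$ contributes the lower-row variable $x_{j_r}$, and you then claim that the resulting leading monomial $x_{j_1}\cdots x_{j_k}x_{i_{k+1}}\cdots x_{i_d}$, together with standardness, pins down $T$. This is false. Take $n=5$, $k=1$, $d=3$: the two standard tableaux
$$T_1=\begin{ytableau}\none & 2 & 4 & 5\\ 1 & 3\\ \end{ytableau},\qquad T_2=\begin{ytableau}\none & 1 & 4 & 5\\ 2 & 3\\ \end{ytableau}$$
both yield $\{j_1,i_2,i_3\}=\{3,4,5\}$, so $F_{T_1}(3)=(x_2-x_3)x_4x_5$ and $F_{T_2}(3)=(x_1-x_3)x_4x_5$ have the same leading monomial $x_3x_4x_5$ under your order. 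The hybrid set $\{j_1,\ldots,j_k,i_{k+1},\ldots,i_d\}$ loses exactly the information of which $j_r$ is paired with which $i_r$. The correct invariant is the full top row $\{i_1,\ldots,i_d\}$, which Lemma~\ref{lem:nskrow} shows determines a standard $T$ uniquely; to realize this as a leading monomial you must reverse the ordering (e.g. lex with $x_1\succ x_2\succ\cdots\succ x_n$), so that each $(x_{i_r}-x_{j_r})$ contributes $x_{i_r}$. With that repair the initial-monomial argument does go through and is a clean alternative to the paper's proof, which instead proves Lemma~\ref{lem:independent} by induction on $n$ via the specialization $x_n\mapsto 0$, separating standard tableaux according to whether $n$ sits in the bottom row ($n=j_k$) or at the end of the top row ($n=i_d$).

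On the spanning half, your fallback of ``invoking Theorem~\ref{thm:A}'' is circular: Theorem~\ref{thm:A} in the introduction is precisely the statement of Theorem~\ref{thm:basis}, and its proof in the paper \emph{is} Lemma~\ref{lem:span}. So you must actually carry out the straightening. Your Garnir-style plan is in spirit the same as the paper's Lemma~\ref{lem:span}, which runs a downward induction on a composition-dominance partial order on $\tab(\lambda(d))$ and writes $F_T(d)=\sum_{T\triangleleft T'}c_{T'}F_{T'}(d)$ by a finite (five-case) check of the possible row descents, with the single-box columns handled explicitly (your Cases 4 and 5 in their scheme). The main caution, which you already sense, is keeping track of the monomial factor $x_{i_{k+1}}\cdots x_{i_d}$: the relevant two-term relations in the shifted setting do not all come from a single Garnir antisymmetrization but split into several shapes depending on where the descent lies relative to the boundaries between single-box and two-box columns.

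Net assessment: spanning is the right idea but needs to be carried out (not delegated), and linear independence as written is incorrect but salvageable by switching to the top-row leading monomial and citing Lemma~\ref{lem:nskrow}.
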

The proof of Theorem \ref{thm:basis} comes in two steps, which we state as Lemmas. 

\begin{lemma}
	\label{lem:independent}
	The set $\left\{F_T(d)|T\in\stab(n,k,d)\right\}$ is linearly independent.
\end{lemma}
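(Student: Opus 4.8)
The plan is to equip $R$ with a monomial order and show that the polynomials $F_T(d)$, $T\in\stab(n,k,d)$, have pairwise distinct leading monomials; linear independence then follows for free, since a family of nonzero polynomials with pairwise distinct leading monomials is automatically $\F$-linearly independent.

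Concretely, I would take the lexicographic order on $R$ with $x_1>x_2>\cdots>x_n$. Given a standard tableau $T$ written as in \eqref{eq:Tnkd}, its first row is an increasing sequence $i_1<\cdots<i_d$, and the column-strict condition forces $i_\ell<j_\ell$ for $1\le\ell\le k$. Hence $x_{i_\ell}$ is the leading term of the linear form $x_{i_\ell}-x_{j_\ell}$, and since $\F[x_1,\dots,x_n]$ is an integral domain the leading term of the product
$$F_T(d)=(x_{i_1}-x_{j_1})\cdots(x_{i_k}-x_{j_k})\cdot x_{i_{k+1}}\cdots x_{i_d}$$
is the product of the leading terms of the factors. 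As $i_1,\dots,i_d$ are distinct, this is the squarefree monomial $x_{i_1}x_{i_2}\cdots x_{i_d}$, that is, the monomial supported precisely on the entries of the first row of $T$.

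It then remains to note that a standard tableau $T\in\stab(n,k,d)$ is completely determined by the content of its first row: both rows of a standard tableau are strictly increasing (hence each row is recovered from its content as a set), and the second-row content is forced to be the complement $\{1,\dots,n\}\setminus\{i_1,\dots,i_d\}$. Therefore $T\mapsto x_{i_1}\cdots x_{i_d}$ is injective on $\stab(n,k,d)$, the leading monomials of the $F_T(d)$ are pairwise distinct, and the lemma follows. I do not anticipate a genuine obstacle here; the only points requiring (minor) care are that the leading term of a product is the product of the leading terms — valid because the polynomial ring is a domain and each factor $x_{i_\ell}-x_{j_\ell}$ is nonzero in every characteristic — and that a standard tableau is recovered from its first-row content. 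In particular, the argument is characteristic-free, as the statement demands.
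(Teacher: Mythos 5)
Your proof is correct, and it takes a genuinely different route from the paper. The paper proves linear independence by induction on $n$: it projects $R \to \F[x_1,\ldots,x_{n-1}]$ via $x_n \mapsto 0$, splits $\stab(n,k,d)$ according to whether $n$ sits in position $i_d$ (top-right box) or $j_k$ (bottom-right box), shows that the $j_k$-case tableaux map bijectively to $\stab(n-1,k-1,d)$ under the projection while the $i_d$-case tableaux vanish, and then peels off a factor of $x_n$ to reduce the $i_d$-case tableaux to $\stab(n-1,k,d-1)$. You instead use a term-order argument: with lex, $x_1 > \cdots > x_n$, each factor $x_{i_\ell}-x_{j_\ell}$ has leading term $x_{i_\ell}$ because standardness forces $i_\ell < j_\ell$, so $\mathrm{LM}\bigl(F_T(d)\bigr) = x_{i_1}\cdots x_{i_d}$ is the squarefree monomial on the first-row content of $T$; standard tableaux are recovered from their first-row content (the second row is the sorted complement), so these leading monomials are pairwise distinct and independence follows. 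Your argument is shorter, non-inductive, and characteristic-free in an immediately visible way, and it pinpoints the initial ideal of the span — a small bonus. What the paper's inductive setup buys is uniformity with the rest of the section: the same "delete $n$ and shift" maps reappear in Corollaries \ref{cor:CoC} and \ref{cor:inductive}, so the authors likely preferred to introduce that machinery once. Both are valid; yours is arguably the cleaner proof of this particular lemma.
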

\begin{proof}
	By induction on $n\geq 2$.   The base case, where $n=2$ and $k=d=n-k=1$ is trivial.  For the inductive step, we assume that for every choice of $k'$ and $d'$ satisfying $1\leq k'\leq d'\leq (n-1)-k'$, the set $\left\{F_{T'}(d')|T'\in\stab(n-1,k',d')\right\}$ is linearly independent.  Fix any integers $1\leq k\leq d\leq n-k$ and suppose we have a dependence relation
	\begin{equation}
	\label{eq:dependence}
	\sum_{T\in\stab(n,k,d)}c_TF_T(d)=0.
	\end{equation}
	Note that for every $d$-standard tableau $T\in\stab(n,k,d)$ as in \eqref{eq:Tnkd}, we must have either $n=i_d$ or $n=j_k$; let $I_d\subset\stab(n,k,d)$ denote the set of $d$-standard tableaux with $n=i_d$ and let $J_d\subset \stab(n,k,d)$ be the ones with $n=j_k$.  Let $\pi\colon R\rightarrow S=\F[x_1,\ldots,x_{n-1}]$ be the projection sending $x_n$ to $0$, and note that for every $T\in I_d$ we have $\pi(F_T(d))=0$.  Moreover for every $T\in J_d$ we have $\pi(F_T(d))=F_{T'}(d)$ where 
	$$T'=\begin{ytableau}
	\none & \none & \none & i_1 & \cdots & i_{k-1} & i_k & i_{k+1} & \cdots & i_d\\
	i_{d+1} & \cdots &  i_{n-k} & j_1 & \cdots & j_{k-1}\\ 
	\end{ytableau}$$
	We see that $T'\in\stab(n-1,k-1,d)$, hence the induction hypothesis applies.
	Note that since the map $J_d\rightarrow \stab(n-1,k-1,d)$, sending $T\mapsto T'$ is one-to-one, and since 
	$$\pi\left(\sum_{T\in \stab(n,k,d)=I_d\sqcup J_d}c_TF_T(d)\right)=\sum_{T \in J_d}c_TF_{T'}(d)=0,$$
	we deduce, by the induction hypothesis, that $c_T=0$ for all $T\in J_d$.
	Then our dependence relation \eqref{eq:dependence} becomes 
	$$\sum_{T \in I_d}c_TF_T(d)=0=x_n\cdot\left(\sum_{T \in I_d}c_TF_{T''}(d)\right)$$
	where 
	$$T''=\begin{ytableau}
	\none & \none & \none & i_1 & \cdots & i_k & i_{k+1} & \cdots & i_{d-1}\\
	i_{d+1} & \cdots & i_{n-k} & j_1 & \cdots & j_k\\
	\end{ytableau}$$
	In this case we see that $T''\in\stab(n-1,k,d-1)$, and again our induction hypothesis applies.  Since the map $I_d\rightarrow \operatorname{STab}(n-1,k,d-1)$ sending $T\mapsto T''$ is one-to-one, and since 
	$$\sum_{T \in I_d}c_TF_{T''}(d)=0$$
	the induction hypothesis implies that $c_T=0$ for every $T\in I_d$ too, and therefore the dependence relation \eqref{eq:dependence} must be trivial.
\end{proof}

To see that the $d$-standard polynomials span $V(n,k,d)$ is a little more work.  We will follow the general method described in \cite[Section 2.6]{Sagan}.  For $T$ as in \eqref{eq:Tnkd} and any index $1\leq a\leq n$, define \emph{$a$-composition vector} to be the integer vector with $n-k$ components defined by 
$$\gamma^a(T)=(\gamma^a_1(T),\ldots,\gamma^a_{n-k}(T)), \ \ \text{where} \ \ \gamma^a_b(T)=\#\left\{c\in\operatorname{col}_b(T) \ | \ c\leq a\right\}.$$
Note that for a two rowed partition $\lambda=(n-k,k)$ the $a$-composition vector has entries $0$, $1$, or $2$.  Define the \emph{composition series} for $T$ to be the $n$-tuple of composition vectors $\gamma(T)=(\gamma^1(T),\ldots,\gamma^n(T))$; we  regard $\gamma(T)$ as a matrix whose columns are the composition vectors of $T$.  Given two vectors $v=(v_1,\ldots,v_{n-k})$ and $w=(w_1,\ldots,w_{n-k})$, we say that \emph{$w$ dominates $v$}, and write $v\triangleleft w$, if $v_1+\cdots+v_p\leq w_1+\cdots+w_p$ for all $1\leq p\leq n-k$.  Finally given two tableaux $T,T'\in\tab(\lambda(d))$, we say that $T'$ \emph{dominates} $T$, and write $T\triangleleft T'$, if every composition vector of $T'$ dominates the corresponding composition vector of $T$, i.e.
$$T\triangleleft T' \ \ \Leftrightarrow \ \ \gamma^a(T)\triangleleft \gamma^a(T'), \ \forall \ 1\leq a\leq n.$$
This composition-dominance order is a partial order on the set of tableaux $\tab(\lambda(d))$.  Moreover it is clear that the largest tableau is the one that fills the columns in order from left to right.  For example, if $n=5$, $k=1$, and $d=3$ the largest tableau (with increasing columns) is the standard tableau 
$$T=\begin{ytableau}
\none & 2 & 4 & 5\\
1 & 3\\
\end{ytableau}$$
with composition series 
$$\gamma(T)=\left(\begin{array}{lllll} 1 & 1 & 1 & 1 & 1\\
0 & 1 & 2 & 2 & 2\\
0 & 0 & 0 & 1 & 1\\
0 & 0 & 0 & 0 & 1\\
\end{array}\right)$$ 

Note that if $T$ and $T'$ have the same columns (possibly in different orders) then they have the same composition series, and they also have the same shifted Specht polynomials (up to sign), i.e. $\gamma(T)=\gamma(T')$ and $F_T(d)=\pm F_{T'}(d)$.  
The following lemma is useful for telling when one tableau dominates another.
\begin{lemma}
	\label{lem:dominance}
	If $1\leq a<b\leq n$ and $a$ appears in a column to the right of $b$, then 
	$$T\triangleleft (a,b)\cdot T$$
	where $(a,b)\cdot T$ is the tableau obtained from $T$ by transposing $a$ and $b$.   
\end{lemma}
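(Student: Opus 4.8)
The plan is to compare the composition series of $T$ and $T' = (a,b)\cdot T$ column by column, and check the dominance inequality $\gamma^c_1(T) + \cdots + \gamma^c_p(T) \le \gamma^c_1(T') + \cdots + \gamma^c_p(T')$ for every threshold $c$ with $1 \le c \le n$ and every prefix length $p$ with $1 \le p \le n-k$. First I would observe that transposing $a$ and $b$ only changes which column contains $a$ and which contains $b$; all other entries stay put. So for a fixed threshold $c$, the composition vector $\gamma^c$ counts, in each column, how many entries are $\le c$, and this count can differ between $T$ and $T'$ only in the (at most two) columns originally containing $a$ and $b$. Say $a$ sits in column $q$ of $T$ and $b$ sits in column $q'$ of $T$; the hypothesis that $a$ appears to the right of $b$ means $q > q'$.

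The key case analysis is on where the threshold $c$ falls relative to $a < b$. If $c < a$ (so $c < b$ too), then neither $a$ nor $b$ is counted in either tableau, so $\gamma^c(T) = \gamma^c(T')$ and dominance holds with equality. If $c \ge b$ (so $c \ge a$ too), then both $a$ and $b$ are counted in both tableaux — moving them between columns $q$ and $q'$ just swaps one "+1" for another — so again $\gamma^c(T) = \gamma^c(T')$. The interesting case is $a \le c < b$: here $a$ is counted but $b$ is not. In $T$, the entry $\le c$ contributed by $\{a,b\}$ sits in column $q$; in $T'$ (after the swap) $a$ has moved to column $q'$, so the contribution sits in column $q' < q$. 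Thus $\gamma^c(T')$ is obtained from $\gamma^c(T)$ by removing a $1$ from coordinate $q$ and adding a $1$ to coordinate $q'$ with $q' < q$; equivalently a unit of "mass" moves to an earlier coordinate. For such a move, every prefix sum $\gamma^c_1 + \cdots + \gamma^c_p$ either stays the same (if $p < q'$ or $p \ge q$) or increases by $1$ (if $q' \le p < q$), so $\gamma^c(T) \triangleleft \gamma^c(T')$. (One should double-check that coordinates $q$ and $q'$ of $\gamma^c(T)$ are genuinely $1$ and $0$ respectively in the two-rowed setting, but this follows because for a two-rowed shape each column has entries whose $c$-counts are $0$, $1$, or $2$, and the swap only toggles the contribution of the single element $a$.)

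Combining the three cases, $\gamma^c(T) \triangleleft \gamma^c(T')$ for every $c$, which by definition is exactly $T \triangleleft T'$. I expect the only mild obstacle to be bookkeeping in the case $a \le c < b$ when $a$ and $b$ happen to lie in the same column of $T$ — but that cannot occur here, since a column of a standard (or even just column-increasing) tableau has distinct entries and the hypothesis places $a$ strictly to the right of $b$, so $q \ne q'$. Hence the argument is a clean three-case check with no real subtlety beyond tracking a single unit of mass moving left.
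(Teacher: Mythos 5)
Your proof is correct and follows essentially the same route as the paper: split on whether the threshold $c$ falls below $a$, in $[a,b)$, or at/above $b$, and in the middle case observe that the transposition moves a single unit of the composition vector from a right column to a strictly left column, which weakly increases every prefix sum. The parenthetical worry about the exact values of the affected coordinates is unnecessary—only the net change $(+1,-1)$ matters—but it does no harm.
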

\begin{proof}
	Note that for $1\leq i\leq a-1$ and for $b\leq i\leq n$ we have $\gamma^i(T)=\gamma^i((a,b)\cdot T)$.  Assume then that $a\leq i\leq b-1$, and suppose that $a$ and $b$ belong to columns $r$ and $q$ in $T$, respectively.  Then 
	$$\gamma^i((a,b)\cdot T) =\gamma^i(T)\ \begin{array}{c} \text{with $r^{th}$ part decreased by $1$}\\ \text{and $q^{th}$ part increased by $1$.}\\ \end{array}$$
	and since we are assuming that $q<r$, it follows that $\gamma^i(T)\triangleleft\gamma^i((a,b)\cdot T)$, and the result follows.
\end{proof}
We are now in a position to show the standard shifted Specht polynomials span the shifted Specht module.
\begin{lemma}
	\label{lem:span}
	The set $\left\{F_T(d)|T\in\stab(n,k,d)\right\}$ spans $V(n,k,d)$.
\end{lemma}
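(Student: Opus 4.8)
The plan is to argue by a straightening (or \emph{sorting}) procedure on tableaux, using the composition-dominance order established in Lemma~\ref{lem:dominance}, exactly in the spirit of \cite[Section 2.6]{Sagan}. Since $V(n,k,d)$ is spanned by all $F_T(d)$ over $T\in\tab(n,k,d)$, it suffices to show that every $F_T(d)$ with $T$ \emph{not} standard can be written as an $\F$-linear combination of $F_{T'}(d)$ with $T'$ strictly larger than $T$ in the composition-dominance order; iterating and using that there are only finitely many tableaux (and that the maximal one, filling columns in order, is standard) then finishes the argument by descending induction on the dominance order.

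First I would reduce to the case where $T$ has increasing columns: if some column of $T$ is not increasing, a transposition within that column changes $F_T(d)$ only by a sign and does not change the composition series, so we may always assume each column of $T$ is increasing top-to-bottom without affecting dominance. Given such a $T$ that is still not standard, some row must fail to be increasing; that is, there are two boxes in the same row, in columns $q<r$, holding entries $b$ (in the left column $q$) and $a$ (in the right column $r$) with $a<b$. Then by Lemma~\ref{lem:dominance} the swapped tableau $(a,b)\cdot T$ strictly dominates $T$. The heart of the proof is a \emph{Garnir-type relation}: expressing $F_T(d)$ in terms of $F_{T'}(d)$ for tableaux $T'$ obtained by shuffling the entries that lie in columns $q$ and $r$ at and below the offending row, all of which turn out to dominate $T$.

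Concretely, for the two-rowed shifted shape each of columns $q$ and $r$ contributes at most two entries, so the relevant Garnir element is small. In the generic situation column $q$ holds $\{b, j\}$ (top $b$, bottom $j$) and column $r$ holds $\{a\}$ or $\{a, j'\}$, and the product of the two Vandermonde factors $\Delta_{\{b,j\}}\Delta_{C_r}$ (or the factor $x_b$ times $\Delta_{C_r}$, in the monomial columns coming from the $i_{k+1},\dots,i_d$ part) satisfies a three-term or shorter linear relation upon antisymmetrizing over the set $\{a,b,j\}$ (respectively $\{a,b\}$). Solving that relation for the term corresponding to $T$ expresses $F_T(d)$ as a combination of $F_{T'}(d)$ where $T'$ ranges over the tableaux in which $a$ has moved left into column $q$; each such $T'$ strictly dominates $T$ by (an iterated application of) Lemma~\ref{lem:dominance}. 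The bookkeeping has several cases depending on whether the columns $q,r$ are ``full'' (two boxes, contributing a difference of variables) or ``short'' (one box, contributing a single variable from the $x_{i_{k+1}}\cdots x_{i_d}$ tail), and whether the offending repeated value sits in the first or second row.

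The main obstacle I expect is precisely this case analysis for the two-rowed shifted shape: one must check that in every configuration the straightening relation only produces tableaux that are genuinely larger in the composition-dominance order (so the descending induction terminates), and in particular one must handle carefully the ``hybrid'' situation where the swap exchanges an entry in a paired column $C_i$ ($i\le k$) with an entry in a monomial column ($k<i\le d$), since there the relation mixes a Vandermonde factor with a plain variable. Once it is verified that each such relation strictly increases the tableau in the partial order, the span statement follows immediately by induction, and combined with Lemma~\ref{lem:independent} this proves Theorem~\ref{thm:basis}.
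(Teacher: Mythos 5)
Your proposal is essentially the same argument as the paper's: downward induction on the composition-dominance order starting from the maximal (standard) tableau, reduction to the case of increasing columns, locating a row descent, and then producing a Garnir-type two- or three-term relation that rewrites $F_T(d)$ in terms of $F_{T'}(d)$ with $T\triangleleft T'$, where dominance is certified by Lemma~\ref{lem:dominance}. The paper carries this out in five explicit cases depending on where the descent sits relative to the breaks between single-box (monomial) and two-box (Vandermonde) columns of $\lambda(d)$, which is exactly the case analysis and ``hybrid'' issue you flag as the main work remaining; in the paired--monomial boundary cases the relations have the form $F_T(d)=F_{T'}(d)\pm F_{T''}(d)$, and in the pure single-box cases the swap leaves $F_T(d)$ unchanged, so every case does produce only strictly larger tableaux and the descent terminates. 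One small correction: in each case a single application of Lemma~\ref{lem:dominance} suffices (no iteration is needed), since every tableau appearing in a relation is obtained from $T$ by one transposition of the required shape.
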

\begin{proof}
	We will show by downward induction on the composition-dominance order on  $\tab(\lambda(d))$ that for every $T\in\tab(\lambda(d))$, $F_T(d)$ can be written as a linear combination of shifted Specht polynomials associated to $d$-standard tableaux.  For the base case, note, as above, that the largest tableau of shifted shape $\lambda(d)$ is already standard.  Inductively, fix a shifted tableau $T\in\tab(\lambda(d))$ as in \eqref{eq:Tnkd}, and assume that for every tableau $T'$ that dominates $T$, $F_{T'}(d)$ can be written as a linear combination of shifted Specht polynomials corresponding to standard tableaux.  We may assume that the columns of $T$ are increasing.  If $T$ has no row descents, then $T$ must be standard and we are done.  Otherwise $T$ has some row descent, say between the $a^{th}$ and $a+1^{st}$ column.  There are several cases to consider and we claim that in all cases we can write $F_T(d)$ as a linear combination 
	$$F_T(d)=\sum_{T \triangleleft T'}c_{T'}F_{T'}(d).$$
	
	\noindent\textbf{Case 1:}  $1\leq a\leq n-k-d-1$.  Set $b=d+a$; in this case, we can merely swap $i_b$ and $i_{b+1}$ without affecting $F_T(d)$; in other words setting $T'=(i_b,i_{b+1})\cdot T$ we have 
	$$F_T(d)=F_{T'}(d)$$
	and since $T\triangleleft (i_b,i_{b+1})\cdot T$, it follows from our inductive hypothesis that $F_T(d)$ can be written as a linear combination of $d$-standard Specht polynomials on the shifted shape $\lambda(d)$.
	
	\noindent\textbf{Case 2:}  $a=n-k-d$.  Then we have 
	$$T=\begin{ytableau}
	\none & \none & \none & i_1 & \cdots & i_k & i_{k+1} & \cdots & i_d\\
	i_{d+1} & \cdots & i_{n-k} & j_1 & \cdots & j_k\\
	\end{ytableau}$$
	and we have the descent $i_{n-k}>j_1>i_1$.  Then by lemma \ref{lem:dominance} we see that 
	$$T\triangleleft T'=\left(i_{n-k},j_{1}\right)\cdot T \ \text{and} \ T\triangleleft T''=\left(i_1,i_{n-k}\right)\cdot T.$$
	Moreover one can easily check that 
	$$F_T(d)=F_{T'}(d)-F_{T''}(d)=G\left((x_{i_1}-x_{i_{n-k}})-(x_{j_1}-x_{i_{n-k}})\right)$$ 
	and inductive hypothesis applies.
	
	\noindent\textbf{Case 3:} $n-k-d+1\leq a\leq n-d-1$.  Set $b=a-n+k+d$ so we have 
	$$T=\begin{ytableau}
	\none & \none & \none & i_1 & \cdots & i_b & i_{b+1} & \cdots & i_k & i_{k+1} & \cdots & i_d\\
	i_{d+1} & \cdots & i_{n-k} & j_1 & \cdots & j_b & j_{b+1} & \cdots & j_k\\
	\end{ytableau}$$
	
	\textbf{Sub-Case 3a:}  $i_{b+1}<i_b<j_b$.  Then 
	$$T\triangleleft T'=\left(i_{b+1},j_b\right)\cdot T \ \text{and} \ T\triangleleft T''=\left(i_{b+1},i_b\right)\cdot T$$
	and again one can easily check that 
	$$F_T(d)=F_{T'}(d)+F_{T''}(d)=
	G\cdot\left((x_{i_b}-x_{i_{b+1}})(x_{j_{b}}-x_{j_{b+1}})+(x_{i_{b+1}}-x_{j_b})(x_{i_b}-x_{j_{b+1}})\right).$$
	and our inductive hypothesis applies.
	
	\textbf{Sub-Case 3b:} $i_b<i_{b+1}<j_{b+1}<j_b$.  Then 
	$$T\triangleleft T'=\left(i_{b+1},j_b\right)\cdot T \ \text{and} \ T\triangleleft T''=\left(j_{b+1},j_b\right)\cdot T$$
	and again one can easily check that 
	$$F_T(d)=F_{T'}(d)+F_{T''}(d)=
	G\cdot\left((x_{i_b}-x_{i_{b+1}})(x_{j_{b}}-x_{j_{b+1}})+(x_{i_{b}}-x_{j_{b+1}})(x_{i_{b+1}}-x_{j_{b}})\right)$$
	and our inductive hypothesis applies.
	
	\noindent\textbf{Case 4:} $a=n-d$.  Then 
	$$T=\begin{ytableau}
	\none & \none & \none & i_1 & \cdots & i_k & i_{k+1} & \cdots & i_d\\
	i_{d+1} & \cdots & i_{n-k} & j_1 & \cdots & j_k\\
	\end{ytableau}$$
	and we have the descent $j_{k}>i_k>i_{k+1}$.  Then by Lemma \ref{lem:dominance} we have
	$$T\triangleleft T'=\left(i_{k+1},i_k\right)\cdot T \ \text{and} \ T\triangleleft T''=\left(i_{k+1},j_k\right)\cdot T.$$
	and again one can check that
	$$F_T(d)=F_{T'}(d)+F_{T''}(d)=G\left(x_{i_k}\left(x_{i_{k+1}} -x_{j_k}\right) +x_{j_k}\left(x_{i_k}-x_{i_{k+1}}\right)\right)$$
	to which the inductive hypothesis once again implies.

	\noindent\textbf{Case 5:} $n-d+1\leq a\leq n-k-1$.  In this case set $b=a-(n-k-d)$ so that we have 
	$$T=\begin{ytableau}
	\none & \none & \none & i_1 & \cdots & i_k & i_{k+1} & \cdots & i_b & i_{b+1} &  \cdots & i_d\\
	i_{d+1} & \cdots & i_{n-k} & j_1 & \cdots & j_k\\
	\end{ytableau}$$
	with the descent $i_b>i_{b+1}$.  Hence by Lemma \ref{lem:dominance} we have 
	$$T\triangleleft T'=\left(i_{b+1},i_b\right)\cdot T$$
	and in this case we clearly have 
	$$F_T(d)=F_{T'}(d)$$
	to which the induction hypothesis applies again.
	
	Therefore in all cases we have shown that $F_T(d)$ is a linear combination of shifted Specht polynomials indexed by tableaux on the shifted shape $\lambda(d)$ which dominate $T$.  Therefore by induction, the $d$-standard shifted Specht polynomials  
	$$\left\{F_T(d)|T\in\stab(\lambda(d))\right\}$$
	must span $V(n,k,d)$.
\end{proof}

\begin{proof}[Proof of Theorem \ref{thm:basis}]
	By Lemma \ref{lem:independent}, the polynomials in the set 
	$$\left\{F_T(d) \ | \ T\in\stab(\lambda(d))\right\}$$
	are linearly independent, and by Lemma \ref{lem:span} they generate the shifted Specht module $V(n,k,d)$, and hence they must form a basis.
\end{proof}

\begin{example}
	\label{ex:513}
	Let $n=5$, $k=1$, and $d=3$.  There are $9$ standard tableau of shifted shape $\lambda(3)$ where $\lambda=(4,1)$:
	$$\begin{ytableau}
	\none & 2 & 4 & 5\\
	1 & 3\\ 
	\end{ytableau} \ \ 
	\begin{ytableau}
	\none & 2 & 3 & 5\\
	1 & 4\\ 
	\end{ytableau} \ \ 
	\begin{ytableau}
	\none & 2 & 3 & 4\\
	1 & 5\\ 
	\end{ytableau} \ \ 
	\begin{ytableau}
	\none & 1 & 4 & 5\\
	2 & 3\\ 
	\end{ytableau} 
	$$
	
	$$\begin{ytableau}
	\none & 1 & 3 & 5\\
	2 & 4\\ 
	\end{ytableau} \ \ 
	\begin{ytableau}
	\none & 1 & 3 & 4\\
	2 & 5\\ 
	\end{ytableau} \ \
	\begin{ytableau}
	\none & 1 & 2 & 5\\
	3 & 4\\ 
	\end{ytableau} \ \
	\begin{ytableau}
	\none & 1 & 2 & 4\\
	3 & 5\\ 
	\end{ytableau} \ \
	\begin{ytableau}
	\none & 1 & 2 & 3\\
	4 & 5\\ 
	\end{ytableau}$$
	Hence a minimal generating set for the ideal $\mathfrak{a}(5,1,3)$, and a basis for the representation $V(5,1,3)$, is given by 
	$$\left\{\begin{array}{ccc}
	(x_2-x_3)x_4x_5, & (x_2-x_4)x_3x_5, & (x_2-x_5)x_3x_4,\\
	(x_1-x_3)x_4x_5, & (x_1-x_4)x_3x_5, & (x_1-x_5)x_3x_4,\\
	(x_1-x_4)x_2x_5, & (x_1-x_5)x_2x_4, & (x_1-x_5)x_2x_3\end{array}\right\}.$$ 
\end{example}

The utility of Theorem \ref{thm:basis} is that it can transform set maps on the set of standard tableau $\stab(\lambda(d))$ to linear maps on the shifted Specht module $V(n,k,d)$, and sometimes the shifted Specht ideal $\mathfrak{a}(n,k,d)$.  The following useful corollaries illustrate this point. 

First note every standard tableau $T\in\stab(n,k,k)$ has $n$ in its support; in fact it must have the form
$$T=\begin{ytableau}
\none & \none & \none & i_1 & \cdots & i_{k-1} & i_k\\
i_{k+1} & \cdots & i_{n-k} & j_1 & \cdots & j_{k-1} & n\\
\end{ytableau}$$
We get a standard tableau $T'\in \stab(n-1,k-1,k)$ from $T$ by deleting the box containing $n$, i.e.
$$T'=\begin{ytableau}
\none & \none & \none & i_1 & \cdots & i_{k-1} & i_k\\
i_{k+1} & \cdots & i_{n-k} & j_1 & \cdots & j_{k-1} \\
\end{ytableau}$$
Since this map is obviously a bijection it extends to a linear isomorphism 
$$V(n,k,k)\rightarrow V(n-1,k-1,k).$$
In fact it can be extended to a ring isomorphism of Specht ideals using a linear change of coordinates:  Define the variables $y_1,\ldots,y_n$ by the formula
$$y_i=\begin{cases} x_n-x_i & \text{if} \ 1\leq i\leq n-1\\
x_n & \text{if} \ i=n\\
\end{cases}$$
and let $\Phi\colon R\rightarrow R$ be the change of coordinates map $\Phi(x_i)=y_i$.
\begin{corollary}
	\label{cor:CoC}
	Fix integers $k,n$ satisfying $1\leq k\leq n-k$.  Then we have a ring isomorphism
	$$\mathfrak{a}(n,k,k)\cong \Phi\left(\mathfrak{a}(n,k,k)\right)=\mathfrak{a}(n-1,k-1,k).$$
	In fact, in adding the principal ideal $(x_n)$, this isomorphism becomes equality:
	$$\mathfrak{a}(n,k,k)+(x_n)=\mathfrak{a}(n-1,k-1,k)+(x_n).$$
\end{corollary}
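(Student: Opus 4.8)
The plan is to deduce the corollary from the explicit minimal generating sets furnished by Theorem~\ref{thm:basis}: $\mathfrak a(n,k,k)=\bigl(F_T(k)\mid T\in\stab(n,k,k)\bigr)$ and $\mathfrak a(n-1,k-1,k)=\bigl(F_{T'}(k)\mid T'\in\stab(n-1,k-1,k)\bigr)$ (for $k=1$ the latter is simply the ideal $(x_1,\dots,x_{n-1})$ of $S=\F[x_1,\dots,x_{n-1}]$, and the argument degenerates harmlessly). I first record that $\Phi$ is an $\F$-algebra automorphism of $R$: the substitution $x_i\mapsto y_i$ is linear with integer coefficients and is inverted by $x_n=y_n$, $x_i=y_n-y_i$ for $i<n$ (in fact $\Phi$ is an involution). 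Consequently $R/\mathfrak a(n,k,k)\cong R/\Phi(\mathfrak a(n,k,k))$ as graded rings, which is the meaning of the ``ring isomorphism'' in the statement; the substantive content lies in the two ideal identities.

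For $\Phi(\mathfrak a(n,k,k))=\mathfrak a(n-1,k-1,k)$, recall from the discussion preceding the statement that every $T\in\stab(n,k,k)$ has $n$ in its lower-right box, so that, with notation as in~\eqref{eq:Tnkd},
$$F_T(k)=\prod_{\ell=1}^{k-1}(x_{i_\ell}-x_{j_\ell})\cdot(x_{i_k}-x_n),$$
where all of $i_1,\dots,i_k,j_1,\dots,j_{k-1}$ lie in $\{1,\dots,n-1\}$, and that erasing the box containing $n$ is a bijection $T\mapsto T'$ of $\stab(n,k,k)$ onto $\stab(n-1,k-1,k)$. A direct computation gives $\Phi(x_{i_\ell}-x_{j_\ell})=-(x_{i_\ell}-x_{j_\ell})$ for $\ell<k$ and $\Phi(x_{i_k}-x_n)=-x_{i_k}$, hence $\Phi(F_T(k))=(-1)^k F_{T'}(k)$. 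Thus $\Phi$ carries the generating set of $\mathfrak a(n,k,k)$ bijectively, up to a single global sign, onto that of $\mathfrak a(n-1,k-1,k)$, which proves the identity.

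For $\mathfrak a(n,k,k)+(x_n)=\mathfrak a(n-1,k-1,k)+(x_n)$ I would not invoke $\Phi$ (which here would only yield an isomorphism), but instead reduce modulo $x_n$ directly. Let $\pi\colon R\to S$ be the quotient by $(x_n)$. With $T,T'$ as above, $\pi(F_T(k))=\prod_{\ell=1}^{k-1}(x_{i_\ell}-x_{j_\ell})\cdot x_{i_k}=F_{T'}(k)$; since $\pi$ is surjective it takes a generating set to a generating set, so $\pi(\mathfrak a(n,k,k))=\bigl(F_{T'}(k)\mid T'\in\stab(n-1,k-1,k)\bigr)=\mathfrak a(n-1,k-1,k)$ as ideals of $S$. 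Taking $\pi$-preimages and using the standard facts $\pi^{-1}\pi(I)=I+(x_n)$ for any ideal $I\subseteq R$ and $\pi^{-1}(J)=JR+(x_n)$ for any ideal $J\subseteq S$, we obtain the claimed equality.

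Since Theorem~\ref{thm:basis} is already available, none of the steps is hard; the only points requiring care are the sign bookkeeping in $\Phi(F_T(k))=\pm F_{T'}(k)$ and, more importantly, not conflating ``isomorphic quotient rings'' with ``equal ideals'' — the first assertion of the corollary is the former, obtained via $\Phi$, whereas the $(x_n)$-identity genuinely needs the modular argument of the previous paragraph.
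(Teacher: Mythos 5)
Your proof is correct and takes essentially the same route as the paper's: both rest on the bijection $T\mapsto T'$ of standard tableaux (via Theorem~\ref{thm:basis}) and the sign computation $\Phi(F_T)=(-1)^kF_{T'}$, equivalently $F_T=(-1)^k\Phi(F_{T'})$. For the $(x_n)$-identity you reduce the generators directly modulo $x_n$ (observing $\pi(F_T)=F_{T'}$), whereas the paper phrases the same fact by noting that $\bar\Phi$ acts as the sign automorphism $x_i\mapsto -x_i$ on $R/(x_n)$, which preserves every homogeneous ideal; the content is identical.
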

\begin{proof}
	With $T\in \stab(n,k,k)$ and $T'\in\stab(n-1,k-1,k)$ as above, we compute 
	$$F_T=\prod_{t=1}^k(x_{i_t}-x_{j_t})=\prod_{t=1}^{k-1}(x_{i_t}-x_{j_t})\cdot (x_{i_k}-x_n)=\prod_{t=1}^{k-1}(y_{j_t}-y_{i_t})\cdot (-y_{i_k})=(-1)^k\cdot \Phi\left(F_{T'}\right)$$
	and hence by Theorem \ref{thm:basis}, $\Phi$ sends a the first statement follows.  To see the second statement note that the ring automorphism $\Phi\colon R\rightarrow R$ becomes the negative identity map on the quotient $\bar{\Phi}=-I\colon R/(x_n)\rightarrow R/(x_n)$, and hence 
	$$\mathfrak{a}(n,k,k)+(x_n)=\mathfrak{a}(n-1,k-1,k)+(x_n)$$
	which is the second statement.
\end{proof}

We say that an index $i$, $1\leq i\leq n$ is in the support of tableau $T\in\tab(n,k,d)$, and write $i\in\operatorname{supp}(T)$, if $i$ appears in or to the right of a column with more than one row, i.e. if 
$$T=\begin{ytableau}
\none & \none & \none & i_1 & \cdots & i_k & i_{k+1} & \cdots & i_d\\
i_{d+1} & \cdots & i_{n-k} & j_1 & \cdots & j_k\\
\end{ytableau}$$
then $\operatorname{supp}(T)=\{i_1,j_1,\cdots,i_k,j_k,i_{k+1},\ldots,i_d\}$.  Fix an integer $m$ satisfying $0\leq m\leq d$, and define the subset $\stab_m(n,k,d)\subset \stab(n,k,d)$ consisting of standard tableaux which contain $\{1,\ldots,m\}$ in its support.  A standard tableau $T\in\stab_m(n,k,d)$ necessarily has the form
$$T=\begin{ytableau}
\none & \none & \none & 1 & \cdots & m & {}_{m+1} & \cdots & k & {}_{k+1} & \cdots & d\\
i_{d+1} & \cdots & i_{n-k} & j_1 & \cdots & j_m & j_{m+1} & \cdots & j_k\\
\end{ytableau}$$
and we define its image tableau as the one obtained by removing the boxes containing the numbers $1,\ldots,m$:
$$T'=\begin{ytableau}
\none & \none & \none & \none & \none & \none & {}_{m+1} & \cdots & k & {}_{k+1} & \cdots & d\\
i_{d+1} & \cdots & i_{n-k} & j_1 & \cdots & j_m & j_{m+1} & \cdots & j_k\\
\end{ytableau}$$  
Note that $T'\in \stab([n]_m,k-m,d-m)$ where $[n]_m$ means the tableaux are filled with numbers $\{m+1,\ldots,n\}$, and we count $k-m$ as zero if $k\leq m$.  The map of sets $\stab_m(n,k,d)\ni T\mapsto T'\in \stab([n]_m,k-m,d-m)$ is evidently one-to-one and onto, and by Theorem \ref{thm:basis}, it induces an bijective linear map of Specht modules, which is the key to some of the main technical arguments in this paper.
\begin{corollary}
	\label{cor:inductive}
	The induced linear map 
	$$\xymatrixrowsep{.5pc}\xymatrix{V_m(n,k,d)\ar[r] & V([n]_m,k-m,d-m)\\ F_T(d)\ar@{|->}[r] & F_{T'}(d-m)}$$
	is bijective.
\end{corollary}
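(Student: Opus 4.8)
The plan is to read the corollary off from Theorem~\ref{thm:basis}: the standard shifted Specht polynomials are genuine \emph{bases} of both the source $V_m(n,k,d)$ and the target $V([n-m],k-m,d-m)$, so a linear map between them is governed entirely by the underlying map $T\mapsto T'$ of indexing sets of standard tableaux, and injectivity of the former will follow from injectivity of the latter. (As a sanity check one has, for $T\in\stab_m(n,k,d)$ in the displayed normal form, the factorization $F_T(d)=(x_1-x_{j_1})\cdots(x_m-x_{j_m})\cdot F_{T'}(d-m)$, so $F_{T'}(d-m)$ is literally a divisor of $F_T(d)$; the proof, however, uses only that the rule $F_T(d)\mapsto F_{T'}(d-m)$ is being prescribed on a basis.)

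In detail: by Theorem~\ref{thm:basis}, $\{F_T(d)\mid T\in\stab(n,k,d)\}$ is an $\F$-basis of $V(n,k,d)$, hence its sub-collection indexed by $\stab_m(n,k,d)$ — which by definition spans $V_m(n,k,d)$ — is linearly independent and forms a basis of $V_m(n,k,d)$. Applying Theorem~\ref{thm:basis} once more, now in the polynomial ring $\F[x_{m+1},\ldots,x_n]$ and with parameters $(n-m,\,k-m,\,d-m)$ — reading $k-m$ as $0$ when $k\le m$, in which degenerate case the statement is just the obvious basis of squarefree monomials — the family $\{F_S(d-m)\mid S\in\stab([n-m],k-m,d-m)\}$ is a basis, in particular linearly independent. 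Thus the rule $F_T(d)\mapsto F_{T'}(d-m)$ does define an $\F$-linear map $V_m(n,k,d)\to V([n-m],k-m,d-m)$ (it is prescribed on a basis), and its injectivity is equivalent to linear independence of the images $\{F_{T'}(d-m)\mid T\in\stab_m(n,k,d)\}$. As these images form a sub-family of the linearly independent family indexed by $\stab([n-m],k-m,d-m)$, it suffices to prove that the index map $T\mapsto T'$ is injective.

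This last point I would settle by inspection: $T'$ is obtained from $T$ by deleting exactly the $m$ boxes carrying the labels $1,\ldots,m$, and by the displayed normal form for $\stab_m(n,k,d)$ these boxes occupy forced positions (for instance, when $m\le k$, they sit directly above the boxes holding $j_1,\ldots,j_m$), so $T$ is uniquely reconstructed from $T'$ by reinserting them — hence $T\mapsto T'$ is injective and the corollary follows. The only genuine care required is the routine combinatorial bookkeeping: one must check that deleting those $m$ top boxes leaves the rows and columns of $T$ strictly increasing, so that $T'$ indeed lands in $\stab([n-m],k-m,d-m)$ (the $(d-m)$-shifted shape of $(n-k,k-m)$ on the entries $\{m+1,\ldots,n\}$), and one must quote the normal form itself. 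I do not anticipate any deeper obstacle here: once Theorem~\ref{thm:basis} is in hand, the corollary is essentially ``a change of basis along an injective reindexing of standard tableaux.''
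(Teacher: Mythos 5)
Your proposal is correct and follows the same route the paper takes: Theorem~\ref{thm:basis} supplies bases of both source and target indexed by standard tableaux, so injectivity reduces to the injectivity of the set map $T\mapsto T'$, which you verify by the reconstruction argument (the boxes holding $1,\ldots,m$ occupy forced positions in the normal form, so $T$ is recovered from $T'$ by reinserting them). The paper simply states the set map is ``evidently one-to-one'' and invokes Theorem~\ref{thm:basis}; you have filled in exactly the bookkeeping the paper leaves implicit.
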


\subsection{Dimension of a Shifted Specht Module}
We compute the dimension of the shifted Specht module $V(n,k,d)$ by counting the standard tableaux on the shifted shape $\lambda(d)$.
As a first step, we observe that a standard tableau $T$ on $\lambda(d)$ is uniquely determined by its $d$-row, or its $(n-d)$-row, i.e. if 
$$T=\begin{ytableau} \none & \none & \none & i_1 & \cdots & i_k & i_{k+1} & \cdots & i_d\\ 
i_{d+1} & \cdots & i_{n-k} & j_1 & \cdots & j_k\\ \end{ytableau}$$ its $d$-row is the sequence $(i_1,\ldots,i_d)$, and its $(n-d)$-row is $(i_{d+1},\ldots,i_{n-k},j_1,\ldots,j_k)$.  
\begin{lemma}
	\label{lem:nskrow}
	A necessary and sufficient condition for an increasing sequence of $d$ integers in $[n]$, say $\{i_1<\cdots<i_d\}$, to be a $d$-row of a standard tableau of shape $\lambda(d)$ is that for each index $1\leq t\leq k$, we have 
	\begin{equation}
	\label{eq:condition}
	\#\left\{j\in[n]\setminus\{i_1,\ldots,i_d\} \ | \ j>i_{k+1-t}\right\}\geq t.
	\end{equation} 
\end{lemma}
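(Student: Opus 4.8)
The plan is to reconstruct the full tableau from its $d$-row and check when the standardness constraints on the second row can be satisfied. Given the increasing sequence $\{i_1 < \cdots < i_d\}$, the complement in $[n]$ is the set of entries that must occupy the remaining $n-d$ boxes, i.e. the $(n-d)$-row $(i_{d+1},\ldots,i_{n-k},j_1,\ldots,j_k)$ read left to right; since that row must be increasing, the $(n-d)$-row is forced to be the complement written in increasing order. So the tableau is uniquely determined by the $d$-row, and the only thing left to verify is the column condition: in each of the $k$ two-box columns, the top entry (which is $i_{k+1-t}$ for the $t$-th column counted from the right, $t=1,\ldots,k$) must be strictly less than the bottom entry (which is the $t$-th largest element of the complement). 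Hence the reconstruction is standard if and only if for each $t$ with $1\le t\le k$, the $t$-th largest element of $[n]\setminus\{i_1,\ldots,i_d\}$ exceeds $i_{k+1-t}$.

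First I would set $J = [n]\setminus\{i_1,\ldots,i_d\}$, note $|J| = n-d$, and write its elements in increasing order. The column of $\lambda(d)$ sitting above $j_t$ has top entry $i_{k+1-t}$: indeed, reading the $d$-row from the right, the entries $i_d, i_{d-1},\ldots, i_{k+1}$ sit over the one-box columns that got shifted left (no, wait—over the single-box overhang), and $i_k, i_{k-1},\ldots, i_1$ sit over $j_k, j_{k-1},\ldots, j_1$ respectively, so the pairing is $i_{k+1-t}$ over $j_t$ when we index $j_1<\cdots<j_k$ from the left and match column-by-column. I would lay this indexing out carefully with the \verb|ytableau| picture already used in the excerpt. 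Then the standardness of the bottom row is automatic (it is increasing by construction since $j_1<\cdots<j_k$ is a subsequence of the increasing complement, and the overhang $i_{d+1}<\cdots<i_{n-k}$ is the rest of it, placed to the left, which is consistent because those are the smallest elements of $J$). So the only obstruction is the column inequalities $i_{k+1-t} < j_t$.

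Finally I would translate $i_{k+1-t} < j_t$ into the counting statement \eqref{eq:condition}. The element $j_t$ is the $t$-th largest element of $J$, so $j_t > i_{k+1-t}$ holds if and only if there are at least $t$ elements of $J$ that are $> i_{k+1-t}$, which is exactly $\#\{j \in [n]\setminus\{i_1,\ldots,i_d\} \mid j > i_{k+1-t}\} \ge t$. Running over $t = 1,\ldots,k$ gives the claimed equivalence.

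I expect the only real subtlety to be bookkeeping: matching up which $i$ sits atop which $j$ after the shift, and confirming that the overhang entries $i_{d+1},\ldots,i_{n-k}$ — the $n-d-k$ smallest elements of $J$ — automatically land in increasing order to the left of the $j$'s without imposing any extra condition. This is a purely combinatorial check on the shape $\lambda(d)$, with no field-theoretic or homological input, so while it is fiddly it should not be genuinely hard.
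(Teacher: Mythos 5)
Your proposal is correct and follows essentially the same route as the paper: reconstruct the forced bottom row from the complement, observe that the only non-automatic constraints are the $k$ column inequalities, and translate those into the counting condition \eqref{eq:condition}. One small note: in your second paragraph you write the pairing as ``$i_{k+1-t}$ over $j_t$'' and call $j_t$ the $t$-th largest element of $J$ --- with your own convention $j_1<\cdots<j_k$ the bottom entry in the $t$-th column from the right is $j_{k+1-t}$, which is indeed the $t$-th largest of $J$; the two slips cancel and your final statement (the one in the first paragraph) is the right one.
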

\begin{proof}
	Assume that \eqref{eq:condition} holds.  Write the compliment $[n]\setminus \{i_1,\ldots,i_d\}$ sequence in increasing order as $i_{d+1}< \cdots<i_{n-k}<j_1<\cdots<j_k$.  Then Condition \eqref{eq:condition} implies that $i_t<j_t$ for each $1\leq t\leq k$, and hence the tableau with $d$-row $(i_1,\ldots,i_d)$ must be standard.
	Conversely, suppose that $T$ in \eqref{eq:condition} is standard.  Then for each $1\leq t\leq k$ we have 
	$i_t<j_t<j_{t+1}<\cdots <j_k$, which implies Condition \eqref{eq:condition}. 
\end{proof}

In fact, we can encode a tableau $T$, with increasing rows but not necessarily standard, on the shifted shape $\lambda(d)$ as a NE lattice path from $(0,0)$ to $(n-d,d)$.  Then the standard tableaux will correspond to NE lattice paths which stay below the shifted diagonal line $y=x+(d-k)$.  Here are some more precise statements.

\begin{definition}
	\label{def:NELP}
	Define a NE lattice path from $(a,b)$ to $(a',b')$ to be a sequence of points on the $\Z^2$ lattice, $P_0=(a,b),P_1,\ldots,P_{m}=(a',b')$ where if $P_i=(a_i,b_i)$ then either $P_{i+1}=(a_i+1,b_i)$ in which case the $i^{th}$ step is E, or $P_{i+1}=(a_i,b_i+1)$ in which case the $i^{th}$ step is N.
\end{definition}
The following fact is well known.
\begin{fact}
	\label{fact:NELPcount}
	The number of NE lattice paths from $(a,b)$ to $(a',b')$ is the binomial coefficient
	$$\binom{\overbrace{(a'+b')-(a+b)}^m}{b'-b}.$$
\end{fact}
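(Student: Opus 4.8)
The plan is to show that every NE lattice path from $(a,b)$ to $(a',b')$ uses exactly $a'-a$ East-steps and $b'-b$ North-steps, so that such a path is completely encoded by the set of indices at which a North-step is taken; the count then reduces to counting subsets of a fixed size.

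First I would record the bookkeeping. Along any path $(P_0,\ldots,P_m)$ the first coordinate is non-decreasing and increases by exactly $1$ at each East-step (and is unchanged at each North-step), while the second coordinate increases by exactly $1$ at each North-step (and is unchanged at each East-step). If the path has $e$ East-steps and $\nu$ North-steps, summing these unit increments gives $a'-a=e$ and $b'-b=\nu$, hence $m=e+\nu=(a'+b')-(a+b)$. In particular a path can exist only when $a'\geq a$ and $b'\geq b$, in which case every path has precisely this common number $m$ of steps, with $\nu=b'-b$ of them North-steps; the degenerate cases $a'<a$ or $b'<b$ give no paths, matching the convention $\binom{m}{j}=0$ for $j<0$ or $j>m$.

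Next I would set up the bijection. To a path assign the subset $S\subseteq\{0,1,\ldots,m-1\}$ of those indices $i$ for which the $i^{\text{th}}$ step is a North-step; by the previous paragraph $|S|=b'-b$. Conversely, a size-$(b'-b)$ subset $S\subseteq\{0,1,\ldots,m-1\}$ prescribes a sequence of steps (North at indices in $S$, East at the others), which, starting from $P_0=(a,b)$, produces a unique NE path, and by the increment count it terminates at $(a',b')$. These two assignments are visibly mutually inverse, so the number of paths equals the number of $(b'-b)$-element subsets of an $m$-element set, namely $\binom{m}{b'-b}$; reversing the roles of North and East gives the equivalent form $\binom{m}{a'-a}$.

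I do not anticipate any real obstacle: the argument is a routine encoding of paths by step-sequences, and the only minor care needed is the treatment of the degenerate cases and the standard binomial-coefficient conventions noted above.
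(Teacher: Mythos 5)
Your proposal is correct and is essentially the same argument the paper gives: the paper notes that a path has $m=(a'+b')-(a+b)$ steps of which exactly $b'-b$ are N-steps, chosen freely, and you simply spell out that observation as an explicit bijection with $(b'-b)$-element subsets of $\{0,\ldots,m-1\}$.
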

%For the proof, note that in such a NE lattice path, among the $m=(a'+b')-(a+b)$ total steps, exactly $b'-b$ of those must be N-steps, and we may choose those freely.

There is a map from the set of NE lattice paths from $(0,0)$ to $(n-d,d)$ to the set of tableaux of shifted shape $\lambda(d)$ with increasing rows:  label the $i^{th}$ step $n-i+1$, i.e. 
\begin{equation}
\overline{P_iP_{i+1}}\mapsto n-i+1
\end{equation}
and fill in the shifted diagram $\lambda(d)$ with these numbers from right to left with the N-steps in the $k$-row and the E-steps in the $(n-k)$-row.

\begin{example}
	\label{ex:NELPmap}
	Let $n=11$, $k=4$, $d=6$, so that $n-d=5$.  Then setting $\lambda=(7,4)$, the NE lattice path from $(0,0)$ to $(n-d,d)=(5,6)$ given by
	\begin{center}
		\includegraphics[width=7cm]{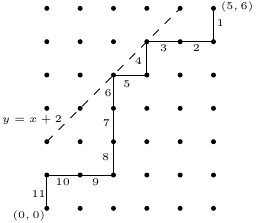}
	\end{center}
	corresponds to the tableau of shifted shape given by 
	$$\begin{ytableau}
	\none & 1 & 4 & 6 & 7 & 8 & 11\\
	2 & 3 & 5 & 9 & 10\\
	\end{ytableau}$$
	Note that the NE lattice path here is sub-shifted-diagonal, and that the Young tableau here is standard.  A non-sub-shifted-diagonal NE lattice path is given by 
	\begin{center}
		\includegraphics[width=7cm]{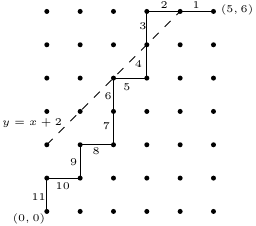}
	\end{center}	
	and corresponds to the non-standard Young tableau
	$$\begin{ytableau}
	\none & 3 & 4 & 6 & 7 & 9 & 11\\
	1 & 2 & 5 & 8 & 10\\
	\end{ytableau}$$
\end{example}  
The theorem, evidenced by Example \ref{ex:NELPmap}, is that the above map gives a bijection between sub-shifted-diagonal NE lattice paths and standard Young tableaux.

\begin{lemma}
	\label{lem:NELP}
	There is a bijection between NE lattice paths from $(0,0)$ to $(n-d,d)$ which do not cross the shifted diagonal line $y=x+(d-k)$ and standard tableaux of shifted shape $\lambda(d)$. 
\end{lemma}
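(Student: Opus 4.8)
The plan is to study the explicit map $\Psi$ described just before the statement: it sends a NE lattice path $P=(P_0,\ldots,P_n)$ from $(0,0)$ to $(n-d,d)$ to the filling of $\lambda(d)$ obtained by labelling the $i$-th step of $P$ by $n-i+1$ and then recording the labels of the N-steps in the top (length-$d$) row and the labels of the E-steps in the bottom (length-$(n-d)$) row, each row filled from right to left. First I would check that $\Psi$ is a bijection from the set of \emph{all} such paths onto the set of \emph{all} tableaux of shape $\lambda(d)$ with increasing rows. This is routine: a path is completely determined by the set $S\subseteq[n]$ of labels attached to its N-steps --- indeed step $i$ is an N-step if and only if $n-i+1\in S$ --- and $|S|=d$; on the other side a tableau with increasing rows is completely determined by the set of entries of its top row, again a $d$-subset of $[n]$, since the entries of each row are then forced to appear left to right in increasing order, which is exactly the order in which $\Psi$ writes the (decreasing) step-labels from right to left. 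So $\Psi$ matches paths whose N-step label set is $S$ with increasing-row tableaux whose top row is $S$, bijectively.

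It then remains to show that, for a path $P$ with N-step label set $S=\{i_1<\cdots<i_d\}$, the tableau $\Psi(P)$ is \emph{standard} exactly when $P$ does not cross the line $y=x+(d-k)$, i.e.\ when every vertex $P_i=(x_i,y_i)$ satisfies $y_i\le x_i+(d-k)$. Since $\Psi(P)$ already has increasing rows and its $d$-row is $S$, Lemma~\ref{lem:nskrow} says $\Psi(P)$ is standard precisely when
$$\#\{\,j\in[n]\setminus S\ \mid\ j>i_u\,\}\ \ge\ k+1-u\qquad\text{for all }1\le u\le k .$$
The key translation is to read both sides off the path. The label on step $m$ is $n-m+1$, so the labels exceeding $i_u$ are exactly those carried by steps $1,\ldots,n-i_u$; among those first $n-i_u$ steps there are $x_{n-i_u}$ E-steps and $y_{n-i_u}$ N-steps, and since the N-step labels exceeding $i_u$ are precisely $i_{u+1},\ldots,i_d$, we get $y_{n-i_u}=d-u$ and $\#\{\,j\in[n]\setminus S\mid j>i_u\,\}=x_{n-i_u}$. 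Hence the condition of Lemma~\ref{lem:nskrow} becomes simply $x_{n-i_u}\ge k+1-u$ for all $1\le u\le k$.

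Finally I would relate this to non-crossing by a $\pm1$-walk argument. Put $f(i)=y_i-x_i$; along $P$ this increases by $1$ at each N-step and decreases by $1$ at each E-step, with $f(0)=0$ and $f(n)=2d-n$, both $\le d-k$ (the second because $d\le n-k$). It follows that $P$ crosses the line --- that is, $f(i)>d-k$ for some $i$ --- if and only if $f$ jumps from $d-k$ up to $d-k+1$ at some step, which must then be an N-step; equivalently, $P$ does not cross the line if and only if $f(j)\le d-k-1$ at every vertex $P_j$ immediately preceding an N-step. But the vertices immediately preceding N-steps of $P$ are exactly the $P_{n-i_u}$, $1\le u\le d$ (the step labelled $i_u$ being step number $n-i_u+1$), and there $f(n-i_u)=(d-u)-x_{n-i_u}$, so $f(n-i_u)\le d-k-1$ reads $x_{n-i_u}\ge k+1-u$; for $u>k$ this is automatic since $x_{n-i_u}\ge 0$. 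Therefore $P$ does not cross the line if and only if $x_{n-i_u}\ge k+1-u$ for all $1\le u\le k$, which by the translation above is exactly the condition for $\Psi(P)$ to be standard; and since standard tableaux are in particular increasing in rows, $\Psi$ restricts to the desired bijection. The only real hazard is keeping the correspondences among step indices, labels $n-m+1$, and lattice coordinates straight; the substance of the argument is just the reformulation of Lemma~\ref{lem:nskrow} as the assertion that a certain $\pm1$-walk stays weakly below a fixed level, together with the elementary fact that such a walk first exceeds the level only at an up-step.
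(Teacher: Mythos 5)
Your argument is correct and follows the same strategy as the paper: identify lattice paths with increasing-row tableaux via the labelling, then translate the standardness condition of Lemma~\ref{lem:nskrow} into the statement that the path stays weakly below the line $y=x+(d-k)$. Your write-up is actually more careful than the paper's, which is quite terse and in effect only sketches one direction of the equivalence between standardness and non-crossing; your $\pm 1$-walk reformulation cleanly establishes both.
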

\begin{proof}
	Translating condition \eqref{eq:condition} from Lemma \ref{lem:nskrow} into the language of NE lattice paths, it says:
	\begin{equation}
	\label{eq:NELPCondition}
	\# \ \text{E-steps to the left of the $(d-k+t)^{th}$ N-step} \geq t.	
	\end{equation}
	Hence if the $(d-k+t+1)^{th}$ $N$-step is the $p^{th}$ step in the path, then the point $P_p=(e_p,n_p)$ has coordinates that satisfy $n_p=(d-k+t)$ and $e_p\geq t$, and hence $n_p-e_p\leq d-k$.  This implies that the point $P_p$ lies below the shifted diagonal line $y=x+(d-k)$, as desired.
\end{proof}

Lemma \ref{lem:NELP} translates our problem of counting standard tableaux of shifted shape $\lambda(d)$ into the problem of counting sub-shifted-diagonal lattice paths from $(0,0)$ to $(n-d,d)$.  The advantage of counting lattice paths lies in the so-called reflection principal; a proof can be found in \cite{Feller}.
\begin{lemma}[Reflection Principle]
	\label{lem:rp}
	Let $\ell:y=x+c$ be any line with slope $1$, and suppose that $(a',b')$ and $(a,b)$ are two lattice points that lie on the same side of $\ell$.  Then the number of NE lattice paths from $(a',b')$ to $(a,b)$ which touch or cross $\ell$ is equal to the total number of NE lattice paths from $(a'',b'')$ to $(a,b)$, where $(a'',b'')$ is the reflection of $(a',b')$ over $\ell$.
\end{lemma}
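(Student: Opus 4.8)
The plan is to prove the statement by the classical reflection construction: I will exhibit a bijection between the NE lattice paths from $(a',b')$ to $(a,b)$ that touch or cross $\ell$ and \emph{all} NE lattice paths from $(a'',b'')$ to $(a,b)$, from which the equality of cardinalities is immediate.

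Write $A=(a',b')$, $B=(a,b)$, and let $\rho(x,y)=(y-c,\,x+c)$ be the reflection across $\ell\colon y=x+c$; since in the application $c=d-k\in\Z_{\ge 0}$ we may assume $c\in\Z$, so that $\rho$ maps $\Z^2$ to itself, fixes $\ell$ pointwise, and interchanges the two open half-planes bounded by $\ell$. The first observation is that $\rho$ sends an E-step to an N-step and vice versa, so the image under $\rho$ of a NE lattice path is again a NE lattice path (with the roles of N and E exchanged). The second observation is that along any NE lattice path the quantity $y-x$ changes by exactly $\pm 1$ at each step; consequently a path that touches or crosses $\ell$ must in fact have a vertex on $\ell$, so throughout the argument ``touches or crosses $\ell$'' may be read as ``has a vertex on $\ell$''.

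Now I would define the map. Given a NE lattice path $\gamma$ from $A$ to $B$ with a vertex on $\ell$, let $P$ be the \emph{first} such vertex, and factor $\gamma=\gamma_1\gamma_2$ with $\gamma_1\colon A\to P$ and $\gamma_2\colon P\to B$. Set $\Psi(\gamma)=\rho(\gamma_1)\cdot\gamma_2$; since $\rho(P)=P$ this is a well-defined NE lattice path from $\rho(A)=(a'',b'')$ to $B$. In the reverse direction, let $\delta$ be \emph{any} NE lattice path from $(a'',b'')$ to $B$. Because $A$ and $B$ lie on one closed side of $\ell$ while $\rho(A)$ lies on the opposite side, and $y-x$ moves in unit steps, $\delta$ must have a vertex on $\ell$; letting $Q$ be the first such vertex and factoring $\delta=\delta_1\delta_2$ with $\delta_1\colon (a'',b'')\to Q$, set $\Phi(\delta)=\rho(\delta_1)\cdot\delta_2$, a NE lattice path from $A$ to $B$ with a vertex on $\ell$.

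It remains to check that $\Psi$ and $\Phi$ are mutually inverse, and here lies the only point requiring care. By the choice of $P$, the sub-path $\gamma_1$ meets $\ell$ only at its endpoint $P$; since $\rho$ fixes $\ell$ and carries each point strictly off $\ell$ to a point strictly off $\ell$, the reflected sub-path $\rho(\gamma_1)$ also meets $\ell$ only at $P$. Hence $P$ is precisely the first vertex on $\ell$ of $\Psi(\gamma)$, so $\Phi(\Psi(\gamma))$ reflects $\rho(\gamma_1)$ back to $\gamma_1$ and returns $\gamma$; the identity $\Psi\circ\Phi=\mathrm{id}$ is symmetric. The degenerate cases in which $A$ or $B$ already lies on $\ell$ are handled by the same argument (and then both sides simply equal the total count of Fact \ref{fact:NELPcount}). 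Since $\Psi\circ\Phi=\mathrm{id}$ shows $\Psi$ is onto, every NE lattice path from $(a'',b'')$ to $B$ arises as $\Psi(\gamma)$, and the bijection proves that the number of NE lattice paths from $A$ to $B$ touching or crossing $\ell$ equals the total number of NE lattice paths from $(a'',b'')$ to $B$, as claimed. I do not anticipate a genuine obstacle; the entire difficulty is the bookkeeping around the ``first vertex on $\ell$'' and verifying that $\rho$ respects the NE structure.
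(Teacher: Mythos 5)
Your proof is correct and is the classical reflection (André) bijection: take the first vertex on $\ell$, reflect the initial segment, and check that this is an involution on the relevant sets of paths. The paper itself does not give a proof of Lemma \ref{lem:rp}; it simply refers the reader to \cite{Feller}, and your argument is exactly the one found there, so you have supplied the details the paper omits. Two small points worth keeping in mind: your formula $\rho(x,y)=(y-c,x+c)$ is indeed the reflection across $y=x+c$, and the observation that $\rho$ swaps N-steps and E-steps is what guarantees the reflected prefix is still an NE path; and the ``first vertex on $\ell$'' bookkeeping is precisely what makes $\Psi$ and $\Phi$ inverse to one another, as you note. No gaps.
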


We are now in a position to compute the dimension of the shifted Specht module $V(n,k,d)$.
\begin{theorem}
	\label{thm:dimMinGenSpecht}
	The number of NE lattice paths from $(0,0)$ to $(n-d,d)$ which do not cross the shifted diagonal $y=x+(d-k)$, and hence the number of standard tableaux on the shifted shape $\lambda(d)$, is equal to 
	$$\dim(V(n,k,d))=\binom{n}{d}-\binom{n}{k-1}.$$
\end{theorem}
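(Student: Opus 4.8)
The plan is to combine the lattice-path reformulation of Lemma \ref{lem:NELP} with the reflection principle of Lemma \ref{lem:rp} and the binomial count of Fact \ref{fact:NELPcount}. By Fact \ref{fact:NELPcount} the total number of NE lattice paths from $(0,0)$ to $(n-d,d)$ is $\binom{n}{d}$, since among the $n$ steps exactly $d$ must be N-steps. By Lemma \ref{lem:NELP}, the standard tableaux on $\lambda(d)$ correspond bijectively to those NE paths all of whose points $(x,y)$ satisfy $y\le x+(d-k)$. Since a NE path starts at $(0,0)$, where $y-x=0\le d-k$, and $y-x$ changes by $\pm 1$ at each step, such a path violates this constraint if and only if it reaches the line $\ell\colon y=x+(d-k+1)$; so the paths to be discarded are exactly those that touch or cross $\ell$.

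First I would check that both endpoints lie strictly below $\ell$: for $(0,0)$ this reads $0<d-k+1$, which holds because $d\ge k$, and for $(n-d,d)$ it reads $d<(n-d)+(d-k+1)=n-k+1$, which holds because $d\le n-k$. Hence Lemma \ref{lem:rp} applies to the line $\ell$ and the point $(0,0)$: the number of NE paths from $(0,0)$ to $(n-d,d)$ that touch or cross $\ell$ equals the total number of NE paths from $Q$ to $(n-d,d)$, where $Q$ is the reflection of $(0,0)$ over $\ell$. A direct computation of the reflection over a slope-$1$ line gives $Q=(k-d-1,\,d-k+1)$.

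Next I would count these reflected paths with Fact \ref{fact:NELPcount}: from $Q=(k-d-1,\,d-k+1)$ to $(n-d,d)$ the total number of steps is $n-\bigl((k-d-1)+(d-k+1)\bigr)=n$, of which $d-(d-k+1)=k-1$ are N-steps, so there are $\binom{n}{k-1}$ of them. Subtracting the discarded paths from the total, the number of NE paths from $(0,0)$ to $(n-d,d)$ staying weakly below $y=x+(d-k)$ is $\binom{n}{d}-\binom{n}{k-1}$; by Lemma \ref{lem:NELP} this counts the standard tableaux on $\lambda(d)$, and by Theorem \ref{thm:basis} it equals $\dim V(n,k,d)$.

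The points I would be most careful about are the passage from ``does not cross the diagonal'' to ``does not touch the line $\ell$ one unit higher'' (Lemma \ref{lem:rp} is phrased for paths that touch or cross), and the verification that the two endpoints lie \emph{strictly} below $\ell$, so that the reflection principle applies without fuss; the boundary cases $d=k$ and $d=n-k$ are precisely where $(0,0)$ or $(n-d,d)$ sits on the diagonal $y=x+(d-k)$ itself, but since $\ell$ is the next line up they remain strictly below $\ell$ and the argument is unaffected. As a sanity check, the formula specializes for $d=k$ to $\binom{n}{k}-\binom{n}{k-1}$, the classical dimension of the two-row Specht module, and for $k=0$ to $\binom{n}{d}$, the number of squarefree monomials of degree $d$.
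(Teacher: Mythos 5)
Your proof is correct and follows essentially the same route as the paper's: reformulate via Lemma \ref{lem:NELP}, count the bad paths touching $y=x+(d-k)+1$ by reflecting $(0,0)$ to $(k-d-1,\,d-k+1)$ and invoking Fact \ref{fact:NELPcount} to get $\binom{n}{k-1}$, then subtract from the total $\binom{n}{d}$. The only difference is that you explicitly verify both endpoints lie strictly below the reflection line $\ell$, a hypothesis the paper leaves implicit.
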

\begin{proof}
	To count the number of shifted subdiagonal NE lattice paths from $(0,0)$ to $(n-d,d)$ is the same as to count the NE lattice paths which do not touch or cross the line $y=x+(d-k)+1$.
	%\begin{center}
	%	\includegraphics[width=10cm]{NELPpaper3}
	%\end{center}  
	Let us instead count the number of NE lattice paths from $(0,0)$ to $(n-d,d)$ that do touch or cross the line $y=x+(d-k)+1$, which, according to the Reflection Principle, is the same as the total number of NE lattice paths from $(-(d-k)-1,(d-k)+1)$ to $(n-d,d)$.  According to Fact \ref{fact:NELPcount}, this number is 
	\begin{equation}
	\label{eq:count1}
	\binom{(n-d+d)+(-(d-k)-1+(d-k)+1)}{d-(d-k)-1}=\binom{n}{k-1}.
	\end{equation} 
	We can also count the total number of NE lattice paths from $(0,0)$ to $(n-d,d)$ as 
	\begin{equation}
	\label{eq:count2}
	\binom{(n-d+d)-(0+0)}{d-0}=\binom{n}{d}.
	\end{equation} 
	We therefore get the number of NE lattice paths from $(0,0)$ to $(n-d,d)$ which do not cross the line $y=x+(d-k)$ by subtracting \eqref{eq:count1} from \eqref{eq:count2}, which gives the desired formula.
\end{proof}

\section{Lefschetz Propoerties}
\label{sec:Lefschetz}
Let $E\subset R$ be the graded vector subspace spanned by square-free monomials in the variables $x_1,\ldots,x_n$.  It will be convenient to identify $E$ with a quotient of $R$ as well, specifically the Artinian monomial complete intersection 
$$A=\frac{\F[x_1,\ldots,x_n]}{(x_1^2,\ldots,x_n^2)}.$$
Define the following linear operators on $A$:  The \emph{raising operator} is multiplication by the sum of variables:
$$L=\times (x_1+\cdots+x_n)\colon A\rightarrow A[1],$$
the \emph{lowering operator} is the partial derivative map corresponding to the sum of variables:
$$D=\frac{\partial}{\partial x_1} + \cdots +\frac{\partial}{\partial x_n}\colon A\rightarrow A[-1],$$
and the \emph{semi-simple operator}:
$$H\colon A\rightarrow A, \ \ H(a)=(n-2k)\cdot a, \ \forall \ a\in A_k.$$
\begin{lemma}
	\label{lem:SL2}
	The operators $\{D,L,H\}$ forms an $\mathfrak{sl}_2$-triple on $E\cong A$, i.e. they satisfy the commutator relations:
	$$[D,L]=H, \ \ [H,D]=2D, \ \ [H,L]=-2L.$$
\end{lemma}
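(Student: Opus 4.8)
The plan is to verify the three commutator relations $[D,L]=H$, $[H,D]=2D$, and $[H,L]=-2L$ directly as operators on $A$, using the fact that the square-free monomials $x_S := \prod_{i\in S} x_i$ for $S\subseteq\{1,\ldots,n\}$ form an $\F$-basis of $A$. Since all three operators are $\F$-linear, it suffices to check each identity on these basis elements.

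First I would record the action of each operator on a basis monomial $x_S$. We have $L(x_S) = \sum_{i\notin S} x_{S\cup\{i\}}$, because $x_i \cdot x_S = 0$ in $A$ whenever $i\in S$ (as $x_i^2=0$), while for $i\notin S$ it produces the square-free monomial $x_{S\cup\{i\}}$. Dually, $D(x_S) = \sum_{i\in S} x_{S\setminus\{i\}}$, since $\partial x_j/\partial x_i = 0$ for $j\ne i$ and $\partial x_i/\partial x_i = 1$, and the operator $D$ descends to $A$ because it sends the ideal $(x_1^2,\ldots,x_n^2)$ into itself (each $\partial/\partial x_j$ maps $x_i^2$ to $0$ or to $2x_i\in(x_i^2)\cdot(\text{something})$ — more precisely $\partial x_i^2/\partial x_i = 2x_i$, so one must observe that $D$ kills square-free representatives and the well-definedness is most cleanly seen by declaring $D$ on the basis $x_S$ as above, which is how $E\cong A$ is set up). Finally $H(x_S) = (n - 2|S|)\, x_S$ by definition, since $x_S\in A_{|S|}$.

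Next I would compute $[D,L](x_S) = D(L(x_S)) - L(D(x_S))$. Expanding, $DL(x_S) = \sum_{i\notin S} D(x_{S\cup\{i\}}) = \sum_{i\notin S}\sum_{j\in S\cup\{i\}} x_{(S\cup\{i\})\setminus\{j\}}$, and $LD(x_S) = \sum_{i\in S} L(x_{S\setminus\{i\}}) = \sum_{i\in S}\sum_{j\notin S\setminus\{i\}} x_{(S\setminus\{i\})\cup\{j\}}$. In $DL(x_S)$ the terms with $j=i$ contribute $\sum_{i\notin S} x_S = (n-|S|)\,x_S$, and the terms with $j\in S$ contribute $\sum_{i\notin S}\sum_{j\in S} x_{(S\setminus\{j\})\cup\{i\}}$. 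In $LD(x_S)$ the terms with $j=i$ contribute $\sum_{i\in S} x_S = |S|\, x_S$, and the terms with $j\notin S$ contribute $\sum_{i\in S}\sum_{j\notin S} x_{(S\setminus\{i\})\cup\{j\}}$, which is exactly the same double sum as the cross-terms in $DL$ (reindex $i\leftrightarrow j$). Hence the cross-terms cancel and $[D,L](x_S) = (n-|S|)\,x_S - |S|\,x_S = (n-2|S|)\,x_S = H(x_S)$. The remaining two relations are then immediate: $[H,D](x_S) = H(D(x_S)) - D(H(x_S)) = \sum_{i\in S}(n-2(|S|-1))x_{S\setminus\{i\}} - (n-2|S|)\sum_{i\in S} x_{S\setminus\{i\}} = 2\sum_{i\in S} x_{S\setminus\{i\}} = 2D(x_S)$, and symmetrically $[H,L](x_S) = \sum_{i\notin S}(n-2(|S|+1))x_{S\cup\{i\}} - (n-2|S|)\sum_{i\notin S}x_{S\cup\{i\}} = -2L(x_S)$.

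The only genuine subtlety — and the single point I would be careful to state cleanly rather than a real obstacle — is the well-definedness of $D$ as an operator on $A$ (as opposed to on the polynomial ring), i.e. that the naive partial-derivative formula respects the relations $x_i^2 = 0$; this is handled by taking the identification $E\cong A$ seriously and defining $D$ on $A$ by the basis formula $D(x_S)=\sum_{i\in S}x_{S\setminus\{i\}}$, which agrees with $\partial/\partial x_1+\cdots+\partial/\partial x_n$ applied to the square-free representative. Everything else is bookkeeping with finite sums over subsets, and the combinatorial heart is the cancellation of cross-terms in the computation of $[D,L]$.
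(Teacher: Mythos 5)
Your proof is correct and follows the same approach as the paper: reduce by linearity to checking on a square-free monomial basis, verify $[D,L]=H$ by direct expansion and cancellation of cross-terms, and note the other two relations follow by routine degree bookkeeping. The paper uses a representative monomial $x_1\cdots x_i$ and invokes symmetry, whereas you phrase things in terms of an arbitrary subset $S$ and also write out the last two relations explicitly, but the argument is the same.
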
 
\begin{proof}
	Since $D$, $L$, and $H$ are linear, it suffices to check the relations on monomials, and by symmetry it suffices to check only the single square-free monomial $\mu=x_1\cdots x_i$.  We have 
	\begin{align}
	\nonumber D\circ L(m)= & D\left(\sum_{j=i+1}^nx_1\cdots x_ix_{j}\right)= \sum_{j=i+1}^nD(x_1\cdots x_i\cdot x_j)= \sum_{j=i+1}^n\left(x_1\cdots x_i+\sum_{k=1}^ix_1\cdots\hat{x}_k\cdots x_i\cdot x_j\right)\\
	\label{eq:DLm}
	= & (n-i)\cdot m+\sum_{j=i+1}^n\sum_{k=1}^ix_1\cdots \hat{x}_k\cdots x_i x_j\\
	\nonumber L\circ D(m)= & L\left(\sum_{k=1}^ix_1\cdots \hat{x}_k\cdots x_i\right)=\sum_{k=1}^i\left(L(x_1\cdots\hat{x}_k\cdots x_i)\right)=\sum_{k=1}^i\left(x_1\cdots x_i+\sum_{j=i+1}^nx_1\cdots\hat{x}_k\cdots x_i\cdot x_j\right)\\
	\label{eq:LDm}
	= & i\cdot m+\sum_{k=1}^i\sum_{j=i+1}^nx_1\cdots \hat{x}_k\cdots x_i\cdot x_j.
	\end{align} 
	Subracting \eqref{eq:DLm} and \eqref{eq:LDm} yields $[D,L](m)=D\circ L(m)-L\circ D(m)=(n-2i)\cdot m= H(m)$, and hence verifies the relation $[D,L]=H$.  Verifications of the other two relations are straightforward and left to the reader. 
\end{proof}
For each $0\leq i\leq n$, define the $i^{th}$-primitive subspace $P_i\subset A_i$ by
$$P_i=\ker(D)\cap A_i=\left\{\alpha\in A_i \ | \ D(\alpha)=0\right\}.$$
It follows from Lemma \ref{lem:SL2} that for any positive integer $m$ and for any $\alpha\in P_k$ we have 
$$D\left(L^m(\alpha)\right)=m\cdot (n-2k+1-m)\cdot L^{m-1}(\alpha).$$
Also note that for any Specht polynomial $F_T\in V(n,k,k)\subset A_k$, we have 
$$D\left(F_T\right)=0.$$
In particular, we have a chain of containments
$$V(n,k,k)\subseteq P_k\subseteq \ker(L^{n-2k+1})\cap A_k.$$
\begin{lemma}
	\label{lem:Dsurj}
	Equality $V(n,k,k)=P_k$ holds if and only if the derivative map, i.e. the lowering operator 
	$$D\colon A_k\rightarrow A_{k-1}$$
	is surjective.
\end{lemma}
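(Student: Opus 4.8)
The plan is a straightforward dimension count built on Theorem~\ref{thm:dimMinGenSpecht} together with rank--nullity. First I would record that $\dim_\F A_j=\binom{n}{j}$ for each $0\le j\le n$, since the square-free monomials of degree $j$ form a basis of $A_j$. Because $D$ is graded of degree $-1$, its restriction $D\colon A_k\rightarrow A_{k-1}$ has kernel exactly $P_k=\ker(D)\cap A_k$, so rank--nullity gives $\dim P_k=\binom{n}{k}-\operatorname{rank}\bigl(D\colon A_k\rightarrow A_{k-1}\bigr)$. Consequently $D\colon A_k\rightarrow A_{k-1}$ is surjective if and only if $\operatorname{rank}(D)=\dim A_{k-1}=\binom{n}{k-1}$, which is in turn equivalent to $\dim P_k=\binom{n}{k}-\binom{n}{k-1}$.

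Next I would apply Theorem~\ref{thm:dimMinGenSpecht} with $d=k$ (the hypotheses $1\le k\le d\le n-k$ all hold), which yields $\dim V(n,k,k)=\binom{n}{k}-\binom{n}{k-1}$. Note that $k\le n-k$ forces $\binom{n}{k}\ge\binom{n}{k-1}$, so this count is nonnegative, as it must be for a subspace of $A_k$. Combining with the previous paragraph, $D\colon A_k\rightarrow A_{k-1}$ is surjective if and only if $\dim P_k=\dim V(n,k,k)$.

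Finally I would invoke the containment $V(n,k,k)\subseteq P_k$ established just before the lemma (every Specht polynomial of shape $(n-k,k)$ lies in $\ker D$). Since one vector space is contained in the other, equality holds precisely when their dimensions agree. Chaining the equivalences gives $V(n,k,k)=P_k\Longleftrightarrow\dim V(n,k,k)=\dim P_k\Longleftrightarrow D\colon A_k\rightarrow A_{k-1}$ is surjective, which is exactly the claim. There is no real obstacle here: the whole argument is rank--nullity plus the known dimension formula plus the known inclusion, and the only points requiring a moment's care are that Theorem~\ref{thm:dimMinGenSpecht} genuinely specializes to $d=k$ and that $V(n,k,k)\subseteq P_k$ is available at this stage of the paper, both of which hold.
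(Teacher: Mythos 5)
Your argument is correct and is essentially the same as the paper's: both rely on the inclusion $V(n,k,k)\subseteq P_k$, the dimension formula $\dim V(n,k,k)=\binom{n}{k}-\binom{n}{k-1}$ from Theorem~\ref{thm:dimMinGenSpecht}, and rank--nullity applied to $D\colon A_k\to A_{k-1}$ to conclude that equality of subspaces is equivalent to surjectivity of $D$. No substantive differences.
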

\begin{proof}
	Assume that $V(n,k,k)=P_k$.  Then by Theorem \ref{thm:dimMinGenSpecht} we have 
	$$\dim(P_k)=\binom{n}{k}-\binom{n}{k-1}=\dim(A_k)-\dim(A_{k-1}).$$
	Let $I_{k-1}=D(A_k)$ be the image of the derivative map.  By linear algebra $\dim(I_{k-1})+\dim(P_k)=\dim(A_k)$ hence $\dim(I_{k-1})-\dim(A_{k-1})=0$, hence the derivative map is surjective.  Conversely, if $D\colon A_k\rightarrow A_{k-1}$ is surjective, then $\dim(I_{k-1})=\dim(A_{k-1})$, hence $\dim(P_k)=\dim(A_k)-\dim(A_{k-1})=\dim(V(n,k,k))$.  Since $V(n,k,k)\subseteq P_k$, and they have the same dimension, this containment must be equality. 
\end{proof}
As we shall see, surjectivity of the derivative map in Lemma \ref{lem:Dsurj} is dictated by the weak Lefschetz property.
\subsection{Weak Lefschetz Property}
For an arbitrary graded Artinian algebra $C=R/I$, we say that $C$ has the \emph{weak Lefschetz property} if there is a linear form $\ell\in C_1$ such that the multiplication maps 
\begin{equation}
\label{eq:WL}
\times\ell\colon C_{i-1}\rightarrow C_{i}
\end{equation}
have maximum rank for every degree $i\geq 0$; in this case we call $\ell$ a weak Lefschetz element for $C$.  If $C$ has a unimodal Hilbert function with socle degree $d$, then $\ell\in C_1$ is Lefschetz if and only if the multiplication maps \eqref{eq:WL} are injective for $1\leq i\leq \flo{d+1}$ and surjective for $\flo{d+3}\leq i\leq d$.  If $C$ is Gorenstein, then it suffices only to check that \eqref{eq:WL} is injective in degrees $1\leq i\leq \flo{d+1}$.  In fact one can show that if $C$ is Gorenstein with the standard grading and if the multiplication map \eqref{eq:WL} is injective for some $i_0$, then it is injective for all $i\leq i_0$.  Moreover if the ideal $I$ is generated by monomials then $C$ is weak Lefschetz if and only if $x_1+\cdots+x_n\in C_1$ is a weak Lefschetz element.  For more details, especially regarding these last two facts, see \cite[Propositions 2.1, 2.2]{MMN} or \cite[Proposition 2.5]{Wat87}.

In our situation $A$ is a standard graded Artinian Gorenstein algebra with unimodal Hilbert function and cut out by a monomial ideal.  In fact, in our situation, the matrix for the multiplication map $L\colon A_{k-1}\rightarrow A_{k}$ in the monomial basis is the transpose of the derivative map $D\colon A_{k}\rightarrow A_{k-1}$.  Therefore we see that $A$ has the weak Lefschetz property if and only if the derivative maps  
$$D\colon A_k\rightarrow A_{k-1}$$
are surjective for all $1\leq k\leq \flo{n+1}$.
The following result is due to Kustin-Vraciu \cite{KV}, and we refer the reader there for a proof.  As usual, $p=\operatorname{char}(\F)\geq 0$.
%We say that the pair $(A,L)$ is \emph{weak Lefschetz} if the restriction of $L$ to the graded components of $A$ always has maximal rank.  For Gorenstein algebras $(A,L)$ is weak Lefschetz if and only if the maps 
%$$L\colon A_k\rightarrow A_{k+1}$$
%are injective for all $0\leq k\leq \left\lfloor\frac{n+1}{2}\right\rfloor$.
%In our case note that the matrix for the map $L\colon A_k\rightarrow A_{k+1}$ in the monomial basis is the transpose of the map $D\colon A_{k}\rightarrow A_{k-1}$, hence $(A,L)$ is weak Lefschetz if and only if the maps
%$$D\colon A_{k}\rightarrow A_{k-1}$$
%are surjective for all $0\leq k\leq \left\lfloor\frac{n+1}{2}\right\rfloor$.

\begin{lemma}
	\label{lem:WLP}
	The monomial complete intersection $A$ has the weak Lefschetz property if and only if $p=0$ or $p\geq \flo{n+3}$.
\end{lemma}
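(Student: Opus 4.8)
The plan is to build entirely on the reduction already recorded before the lemma: $A$ has the weak Lefschetz property if and only if the derivative maps $D\colon A_k\to A_{k-1}$ are surjective for every $1\le k\le\flo{n+1}$, equivalently (passing to transposes) the multiplication maps $\times\ell\colon A_{k-1}\to A_k$ with $\ell=x_1+\cdots+x_n$ are injective in that range. So the whole statement reduces to a single rank computation in each such degree: in the square-free monomial basis the matrix of $D$ is exactly the set-inclusion matrix $W_{k-1,k}$ whose rows and columns are indexed by the $(k-1)$- and $k$-subsets of $[n]$, with $(S,T)$-entry $1$ when $S\subset T$ and $0$ otherwise (indeed $D(x_T)=\sum_{i\in T}x_{T\setminus\{i\}}$). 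Surjectivity of $D$ in degree $k$ is thus the assertion that $\operatorname{rank}_{\F}W_{k-1,k}=\binom{n}{k-1}$, i.e.\ that $W_{k-1,k}$ has full row rank over $\F$.

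The key input I would use is the classical Smith normal form of $W_{k-1,k}$ (Wilson's diagonal form for inclusion matrices): when $(k-1)+k\le n$ — which is precisely the condition $k\le\flo{n+1}$ defining our range — the matrix $W_{k-1,k}$ has invariant factors $\binom{k-i}{\,k-1-i\,}=k-i$ for $i=0,1,\dots,k-1$, each occurring with multiplicity $\binom{n}{i}-\binom{n}{i-1}$, and these multiplicities are all strictly positive because $i\le k-1\le n/2$ throughout. Consequently $W_{k-1,k}$ has full row rank over $\F$ if and only if every invariant factor $1,2,\dots,k$ is a unit in $\F$, i.e.\ if and only if $p=0$ or $p>k$. (That $k-1\le n/2$ for all $k\le\flo{n+1}$, so that Wilson's formula applies, is a routine check on floors.)

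Assembling these over the range $1\le k\le\flo{n+1}$: the maps $D\colon A_k\to A_{k-1}$ are all surjective exactly when $p=0$ or $p>k$ for every such $k$, the binding constraint being $k=\flo{n+1}$. Since $\flo{n+1}+1=\flo{n+3}$, this is precisely the condition $p=0$ or $p\ge\flo{n+3}$, and by the reduction above this is exactly the weak Lefschetz property for $A$. (In passing, the $p=0$ case recovers full rank of $W_{k-1,k}$ over $\Q$, hence $V(n,k,k)=P_k$ via Lemma~\ref{lem:Dsurj}, consistent with the classical characteristic-zero statement.)

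The genuinely hard part is the characteristic-$p$ rank input; Wilson's diagonal form is itself a nontrivial theorem, and quoting it is the natural shortcut here (this is essentially why Kustin--Vraciu's result is cited rather than reproved). If a self-contained argument were wanted, the route I would pursue instead is the inductive block-matrix reduction: writing $A=A'\oplus x_nA'$ with $A'=\F[x_1,\dots,x_{n-1}]/(x_1^2,\dots,x_{n-1}^2)$ and $\ell=\ell'+x_n$, the operator $\times\ell$ on $A_{k-1}$ takes the form $(a,b)\mapsto(\ell'a,\,a+\ell'b)$ with $a\in A'_{k-1}$, $b\in A'_{k-2}$, so that $\ker(\times\ell)\cong\{b\in A'_{k-2}\ :\ (\ell')^2b=0\}$; since $(\ell')^2=2\,e_2(x_1,\dots,x_{n-1})$ in $A'$, one is reduced to controlling annihilators of (products of) elementary symmetric polynomials in a smaller monomial complete intersection. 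That recursion can be carried to completion, but it requires careful bookkeeping of exactly which products of these symmetric functions vanish modulo $p$ — and that bookkeeping is where all the real work, and the appearance of the bound $\flo{n+3}$, is hidden. Either way, the $\mathfrak{sl}_2$-triple $\{D,L,H\}$ is only the packaging that makes the reduction transparent; the arithmetic heart of the statement is the characteristic-$p$ rank of the inclusion matrices.
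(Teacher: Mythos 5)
The paper does not prove this lemma at all: it explicitly states ``The following result is due to Kustin--Vraciu \cite{KV}, and we refer the reader there for a proof.'' So your proposal is necessarily taking a different route, and it is a correct one. The reduction you start from (WLP $\Leftrightarrow$ surjectivity of $D\colon A_k\to A_{k-1}$ for $1\le k\le\flo{n+1}$, via transposing the multiplication matrix) is exactly what the paper records immediately before the lemma. Your identification of the matrix of $D$ in the square-free monomial basis with the inclusion matrix $W_{k-1,k}$ is right, Wilson's diagonal form applies precisely in the range $k\le\flo{n+1}$ (equivalently $(k-1)+k\le n$), the invariant factors $1,2,\dots,k$ each occur with strictly positive multiplicity since $i\le k-1<n/2$, and the floor bookkeeping $\flo{n+1}+1=\flo{n+3}$ is correct. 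So full row rank over $\F$ for all relevant $k$ happens exactly when $p=0$ or $p>\flo{n+1}$, i.e.\ $p\ge\flo{n+3}$.

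The tradeoff relative to the cited source is worth noting: Kustin--Vraciu's proof is a self-contained but fairly intricate inductive computation tailored to $\F[x_1,\dots,x_n]/(x_1^2,\dots,x_n^2)$, whereas your argument quotes a different nontrivial external theorem (Wilson's diagonal form for inclusion matrices) but makes the arithmetic completely transparent: once you know the invariant factors are exactly $1,\dots,k$, the characteristic bound falls out immediately. Your alternative sketch via the block decomposition $A\cong A'\oplus x_nA'$ is essentially the same triangular-block idea the paper uses in the proof of Lemma~\ref{lem:Vnkk} (for the implication $(1)\Rightarrow(2)$ there), though that direction of the paper's argument only gives sufficiency of the bound, not necessity; for necessity the paper exhibits the explicit primitive element $e_p(x_1,\dots,x_{2p-1})$ killed by $D$ but not in $V(n,p,p)$. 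Either your Wilson route or the Kustin--Vraciu citation closes the lemma; the paper simply opted for the latter.
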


%There is a small nuance here.  Note that if $n$ is odd, say $n=2m+1$ for some integer $m\geq 0$, then for $k=\left\lfloor\frac{n+1}{2}\right\rfloor=m+1$ we have $k=m+1>n-k=m$, indicating that the Specht module $V(n,k,k)$ is not defined.  In accordance with Lemma \ref{lem:WLP} we set $V(n,k,k)=0$ in this case.  In fact if we just want condition (3.) for $0\leq k\leq \flo{n}$, we can weaken the other conditions a bit.  To see this consider the subring $B\subset A$ defined by 
%$$B=\frac{\F[x_1,\ldots,x_{n-1}]}{(x_1^2,\ldots,x_{n-1}^2)}$$
%which comes with its own maps $\{D_B,L_B,H_B\}$.
From Lemma \ref{lem:WLP} we derive the following useful result.
\begin{lemma}
	\label{lem:Vnkk}
		Fix any integer $k$ satisfying $2k\leq n$.  Then the following are equivalent:
		\begin{enumerate}
			\item $p=0$ or $p\geq k+1$.
			\item The derivative maps $D\colon A_i\rightarrow A_{i-1}$
			are surjective for all $1\leq i\leq k$.
			\item The derivative map $D\colon A_k\rightarrow A_{k-1}$
			is surjective.
			
		\end{enumerate}
\end{lemma}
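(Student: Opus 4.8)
The plan is to establish the cycle $(1)\Rightarrow(2)\Rightarrow(3)\Rightarrow(2)\Rightarrow(1)$. Throughout I write $A^{(m)}=\F[x_1,\ldots,x_m]/(x_1^2,\ldots,x_m^2)$ with lowering operator $D=\partial/\partial x_1+\cdots+\partial/\partial x_m$, so $A=A^{(n)}$. The implication $(2)\Rightarrow(3)$ is trivial. For $(3)\Rightarrow(2)$ I would invoke the fact cited just before Lemma \ref{lem:WLP} (from \cite{MMN,Wat87}): for a standard graded Gorenstein algebra, injectivity of multiplication by a fixed linear form $\ell$ in some degree $i_0$ forces injectivity in all degrees $i\le i_0$. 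Since in the monomial bases the matrix of $D\colon A_i\to A_{i-1}$ is the transpose of the matrix of $\times\ell\colon A_{i-1}\to A_i$ with $\ell=x_1+\cdots+x_n$, these two maps always have the same rank, so ``$D$ surjective in degree $i$'' is equivalent to ``$\times\ell$ injective in degree $i$''; applying the cited fact with $i_0=k$ then derives $(2)$ from $(3)$.

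For $(2)\Rightarrow(1)$ I argue by contradiction: assume $(2)$ and $0<p\le k$. The composite $D^{k}\colon A_k\to A_{k-1}\to\cdots\to A_0$, each arrow being $D$, is then a composition of surjections, hence surjective onto $A_0=\F\ne0$. On the other hand, stripping off variables one at a time shows that $D^{k}(\mu)=k!\cdot 1$ for every square-free monomial $\mu$ of degree $k$, so $D^{k}\colon A_k\to A_0$ is the linear functional sending $\sum c_\mu\mu$ to $k!\sum c_\mu$. Since $p\le k$ divides $k!$, this functional is zero, a contradiction; hence $p=0$ or $p\ge k+1$.

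The substantive implication is $(1)\Rightarrow(2)$. First I apply the Kustin--Vraciu theorem (Lemma \ref{lem:WLP}) to the ring in $2k$ variables: since $\flo{2k+3}=k+1$, hypothesis $(1)$ says exactly that $A^{(2k)}$ has the weak Lefschetz property, equivalently (via the transpose relation above, as in the text) that $D\colon A^{(2k)}_i\to A^{(2k)}_{i-1}$ is surjective for all $1\le i\le k$. Then I bootstrap up to $n$ variables using the following elementary ``one variable at a time'' lemma, proved by a single block computation: \emph{if $D\colon A^{(m-1)}_i\to A^{(m-1)}_{i-1}$ and $D\colon A^{(m-1)}_{i-1}\to A^{(m-1)}_{i-2}$ are both surjective, then so is $D\colon A^{(m)}_i\to A^{(m)}_{i-1}$}. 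Indeed, writing $A^{(m)}=A^{(m-1)}[x_m]/(x_m^2)$ and a degree-$i$ element as $a+bx_m$ with $a\in A^{(m-1)}_i$ and $b\in A^{(m-1)}_{i-1}$, one computes $D(a+bx_m)=(Da+b)+(Db)x_m$; so to hit a target $c+dx_m$ one first picks $b$ with $Db=d$, then picks $a$ with $Da=c-b$. Applying this lemma for $m=2k+1,2k+2,\ldots,n$ (with $m=2k$ as the Kustin--Vraciu base case, and maps into negative degrees being vacuously surjective) yields that $D\colon A^{(n)}_i\to A^{(n)}_{i-1}$ is surjective for all $1\le i\le k$, which is $(2)$.

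The step I expect to be the main obstacle is $(1)\Rightarrow(2)$ in positive characteristic. When $p$ is small relative to $n$ the $\mathfrak{sl}_2$-module $A^{(n)}$ is far from semisimple --- even under hypothesis $(1)$ --- since the relevant ``irreducibles'' would have lengths $n-2j+1$ which can far exceed $p$; so one cannot simply decompose $A^{(n)}$ and read off surjectivity of $D$ in low degrees. The block-triangular recursion above is precisely what makes the argument characteristic-free: it reduces everything to the single balanced ring $A^{(2k)}$, where Lemma \ref{lem:WLP} applies directly.
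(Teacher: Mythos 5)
Your proof is correct, and the substantive implication $(1)\Rightarrow(2)$ follows the same strategy as the paper: invoke Kustin--Vraciu for the balanced ring $A^{(2k)}$ (where $\flo{2k+3}=k+1$), then propagate surjectivity of $D$ in degrees $\leq k$ up through $A^{(2k+1)},\ldots,A^{(n)}$ by the block--triangular decomposition $A^{(m)}_i\cong A^{(m-1)}_i\oplus x_mA^{(m-1)}_{i-1}$, on which $D(a+bx_m)=(D'a+b)+(D'b)x_m$. The only genuine difference is how you close the loop. The paper proves $(3)\Rightarrow(1)$ directly: assuming $0<p\leq k$, it exhibits the explicit element $\alpha=e_p(x_1,\ldots,x_{2p-1})$, checks $D(\alpha)=p\,e_{p-1}=0$ so $\alpha\in P_p$, checks $\alpha(1,\ldots,1)=\binom{2p-1}{p}\neq 0$ so $\alpha\notin V(n,p,p)$, and then uses Lemma~\ref{lem:Dsurj} together with \cite[Prop.~2.1]{MMN} to conclude $D\colon A_k\to A_{k-1}$ is not surjective. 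You instead split this into $(3)\Rightarrow(2)$ (same citation to \cite{MMN}) and $(2)\Rightarrow(1)$ via the observation that $D^k(\mu)=k!\cdot 1$ for every degree-$k$ square-free monomial $\mu$, so if $0<p\leq k$ the composite $D^k\colon A_k\to A_0$ is identically zero and cannot be surjective. Both closures are valid; yours is a bit slicker since it avoids producing an explicit primitive element and the appeal to Lemma~\ref{lem:Dsurq}, while the paper's has the side benefit of exhibiting a concrete witness $\alpha\in P_p\setminus V(n,p,p)$ that is reused later (e.g.\ in Example~\ref{ex:p23}).

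(One tiny remark: in your bootstrap you correctly note that for $i=1$ the map $D\colon A^{(m-1)}_0\to A^{(m-1)}_{-1}$ is vacuously surjective; the paper handles this implicitly. No gap.)
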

\begin{proof}
	(1.) $\Rightarrow$ (2.). Assume that $p=0$ or $p\geq k+1$.  For each index $j$, $2k\leq j\leq n$, define the nested chain of monomial complete intersections 
	$$
	C_{2k}\subset \cdots \subset C_n, \ \ \text{where} \ \ C_j=\frac{\F[x_1,\ldots,x_j]}{(x_1^2,\ldots,x_j^2)}
	$$
	By Lemma \ref{lem:WLP}, the first monomial complete intersection $C_{2k}$ 
	has the weak Lefschetz property, and in particular the derivative map 
	$$D_{C_{2k}}\colon \left(C_{2k}\right)_i\rightarrow \left(C_{2k}\right)_{i-1}$$
	is surjective for all $1\leq i\leq k$.  Inductively for $j>2k$, note that for each $0\leq i\leq k$ we have direct sum decomposition of graded vector spaces 
	$$\left(C_{j}\right)_i\cong \left(C_{j-1}\right)_i\oplus x_j\cdot \left(C_{j-1}\right)_{i-1}$$
	and in particular the derivative map in degree $k$ decomposes into block triangular form
	$$D_{C_j}\colon \left(C_j\right)_i\rightarrow\left(C_j\right)_{i-1}=
	\left(\begin{array}{ccr}  D_{C_{j-1}}\colon\left(C_{j-1}\right)_i\rightarrow\left(C_{j-1}\right)_{i-1}& \vline &  I\colon\left(C_{j-1}\right)_{i-1}\rightarrow \left(C_{j-1}\right)_{i-1}\\ 
	&\vline & \\
	\hline
	& \vline & \\
	0 & \vline & D_{C_{j-1}}\colon\left(C_{j-1}\right)_{i-1}\rightarrow\left(C_{j-1}\right)_{i-2}\\ \end{array}\right)$$
	Since the maps $$D_{C_{j-1}}\colon\left(C_{j-1}\right)_i\rightarrow\left(C_{j-1}\right)_{i-1}$$ 
	is surjective for all $1\leq i\leq k$, it follows that the maps $$D_{C_j}\colon\left(C_j\right)_i\rightarrow \left(C_j\right)_{i-1}$$
	is also surjective for all $1\leq i\leq k$.  In particular, this argument shows that the derivative maps $D\colon A_i\rightarrow A_{i-1}$ must be surjective for all $1\leq i\leq k$ as well.
	
	(2.) $\Rightarrow$ (3.) Obvious.
	
	(3.) $\Rightarrow$ (1.) Assume that $0<p<k+1$.  Then setting $i=p\leq k$, we claim that $D\colon A_p\colon A_{p-1}$ cannot be surjective.  Indeed, set $\alpha\in A_p$ to be the $p^{th}$-elementary symmetric polynomial in the first $2p-1\leq n-1$ variables: 
	$$\alpha=e_p(x_1,\ldots,x_{2p-1})\in A_p.$$
	Note first that over a field of characteristic $p$, we have 
	$$D(\alpha)=p\cdot e_{p-1}(x_1,\ldots,x_{2p-1})\equiv 0.$$
	On the other hand, we have 
	$$\alpha(1,\ldots,1)=\binom{2p-1}{p}=\frac{(2p-1)\cdots(p+1)}{(p-1)!}\neq 0.$$
	This shows that $\alpha\in P_p=\ker(D)\cap A_p$, but $\alpha\not\in V(n,p,p)$ (since every polynomial in $V(n,p,p)$ necessarily vanishes at any point in which at least $p+1$-entries are equal).  Therefore, by Lemma \ref{lem:Dsurj}, $D\colon A_p\rightarrow A_{p-1}$ is not surjective, and hence by \cite[Proposition 2.1]{MMN}, $D\colon A_k\rightarrow A_{k-1}$ cannot be surjective either.
\end{proof}

%{\color{red}Does the converse to Lemma \ref{lem:Vnkk} hold?  Example?}

We can also derive a useful result on specialization of Specht modules.  
\begin{lemma}
	\label{lem:restrict}
	Assume that $p=0$ or $p\geq k+1$, fix $j$ satisfying $2k\leq j\leq n$, and set 
	$$B=\frac{\F[x_1,\ldots,x_j]}{(x_1^2,\ldots,x_j^2)}.$$  Then 
	$$V(j,k,k)=V(n,k,k)\cap B_k.$$ 
\end{lemma}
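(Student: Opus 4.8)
We want to show $V(j,k,k) = V(n,k,k) \cap B_k$, where $B = \F[x_1,\dots,x_j]/(x_1^2,\dots,x_j^2)$ is viewed inside $A = \F[x_1,\dots,x_n]/(x_1^2,\dots,x_n^2)$ as the span of square-free monomials in the first $j$ variables. The inclusion $V(j,k,k) \subseteq V(n,k,k) \cap B_k$ is immediate: a Specht polynomial $F_T$ for a tableau $T$ on $\lambda = (j-k,k)$ filled with $\{1,\dots,j\}$ is literally a Specht polynomial for a (non-standard, in general) tableau on $(n-k,k)$ filled with the same entries, so it lies in $V(n,k,k)$, and it obviously lies in $B_k$ since it only involves $x_1,\dots,x_j$. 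So the content is the reverse inclusion.

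**The strategy.** I would prove it by a dimension count together with the $\mathfrak{sl}_2$-structure from Section~\ref{sec:Lefschetz}. The key observation is that $B$ is itself a monomial complete intersection in $j$ variables, so it carries its own $\mathfrak{sl}_2$-triple $\{D_B, L_B, H_B\}$ with $D_B = \partial_1 + \cdots + \partial_j$ — and crucially $D_B$ is simply the restriction of $D = \partial_1 + \cdots + \partial_n$ to $B$, since $\partial_{j+1},\dots,\partial_n$ kill every element of $B$. Now, since $p = 0$ or $p \geq k+1$ and $2k \leq j$, Lemma~\ref{lem:Vnkk} applies to $B$ (with $n$ replaced by $j$): the derivative maps $D_B \colon B_i \to B_{i-1}$ are surjective for $1 \leq i \leq k$, hence by Lemma~\ref{lem:Dsurj} (applied to $B$) we get $V(j,k,k) = P_k^B := \ker(D_B) \cap B_k$. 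Likewise $V(n,k,k) = P_k^A := \ker(D) \cap A_k$. So it remains to show $P_k^A \cap B_k = P_k^B$. But an element $\alpha \in B_k$ satisfies $D(\alpha) = 0$ in $A$ if and only if $D_B(\alpha) = 0$ in $B$ — because $D(\alpha) = D_B(\alpha)$ already lies in $B_{k-1} \subseteq A_{k-1}$, and the inclusion $B_{k-1} \hookrightarrow A_{k-1}$ is injective. Hence $P_k^A \cap B_k = \ker(D_B)\cap B_k = P_k^B = V(j,k,k)$, and combining, $V(n,k,k) \cap B_k = P_k^A \cap B_k = V(j,k,k)$.

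**What needs care.** The step I would watch most carefully is the identification $D|_B = D_B$ and the claim that $B_{k-1} \hookrightarrow A_{k-1}$ is injective — both are true essentially by definition of $B$ as the span of square-free monomials in $x_1,\dots,x_j$ inside the square-free-monomial span $A$, but one should state explicitly that under the identification $A \cong E$ of Section~\ref{sec:Lefschetz}, $B$ corresponds to the subspace of $E$ spanned by square-free monomials supported in $\{1,\dots,j\}$, so that the quotient map $R \twoheadrightarrow A$ restricted to $\F[x_1,\dots,x_j]$ factors through $B$ compatibly. The other point to verify is simply that the hypotheses of Lemma~\ref{lem:Vnkk} are genuinely met for $B$: we need $2k \leq j$, which is exactly the hypothesis $2k \leq j \leq n$, and the characteristic condition $p = 0$ or $p \geq k+1$, which is assumed. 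No genuinely new computation is required; the proof is a matter of correctly invoking the three earlier lemmas (\ref{lem:Dsurj}, \ref{lem:Vnkk}, and the $\mathfrak{sl}_2$ set-up) for $B$ in place of $A$.
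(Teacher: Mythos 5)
Your proof is correct and follows essentially the same route as the paper: the key point in both is that $D$ restricts to $D_B$ on $B$, so $V(n,k,k)\cap B_k$ sits inside $\ker(D_B)\cap B_k$, which equals $V(j,k,k)$ by Lemmas~\ref{lem:Vnkk} and~\ref{lem:Dsurj} applied to $B$. The only cosmetic difference is that you invoke the equality $V(n,k,k)=P_{A,k}$ whereas the paper only needs the containment $V(n,k,k)\subseteq P_{A,k}$ (which holds without any hypothesis on $p$), but since the hypothesis is assumed this changes nothing.
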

\begin{proof}
	Containment $V(j,k,k)\subseteq V(n,k,k)\cap B_k$ is clear.  For the reverse containment, suppose that $\alpha\in V(n,k,k)\cap B_k$.  The key observation here is that the restriction of the derivative map on $A$, call it $D_A$, to the subspace $B\subset A$ is the same as $D_B$.  Since $\alpha\in V(n,k,k)\subseteq P_{A,k}=\ker(D_A)\cap A_k$, it follows that $D_A(\alpha)=0$, and hence also $D_B(\alpha)=0$.  This means that $\alpha\in P_{B,k}$.  From the proof of Lemma \ref{lem:Vnkk}, we deduce that $D_B\colon B_k\rightarrow B_{k-1}$ is surjective, which by Lemma \ref{lem:Dsurj} implies that $\alpha\in V(j,k,k)$, as desired.
\end{proof} 

%{\color{red}Does Lemma \ref{lem:restrict} hold without the weak Lefschetz condition?  Example?}
As we shall see, the weak Lefschetz property, or lack thereof, can be used to detect embedded primary components of our Specht-monomial ideals.  Next we shall use the strong Lefschetz property to decompose our shifted Specht modules into irreducible $\mathfrak{S}_n$-representations.
 
\subsection{Strong Lefschetz Property} 
The pair $(A,L)$ is \emph{strong Lefschetz} if the restriction of $L^i$ to the graded components of $A$ always has maximal rank, or equivalently if 
$$L^{n-2k}\colon A_k\rightarrow A_{n-k}$$
are isomorphisms for all $0\leq k\leq \left\lfloor\frac{n}{2}\right\rfloor$.  One can show that we have containment
$$V(n,k,d)\subseteq\ker\left(L^{n-k-d+1}\right)\cap A_k.$$
\begin{lemma}
	\label{lem:SLP}
	Let $p=\operatorname{char}(\F)$.  The following are equivalent.
	\begin{enumerate}
		\item $p=0$ or $p\geq n+1$,
		\item The pair $(A,L)$ is strong Lefschetz.
		\item For all integers $0\leq k\leq d\leq n-k$, the shifted Specht modules $V(n,k,d)\subset A$ satisfy
		$$V(n,k,d)=\bigoplus_{i=k}^dL^{d-i}(P_i)=\ker\left(L^{n-k-d+1}\right)\cap A_d.$$
	\end{enumerate}
\end{lemma}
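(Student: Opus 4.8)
The plan is to establish the cycle $(1)\Rightarrow(2)\Rightarrow(3)\Rightarrow(1)$.

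\textbf{$(1)\Leftrightarrow(2)$.} This is the known characterization of the strong Lefschetz property for the monomial complete intersection $A$. Since $A$ is the $n$-fold tensor product of $\F[x]/(x^2)$, the $\mathfrak{sl}_2$-triple $\{D,L,H\}$ of Lemma~\ref{lem:SL2} realizes $A$ as $V(1)^{\otimes n}$, and the only obstruction to strong Lefschetz is the one-dimensional ``top string'' $\langle 1,L(1),\dots,L^n(1)\rangle$: one has $L^n(1)=n!\,x_1\cdots x_n$ in $A$, so $L^n\colon A_0\to A_n$ is an isomorphism iff $n!$ is a unit, i.e.\ $p=0$ or $p\geq n+1$; when $0<p\leq n$ this single map already fails, and when $p=0$ or $p\geq n+1$ the Clebsch--Gordan decomposition of $V(1)^{\otimes n}$ into $\mathfrak{sl}_2$-strings (whose structure constants are products of integers $\leq n$) shows every $L^i\colon A_k\to A_{k+i}$ has maximal rank. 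I would present this $\mathfrak{sl}_2$ argument rather than reprove Stanley's classical result, mirroring how the paper treats the weak Lefschetz version in Lemma~\ref{lem:WLP}.

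\textbf{$(2)\Rightarrow(3)$.} Fix $0\leq k\leq d\leq n-k$. We already know $V(n,k,d)\subseteq\ker(L^{n-k-d+1})\cap A_d$, and $\dim V(n,k,d)=\binom nd-\binom n{k-1}$ by Theorem~\ref{thm:dimMinGenSpecht}. Strong Lefschetz makes $L^{n-k-d+1}\colon A_d\to A_{n-k+1}$ of maximal rank; since $\binom nd\geq\binom n{k-1}=\dim A_{n-k+1}$ (as $k-1<d\leq n-k+1$), this map is surjective, so $\dim\big(\ker(L^{n-k-d+1})\cap A_d\big)=\binom nd-\binom n{k-1}=\dim V(n,k,d)$. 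Thus $V(n,k,d)=\ker(L^{n-k-d+1})\cap A_d$, giving the second equality of $(3)$. For the first: the $\mathfrak{sl}_2$-identity $D(L^m\alpha)=m(n-2i+1-m)L^{m-1}\alpha$ for $\alpha\in P_i$ shows $L^{n-2i+1}(P_i)\subseteq P_{n-i+1}$, and weak (hence strong) Lefschetz kills $P_j$ for $j>n/2$, so $L^{n-2i+1}(P_i)=0$ for $i\leq n/2$; chaining powers of $L$ gives $\bigoplus_{i=k}^d L^{d-i}(P_i)\subseteq\ker(L^{n-k-d+1})\cap A_d$. Finally weak Lefschetz gives $\dim P_i=\binom ni-\binom n{i-1}$ for $i\leq n/2$, and the telescoping sum $\sum_{i=k}^{\min(d,n-d)}\big(\binom ni-\binom n{i-1}\big)=\binom nd-\binom n{k-1}$ (using $k\leq\min(d,n-d)$) matches the dimension just computed, so the inclusion is an equality and the sum is direct.

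\textbf{$(3)\Rightarrow(1)$.} Suppose $0<p\leq n$; I exhibit a valid pair $(k,d)$ for which $(3)$ fails. If $p\leq n-1$, take $(k,d)=(1,1)$: a direct computation gives $L^{n-1}(x_a)=(n-1)!\,x_1\cdots x_n=0$ in $A$, so $\ker(L^{n-1})\cap A_1=A_1$ has dimension $n$, strictly larger than $\dim V(n,1,1)=n-1$. If $p=n$ (so $n$ is prime and, by Lemma~\ref{lem:WLP}, $A$ still has the weak Lefschetz property), take the degenerate pair $(k,d)=(0,n)$, where $V(n,0,n)=\langle x_1\cdots x_n\rangle=A_n$: I claim $\bigoplus_{i=0}^n L^{n-i}(P_i)=0$. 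Indeed the $i=0$ term is $L^n(1)=n!\,x_1\cdots x_n=0$ since $p=n\mid n!$; for $1\leq i\leq\lfloor n/2\rfloor$, Lemmas~\ref{lem:Dsurj} and~\ref{lem:Vnkk} give $P_i=V(n,i,i)$, and the identity $L^{n-i}(\alpha)=(n-i)!\,\alpha(1,\dots,1)\,x_1\cdots x_n$ together with the vanishing of every Specht polynomial at the all-equal point forces $L^{n-i}(P_i)=0$; and weak Lefschetz gives $P_i=0$ for $i>n/2$. Hence $0=\bigoplus_{i=0}^n L^{n-i}(P_i)\neq A_n=V(n,0,n)$, and $(3)$ fails. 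In every case $(3)$ fails when $0<p\leq n$.

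\textbf{Expected main obstacle.} The delicate step is $(3)\Rightarrow(1)$ in the narrow band $\flo{n+3}\leq p\leq n$, where weak Lefschetz holds but strong Lefschetz fails: once $(n-1)!$ becomes a unit the naive witness $(k,d)=(1,1)$ stops detecting anything, and the only remaining case is $p=n$ with $n$ prime. Resolving it requires the degenerate instance $(k,d)=(0,n)$ of $(3)$ and the observation that weak Lefschetz makes the primitive spaces $P_i$ coincide with honest Specht modules $V(n,i,i)$, so that the classical vanishing of Specht polynomials at the all-equal point annihilates every summand $L^{n-i}(P_i)$. A secondary, purely bookkeeping, point in $(2)\Rightarrow(3)$ is keeping the binomial telescopings aligned with the effective range $k\leq i\leq\min(d,n-d)$; this is routine.
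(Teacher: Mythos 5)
Your proposal is correct; all three equivalences are established, via a logical structure equivalent to the paper's. The substantive divergence is in closing the cycle: the paper proves $(3)\Rightarrow(2)$ abstractly (if strong Lefschetz failed in some degree $k$, a primitive $\alpha\in P_k$ killed by $L^{n-2k}$ would force the decomposition $A_{n-k}=V(n,0,n-k)=\bigoplus_i L^{n-k-i}(P_i)$ to undercount $\dim A_{n-k}$), whereas you prove $(3)\Rightarrow(1)$ by exhibiting explicit witness pairs $(k,d)$: the pair $(1,1)$ when $0<p\leq n-1$, where $(n-1)!=0$ inflates $\ker(L^{n-1})\cap A_1$ to all of $A_1$; and the pair $(0,n)$ when $p=n$ (necessarily prime), where the identity $L^{n-i}(\alpha)=(n-i)!\,\alpha(1,\ldots,1)\,x_1\cdots x_n$, the vanishing of Specht polynomials at the all-equal point, and the fact that WLP (which still holds at $p=n$) forces $P_i=V(n,i,i)$ for $1\le i\le\lfloor n/2\rfloor$, together annihilate every summand of $\bigoplus_i L^{n-i}(P_i)$, even though $V(n,0,n)=A_n\neq 0$. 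Your version is more concrete and it correctly isolates $p=n$ as the genuinely delicate case. One small omission common to your write-up and the paper's $(2)\Rightarrow(3)$ step: the dimension count alone does not prove that $\sum_{i=k}^d L^{d-i}(P_i)$ is a direct sum; one should add that a relation $\sum_i L^{d-i}(\alpha_i)=0$ with $\alpha_i\in P_i$ forces $\alpha_k=0$ (apply $L^{n-k-d}$; all terms with $i>k$ vanish because $L^{n-2i+1}P_i=0$, and $L^{n-2k}$ is injective on $A_k$ under SLP), then iterate on $k+1,\ldots$. This is routine Lefschetz-decomposition bookkeeping, but worth stating.
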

\begin{proof}
	(1.) $\Rightarrow $ (2.).  This is due to Ikeda; see \cite[Proposition 3.66]{HMMNWW}.
	
	(2.) $\Rightarrow$ (1.).  Note that if $p\leq n$, then we must have $L^p=0$, and $(A,L)$ cannot be strong Lefschetz.
	
	(2.) $\Rightarrow$ (3.).  If $(A,L)$ is strong Lefschetz then the map 
	$$L^{n-k-d+1}\colon A_d\rightarrow A_{n-k+1}$$
	must have maximal rank, and hence we must have 
	$$\dim(\ker(L^{n-k-d+1})\cap A_d)=\dim(A_d)-\dim(A_{n-k+1})=\binom{n}{d}-\binom{n}{k-1}=\dim(V(n,k,d)).$$ 
	But since we already have containment $V(n,k,d)\subseteq \ker(L^{n-k-d+1})\cap A_d$ it must be equality.  It follows that $P_i=\ker(L^{n-2i+1})\cap A_i$, and hence we have the containment
	$$\bigoplus_{i=k}^dL^{d-i}(P_i)\subseteq \ker(L^{n-k-d+1})\cap A_d.$$
	Since $(A,L)$ is strong Lefschetz it follows that for each $i$, $L^{d-i}(P_i)\cong P_i$, and hence a simple dimension count reveals this containment must also be equality.
	
	(3.) $\Rightarrow$ (2.).  Assume (3.) holds, and assume that (2.) does not.  Fix integer $k$ satisfying $1\leq k\leq \left\lfloor\frac{n}{2}\right\rfloor$ and suppose that $\alpha\in \ker(L^{n-2k})\cap A_k$.  Then certainly $\alpha\in \ker(A^{n-2k+1})\cap A_k$ hence $\alpha\in P_k$ by (3.).  But also according to (3.) we have 
	$$A_{n-k}=V(n,0,n-k)=\bigoplus_{i=0}^{n-k}L^{n-k-i}(P_i).$$
	On the other hand if $L^{n-2k}(\alpha)=0$, then $\dim(L^{n-2k}(P_k))<P_k$, and hence we must have 
	$$\dim\left(A_{n-k}\right)=\sum_{i=0}^{n-k}\dim\left(L^{n-k-i}(P_i)\right)<\sum_{i=0}^{n-k}P_i=\binom{n}{k}$$
	which is a contradiction.  Therefore (2.) must hold after all.
\end{proof}

If any one of the conditions in Lemma \ref{lem:SLP} is satisfied, one can show that $L^{d-i}(P_i)\cong P_i\cong V(n,k,k)$, and hence in this case we get a decomposition of the shifted Specht module $V(n,k,d)$ into irreducible $\mathfrak{S}_n$-representations.
\begin{corollary}
	\label{cor:VnkdDecomp}
	Let $p=\operatorname{char}(\F)$, and assume that $p=0$ or $p\geq n+1$.  Then the primitive decomposition of the shifted Specht module is a decomposition into irreducible $\mathfrak{S}_n$-representations:  
	$$V(n,k,d)\cong \bigoplus_{i=k}^dL^{d-i}(P_i)\cong \bigoplus_{i=k}V(n,i,i)[d-i].$$
	Here a basis for the irreducible component $L^{d-i}(P_i)$ is 
	$$\left\{e_{d-i}(T^c)\cdot F_T \ | \ T\in\tab(n,i,i)\right\}$$
	where $e_{d-i}(T^c)$ is the $(d-i)^{th}$ elementary symmetric polynomial in the variables which are not in the support of $T$.
\end{corollary}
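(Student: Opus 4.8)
The statement is, at bottom, a refinement of Lemma~\ref{lem:SLP}(3.) by means of the $\mathfrak{S}_n$-symmetry of the $\mathfrak{sl}_2$-action, so my plan is first to harvest that refinement and then to produce the explicit basis by a direct computation of $L^{d-i}$ on Specht polynomials. The raising operator $L$ is multiplication by the symmetric element $x_1+\cdots+x_n$ and the lowering operator $D$ is the symmetric differential operator $\partial_{x_1}+\cdots+\partial_{x_n}$, so both commute with the $\mathfrak{S}_n$-action; hence each primitive space $P_i=\ker(D)\cap A_i$ is an $\mathfrak{S}_n$-submodule of $A_i$ and each $L^{d-i}\colon P_i\to A_d$ is a map of $\mathfrak{S}_n$-modules. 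Under the standing hypothesis $p=0$ or $p\geq n+1$, Lemma~\ref{lem:SLP} simultaneously gives the direct-sum decomposition $V(n,k,d)=\bigoplus_{i=k}^{d}L^{d-i}(P_i)$ and the strong Lefschetz property of $(A,L)$. The latter forces $L^{d-i}$ to be injective on $A_i$ whenever $d-i\leq n-2i$; the remaining indices (those with $i>n-d$) contribute nothing, since then $d-i\geq n-2i+1$ and $P_i\subseteq\ker(L^{n-2i+1})$ kills $L^{d-i}(P_i)$. Thus for each surviving $i$ we obtain an $\mathfrak{S}_n$-isomorphism $L^{d-i}(P_i)\cong P_i$. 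Next, since $p=0$ or $p\geq n+1\geq i+1$ (and automatically $2i\leq n$ for a surviving index, as $2i\leq i+(n-d)\leq n$), Lemma~\ref{lem:Vnkk} shows $D\colon A_i\to A_{i-1}$ is surjective, so Lemma~\ref{lem:Dsurj} gives $P_i=V(n,i,i)$. Finally $V(n,i,i)$ is the Specht module $S^{(n-i,i)}$, which is irreducible: this is classical when $\operatorname{char}(\F)=0$ \cite{Sagan}, and when $p\geq n+1$ it holds because $p\nmid n!$, so $\F\mathfrak{S}_n$ is semisimple and every Specht module is irreducible. The degree-shift $[d-i]$ is only bookkeeping, as the $\mathfrak{S}_n$-action ignores grading; this yields the two displayed isomorphisms.

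For the explicit basis I would compute $L^{d-i}(F_T)$ directly for $T\in\tab(n,i,i)$. The key observation is that in $A$ one has $(x_{i_s}+x_{j_s})\cdot F_T=0$ for each column pair $\{i_s,j_s\}$ of $T$, since $(x_{i_s}+x_{j_s})(x_{i_s}-x_{j_s})=x_{i_s}^2-x_{j_s}^2=0$ in $A$; consequently $(x_1+\cdots+x_n)\cdot F_T=\bigl(\textstyle\sum_{v\notin\supp(T)}x_v\bigr)F_T=e_1(T^c)\,F_T$, and this annihilation persists after multiplying $F_T$ by any polynomial in the $T^c$-variables. Iterating, and using the identity $e_1(S)^m\equiv m!\,e_m(S)$ in $A$ for any index set $S$ (only multilinear monomials survive the reduction modulo squares), gives $L^{d-i}(F_T)=(d-i)!\,e_{d-i}(T^c)\,F_T$. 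Since $(d-i)!\neq 0$ in $\F$ under our hypothesis, the set $\{e_{d-i}(T^c)F_T\}$ agrees up to nonzero scalars with $\{L^{d-i}(F_T)\}$; as $T$ ranges over $\tab(n,i,i)$ these span $L^{d-i}(V(n,i,i))=L^{d-i}(P_i)$ (the non-standard ones reducing to standard ones by the straightening relations of Lemma~\ref{lem:span} and Theorem~\ref{thm:basis}), and restricting to $T\in\stab(n,i,i)$ exhibits this set as the image under the injection $L^{d-i}$ of the standard basis of $V(n,i,i)=P_i$, hence a basis of $L^{d-i}(P_i)$.

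I do not expect a single hard step here; the care goes into invoking the characteristic hypothesis consistently in the three places it is needed --- strong Lefschetz for $(A,L)$, the identification $P_i=V(n,i,i)$, and $(d-i)!\neq 0$ --- and into the bookkeeping of which summands are genuinely nonzero. The one genuinely computational point, the formula $L^{d-i}(F_T)=(d-i)!\,e_{d-i}(T^c)F_T$, is short once the relation $(x_{i_s}+x_{j_s})F_T=0$ in $A$ is noticed; and the only input external to the Lefschetz machinery is the (standard) fact that in characteristic $>n$ the polynomial Specht module $V(n,i,i)$ is the irreducible $S^{(n-i,i)}$.
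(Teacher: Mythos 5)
Your argument is correct and is exactly the argument the paper intends but does not write out; the paper's only hint is the sentence immediately preceding the corollary, ``one can show that $L^{d-i}(P_i)\cong P_i\cong V(n,i,i)$,'' and you supply the missing reasoning cleanly. The two real ingredients you add are the equivariance of $L$ and $D$ (so that $P_i$ is an $\mathfrak{S}_n$-submodule and $L^{d-i}$ is a map of $\mathfrak{S}_n$-modules, injective on the surviving summands by strong Lefschetz) and the explicit computation $L^{d-i}(F_T)=(d-i)!\,e_{d-i}(T^c)F_T$ in $A$, which hinges correctly on the relation $(x_{i_s}+x_{j_s})F_T=0$ in $A$ together with $e_1(S)^m\equiv m!\,e_m(S)$ modulo squares; the factorial is nonzero precisely because $p=0$ or $p\geq n+1>d-i$. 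Your observation that the stated index set should read $\stab(n,i,i)$ rather than $\tab(n,i,i)$ is also right --- over all of $\tab(n,i,i)$ the family is a spanning set satisfying the straightening relations, and restricting to standard tableaux (via Theorem \ref{thm:basis}) is what yields an actual basis of $L^{d-i}(P_i)$.
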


\section{Radical of Shifted Specht Ideals}
\label{sec:radical}
Our main idea is principal radical systems, based on the following basic facts from commutative algebra:
\begin{lemma}
	\label{lem:PRS}
	Let $I\subset R$ be a homogeneous ideal and $x\in R\setminus I$ be any homogeneous polynomial satisfying $(I:x)=(I:x^2)$.
	\begin{enumerate}
		\item If $(I:x)=I$ and if $I+(x)$ is radical, then $I$ is radical too.
		\item If $(I:x)\neq I$ and if $(I:x)$ and $I+(x)$ are both radical, then $I$ is radical too. 
	\end{enumerate}
\end{lemma}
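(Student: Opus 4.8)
The plan is to handle the two parts separately, each by a short argument: part~(2) is essentially a decomposition statement, while part~(1) needs an induction on degree. A preliminary observation used below: the hypothesis $(I:x)=(I:x^2)$ propagates to $(I:x^j)=(I:x)$ for all $j\geq 1$, by an immediate induction using $(I:x^{j+1})=\bigl((I:x^j):x\bigr)$; and when $(I:x)=I$ one gets $(I:x^j)=I$ for all $j$ from $(I:x)=I$ alone.

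For part~(2), I would prove the decomposition
$$I=(I:x)\cap\bigl(I+(x)\bigr).$$
The inclusion $\subseteq$ is immediate. For $\supseteq$, take $f$ in the right-hand side and write $f=g+xh$ with $g\in I$; then $xf\in I$ gives $x^2h=x(f-g)\in I$, so $h\in(I:x^2)=(I:x)$, hence $xh\in I$, hence $f=g+xh\in I$. Given the decomposition, part~(2) follows at once: $(I:x)$ and $I+(x)$ are radical by hypothesis, and an intersection of radical ideals is radical since $\sqrt{J\cap K}=\sqrt J\cap\sqrt K$. (This argument does not actually use $(I:x)\neq I$; that hypothesis is present only to make (1) and (2) a genuine dichotomy, since when $(I:x)=I$ the assumption that $(I:x)$ is radical already says $I$ is radical.)

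For part~(1), I would argue by induction on degree, using that a homogeneous ideal $I$ is radical iff every homogeneous $f$ with $f^m\in I$ for some $m\geq 1$ lies in $I$ (because $\sqrt I$ is homogeneous). So let $f$ be homogeneous with $f^m\in I$, and assume every homogeneous element of $\sqrt I$ of strictly smaller degree already lies in $I$. Since $I+(x)$ is radical and $f^m\in I\subseteq I+(x)$, we get $f\in I+(x)$; write $f=g+xh$ with $g\in I$ and $h$ homogeneous of degree $\deg f-\deg x$. Then $xh=f-g\in\sqrt I$, so $x^Nh^N\in I$ for some $N$; since $(I:x)=I$ forces $(I:x^N)=I$, this yields $h^N\in I$, i.e.\ $h\in\sqrt I$. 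As $x$ has positive degree, $\deg h<\deg f$, so the induction hypothesis gives $h\in I$, whence $xh\in I$ and $f=g+xh\in I$. The base case, degree $0$, is trivial.

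The main obstacle, such as it is, will be the degree induction in part~(1): one must phrase ``radical'' so that it is accessible by induction (the homogeneity of $\sqrt I$ does exactly this), and one must keep track of the fact that $x$ has positive degree---otherwise, e.g., taking $x$ a unit would make both hypotheses of~(1) vacuously true while $I$ could be non-radical. Part~(2) presents no real difficulty once one notices the decomposition $I=(I:x)\cap(I+(x))$, the only nontrivial input there being the stabilization $(I:x)=(I:x^2)$.
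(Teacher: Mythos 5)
Your proof is correct, and it takes a genuinely different route from the paper's. The paper proves both parts in one passage: it takes $g\in\sqrt I$, writes $g=a+xb$ using radicality of $I+(x)$, deduces $b\in\sqrt{(I:x)}$ from $g^N=a'+x^Nb^N\in I$, and then splits into the two cases, using radicality of $(I:x)$ in case~(2) and, in case~(1), an infinite descent $g=(a+xa_1+\cdots+x^ma_m)+x^{m+1}b_m$ to land in $\bigcap_m\bigl(I+(x^m)\bigr)=I$. You instead treat the two parts by unrelated mechanisms: for~(2) you prove the purely ideal-theoretic identity $I=(I:x)\cap\bigl(I+(x)\bigr)$, valid whenever $(I:x)=(I:x^2)$ and with no radicality input at all, and then invoke that finite intersections of radical ideals are radical; for~(1) you run an induction on degree. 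Your decomposition in~(2) is arguably the cleaner statement — it isolates exactly what the hypothesis $(I:x)=(I:x^2)$ is buying and sidesteps the intermediate step $b\in\sqrt{(I:x)}$ — while the paper's single-pass argument is more economical in that one computation covers both alternatives. Your degree induction in~(1) and the paper's infinite descent are doing the same work in two guises: both hinge on $x$ having positive degree, which you rightly flag as the hidden hypothesis (the paper relies on it silently in the step $\bigcap_m\bigl(I+(x^m)\bigr)=I$). One small presentational note: in the degree induction it is worth saying explicitly that $h$ may be taken homogeneous of degree $\deg f-\deg x$ because $I$, $(x)$, and $f$ are homogeneous, so the homogeneous component of $g+xh$ in degree $\deg f$ already gives such a representative; you assert this but the reader should see why it is automatic.
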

\begin{proof}
	Note that $I\subset I+(x)$ and if $I+(x)$ is radical we also have 
	$$\sqrt{I}\subset I+(x).$$
	Hence for $g\in \sqrt{I}$ we can find $a\in I$ and $b\in R$ such that 
	$g=a+xb$.  Since $g\in\sqrt{I}$, there is some integer $N$ for which $g^N\in I$, and we have $g^N=a'+x^Nb^N$ for some other $a'\in I$.  Therefore $x^Nb^N\in I$ and hence $b^N\in (I:x^n)=(I:x)$ and therefore $b\in\sqrt{(I:x)}$.  If $(I:x)\neq I$ and $(I:x)$ is radical, then $xb\in I$ and hence $g\in I$, and we are done.  If $(I:x)=I$, then $b\in \sqrt{I}$, and hence we can find $a_1\in I$ and $b_1\in R$ for which $b=a_1+xb_1$.  Looking back to $g$, we have $g=(a+xa_1)+x^2b_1$.  Repeating this procedure a number $m$-times will yield $g=(a+xa_1+x^2a_2+\cdots+x^{m}a_{m})+x^{m+1}b_m$, which implies that 
	$$g\in \bigcap_{m=1}^\infty I+(x^m).$$
	Since $x$ is homogeneous, it follows that $\bigcap_{m=1}^\infty I+(x^m)=I$, and the result follows.
\end{proof} 

An easy application of principal radical systems is the radical of monomial ideal $(x_1,\ldots,x_n)^{(d)}$.
\begin{lemma}
	\label{lem:PRSmonomial}
	For each integer $d$ satisfying $1\leq d\leq n$ the monomial ideal 
	$$(x_1,\ldots,x_n)^{(d)}$$
	is radical.
\end{lemma}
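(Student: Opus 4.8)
The plan is to prove this by induction on the number of variables $n$, applying Lemma \ref{lem:PRS} to the distinguished polynomial $x=x_n$. Throughout write $x_S=\prod_{i\in S}x_i$ for $S\subseteq\{1,\ldots,n\}$, so that the generators of $I:=(x_1,\ldots,x_n)^{(d)}$ are exactly the $x_S$ with $|S|=d$. The base case is $d=1$, where $I=(x_1,\ldots,x_n)$ is the irrelevant maximal ideal, which is prime and hence radical; this also disposes of $n=1$. So assume $n\geq 2$ and $2\leq d\leq n$, and assume the statement for all smaller numbers of variables.

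First I would record three elementary facts about the monomial ideal $I$, obtained by splitting its generators according to whether or not they contain $x_n$. (i) The generators $x_S$ with $n\in S$ lie in $(x_n)$, so $I+(x_n)=(x_1,\ldots,x_{n-1})^{(d)}+(x_n)$. (ii) Each generator $x_S$ with $n\in S$ contributes $x_{S\setminus\{n\}}$ to the colon ideal $(I:x_n)$, a squarefree monomial of degree $d-1$ in $x_1,\ldots,x_{n-1}$; since every squarefree monomial of degree $d$ in $x_1,\ldots,x_{n-1}$ is a multiple of one of these, we get $(I:x_n)=(x_1,\ldots,x_{n-1})^{(d-1)}$. (iii) Because $I$ is generated by squarefree monomials, $(I:x_n)=(I:x_n^{\,k})$ for every $k\geq 1$: a squarefree generator $m$ has $x_n$-degree at most $1$, so $m\mid x_n^{\,k}\mu$ if and only if $m\mid x_n\mu$, for any monomial $\mu$. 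In particular $(I:x_n)=(I:x_n^2)$.

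Now I would verify the hypotheses of Lemma \ref{lem:PRS}(2.). The condition $(I:x_n)=(I:x_n^2)$ is (iii), and $x_n\notin I$ since $I$ is generated in degree $d\geq 2$. Furthermore $(I:x_n)\neq I$: the monomial $x_1\cdots x_{d-1}$ lies in $(I:x_n)=(x_1,\ldots,x_{n-1})^{(d-1)}$ (legitimate since $d-1\leq n-1$) but has degree $d-1<d$, so it is not in $I$. It remains to check that both $(I:x_n)$ and $I+(x_n)$ are radical. For $(I:x_n)=(x_1,\ldots,x_{n-1})^{(d-1)}$: since $1\leq d-1\leq n-1$, the induction hypothesis gives that this ideal is radical in $\F[x_1,\ldots,x_{n-1}]$, whence $R/(I:x_n)\cong\left(\F[x_1,\ldots,x_{n-1}]/(x_1,\ldots,x_{n-1})^{(d-1)}\right)[x_n]$ is reduced, being a polynomial ring over a reduced ring. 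For $I+(x_n)=(x_1,\ldots,x_{n-1})^{(d)}+(x_n)$: if $d\leq n-1$, then $R/(I+(x_n))\cong\F[x_1,\ldots,x_{n-1}]/(x_1,\ldots,x_{n-1})^{(d)}$ is reduced by the induction hypothesis; if $d=n$, there are no squarefree monomials of degree $d$ in $n-1$ variables, so $I+(x_n)=(x_n)$ is prime. In all cases Lemma \ref{lem:PRS}(2.) gives that $I$ is radical, completing the induction.

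The part requiring the most care is purely bookkeeping: pinning down the colon identity $(I:x_n)=(x_1,\ldots,x_{n-1})^{(d-1)}$ and the stabilization $(I:x_n)=(I:x_n^2)$ precisely, and tracking the degenerate cases $d=1$ and $d=n$ so that the induction hypothesis is only ever invoked on a genuine pair (number of variables $n-1$, degree $d'$ with $1\leq d'\leq n-1$). Conceptually there is nothing deep here --- the conclusion is the classical fact that squarefree monomial ideals are radical --- and the point of including the argument is to illustrate the principal-radical-system method of Lemma \ref{lem:PRS} on a transparent example before deploying it on the shifted Specht ideals.
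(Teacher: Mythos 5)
Your proof is correct and follows essentially the same route as the paper's: induction on $n$, distinguished element $x=x_n$, the identities $(I:x_n)=(x_1,\ldots,x_{n-1})^{(d-1)}$ and $I+(x_n)=(x_1,\ldots,x_{n-1})^{(d)}+(x_n)$, and an appeal to Lemma~\ref{lem:PRS}. The only difference is bookkeeping: you handle $d=1$ as a base case (which the paper glosses over, and which is in fact needed since $x_n\in I$ when $d=1$) and you handle $d=n$ inside the computation of $I+(x_n)$ rather than peeling it off as a separate principal case at the start.
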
	
\begin{proof}
By induction on $n\geq 1$, the base case being trivial.  For the inductive step, assume that $(x_1,\ldots,x_{n-1})^{(e)}$ is radical for all $1\leq e\leq n-1$, and fix an integer $d$ satisfying $1\leq d\leq n$.  Set $I=(x_1,\ldots,x_n)^{(d)}$ and $x=x_n$.  If $d=n$ then $I=(x_1\cdots x_n)$ is principal, and clearly radical, hence we may assume that $1\leq d\leq n-1$.  Then we have 
$$(I:x)=(x_1,\ldots,x_{n-1})^{(d-1)}, \ \ \text{and} \ \ I+(x)=(x_1,\ldots,x_{n-1})^{(d)}+(x_n)$$
which are both radical by the induction hypothesis.
Also note that $(I:x^2)=(I:x)$ since $I$ is generated by square-free monomials.  It therefore follows from Lemma \ref{lem:PRS} that the ideal $I=(x_1,\ldots,x_n)^{(d)}$ is radical.
\end{proof}
Applying principal radical systems to Specht ideals requires the following decomposition of shifted Specht ideals, and is key to the further results of this paper.  This is Theorem \ref{thm:B1} from the Introduction.
\begin{theorem}
	\label{thm:radD}
	For any integers $k,d$ satisfying $0\leq k<d\leq n-k$, we have 
	\begin{equation}
	\label{eq:radD}
	\mathfrak{a}(n,k,d)=\mathfrak{a}(n,k,d-1)\cap (x_1,\ldots,x_n)^{(d)}.
	\end{equation}
\end{theorem}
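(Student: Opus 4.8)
The containment $\mathfrak{a}(n,k,d)\subseteq \mathfrak{a}(n,k,d-1)\cap (x_1,\ldots,x_n)^{(d)}$ is the easy direction: each generator $F_T(d)$ of $\mathfrak{a}(n,k,d)$ is a square-free product of $k$ linear forms $(x_{i_t}-x_{j_t})$ times $d-k$ variables, hence it is divisible by the square-free degree-$d$ monomial on its support (so it lies in $(x_1,\ldots,x_n)^{(d)}$), and by grouping one of the ``free'' variables $x_{i_d}$ out one sees $F_T(d)=x_{i_d}\cdot F_{T''}(d-1)$ with $T''\in\tab(n,k,d-1)$, so $F_T(d)\in\mathfrak{a}(n,k,d-1)$. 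The real content is the reverse containment: an element $g\in\mathfrak{a}(n,k,d-1)$ that also vanishes to order $d$ on every coordinate subspace (equivalently lies in the square-free monomial ideal of degree $d$) must already lie in $\mathfrak{a}(n,k,d)$.

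The plan is to follow Yanagawa's strategy, reorganized using Theorem~\ref{thm:basis} so that everything is phrased in terms of standard tableaux. I would take $g\in\mathfrak{a}(n,k,d-1)\cap(x_1,\ldots,x_n)^{(d)}$ and first reduce to the case where $g$ is homogeneous of degree $d-1$ times a single generator — more precisely, write $g=\sum_S h_S\,F_S(d-1)$ over $S\in\stab(n,k,d-1)$ with $h_S\in R$, and argue by a suitable induction (on $n$, or on the number of variables appearing, or on the composition-dominance order) that membership in the monomial ideal $(x_1,\ldots,x_n)^{(d)}$ forces the ``coefficients'' $h_S$ to supply the missing variable. Concretely, I expect to specialize: apply the projection $\pi\colon R\to\F[x_1,\ldots,x_{n-1}]$ sending $x_n\mapsto 0$ as in the proof of Lemma~\ref{lem:independent}, so that $\pi(g)\in \mathfrak{a}(n-1,\ast,d-1)\cap(x_1,\ldots,x_{n-1})^{(d)}$, invoke the inductive hypothesis to get $\pi(g)\in\mathfrak{a}(n-1,\ast,d)$, lift that back, and then analyze $g - (\text{lift})$, which is divisible by $x_n$; dividing by $x_n$ and iterating over the variables should push $g$ into $\mathfrak{a}(n,k,d)$. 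The identity $(I:x_n)=(x_1,\dots,x_{n-1})^{(d-1)}$-type manipulations from Lemma~\ref{lem:PRSmonomial}, together with Corollary~\ref{cor:inductive} to keep track of how standard tableaux restrict, are the bookkeeping tools.

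The main obstacle — and the step where Yanagawa's argument is genuinely clever — is handling the ``mixed'' terms: when one reduces $\pi(g)$ and lifts, the Specht generators do not restrict compatibly (some $F_T(d-1)$ map to $0$ under $\pi$, others to genuine generators $F_{T'}(d-1)$, and the two populations interact). Controlling the error term after subtracting the lift, and showing it is divisible by $x_n$ in a way that still lands in $\mathfrak{a}(n,k,d)$ after division, requires the straightening relations from the proof of Lemma~\ref{lem:span} (Cases 1--5) to rewrite non-standard shifted Specht polynomials. I expect to need a secondary induction here, likely a downward induction on the composition-dominance order on $\tab(\lambda(d-1))$ parallel to Lemma~\ref{lem:span}, interleaved with the induction on $n$. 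The base cases $d=n-k$ (where one row is empty and $\lambda(d)$ degenerates) and $k=0$ (where $\mathfrak{a}(n,0,d)=(x_1,\dots,x_n)^{(d)}$ and the claim reduces to the monomial identity $(x_1,\dots,x_n)^{(d)}=(x_1,\dots,x_n)^{(d-1)}\cap(x_1,\dots,x_n)^{(d)}$, which is trivial) should be checked first to anchor the induction.
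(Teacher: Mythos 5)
Your easy-direction argument is fine, but your plan for the hard containment $\mathfrak{a}(n,k,d-1)\cap(x_1,\ldots,x_n)^{(d)}\subseteq\mathfrak{a}(n,k,d)$ diverges from the paper's actual proof and has real gaps. The paper does \emph{not} induct on $n$ via the projection $\pi\colon x_n\mapsto 0$, does \emph{not} divide by $x_n$ and iterate, and does \emph{not} invoke the straightening relations from Lemma~\ref{lem:span} or any induction on the composition-dominance order. Instead the whole argument is organized around a single structural observation (Lemma~\ref{lem:monomialrad}): any element of $\mathfrak{a}(n,k,d-1)$ can be written as $\sum_{\a}\x^\a\nu_\a$ with $\nu_\a\in V(n,k,d-1)$, and membership in $(x_1,\ldots,x_n)^{(d)}$ is then \emph{term-by-term} because a monomial of weight exactly $d-1$ appearing in $\x^\a\nu_\a$ pins down $\a$ uniquely (via $\x^\b/\sqrt{\x^\b}=\x^\a$), so its coefficient in $\x^\a\nu_\a$ equals its coefficient in the full sum. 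After that one reduces by $\mathfrak{S}_n$-symmetry to $\x^\a=\x^\m=x_1\cdots x_m$ and splits $V(n,k,d-1)=V_m\oplus V^m$ according to whether $\{1,\ldots,m\}\subseteq\operatorname{supp}(T)$: Lemma~\ref{lem:suppNo} handles the $V^m$ part by a two-term Specht relation, and Lemma~\ref{lem:suppYes} handles the $V_m$ part by a colon computation $\left((x_1,\ldots,x_n)^{(d)}:\x^\m\right)=(x_{m+1},\ldots,x_n)^{(d-m)}$ together with the injectivity in Corollary~\ref{cor:inductive}, forcing $\nu=0$ outright.

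Your projection-and-lift plan, by contrast, runs into a genuine obstruction that your sketch does not resolve. Under $\pi$ the ideal $\mathfrak{a}(n,k,d-1)$ maps onto $\mathfrak{a}(n-1,k-1,d-1)$ (the parameter $k$ drops), so after lifting $\pi(g)$ back to some $h\in\mathfrak{a}(n,k,d)$ you are left with $g-h=x_n g_1$, and to iterate you would need to understand $\left(\mathfrak{a}(n,k,d-1)\cap(x_1,\ldots,x_n)^{(d)}\right):x_n$ — but the colon of the shifted Specht ideal by a variable is precisely the kind of thing this theorem is being used to \emph{establish}, not something you can take as known. Your appeal to ``straightening relations \ldots\ Cases 1--5'' and a ``secondary induction on the dominance order'' signals that you sense the error term needs controlling, but those tools belong to the proof of Theorem~\ref{thm:basis}, not to this theorem, and there is no indication of how they would close the gap. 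In short, the key insight you are missing is the term-by-term reduction of Lemma~\ref{lem:monomialrad} and the support dichotomy of Lemmas~\ref{lem:suppNo}--\ref{lem:suppYes}; once one has those, no induction on $n$ is needed at all (only the bookkeeping bijection of Corollary~\ref{cor:inductive}).
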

Before embarking on the proof of Theorem \ref{thm:radD}, we will show how to use Theorem \ref{thm:radD} and principal radical systems to show that shifted Specht ideals are radical.  This is Theorem \ref{thm:B} from the Introduction.
\begin{theorem}
	\label{thm:rad}
For any integers $k,d$ satisfying $0\leq k\leq d\leq n-k$, the shifted Specht ideal $\mathfrak{a}(n,k,d)$ is radical.	
\end{theorem}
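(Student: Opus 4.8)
The plan is to prove Theorem~\ref{thm:rad} by induction on $n$, using Theorem~\ref{thm:radD} as a black box together with the trivial fact that an intersection of radical ideals is radical (since $\sqrt{I\cap J}=\sqrt I\cap\sqrt J$). The base cases $n\le 2$ are immediate: for $k=0$ the ideal is a squarefree monomial power, radical by Lemma~\ref{lem:PRSmonomial}, and $\mathfrak{a}(2,1,1)=(x_1-x_2)$ is prime. For the inductive step I would assume that every shifted Specht ideal in at most $n-1$ variables is radical and fix $k,d$ with $0\le k\le d\le n-k$. If $k=0$, then $\mathfrak{a}(n,0,d)$ is either $R$ or the squarefree power $(x_1,\ldots,x_n)^{(d)}$, radical by Lemma~\ref{lem:PRSmonomial}. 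If $d>k\ge 1$, then because the ideals $(x_1,\ldots,x_n)^{(e)}$ are nested (decreasing in $e$), iterating the decomposition of Theorem~\ref{thm:radD} gives, after telescoping, $\mathfrak{a}(n,k,d)=\mathfrak{a}(n,k,k)\cap(x_1,\ldots,x_n)^{(d)}$; this is an intersection of two radical ideals as soon as we know $\mathfrak{a}(n,k,k)$ is radical, so everything reduces to the case $d=k$ (equivalently one could induct downward on $d$ using the decomposition one step at a time).

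The substance is thus the case $d=k\ge 1$, i.e.\ the genuine two-rowed Specht ideal $\mathfrak{a}(n,k,k)$. Here I would invoke Corollary~\ref{cor:CoC}: the change of coordinates $\Phi\colon x_i\mapsto y_i$ is a ring automorphism of $R$ with $\Phi(\mathfrak{a}(n,k,k))=\mathfrak{a}(n-1,k-1,k)$, so $R/\mathfrak{a}(n,k,k)\cong R/\mathfrak{a}(n-1,k-1,k)$, and it suffices to see that the latter ring is reduced. Now $\mathfrak{a}(n-1,k-1,k)$ is extended from $S=\F[x_1,\ldots,x_{n-1}]$: writing $J\subset S$ for the shifted Specht ideal $\mathfrak{a}(n-1,k-1,k)$ considered inside $S$, one has $R/\mathfrak{a}(n-1,k-1,k)\cong(S/J)[x_n]$, which is reduced exactly when $J$ is radical in $S$. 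Since $d=k\le n-k$ forces $k-1\le k\le(n-1)-(k-1)$, the ideal $J$ is a legitimate shifted Specht ideal in $n-1$ variables, hence radical by the inductive hypothesis; this closes the induction. If one prefers to keep principal radical systems explicitly in play, note that Corollary~\ref{cor:CoC} also gives $\mathfrak{a}(n,k,k)+(x_n)=\mathfrak{a}(n-1,k-1,k)+(x_n)$, which is radical by the same base-change-plus-induction reasoning; and since $\Phi$ fixes $x_n$ while conjugating $\mathfrak{a}(n,k,k)$ onto the $x_n$-independent ideal $\mathfrak{a}(n-1,k-1,k)$, the element $x_n$ is a nonzerodivisor modulo $\mathfrak{a}(n,k,k)$, so $(\mathfrak{a}(n,k,k):x_n)=(\mathfrak{a}(n,k,k):x_n^2)=\mathfrak{a}(n,k,k)$ and Lemma~\ref{lem:PRS}(1) finishes the case.

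The genuinely hard input is Theorem~\ref{thm:radD} itself, which we are treating as a black box and which packages Yanagawa's delicate straightening argument---made tractable here by the standard-basis description of Theorem~\ref{thm:basis}. Within the induction above, the one point requiring care is the case $d=k$: it is crucial that the change of coordinates of Corollary~\ref{cor:CoC} converts a ``hard'' genuine Specht ideal into a $d>k'$ shifted Specht ideal in \emph{strictly fewer} variables, so that the induction actually decreases, rather than into something of comparable complexity. A direct attempt to prove $\mathfrak{a}(n,k,k)$ radical without this device, or without Theorem~\ref{thm:radD}, is exactly where the argument would stall.
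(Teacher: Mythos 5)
Your proof is correct, and the core of it is the same telescoping reduction via Theorem~\ref{thm:radD} together with the change of coordinates from Corollary~\ref{cor:CoC}; but your main treatment of the crucial case $d=k$ is genuinely simpler than the paper's. The paper runs the full principal-radical-system machinery: it verifies $(I:x_n)=I$ by pushing the colon ideal through $\Phi$, shows $I+(x_n)=\mathfrak{a}(n-1,k-1,k)+(x_n)$ is radical by combining the induction hypothesis with Theorem~\ref{thm:radD} and Lemma~\ref{lem:PRSmonomial}, and then invokes Lemma~\ref{lem:PRS}(1). You instead observe that $\Phi$ is a \emph{ring automorphism} of $R$ carrying $\mathfrak{a}(n,k,k)$ onto $\mathfrak{a}(n-1,k-1,k)$, which is an extended ideal from $S=\F[x_1,\ldots,x_{n-1}]$; since $R/\mathfrak{a}(n-1,k-1,k)\cong(S/J)[x_n]$ is reduced precisely when $S/J$ is, and $J=\mathfrak{a}(n-1,k-1,k)$ is a shifted Specht ideal in $n-1$ variables (with $0\le k-1\le k\le (n-1)-(k-1)$, so directly covered by the induction hypothesis), radicality drops out with no colon ideals and no appeal to Lemma~\ref{lem:PRS}. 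This bypass is sound and arguably more transparent; what the paper's route buys, by contrast, is a uniform illustration of the principal-radical-system technique that parallels the perfection argument in Section~\ref{sec:perfection}, where the extension trick is not available and Lemma~\ref{lem:HE} plus Lemma~\ref{lem:perfectPRS} genuinely are needed. Your closing ``if one prefers'' paragraph reconstructs the paper's PRS version almost verbatim, so you have both arguments and correctly identify Theorem~\ref{thm:radD} as the real engine.
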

\subsection{Proof of Theorem \ref{thm:B} or Theorem \ref{thm:rad}}
\begin{proof}
	Assuming that Theorem \ref{thm:radD} holds, we prove Theorem \ref{thm:rad} by induction on $n\geq 2$.  For the base case $n=2$, the only possibilities for integers $k,d$ are $k=0$ and $d=1,2$ and $k=1=d$.  In the case $k=0$ we have $\mathfrak{a}(2,0,d)=(x_1,x_2)^{(d)}$ which is radical by Lemma \ref{lem:monomialrad}.  In the case, $k=d=1$ we have  $\mathfrak{a}(2,1,1)=((x_1-x_2))$ is prime, and therefore radical.  For the inductive step, assume that $\mathfrak{a}(n-1,j,e)$ for all integers satisfying $0\leq j\leq e\leq n-1-j$.  Fix integers $k,d$ satisfying $1\leq k= d\leq n-k$.  First we argue for the case $d=k$.  Let $I=\mathfrak{a}(n,k,k)$ and $x=x_n$.  Then by Corollary \ref{cor:CoC} we have 
	$$I+(x)=\mathfrak{a}(n,k,k)+(x_n)=\mathfrak{a}(n-1,k-1,k)+(x_n).$$
	By Theorem \ref{thm:radD} we have 
	$$\mathfrak{a}(n-1,k-1,k)=\mathfrak{a}(n-1,k-1,k-1)\cap (x_1,\ldots,x_{n-1})^{(k)}.$$
	By the induction hypothesis $\mathfrak{a}(n-1,k-1,k-1)$ is radical, and by Lemma \ref{lem:PRSmonomial} $(x_1,\ldots,x_{n-1})^{(k)}$ is radical.  It follows that $\mathfrak{a}(n-1,k-1,k)$ and hence also $I+(x)$ is also radical.
	
	Also using the change of coordinates map $\Phi\colon x_i\mapsto y_i$ in Corollary \ref{cor:CoC} we find that 
	\begin{align*}
	\Phi\left(I:x\right)=\left(\Phi\left(I\right):\Phi(x_n)\right)=(\mathfrak{a}(n-1,k-1,k):x_n)=\mathfrak{a}(n-1,k-1,k)=\Phi\left(I\right)
	\end{align*}
	from which it follows that $(I:x)=I$.  Therefore it follows from Lemma \ref{lem:PRS} that $I=\mathfrak{a}(n,k,k)$ itself must be radical.
	
	For $d>k$ we appeal again to Theorem \ref{thm:radD}: 
	$$\mathfrak{a}(n,k,d)=\mathfrak{a}(n,k,d-1)\cap (x_1,\ldots,x_n)^{(d)}=\mathfrak{a}(n,k,k)\cap (x_1,\ldots,x_n)^{(d)}.$$
	Since $\mathfrak{a}(n,k,k)$ is radical, and $(x_1,\ldots,x_n)^{(d)}$ is radical, it follows that $\mathfrak{a}(n,k,d)$ is radical too.
\end{proof}

\subsection{Proof of Theorem \ref{thm:B1} or Theorem \ref{thm:radD}}
The proof of Theorem \ref{thm:radD} (or Theorem \ref{thm:B1} from the Introduction) comes in three steps, each of which we state as a lemma.  First some notation:  For any exponent vector $\a=(a_1,\ldots,a_n)\in\N^n$ we denote the associated monomial by $\x^\a=x_1^{a_1}\cdots x_n^{a_n}$, and its radical by $\sqrt{\x^\a}=\prod_{a_i>0}x_i$.  
\begin{lemma}
	\label{lem:monomialrad}
	The ideal $\mathfrak{a}(n,k,d-1)\cap (x_1,\ldots,x_n)^{(d)}$ is generated by products of monomials and polynomials in the shifted Specht module $V(n,k,d-1)$.  In fact if the sum of any monomials times forms in $V(n,k,d-1)$ lies in the intersection $\mathfrak{a}(n,k,d-1)\cap (x_1,\ldots,x_n)^{(d)}$, then so do each of its summands.
\end{lemma}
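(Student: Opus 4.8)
The plan is to reduce the whole statement to elementary bookkeeping with the supports of monomials. First I would note that since $\mathfrak{a}(n,k,d-1)=V(n,k,d-1)\cdot R$, every element of $\mathfrak{a}(n,k,d-1)$ is a finite sum $\sum_i\x^{\a_i}g_i$ with each $\x^{\a_i}$ a monomial and each $g_i\in V(n,k,d-1)$; working one homogeneous degree at a time, and merging the summands that share a common monomial $\x^{\a_i}$ (legitimate because $V(n,k,d-1)$ is a subspace, so the new coefficients still lie in it), I may assume the monomials $\x^{\a_i}$ are pairwise distinct. With this normalization it suffices to prove the second assertion in the form: \emph{if $f=\sum_i\x^{\a_i}g_i$ with the $\x^{\a_i}$ pairwise distinct and each $g_i\in V(n,k,d-1)$ lies in $(x_1,\ldots,x_n)^{(d)}$, then every $\x^{\a_i}g_i$ lies in $(x_1,\ldots,x_n)^{(d)}$}. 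The first assertion follows at once, since every $\x^{\a_i}g_i$ automatically lies in $\mathfrak{a}(n,k,d-1)$, so $f$ is then an $R$-combination of products of the asserted form.

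The two facts I would use are: (i) $(x_1,\ldots,x_n)^{(d)}$ is the monomial ideal generated by the square-free monomials of degree $d$, so a polynomial lies in it if and only if every monomial occurring in it involves at least $d$ distinct variables; and (ii) by the defining formula $F_T(d-1)=(x_{i_1}-x_{j_1})\cdots(x_{i_k}-x_{j_k})x_{i_{k+1}}\cdots x_{i_{d-1}}$, in which $i_1,\ldots,i_{d-1},j_1,\ldots,j_k$ are all distinct, every monomial of $F_T(d-1)$, and hence of any $g\in V(n,k,d-1)$, is square-free of degree exactly $d-1$. Combining these, for a monomial $\x^{\a}$ and $g=\sum_{\b}g_{\b}\x^{\b}\in V(n,k,d-1)$ (the sum being over $0/1$ exponent vectors $\b$ with $d-1$ ones), the monomials of $\x^{\a}g$ are the $\x^{\a+\b}$, whose support $\supp(\a)\cup\supp(\b)$ has size at least $d-1$, with equality exactly when $\supp(\a)\subseteq\supp(\b)$. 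Hence $\x^{\a}g\in(x_1,\ldots,x_n)^{(d)}$ if and only if $g_{\b}=0$ for every square-free $\b$ of degree $d-1$ with $\supp(\b)\supseteq\supp(\a)$.

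The heart of the argument is then a no-cancellation observation. Call a monomial \emph{short} if it involves exactly $d-1$ variables; each short monomial $\x^{\c}$ factors uniquely as $\sqrt{\x^{\c}}\cdot(\x^{\c}/\sqrt{\x^{\c}})$ with $\sqrt{\x^{\c}}$ square-free of degree $d-1$ and $\supp(\x^{\c}/\sqrt{\x^{\c}})\subseteq\supp(\sqrt{\x^{\c}})$. If a short monomial $\x^{\c}$ occurs in $\x^{\a_i}g_i$, then by the previous paragraph $\x^{\c}=\x^{\a_i+\b}$ with $\x^{\b}$ a square-free degree-$(d-1)$ monomial of $g_i$ and $\supp(\b)\supseteq\supp(\a_i)$, so $\supp(\b)=\supp(\c)$, forcing $\x^{\b}=\sqrt{\x^{\c}}$ and $\x^{\a_i}=\x^{\c}/\sqrt{\x^{\c}}$; since the $\x^{\a_i}$ are pairwise distinct, at most one index $i$ can contribute a given short monomial. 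Therefore in $f=\sum_i\x^{\a_i}g_i$ no short monomial cancels, so its coefficient in $f$ equals its coefficient in the unique $\x^{\a_i}g_i$ that can contain it. Because $f\in(x_1,\ldots,x_n)^{(d)}$, every short monomial has coefficient $0$ in $f$ by (i), hence coefficient $0$ in each $\x^{\a_i}g_i$; and since $\x^{\a_i}g_i$ has no monomial of support smaller than $d-1$, it has no short monomials at all, i.e.\ $\x^{\a_i}g_i\in(x_1,\ldots,x_n)^{(d)}$, as required.

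The calculations are routine, so I do not expect a serious obstacle. The two points that need a little care are the preliminary reduction to pairwise distinct monomials $\x^{\a_i}$ (without which the literal statement is false, since cancellation between two products $\x^{\a}g$ and $\x^{\a}g'$ with the same monomial can hide short monomials) and the uniqueness step in the no-cancellation argument, which is precisely where the square-free structure of the shifted Specht polynomials coming from Theorem \ref{thm:basis} and the formula for $F_T(d-1)$ is used.
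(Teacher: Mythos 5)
Your proof is correct and follows essentially the same route as the paper's: both arguments identify the key fact that a monomial $\x^{\c}$ of weight exactly $d-1$ appearing in $\x^{\a}\nu$ (with $\nu\in V(n,k,d-1)$) forces $\x^{\a}=\x^{\c}/\sqrt{\x^{\c}}$, so such "short'' monomials cannot cancel between summands with distinct monomial factors. The only cosmetic difference is that the paper phrases the argument as a contradiction starting from a bad term, while you run it directly as a no-cancellation observation; the content is the same.
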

\begin{proof}
	It is not difficult to see that every polynomial $P\in\mathfrak{a}(n,k,d-1)$ decomposes into a sum of terms of the form
	$$P=\sum_{\a\in\N^{n}}\x^\a\cdot \ \nu_\a$$
	where $\nu_\a\in V(n,k,d-1)$.  We want to show that if $P\in (x_1,\ldots,x_n)^{(d)}$, then each of its summands are too, i.e. $\x^\a\cdot\nu_\a\in (x_1,\ldots,x_n)^{(d)}$ for all $\a\in\N^n$.  Suppose by way of contradiction that for some $\a\in\N^n$ and some $\nu_\a\in V(n,k,d-1)$ that $\x^\a\cdot \nu_\a\notin (x_1,\ldots,x_n)^{(d)}$.  Since $(x_1,\ldots,x_n)^{(d)}$ is a monomial ideal, there must be some monomial $\x^\b$ which appears in the monomial expansion of $\x^\a\cdot \nu_\a$ such that $\x^\b\notin (x_1,\ldots,x_n)^{(d)}$.  Define the \emph{weight} of monomial $\x^\b$ as $\operatorname{wt}(\x^\b)=\#\{b_i>0\}$.  Since $(x_1,\ldots,x_n)^{(d)}$ consists of all monomials of weight at least $d$, it follows that $\operatorname{wt}(\x^\b)\leq d-1$.  On the other hand, since $\nu_\a$ is a linear combination of shifted Specht polynomials of type $\lambda(d)$, it follows that every monomial in the monomial expansion of $\nu_\a$ also has weight $d-1$.  This implies that $\operatorname{wt}(\x^\b)=d-1$, and therefore that 
	$$\frac{\x^\b}{\sqrt{\x^\b}} = \x^\a.$$
	In particular, we see that the monomial $\x^\b$ is unique to the term $\x^\a\cdot\nu_\a$, and hence must occur with the same coefficient in the monomial expansion of $\x^\a\cdot\nu_\a$ as it does in the monomial expansion of $P=\sum_{\a\in\N^n}\x^\a\cdot\nu_\a$.  Therefore $P\notin (x_1,\ldots,x_n)^{(d)}$, as desired.      
\end{proof}
Lemma \ref{lem:monomialrad} tells us that it suffices to check equation \eqref{eq:radD} in Theorem \ref{thm:radD} on products of monomials with $V(n,k,d-1)$.  So we want to show that for each  $\nu\in V(n,k,d-1)$ and for each $\a\in\N^n$ the following implication holds:
$$\x^\a\cdot \nu\in (x_1,\ldots,x_n)^{(d)} \ \Rightarrow \ \x^\a\cdot \nu\in \mathfrak{a}(n,k,d).$$
Note that since $(x_1,\ldots,x_n)^{(d)}$ is generated by square-free monomials, we have 
$$\x^\a\cdot \nu\in (x_1,\ldots,x_n)^{(d)} \ \Leftrightarrow \ \sqrt{\x^\a}\cdot \nu\in (x_1,\ldots,x_n)^{(d)}.$$
In particular, we may assume without loss of generality that our monomials $\x^\a$ are square-free.  For any polynomial $F\in R$ define its \emph{support} to be the set of square-free monomials which divide some non-zero monomial term of $F$.  For example, given a tableau $T\in\tab(n,k,d)$, the support of the shifted Specht polynomial $F_T(d-1)$ is the set of square-free monomials indexed by subsets of numbers in the support of $T$, no two of which lie in the same column of $T$.  
 
\begin{lemma}
	\label{lem:suppNo}
		For each $T\in\tab(n,k,d)$, if $\x^\a\notin\supp(F_T(d-1))$ then $\x^\a\cdot F_T(d-1)\in \mathfrak{a}(n,k,d)$.
\end{lemma}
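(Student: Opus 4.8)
The goal is to show that if $\x^\a \notin \supp(F_T(d-1))$, then $\x^\a \cdot F_T(d-1) \in \mathfrak{a}(n,k,d)$. I will assume, as the preceding discussion allows, that $\x^\a$ is square-free. Write $T$ as in \eqref{eq:Tnkd} with support indices $i_1,\ldots,i_k,j_1,\ldots,j_k,i_{k+1},\ldots,i_{d-1}$ (recall the shifted polynomial $F_T(d-1)$ has degree $d-1$), and recall that $\supp(F_T(d-1))$ consists of those square-free monomials whose variable set meets each column of $T$ in at most one index. So $\x^\a \notin \supp(F_T(d-1))$ means the variable set $S$ of $\x^\a$ contains two indices lying in a common column of $T$.

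The plan is to induct on the number of ``bad pairs'' — pairs of variables in $\x^\a$ occupying the same column of $T$ — and on $d$. First I would handle the base mechanism: suppose $x_{u}, x_{v} \mid \x^\a$ with $u = i_t$, $v = j_t$ both in the $t$-th mixed column of $T$ (the case where the repeated column is a length-one column in the top row is similar but easier, since there the relevant factor is a single variable $x_{i_s}$, $s > k$). Then in the product $\x^\a \cdot F_T(d-1)$ we have a factor $x_{i_t} x_{j_t}(x_{i_t} - x_{j_t})$ available. The key algebraic identity to exploit is
$$
x_u x_v (x_u - x_v) = x_u^2 x_v - x_u x_v^2,
$$
so that, writing $\x^{\a} = x_u x_v \cdot \x^{\a'}$ and $F_T(d-1) = (x_u - x_v)\cdot G$ where $G$ collects the remaining column factors, we get
$$
\x^\a \cdot F_T(d-1) = \x^{\a'} G \cdot \big(x_u^2 x_v - x_u x_v^2\big) = x_u^2 \cdot (x_v \x^{\a'} G) - x_v^2 \cdot (x_u \x^{\a'} G).
$$
Now I would recognize $x_v \x^{\a'} G$ and $x_u \x^{\a'} G$ as monomial multiples of shifted Specht polynomials of one higher degree $d$: indeed $x_u G$ is, up to reordering columns and sign, the shifted Specht polynomial $F_{\widetilde T}(d)$ for the tableau $\widetilde T \in \tab(n,k,d)$ obtained from $T$ by appending $u$ as a new length-one column in the top row (and similarly with $v$), because $G$ already carries $d-1$ column factors and multiplying by a bare variable adds a $d$-th column. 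Hence each of the two terms lies in $\mathfrak{a}(n,k,d)$, so their difference $\x^\a \cdot F_T(d-1)$ does too. I expect this to be the heart of the argument, and the main obstacle will be bookkeeping: carefully identifying $x_u G$ (a monomial times a product of Vandermonde factors on the columns of $T$ minus the $t$-th) with an honest generator $F_{\widetilde T}(d)$ of $\mathfrak{a}(n,k,d)$, checking that $\widetilde T$ really is a valid tableau on the shifted shape $\lambda(d)$ (its support has the right size $d$, all column entries distinct), and tracking signs through the column reorderings — Theorem \ref{thm:basis} then lets us rewrite anything in the span of such generators as a combination of standard ones if needed, but for membership in $\mathfrak{a}(n,k,d)$ we only need it lies in the ideal generated by all $F_{\widetilde T}(d)$.

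If there is no same-column pair inside a mixed column but the repetition comes from the top-row length-one columns — i.e. some $x_{i_s}$ with $s > k$ divides $\x^\a$ and $i_s$ already appears as a top-row column of $T$ — the argument is even more direct: then $\x^\a \cdot F_T(d-1)$ already contains the factor $x_{i_s}^2 \cdot (\text{product of } d-1 \text{ columns not involving } x_{i_s})$, and $x_{i_s} \cdot F_T(d-1)$ is itself, up to sign, a generator $F_{\widetilde T}(d)$ with $\widetilde T$ obtained by duplicating the column $i_s$ — wait, that would repeat an index, which is not allowed. Instead, in this degenerate case one observes directly that $x_{i_s} \cdot F_T(d-1)$ need not be handled by duplication; rather, since $i_s$ is already in $\supp(T)$, the monomial $\sqrt{\x^\a}$ has a variable in a column of $T$, and we can instead move to a tableau $\widetilde T$ where we relocate: but the cleanest route is to reduce this case to the mixed-column case by noting $\x^\a \notin \supp(F_T(d-1))$ forces \emph{some} pair of variables in a common column, and if that column is a length-one column the ``pair'' is a single variable used with multiplicity, which is impossible for square-free $\x^\a$ — so in fact the only way to fail to be in the support is via a genuine pair in a mixed column, or via $S$ being too small relative to the column structure. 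I would, in the write-up, phrase the failure condition precisely via Theorem \ref{thm:basis} / the explicit description of $\supp(F_T(d-1))$ and reduce everything to the one identity above, using downward induction on $d$ together with Corollary \ref{cor:inductive} to pass between sizes when the appended index forces a smaller sub-tableau.
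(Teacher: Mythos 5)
Your central step fails. After writing $\x^\a\cdot F_T(d-1)=\x^{\a'}G\cdot(x_u^2x_v-x_ux_v^2)$ you claim each of $x_u^2\cdot(x_v\x^{\a'}G)$ and $x_v^2\cdot(x_u\x^{\a'}G)$ lies in $\mathfrak{a}(n,k,d)$ because $x_vG$ (respectively $x_uG$) is a $d$-shifted Specht polynomial for a tableau in $\tab(n,k,d)$. This is not so: $G$ is $F_T(d-1)$ with the Vandermonde factor $(x_u-x_v)$ removed, so $G$ carries only $k-1$ Vandermonde factors and $d-1-k$ linear monomial factors (degree $d-2$, not $d-1$ as you wrote), and hence $x_vG$ has only $k-1$ Vandermonde factors. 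Every generator $F_{\widetilde T}(d)$ of $\mathfrak{a}(n,k,d)$ has exactly $k$ Vandermonde factors, so $x_v\x^{\a'}G$ is not a monomial multiple of any generator, and in fact is generally not in the ideal at all. A minimal counterexample: $n=4$, $k=1$, $d=2$, $T\in\tab(4,1,1)$ with $F_T(1)=x_1-x_2$, and $\x^\a=x_1x_2$. Here $G=1$, your two terms are $x_1^2x_2$ and $x_1x_2^2$, and neither lies in $\mathfrak{a}(4,1,2)$ (e.g.\ $x_1^2x_2$ does not vanish on the diagonal $x_1=\cdots=x_4$, while every element of $\mathfrak{a}(4,1,2)\subset\mathfrak{a}(4,1,1)$ must). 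Only the \emph{difference} $x_1^2x_2-x_1x_2^2=x_2\cdot(x_3-x_2)x_1+x_1\cdot(x_1-x_3)x_2$ lies in the ideal.

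The paper avoids this trap by never removing the offending Vandermonde factor. It introduces a fresh index $r\notin\supp(T)$ (available since $d\leq n-k$ forces $n-k-(d-1)\geq 1$ non-support indices) and uses the transitivity identity $(x_i-x_j)=(x_i-x_r)+(x_r-x_j)$, i.e.\ $F_T(d-1)=F_{(i,r).T}(d-1)+F_{(j,r).T}(d-1)$. Each summand still has all $k$ Vandermonde factors; moreover $x_i\notin\supp(F_{(i,r).T}(d-1))$ and $x_j\notin\supp(F_{(j,r).T}(d-1))$, so multiplying the first by $x_i$ and the second by $x_j$ appends a genuine new length-one column and produces a degree-$d$ generator of $\mathfrak{a}(n,k,d)$. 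Since $x_i,x_j\mid\x^\a$, both terms of $\x^\a F_T(d-1)$ are then multiples of generators, and the lemma follows. You should also state the easy case separately (the paper does): if some $x_i\mid\x^\a$ with $i\notin\supp(T)$, then already $x_iF_T(d-1)\in V(n,k,d)$ and you are done; your discussion of length-one columns circles around this without quite landing on it.
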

\begin{proof}
	If there is a variable $x_i\in \supp(\x^\a)$ which is not in $\supp(F_T(d-1))$, then certainly $\x^\a\cdot F_T(d-1)\in \mathfrak{a}(n,k,d)$.  Otherwise, there must be two indices $i\neq j$ such that $x_i,x_j\in \supp(\x^\a)$ and $i,j$ in the same column of $T$.  Choose any index $r\neq i,j$ such that $x_r\notin\supp(F_T(d-1))$, which exists since $0\leq k\leq d-1<d\leq n-k$, and let $(i,r), (j,r)\in\mathfrak{S}_n$ be the transpositions swapping $i,r$ and $j,r$, respectively.  Then we have 
	$$F_T(d-1)=F_{(i,r).T}(d-1)+F_{(j,r).T}(d-1)$$
	and since $x_i\cdot F_{(i,r).T}(d-1), x_j\cdot F_{(j,r).T}(d-1)\in \mathfrak{a}(n,k,d)$, it follows that 
	$$\x^\a\cdot F_T(d-1)\in\mathfrak{a}(n,k,d)$$
	as desired.
\end{proof}

Finally we must show what happens with square-free monomials which do lie in the support of the shifted Specht polynomials.  Here we use symmetry to make the further reduction that our square-free monomial is initial, i.e. 
$$\x^\a=\x^\m=x_1\cdots x_m, \ \ \text{for some} \ 1\leq m\leq d-1.$$
Setting $V^m(n,k,d-1)\subset V(n,k,d-1)$ to be the span of shifted Specht polynomials indexed by standard tableaux $T\in\stab(n,k,d-1)$ for which $\{1,\ldots,m\}\not\subset\operatorname{supp}(T)$.  Then the shifted Specht module decomposes into a direct sum
$$V(n,k,d-1)=V_m(n,k,d-1)\oplus V^m(n,k,d-1),$$
and Lemma \ref{lem:suppNo} says $\x^\m\cdot \nu\in \mathfrak{a}(n,k,d)$ for every $\nu\in V^m(n,k,d-1)$.  
Recall the bijective linear map 
$$\xymatrixrowsep{.5pc}\xymatrix{\phi\colon V(n,k,d-1)\ar[r] & V([n]_m,k-m,d-m)\\
F_T(d)\ar@{|->}[r] & F_{T'}(d)}$$
where
\begin{equation}
\label{eq:Trad}
T=\begin{ytableau}
\none & \none & \none & 1 & \cdots & m & i_{m+1} & \cdots & i_{k} & i_{k+1} & \cdots & i_{d-1}\\
i_{d} & \cdots & i_{n-k} & j_1 & \cdots & j_m & j_{m+1} & \cdots & j_k\\
\end{ytableau}
\end{equation}	
and 
\begin{equation}
\label{eq:T'rad}
T'=\begin{ytableau}
\none & \none & \none & \none & \none & \none & i_{m+1} & \cdots & i_{k} & i_{k+1} & \cdots & i_{d-1}\\
i_{d} & \cdots & i_{n-k} & j_1 & \cdots & j_m & j_{m+1} & \cdots & j_k\\
\end{ytableau}
\end{equation}

\begin{lemma}
\label{lem:suppYes}
Fix $\nu\in V_m(n,k,d-1)$.  If $\x^\m\cdot \nu\in (x_1,\ldots,x_n)^{(d)}$ then $\nu=0$, and hence $\x^\m\cdot \nu\in \mathfrak{a}(n,k,d)$.	
\end{lemma}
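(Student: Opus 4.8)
The plan is to translate the hypothesis $\x^\m\cdot\nu\in(x_1,\ldots,x_n)^{(d)}$ into a condition on the monomials appearing in $\nu$, and then to recognize that condition as the vanishing of the image of $\nu$ under the injective map of Corollary \ref{cor:inductive}.

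First I would record two elementary observations. Recall that $(x_1,\ldots,x_n)^{(d)}$ is the set of all polynomials all of whose monomials involve at least $d$ distinct variables, that every monomial appearing in an element of $V(n,k,d-1)$ is square-free of degree exactly $d-1$, and that multiplication by the square-free monomial $\x^\m=x_1\cdots x_m$ is injective on monomials. For a square-free monomial $\mu$ of degree $d-1$, the product $\x^\m\mu$ involves at least $d$ distinct variables if and only if $x_1\cdots x_m\nmid\mu$; consequently $\x^\m\cdot\nu\in(x_1,\ldots,x_n)^{(d)}$ if and only if no monomial of $\nu$ is divisible by $x_1\cdots x_m$. To package this, for any polynomial $F$ let $F^{[m]}$ denote the polynomial obtained by discarding every monomial of $F$ not divisible by $x_1\cdots x_m$ and then dividing what remains by $x_1\cdots x_m$; this operation is $\F$-linear, and $F^{[m]}=0$ precisely when no monomial of $F$ is divisible by $x_1\cdots x_m$. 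Thus it suffices to prove that $\nu^{[m]}=0$ implies $\nu=0$.

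The key computation is that $F_T(d-1)^{[m]}=F_{T'}(d-1-m)$ for every $T\in\stab_m(n,k,d-1)$, where $T'$ is the tableau associated to $T$ in Corollary \ref{cor:inductive}. The first point is that the defining condition $\{1,\ldots,m\}\subseteq\supp(T)$ forces $i_t=t$ for $1\le t\le m$: since $d\le n-k$ the shape $\lambda(d-1)$ has a single-box column on the left, and its bottom entry $i_d$ (the bottom-left box) must exceed $m$; by the row- and column-increasing property every $j_t$ then exceeds $m$, so $1,\ldots,m$ all lie in the top row and, that row being increasing, they are its first $m$ entries. From this one reads off a factorization $F_T(d-1)=G_T\cdot F_{T'}(d-1-m)$ in which $F_{T'}(d-1-m)$ involves only the variables $x_{m+1},\ldots,x_n$ and $G_T$ equals $\prod_{t=1}^{\min(m,k)}(x_t-x_{j_t})$ times the monomial $x_{k+1}\cdots x_m$ (the monomial being empty unless $m>k$); in either case the unique monomial of $G_T$ divisible by $x_1\cdots x_m$ is $x_1\cdots x_m$ itself, occurring with coefficient $1$. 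Taking the $[m]$-part therefore kills the $G_T$ factor and leaves $F_{T'}(d-1-m)$. By linearity, if $\nu=\sum_T c_T F_T(d-1)$ with $T$ ranging over $\stab_m(n,k,d-1)$, then $\nu^{[m]}=\sum_T c_T F_{T'}(d-1-m)$.

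Finally, $\nu^{[m]}=0$ says exactly that $\sum_T c_T F_{T'}(d-1-m)=0$ in $V([n-m],k-m,d-1-m)$; since the assignment $F_T(d-1)\mapsto F_{T'}(d-1-m)$ is injective by Corollary \ref{cor:inductive}, all $c_T$ vanish, so $\nu=0$ and then trivially $\x^\m\cdot\nu=0\in\mathfrak{a}(n,k,d)$. I expect the main obstacle to be the bookkeeping in the third paragraph: verifying that the support condition really does pin down $i_1,\ldots,i_m$ (the subcase $m>k$, where some of these entries occupy single-box columns of $\lambda(d-1)$, needs slightly different handling), and checking that, because $F_{T'}(d-1-m)$ uses no variable among $x_1,\ldots,x_m$, no monomial of $F_T(d-1)$ other than those coming from the $x_1\cdots x_m$ term of $G_T$ is divisible by $x_1\cdots x_m$ — this is what makes the linear operation $(\cdot)^{[m]}$ coincide on $V_m(n,k,d-1)$ with the map of Corollary \ref{cor:inductive}.
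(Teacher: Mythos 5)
Your proof is correct and follows the same underlying strategy as the paper: both identify $\nu'\in V([n-m],k-m,d-1-m)$ via Corollary \ref{cor:inductive}, use the factorization of $F_T(d-1)$ for $T\in\stab_m$ to show that the hypothesis forces $\nu'$ to vanish, and then invoke the injectivity from Corollary \ref{cor:inductive} to conclude $\nu=0$. The only cosmetic difference is that the paper extracts $\nu'$ and shows it lies in the colon ideal $\left((x_1,\ldots,x_n)^{(d)}:\x^\m\right)=(x_{m+1},\ldots,x_n)^{(d-m)}$ (killing it by degree), whereas you express the same fact directly as ``$\nu$ has no monomial divisible by $\x^\m$'' and read off $\nu^{[m]}=\nu'=0$.
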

\begin{proof}
	We observe that 
	$$\x^\m\cdot \left(\nu-\x^\m\cdot \nu'\right)\in (x_1,\ldots,x_n)^{(d)}$$
	where $\nu'\in V([n]_m,k-m,d-1-m)$ is the image of $\nu$ in the map above.  Indeed note that for each standard tableau $T\in\stab_m(n,k,d-1)$ as in \eqref{eq:Trad}, the support of the difference $F_T(d-1)-\x^\m\cdot F_{T'}(d-1-m)$ does not contain the monomial $\x^\m$, hence the product $\x^\m\cdot\left(F_T(d)-\x^\m\cdot F_{T'}(d-1-m)\right)\in (x_1,\ldots,x_n)^{(d)}$.  Hence if $\x^\m\cdot \nu\in (x_1,\ldots,x_n)^{(d)}$, it follows that $\left(\x^\m\right)^2\cdot \nu'\in (x_1,\ldots,x_n)^{(d)}$, and hence that $\nu'\in \left((x_1,\ldots,x_n)^{(d)}:\x^\m\right)=(x_{m+1},\ldots,x_n)^{(d-m)}$.  For degree reasons this implies that $\nu'=0$, and hence by Corollary \ref{cor:inductive}, $\nu=0$ as well. 
\end{proof}

\begin{proof}[Proof of Theorem \ref{thm:radD}]
	The containment $\mathfrak{a}(n,k,d)\subset \mathfrak{a}(n,k,d-1)\cap (x_1,\ldots,x_n)^{(d)}$ is clear.  For the reverse containment, Lemma \ref{lem:monomialrad} implies that it suffices to check it on products of monomials with polynomials in $V(n,k,d-1)$.  Since $(x_1,\ldots,x_n)^{(d)}$ is generated by square-free monomials we may assume that our monomials are square-free, and by symmetry we may assume that our monomial has the form $\x^\m=x_1\cdots x_m$ for some integer $1\leq m\leq d$.  Then as above we have 
	$$V(n,k,d-1)=V_m(n,k,d-1)\oplus V^m(n,k,d-1)$$
	and by Lemma \ref{lem:suppYes}, $\x^\m\cdot \nu\in \mathfrak{a}(n,k,d)$ if $\nu\in V_m(n,k,d-1)$.  Furthermore, Lemma \ref{lem:suppNo} implies that for $\nu\in V^m(n,k,d-1)$ if $\x^\m\cdot \nu\in (x_1,\ldots,x_n)^{(d)}$ then $\x^\m\cdot \nu\in \mathfrak{a}(n,k,d)$, and the result follows.
\end{proof}

\section{Perfection of Specht and Specht-Monomial Ideals}
\label{sec:perfection}
Recall that an ideal $I\subset R$ in a commutative ring has projective or homological dimension $s$ if a minimal resolution of $R/I$ as an $R$-module has length $s$.  Its grade is the length $g$ of a maximal $R$-sequence contained in $I$, or equivalently the smallest integer $g$ for which the Ext group $\operatorname{Ext}_R^g(R/I,R)$ is non-zero.  We say that the ideal $I\subset R$ is \emph{perfect} if its projective dimension is equal to its grade.  In our case, where $R$ is polynomial (hence Cohen-Macaulay), the grade of an ideal is equal to its height, and by the Auslander-Buchsbaum formula, $I$ is perfect if and only if the quotient $R/I$ is Cohen-Macaulay.  For more details on these matters we refer the reader to \cite{Matsumura}. 

As in Section \ref{sec:radical}, the main idea here is to use principal radical systems.
\begin{lemma}
	\label{lem:perfectPRS}
	Let $I\subset R$ be any homogeneous ideal and let $x\in R\setminus I$ be any homogeneous polynomial.  Then 
	\begin{enumerate}
		\item if $(I:x)=I$ and if $I+(x)$ is perfect, then $I$ is perfect too.
		\item if $(I:x)\neq I$ and if $(I:x)$ and $I+(x)$ are both perfect of the same grade $g$, then $I$ is perfect of that same grade too.
	\end{enumerate}
\end{lemma}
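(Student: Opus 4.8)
The plan is to realize $R/(I+(x))$, in both cases, as the cokernel of an injective ``multiplication by $x$'' map out of a suitable module, and then read off the Cohen--Macaulay property of $R/I$ from the depth lemma together with a dimension count. Throughout, set $e=\deg(x)$; we may assume $e\geq 1$, since otherwise $x$ is a unit, $I+(x)=R$, and the hypotheses impose no condition on $I$. As recalled just above, a homogeneous ideal $J\subset R$ is perfect of grade $g$ if and only if $R/J$ is Cohen--Macaulay of dimension $n-g$, and for any homogeneous ideal $J\subset R$ one has $\operatorname{grade}(J)=\operatorname{ht}(J)=n-\dim(R/J)$ because $R$ is a polynomial ring. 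I will use the \emph{depth lemma}: for a short exact sequence $0\to A\to B\to C\to 0$ of finitely generated graded $R$-modules one has $\operatorname{depth}(B)\geq\min\{\operatorname{depth}(A),\operatorname{depth}(C)\}$, together with the elementary fact $\operatorname{Supp}(B)=\operatorname{Supp}(A)\cup\operatorname{Supp}(C)$, hence $\dim(B)=\max\{\dim(A),\dim(C)\}$.

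For (1): the hypothesis $(I:x)=I$ says precisely that $x$ is a nonzerodivisor on $R/I$, which yields the short exact sequence
$$0\to (R/I)(-e)\xrightarrow{\ \cdot x\ } R/I\to R/(I+(x))\to 0.$$
Since $x$ is a homogeneous nonzerodivisor lying in the homogeneous maximal ideal, passing to $R/(I+(x))$ drops both depth and dimension by exactly one: $\operatorname{depth} R/(I+(x))=\operatorname{depth} R/I-1$, and $\dim R/(I+(x))=\dim R/I-1$, the latter because a nonzerodivisor avoids every associated, hence every minimal, prime of $R/I$. If $I+(x)$ is perfect, then $R/(I+(x))$ is Cohen--Macaulay, so $\operatorname{depth} R/I-1=\dim R/I-1$; thus $R/I$ is Cohen--Macaulay and $I$ is perfect.

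For (2): when $(I:x)\neq I$ the assignment $\bar a\mapsto \overline{ax}$ is a well-defined injective map $R/(I:x)\to R/I$ with image $(I+(x))/I$, so we obtain the short exact sequence
$$0\to \bigl(R/(I:x)\bigr)(-e)\xrightarrow{\ \cdot x\ } R/I\to R/(I+(x))\to 0.$$
By hypothesis both $R/(I:x)$ and $R/(I+(x))$ are Cohen--Macaulay of dimension $n-g$. The depth lemma gives $\operatorname{depth} R/I\geq \min\{n-g,\,n-g\}=n-g$, and the support identity gives $\dim R/I=\max\{n-g,\,n-g\}=n-g$. Since $\operatorname{depth}\leq\dim$ always, we conclude $\operatorname{depth} R/I=\dim R/I=n-g$, so $R/I$ is Cohen--Macaulay, hence perfect, of grade $n-\dim(R/I)=g$.

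The only point requiring care---and the closest thing to an obstacle---is in (2): one must correctly identify the short exact sequence (in particular that the kernel term is $\bigl(R/(I:x)\bigr)(-e)$, not $(R/I)(-e)$) and observe that its two outer terms have the \emph{same} dimension $n-g$, so that the support identity pins down $\dim R/I$ exactly; once that is in place, the depth lemma finishes the argument. Part (1) is then simply the degenerate case in which the kernel term collapses to $(R/I)(-e)$.
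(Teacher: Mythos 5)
Your proof is correct and follows essentially the same route as the paper: the key object in both is the short exact sequence $0\to R/(I:x)\xrightarrow{\times x}R/I\to R/(I+(x))\to 0$, and the paper's invocation of the long exact $\operatorname{Ext}$-sequence is equivalent, via Auslander--Buchsbaum, to your depth-lemma-plus-support computation. Your explicit dimension bookkeeping in part (2) and the spelled-out ``nonzerodivisor drops depth and dimension by one'' argument in part (1) are just more elementary unpackings of the same underlying facts.
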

\begin{proof}
	Item (1.) is well known, and can be found in any commutative algebra text, e.g. \cite[Theorem 17.3]{Matsumura}.  For (2.) we use the long exact sequence for $\operatorname{Ext}$-modules associated with the short exact sequence of $R$-modules
	$$\xymatrix{0\ar[r] & R/(I:x)\ar[r]^-{\times x} & R/I\ar[r] & R/I+(x)\ar[r] & 0.}$$
\end{proof}
  
As in Section \ref{sec:radical}, we give an easy application of principal radical systems.
\begin{lemma}
	\label{lem:PRSperfect}
	For every integer $d$ satisfying $1\leq d\leq n$ the monomial ideal 
	$$(x_1,\ldots,x_n)^{(d)}$$
	is perfect.
\end{lemma}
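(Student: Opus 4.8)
The plan is to run the same principal radical system argument used for Lemma \ref{lem:PRSmonomial}, only now invoking the perfection version, Lemma \ref{lem:perfectPRS}, in place of the radical version, Lemma \ref{lem:PRS}. So I would induct on $n\ge 1$, the base case $n=1$ being the principal — hence perfect — ideal $(x_1)$. For the inductive step, assume that $(x_1,\ldots,x_{n-1})^{(e)}$ is perfect for every $1\le e\le n-1$, and fix $1\le d\le n$. The two extreme cases are immediate: for $d=1$ the ideal is the complete intersection $(x_1,\ldots,x_n)$, and for $d=n$ it is the principal ideal $(x_1\cdots x_n)$; so I may assume $2\le d\le n-1$.

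Set $I=(x_1,\ldots,x_n)^{(d)}$ and $x=x_n$, noting $x_n\notin I$ since $\deg x_n=1<d$. Exactly as in the radical case one computes
\[
(I:x_n)=(x_1,\ldots,x_{n-1})^{(d-1)},\qquad I+(x_n)=(x_1,\ldots,x_{n-1})^{(d)}+(x_n).
\]
The first of these is perfect by the induction hypothesis (viewed now inside $R$, using that perfection is preserved under the polynomial extension $\F[x_1,\ldots,x_{n-1}]\hookrightarrow R$); the second is perfect because $R/(I+(x_n))\cong \F[x_1,\ldots,x_{n-1}]/(x_1,\ldots,x_{n-1})^{(d)}$ is Cohen--Macaulay by the induction hypothesis. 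Moreover $(I:x_n)\neq I$ — for instance $x_1\cdots x_{d-1}$ lies in $(I:x_n)$ but not in $I$ — so I am in case (2) of Lemma \ref{lem:perfectPRS}, and it remains only to check that $(I:x_n)$ and $I+(x_n)$ have the same grade. Since $R$ is Cohen--Macaulay, grade equals height, and the minimal primes of both ideals are generated by exactly $n-d+1$ of the variables: for $(x_1,\ldots,x_{n-1})^{(d-1)}$ these are the $(n-1)-(d-1)+1=n-d+1$ element subsets of $\{x_1,\ldots,x_{n-1}\}$, and for $I+(x_n)$ they are $x_n$ together with an $(n-1)-d+1=n-d$ element subset coming from a minimal prime of $(x_1,\ldots,x_{n-1})^{(d)}$. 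Hence both ideals have grade $n-d+1$, and Lemma \ref{lem:perfectPRS}(2) yields that $I=(x_1,\ldots,x_n)^{(d)}$ is perfect.

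I do not expect a genuine obstacle here: the argument is a word-for-word parallel of the proof that $(x_1,\ldots,x_n)^{(d)}$ is radical, and the only points needing a moment's care are the grade bookkeeping — confirming that the colon ideal and the sum both sit in height $n-d+1$ — and the routine observations that height of a squarefree monomial ideal is unaffected by adjoining an extra variable and that Cohen--Macaulayness descends along the quotient $R\twoheadrightarrow R/(I+(x_n))=\F[x_1,\ldots,x_{n-1}]/(x_1,\ldots,x_{n-1})^{(d)}$.
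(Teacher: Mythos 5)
Your proof is correct and follows the same route as the paper: induction on $n$ with $x=x_n$, computing $(I:x_n)=(x_1,\ldots,x_{n-1})^{(d-1)}$ and $I+(x_n)=(x_1,\ldots,x_{n-1})^{(d)}+(x_n)$, and invoking Lemma~\ref{lem:perfectPRS}. You are in fact more careful than the paper's proof, which omits the grade bookkeeping for part (2) of Lemma~\ref{lem:perfectPRS} and does not separately treat the edge case $d=1$, where $(I:x_n)=R$ makes the colon-ideal computation degenerate.
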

\begin{proof}
	By induction on $n$, where the base case $n=1$ is trivial.  For the induction step, assume that $(x_1,\ldots,x_{n-1})^{(e)}$ is perfect for every $1\leq e\leq n-1$, and fix an integer $d$ satisfying $1\leq d\leq n$.  Set $I=(x_1,\ldots,x_n)^{(d)}$ and $x=x_n$.  Then we have 
	\begin{align*}
	(I:x)= & (x_1,\ldots,x_{n-1})^{(d-1)} & I+(x)=(x_1,\ldots,x_{n-1})^{(d)}+(x_n) 
	\end{align*}
	which are both perfect by our induction hypothesis.  It follows from Lemma \ref{lem:perfectPRS} that $I=(x_1,\ldots,x_n)^{(d)}$ is perfect too.\footnote{Alternatively, one could also appeal to Reisner's theorem \cite{Reisner} here, since $(x_1,\ldots,x_n)^{(d)}$ is the Stanley-Reisner ideal of the $d-1$ skeleton of an $n-1$ simplex.}
\end{proof}

It is trickier to apply Lemma \ref{lem:perfectPRS} to shifted Specht ideals.  For one thing, not all shifted Specht ideals are perfect.  Indeed,  
Theorem \ref{thm:radD} gives the decomposition 
$$\mathfrak{a}(n,k,d)=\mathfrak{a}(n,k,k)\cap (x_1,\ldots,x_n)^{(d)}.$$
Individually the grades (=heights) of the ideals $\mathfrak{a}(n,k,k)$ and $(x_1,\ldots,x_n)^{(d)}$ are $g=n-k$ and $g=n-d+1$, respectively.
In particular we see that if $d>k+1$, then the two ideals $\mathfrak{a}(n,k,k)$ and $(x_1,\ldots,x_n)^{(d)}$ have mixed heights, and in particular the shifted Specht ideal $\mathfrak{a}(n,k,d)$ cannot be perfect.  For $d=k+1$, we must show that the intersection is perfect:
$$\mathfrak{a}(n,k,k+1)=\mathfrak{a}(n,k,k)\cap (x_1,\ldots,x_n)^{(d)}.$$
To this end, we appeal to the following basic fact, which appears in the paper \cite{HE} by Hochster-Eagon, and we refer the reader there for its proof.
  
\begin{lemma}[Proposition 18, \cite{HE}]
	\label{lem:HE}
	Suppose that $I,J\subset R$ are two perfect ideals of the same grade $g$, and assume that $I+J$ has grade $g+1$.  Then $I+J$ is perfect if and only if $I\cap J$ is perfect. 
\end{lemma}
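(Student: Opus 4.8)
The plan is to derive Lemma~\ref{lem:HE} from the long exact $\operatorname{Ext}$-sequence attached to the Mayer--Vietoris short exact sequence of $R$-modules
$$0 \longrightarrow R/(I\cap J) \xrightarrow{\ r\,\mapsto\,(r,r)\ } R/I \oplus R/J \xrightarrow{\ (a,b)\,\mapsto\,a-b\ } R/(I+J) \longrightarrow 0,$$
whose exactness is an elementary check (injectivity on the left uses $I\cap J=\{r:r\in I\text{ and }r\in J\}$; exactness in the middle uses that $a-b\in I+J$ exactly when $a$ and $b$ admit a common representative modulo $I$ and modulo $J$). The other input is the $\operatorname{Ext}$-characterisation of perfection valid over the regular ring $R$: a proper ideal $K$ of grade $g$ is perfect if and only if $\operatorname{Ext}^i_R(R/K,R)=0$ for every $i\neq g$. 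This holds because $\operatorname{grade}(K)=\min\{i:\operatorname{Ext}^i_R(R/K,R)\neq 0\}$ by definition of grade, because $\operatorname{pd}_R(R/K)$ is finite (Hilbert syzygy theorem) and equals $\max\{i:\operatorname{Ext}^i_R(R/K,R)\neq 0\}$ by minimality of graded free resolutions, and because $\operatorname{grade}(K)\le \operatorname{pd}_R(R/K)$ always; hence perfection, i.e.\ $\operatorname{grade}(K)=\operatorname{pd}_R(R/K)$, is exactly the concentration of $\operatorname{Ext}^\bullet_R(R/K,R)$ in the single degree $g$.

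\textbf{Extracting the consequences.} Applying $\operatorname{Hom}_R(-,R)$ to the Mayer--Vietoris sequence yields the long exact sequence
$$\cdots\to \operatorname{Ext}^i(R/(I{+}J),R)\to \operatorname{Ext}^i(R/I,R)\oplus\operatorname{Ext}^i(R/J,R)\to \operatorname{Ext}^i(R/(I{\cap}J),R)\to \operatorname{Ext}^{i+1}(R/(I{+}J),R)\to\cdots$$
Now feed in the hypotheses: $\operatorname{Ext}^i(R/I,R)=\operatorname{Ext}^i(R/J,R)=0$ for $i\neq g$ (perfection of $I$ and $J$), and $\operatorname{Ext}^i(R/(I+J),R)=0$ for $i\le g$ (the grade of $I+J$ is $g+1$). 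For $i<g$ the two terms flanking $\operatorname{Ext}^i(R/(I\cap J),R)$ both vanish, so $\operatorname{Ext}^i(R/(I\cap J),R)=0$; combined with $I\cap J\subseteq I$, which forces $\operatorname{grade}(I\cap J)\le g$, this pins down $\operatorname{grade}(I\cap J)=g$. For each $i\ge g+1$ the terms $\operatorname{Ext}^i(R/I,R)\oplus\operatorname{Ext}^i(R/J,R)$ and $\operatorname{Ext}^{i+1}(R/I,R)\oplus\operatorname{Ext}^{i+1}(R/J,R)$ both vanish, so the connecting maps are isomorphisms $\operatorname{Ext}^i(R/(I\cap J),R)\cong \operatorname{Ext}^{i+1}(R/(I+J),R)$ for all $i\ge g+1$.

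\textbf{Concluding the equivalence.} With these isomorphisms the two directions are immediate. If $I+J$ is perfect of grade $g+1$, then $\operatorname{Ext}^{i+1}(R/(I+J),R)=0$ for every $i\ge g+1$, hence $\operatorname{Ext}^i(R/(I\cap J),R)=0$ for every $i\ge g+1$; together with the vanishing in degrees $<g$ established above, $\operatorname{Ext}^\bullet_R(R/(I\cap J),R)$ is concentrated in degree $g$, so $I\cap J$ is perfect. Conversely, if $I\cap J$ is perfect (necessarily of grade $g$), then $\operatorname{Ext}^i(R/(I\cap J),R)=0$ for $i\ge g+1$, so the isomorphisms give $\operatorname{Ext}^j(R/(I+J),R)=0$ for all $j\ge g+2$; together with the grade hypothesis this concentrates $\operatorname{Ext}^\bullet_R(R/(I+J),R)$ in degree $g+1$, so $I+J$ is perfect. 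The only genuinely delicate points are justifying that regularity of $R$ legitimately converts ``perfect'' into the clean $\operatorname{Ext}$-vanishing statement, and keeping the index bookkeeping in the long exact sequence straight so that the degree ranges line up correctly; neither is hard, but both must be stated with care.
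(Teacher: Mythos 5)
Your proof is correct. The paper does not supply its own argument for this lemma; it cites Hochster--Eagon's Proposition 18 in \cite{HE} and explicitly refers the reader there, so there is no in-paper proof to compare against. Your route --- the Mayer--Vietoris short exact sequence $0\to R/(I\cap J)\to R/I\oplus R/J\to R/(I+J)\to 0$, the induced long exact sequence in $\operatorname{Ext}_R(-,R)$, and the characterization of perfection over the regular ring $R$ as concentration of $\operatorname{Ext}^\bullet_R(R/K,R)$ in the single degree $\operatorname{grade}(K)$ --- is the standard and natural way to prove this, and is essentially the argument in \cite{HE}. The index bookkeeping is handled correctly: the flanking terms vanish in degrees $i<g$ giving $\operatorname{Ext}^i(R/(I\cap J),R)=0$ there, the connecting maps give $\operatorname{Ext}^i(R/(I\cap J),R)\cong\operatorname{Ext}^{i+1}(R/(I+J),R)$ for $i\ge g+1$, and the observation that $I\cap J\subseteq I$ pins $\operatorname{grade}(I\cap J)=g$ (while the hypothesis pins $\operatorname{grade}(I+J)=g+1$), so that concentration of one $\operatorname{Ext}^\bullet$ in its grade degree is equivalent to concentration of the other.
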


In the spirit of Lemma \ref{lem:HE} we study the following sum of ideals, which plays a key role in this paper.
	\begin{definition}
		\label{def:INK}
Fix an integer $k$ satisfying $1\leq k<k+1\leq n-k$, and define the \emph{Specht-monomial ideal} for the pair $(k,n)$ to be the sum of ideals
\begin{equation}
\label{eq:INK}
I(n,k)=\mathfrak{a}(n,k,k)+(x_1,\ldots,x_n)^{(k+1)}.
\end{equation}
	\end{definition} 

The next result is key to our results in this section, and it plays a similar role as Theorem \ref{thm:radD} in Section \ref{sec:radical}.  Note however that this result depends on the characteristic of $\F$.  It is Theorem \ref{thm:C}(3.) from the Introduction.
\begin{theorem}
	\label{thm:perfectD}
	Let $p=\operatorname{char}(\F)\geq 0$ and fix positive integers $n,k$ satisfying $n\geq 2k+1$.  Then the following conditions are equivalent.
	\begin{enumerate}
		\item $p=0$ or $p\geq k+1$.
		\item The Specht monomial ideal $I(n,k)$ satisfies
		\begin{align}
		\label{eq:INKD}
		I(n,k)= & I(n-1,k-1)\cap \left(\ydeal\right)\\
		\nonumber\text{where} \ \ \ & y_i=\begin{cases} x_n-x_i & \text{if} \ 1\leq i\leq n-1\\ x_n & \text{if} \ i=n\\ \end{cases}.
		\end{align}	
	\end{enumerate} 
\end{theorem}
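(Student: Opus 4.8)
The plan is to isolate first the containment valid in every characteristic, and then prove the two implications. Write $J=\ydeal$. I would begin by checking $I(n,k)\subseteq I(n-1,k-1)\cap J$ unconditionally. Each standard Specht generator $F_T$ of $\mathfrak a(n,k,k)$ has $n$ in its second row, so $F_T=\prod_{t<k}(x_{i_t}-x_{j_t})\cdot(x_{i_k}-x_n)$; expanding $x_{i_k}-x_n$ writes $F_T$ as a sum of two elements of $\mathfrak a(n-1,k-1,k-1)\cdot R$, hence $F_T\in I(n-1,k-1)$, while substituting $x_a=x_n-y_a$ turns each factor $x_{i_t}-x_{j_t}$ into $y_{j_t}-y_{i_t}$ and $x_{i_k}-x_n$ into $-y_{i_k}$, so $F_T$ expands into square-free degree-$k$ monomials in $y_1,\ldots,y_{n-1}$ and lies in $J$; a similar, easier check disposes of $(x_1,\ldots,x_n)^{(k+1)}$. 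Thus (2) is equivalent to the reverse inclusion $I(n-1,k-1)\cap J\subseteq I(n,k)$, which is the only characteristic-sensitive assertion.

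For (2)$\Rightarrow$(1) I would argue by contrapositive, comparing degree-$k$ graded pieces. Here $I(n,k)_k=\mathfrak a(n,k,k)_k=V(n,k,k)$, whereas, writing $A'=\F[x_1,\ldots,x_{n-1}]/(x_1^2,\ldots,x_{n-1}^2)$ with lowering operator $D_0$ and primitive space $P'_k=\ker D_0\cap A'_k$, one has $P'_k\subseteq (I(n-1,k-1)\cap J)_k$: indeed $A'_k=(x_1,\ldots,x_{n-1})^{(k)}_k\subseteq I(n-1,k-1)$, and if one writes a square-free degree-$k$ polynomial $\alpha$ in $x_1,\ldots,x_{n-1}$ in terms of $x_n$ and $y_1,\ldots,y_{n-1}$, then its $x_n$-linear coefficient has $y$-degree only $k-1$, hence lies in $(y_1,\ldots,y_{n-1})^{(k)}$ only when it vanishes, so $\alpha\in J\iff D_0\alpha=0\iff\alpha\in P'_k$. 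If now $0<p\le k$, then by Lemma \ref{lem:Vnkk} applied to $A'$ we get $V(n-1,k,k)\subsetneq P'_k$, and concretely $\eta=e_p(x_1,\ldots,x_{2p-1})\cdot\prod_{t=1}^{k-p}(x_{2p+2t-2}-x_{2p+2t-1})$ lies in $P'_k\setminus V(n,k,k)$: it is square-free of degree $k$ with $D_0\eta=0$ (since $D_0e_p(x_1,\ldots,x_{2p-1})=p\,e_{p-1}=0$), but at the point with $x_i=1$ for $i\le2p-1$ and for $i\ge2k$, $x_{2p+2t-2}=1$, and $x_{2p+2t-1}$ a value $\ne1$, we get $\eta\ne0$ (because $\binom{2p-1}{p}\not\equiv0\pmod p$), while every $F\in V(n,k,k)$ vanishes, since the $n-k+p\ge n-k+1$ coordinates equal to $1$ force, in each Specht monomial $\prod_{s=1}^k(x_{i_s}-x_{j_s})$, at least one of the $k$ disjoint pairs $\{i_s,j_s\}$ to lie entirely among them. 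Hence $(I(n-1,k-1)\cap J)_k\ne I(n,k)_k$, so (2) fails.

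For (1)$\Rightarrow$(2), assuming $p=0$ or $p\ge k+1$, I would run the proof of Theorem \ref{thm:radD} essentially verbatim, with $I(n-1,k-1)$, $J$, $I(n,k)$ playing the roles of $\mathfrak a(n,k,d-1)$, $(x_1,\ldots,x_n)^{(d)}$, $\mathfrak a(n,k,d)$: a ``summands'' lemma in the spirit of Lemma \ref{lem:monomialrad}, carried out in the $y$-coordinates where $J$ is essentially a square-free monomial ideal, reduces the inclusion to elements $\x^\a\nu$ with $\nu$ a generator of $I(n-1,k-1)$; $\mathfrak S_{n-1}$-symmetry and square-freeness reduce to $\x^\a=x_1\cdots x_m$; an analogue of Lemma \ref{lem:suppNo} kills the part of $V(n-1,k-1,k-1)$ not supporting $\{1,\ldots,m\}$; and the analogue of Lemma \ref{lem:suppYes} is the crux. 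In Theorem \ref{thm:radD} the residual term there vanished for degree reasons; here, via Corollary \ref{cor:inductive}, it has the shape $f_0-x_nD_0f_0$ for a square-free polynomial $f_0$ with $D_0f_0\in V(n-1,k-1,k-1)$, and the needed vanishing lemma (the ``Lemma \ref{lem:zero}'' cited in the introduction) amounts to the assertion $f_0\in V(n-1,k-1,k)$, so that $f_0-x_nD_0f_0\in V(n,k,k)\subseteq I(n,k)$ — the last identification being Corollary \ref{cor:CoC} read through $\Phi(g_0)\equiv(-1)^k(g_0-x_nD_0g_0)\pmod{x_n^2}$, i.e. $\Phi$ acting as the exponential of the lowering operator. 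The hypothesis enters exactly here: by Lemma \ref{lem:Vnkk} it makes $D_0\colon A'_k\to A'_{k-1}$ surjective and $V(n-1,k-1,k-1)=P'_{k-1}$, so $f_0\in\ker D_0^2\cap A'_k$; since $V(n-1,k-1,k)$ obviously sits in $\ker D_0^2\cap A'_k$ and a dimension count from Theorem \ref{thm:dimMinGenSpecht} gives $\dim V(n-1,k-1,k)=\dim P'_k+\dim P'_{k-1}=\dim(\ker D_0^2\cap A'_k)$, the two spaces coincide and $f_0\in V(n-1,k-1,k)$.

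The hard part is the analogue of Lemma \ref{lem:suppYes} in the implication (1)$\Rightarrow$(2): in Theorem \ref{thm:radD} it is a triviality, whereas here it demands recognizing a primitive square-free polynomial as a genuine shifted Specht polynomial — precisely the weak Lefschetz content of Kustin-Vraciu's Theorem \ref{thm:C}(2), and what forces $p\ge k+1$ rather than merely $p\ge k$. A secondary difficulty is that $I(n-1,k-1)$ is not stable under $\Phi$, so one must keep track of two coordinate systems simultaneously, running the bookkeeping of the summands lemma in the $x$-coordinates while $J$ is transparent only in the $y$-coordinates.
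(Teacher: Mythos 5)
Your proposal follows the same architecture as the paper's proof: an unconditional easy containment $I(n,k)\subseteq I(n-1,k-1)\cap J$, a contrapositive degree-$k$ argument for $(2)\Rightarrow(1)$, and an adaptation of the proof of Theorem \ref{thm:radD} for $(1)\Rightarrow(2)$ hinging on a characteristic-sensitive vanishing lemma (the paper's Lemma \ref{lem:zero}). You correctly identify all the moving parts, including the nonobvious observation that the $\y^\m$-supported residual is where the characteristic enters, and the role of $\Phi$ linking $\beta-x_nD_0\beta$ to $V(n,k,k)$ via Lemma \ref{lem:Dlem}; the only wrinkle you gloss over is that the congruence mod $x_n^2$ upgrades to an equality because both sides are square-free, which the paper also treats somewhat implicitly.

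The two places you diverge from the paper are genuine alternatives and worth noting. For $(2)\Rightarrow(1)$, the paper simply invokes the existence, from Lemma \ref{lem:Vnkk} and \cite{MMN}, of some $f\in\ker D_0\cap A'_k$ outside $V(n-1,k,k)$, whereas you produce an explicit witness $\eta=e_p(x_1,\ldots,x_{2p-1})\prod_{t=1}^{k-p}(x_{2p+2t-2}-x_{2p+2t-1})$ and verify $D_0\eta=0$ together with $\eta\notin V(n,k,k)$ by evaluation at a point with $n-k+p\geq n-k+1$ equal coordinates; this is a nice concretization. For the key vanishing lemma, the paper proves $\mu=\beta-x_nD_0\beta\in V(n,k,k)$ by running the $\mathfrak{sl}_2$-commutator $[D,L]=H$ to manufacture $\mu$ explicitly as $\mu_A-\mu_B$ with $\mu_A\in P_{A,k}$ and $\mu_B\in P_{B,k}$, whereas you instead observe $\beta\in\ker D_0^2\cap A'_k$ and identify this space with $V(n-1,k-1,k)$ by the dimension count $\dim(\ker D_0^2\cap A'_k)=\dim P'_k+\dim P'_{k-1}=\binom{n-1}{k}-\binom{n-1}{k-2}=\dim V(n-1,k-1,k)$, valid once surjectivity of $D_0$ in degrees $k$ and $k-1$ is granted by Lemma \ref{lem:Vnkk}. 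Your route is arguably cleaner and avoids explicitly invoking the raising operator, though it leans on knowing $\dim V(n-1,k-1,k)$ from Theorem \ref{thm:dimMinGenSpecht}; the paper's commutator computation is more constructive and makes the $\mathfrak{sl}_2$ structure the visible mechanism. Both are correct, and the rest of your sketch (summands lemma, support dichotomy, change-of-coordinates bookkeeping) follows the paper in outline.
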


Before embarking on the proof of Theorem \ref{thm:perfectD}, we will use it in conjunction with principal radical systems to prove Theorems \ref{thm:C}(4.) and also Theorem \ref{thm:SpechtP} from the Introduction.

\begin{theorem}
	\label{thm:perfect}
	Let $p=\operatorname{char}(\F)\geq 0$ and fix positive integers $n,k$ satisfying $n\geq 2k+1$.  Then the following conditions are equivalent.
	\begin{enumerate}
		\item $p=0$ or $p\geq k+1$.
		\item The Specht-monomial ideal $I(n,k)$ is perfect.
	\end{enumerate}
\end{theorem}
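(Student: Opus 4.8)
The plan is to derive Theorem \ref{thm:perfect} from Theorem \ref{thm:perfectD} together with an induction on $n$ that uses Lemma \ref{lem:perfectPRS} and Lemma \ref{lem:HE}. The two implications have quite different flavors, so I would treat them separately.

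\textbf{The implication (1.) $\Rightarrow$ (2.).} Assume $p=0$ or $p\geq k+1$. I would argue by induction on $n$, with an auxiliary induction on $k$. The base cases are small: when $k=0$ the ideal $I(n,0)=(x_1,\ldots,x_n)^{(1)}=(x_1,\ldots,x_n)$ is the maximal ideal and is trivially perfect, and small $n$ (like $n$ just large enough that $k+1\leq n-k$) can be checked directly or absorbed into the base of the induction. For the inductive step, note first that by Corollary \ref{cor:CoC} and a change of coordinates $\Phi$, the ideal $\mathfrak{a}(n,k,k)$ is carried to $\mathfrak{a}(n-1,k-1,k)$, which by Theorem \ref{thm:radD} equals $\mathfrak{a}(n-1,k-1,k-1)\cap(x_1,\ldots,x_{n-1})^{(k)}$; adding $(x_n)$ and tracking $\Phi$, this means $I(n,k)$ is related via $\Phi$ to $I(n-1,k-1)+(\y$-business$)$. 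More precisely, Theorem \ref{thm:perfectD} gives the decomposition
\begin{equation*}
I(n,k)=I(n-1,k-1)\cap\left(\ydeal\right),
\end{equation*}
valid precisely because we are in characteristic $p=0$ or $p\geq k+1$. Now I would apply Lemma \ref{lem:HE} (the Hochster–Eagon intersection lemma) with $I=I(n-1,k-1)$ (viewed in the larger ring, or after the coordinate change $\Phi$) and $J=\left(\ydeal\right)$. The ideal $J$ is, after the linear change of coordinates $y_i\mapsto x_i$, equal to $(x_1,\ldots,x_{n-1})^{(k)}+(x_n^2)$; this is a perfect ideal — it is the sum of the perfect monomial ideal $(x_1,\ldots,x_{n-1})^{(k)}$ (Lemma \ref{lem:PRSperfect}) with the regular element $x_n^2$ on the quotient, so $R/J$ is Cohen–Macaulay — of the appropriate grade. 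And $I(n-1,k-1)$ is perfect by the induction hypothesis (on $n$). One then checks that the grades match: $I(n-1,k-1)$ and $J$ both have grade equal to the common height, and their sum $I+J$ — which is essentially $I(n-1,k-1)$ with a principal ideal adjoined, or can be computed directly — has grade exactly one more. Then Lemma \ref{lem:HE} forces: $I\cap J=I(n,k)$ is perfect iff $I+J$ is perfect. Finally, to see $I+J$ is perfect, I would present it as $I(n-1,k-1)+(\text{one more element})$ in a way amenable to Lemma \ref{lem:perfectPRS}(1.), i.e. find a homogeneous $x$ with $(I+J:x)=I+J$ and $(I+J)+(x)$ perfect; a natural candidate is $x=x_n$ or a Vandermonde-type factor, after which $(I+J)+(x_n)$ collapses to something already known to be perfect by the inductive hypothesis and Lemma \ref{lem:PRSperfect}. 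Alternatively — and this is probably cleaner — one shows directly that $I+J$, after the coordinate change, is $I(n-1,k-1)+(x_n)$ up to the monomial relations, and invoke the induction hypothesis plus the fact that adjoining $x_n$ preserves perfection here.

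\textbf{The implication (2.) $\Rightarrow$ (1.).} For the converse I would argue contrapositively: if $0<p<k+1$, then $I(n,k)$ is not perfect. The key leverage is again Theorem \ref{thm:perfectD}, but now read in the failing direction: when $p<k+1$, the clean decomposition \eqref{eq:INKD} fails, and in fact (as the introduction signals) the Specht-monomial ideal acquires an embedded prime. Concretely, I would exhibit an element — built from the $p$-th elementary symmetric polynomial $e_p(x_1,\ldots,x_{2p-1})$, exactly as in the proof of Lemma \ref{lem:Vnkk} where $D(e_p(x_1,\ldots,x_{2p-1}))\equiv 0$ in characteristic $p$ — that witnesses a primary component of $I(n,k)$ of height strictly greater than the height of $I(n,k)$, i.e. an embedded associated prime, most plausibly the maximal ideal $\mathfrak{m}$ or a prime containing a large linear subspace. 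An ideal with an embedded prime cannot be unmixed, hence cannot be perfect (a perfect ideal is Cohen–Macaulay, hence unmixed). To make this work I would: first pin down $\operatorname{height}(I(n,k))$ (it equals $n-k$, the height of $\mathfrak{a}(n,k,k)$, since $(x_1,\ldots,x_n)^{(k+1)}$ has height $n-k$ and the two ideals share the generic linear-subspace components); then locate a socle-type element of $R/I(n,k)$ living in a component whose dimension exceeds the pure dimension, using the failure of surjectivity of $D\colon A_p\to A_{p-1}$ from Lemma \ref{lem:Vnkk}(3.) $\Rightarrow$ (1.) and the strict containment $V(n,p,p)\subsetneq P_p=\ker D\cap A_p$. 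This is Corollary \ref{cor:prime}/Theorem \ref{thm:D} territory, so I would lean on that machinery.

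\textbf{Where the difficulty lies.} The (1.) $\Rightarrow$ (2.) direction is mostly bookkeeping once Theorem \ref{thm:perfectD} is in hand: the real content has already been offloaded into Theorem \ref{thm:perfectD} (the hard decomposition) and into Lemma \ref{lem:HE}. The genuinely delicate point is verifying the grade hypothesis of Lemma \ref{lem:HE} — that $\operatorname{grade}(I+J)=\operatorname{grade}(I)+1=\operatorname{grade}(J)+1$ — and correctly identifying $I+J$ as a perfect ideal; getting the heights exactly right (as opposed to off by one) requires care with how the coordinate change $\Phi$ interacts with the monomial ideal $(x_1,\ldots,x_{n-1})^{(k)}+(x_n^2)$. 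For the (2.) $\Rightarrow$ (1.) direction, the main obstacle is constructing the explicit embedded-prime witness and proving it genuinely lies in an associated prime of $I(n,k)$ rather than merely in some larger ideal; this is where the elementary-symmetric-polynomial trick from Lemma \ref{lem:Vnkk} must be pushed from a statement about the Artinian quotient $A$ to a statement about the (non-Artinian) ring $R/I(n,k)$, presumably by exhibiting the relevant element modulo $I(n,k)$ and computing its annihilator. I expect the latter to be the true bottleneck of the argument.
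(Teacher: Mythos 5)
Your overall framework is the right one: Theorem \ref{thm:perfectD} gives the decomposition, Lemma \ref{lem:HE} converts the problem for the intersection $I(n,k)$ into a problem for the sum $J(n,k)=I(n-1,k-1)+\bigl(\ydeal\bigr)$, and then a principal-radical-system argument with $x=x_n$ closes the induction. That is exactly the paper's skeleton. But there is a genuine gap in the way you propose to execute the principal-radical step.

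You say you would apply Lemma \ref{lem:perfectPRS}(1.), i.e.\ find $x$ (naturally $x=x_n$) with $(J(n,k):x_n)=J(n,k)$ and $J(n,k)+(x_n)$ perfect. This fails: $x_n$ is \emph{not} a nonzerodivisor on $R/J(n,k)$, because $J(n,k)$ contains $x_n^2$ but not $x_n$. Indeed the paper's Lemma \ref{lem:JNK} computes
$(J(n,k):x_n)=(x_1,\ldots,x_{n-1})^{(k-1)}+(x_n)$,
which is strictly larger than $J(n,k)$ (it has $x_n$ in it, and lower-degree generators). Establishing this identity is nontrivial: the containment $(x_1,\ldots,x_{n-1})^{(k-1)}+(x_n)\subseteq (J(n,k):x_n)$ requires the change-of-variables identity in Lemma \ref{lem:Dlem} \emph{together with} surjectivity of the derivative map $D\colon B_k\to B_{k-1}$ (Lemma \ref{lem:Vnkk}), and it is precisely here that the hypothesis $p=0$ or $p\geq k+1$ is used a second time, independently of Theorem \ref{thm:perfectD}. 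One must then invoke Lemma \ref{lem:perfectPRS}(2.)\ (the case $(I:x)\neq I$), checking that $(J(n,k):x_n)$ and $J(n,k)+(x_n)=I(n-1,k-1)+(x_n)$ are perfect of the \emph{same} grade $n-k+2$. Your ``alternative'' — that $I+J$ is $I(n-1,k-1)+(x_n)$ up to the monomial relations — is not correct for the same reason: $J(n,k)$ contains $x_n^2$ but not $x_n$, so it is not $I(n-1,k-1)+(x_n)$.

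For the converse (2.)\ $\Rightarrow$ (1.), your instinct that an embedded prime is the obstruction is right, and you correctly acknowledge the real difficulty: passing from the Artinian picture (where $e_p(x_1,\ldots,x_{2p-1})$ lies in $\ker D$ but outside the Specht module) to an honest statement about associated primes of $R/I(n,k)$. The paper packages this step as Lemma \ref{lem:Nagata}: a localization argument using Nagata's theorem shows that perfection \emph{forces} the primary decomposition $I(n,k)=\bigcap_{\sigma}Q_\sigma$, and then the element $f$ exhibits $I(n,k)\subsetneq\bigcap_\sigma Q_\sigma$, contradicting perfection. You would need some substitute for Lemma \ref{lem:Nagata} — either that localization argument, or a direct computation of an associated prime that dominates the expected height $n-k+1$ — and this part of your sketch is too thin to count as a proof. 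In short: framework correct, but the colon computation (Lemma \ref{lem:JNK}) and the Nagata-style localization (Lemma \ref{lem:Nagata}) are both missing, and the first of those is a real error rather than merely a gap.
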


\begin{theorem}
	\label{thm:perfectSpecht}
	Let $p=\operatorname{char}(\F)\geq 0$ and fix positive integers $n,k$ satisfying $n\geq 2k+1$.  
	\begin{enumerate}
		\item If $p=0$ or $p\geq k+1$, then the Specht ideal $\mathfrak{a}(n+1,k+1,k+1)$ is perfect.
		\item If $n\geq 2p+1$, then the Specht ideal $\mathfrak{a}(n+1,p+1,p+1)$ is not perfect.
	\end{enumerate}
\end{theorem}

The proof for Theorem \ref{thm:perfect} occupies the next subsection, followed by the proof of Theorem \ref{thm:perfectSpecht}, and we save the proof of Theorem \ref{thm:perfectD} for the end.

\subsection{Proof of Theorem \ref{thm:C}(4.) or Theorem \ref{thm:perfect}}
We will focus on the implication (1.) $\Rightarrow$ (2.) first. Assuming that $p=0$ or $p\geq k+1$, Theorem \ref{thm:perfectD} says that for all $1\leq k<k+1\leq n-k$ the Specht-monomial ideal satisfies \eqref{eq:INKD}:
$$I(n,k)=I(n-1,k-1)\cap \left(\ydeal\right).$$

We need the following Lemma, which tells us how to go between $x$-variables and $y$-variables, and provides a direct link to Lefschetz properties from Section \ref{sec:Lefschetz}.
\begin{lemma}
	\label{lem:Dlem}
	Let $P^j(x_1,\ldots,x_{n-1})$ be any square-free polynomial of degree $j$ in the variables $x_1,\ldots,x_{n-1}$.  Then modulo the principal ideal $(x_n^2)$ we have 
	\begin{align*}
	P^j(x_1,\ldots,x_{n-1})\equiv & (-1)^j\left(P^j(y_1,\ldots,y_{n-1})-x_n\cdot D\left(P^j(y_1,\ldots,y_{n-1}\right)\right) & \text{mod} \ (x_n^2)
	\end{align*} where $D$ is the linear partial differentiation operator ${\dsp D=\frac{\partial}{\partial y_1}+\cdots+\frac{\partial}{\partial y_{n-1}}}$.  In particular, if $\alpha=\alpha(x_1,\ldots,x_{n-1})\in V(n-1,k-1,k-1)$ is a linear combination of Specht polynomials then 
	$$\alpha(x_1,\ldots,x_{n-1})=(-1)^{k-1}\cdot \alpha(y_1,\ldots,y_{n-1}).$$
\end{lemma}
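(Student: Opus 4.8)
The plan is to reduce immediately to a single square-free monomial and then expand a product of binomials. Since the substitution $x_i \mapsto y_i$ is linear and the operator $D$ is linear, both sides of the asserted congruence are $\F$-linear in $P^j$; hence it suffices to verify the identity when $P^j = \mu := x_{i_1}\cdots x_{i_j}$ for some subset $\{i_1 < \cdots < i_j\}\subseteq\{1,\ldots,n-1\}$, as these monomials span the space of square-free polynomials of degree $j$ in $x_1,\ldots,x_{n-1}$.

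First I would use $x_i = x_n - y_i$ for $1\le i\le n-1$ and expand
\[
\mu = \prod_{t=1}^{j}(x_n - y_{i_t}) = \sum_{S\subseteq\{1,\ldots,j\}} x_n^{\,j-|S|}\prod_{t\in S}(-y_{i_t}).
\]
Working modulo $(x_n^2)$, only the terms with $|S|=j$ and $|S|=j-1$ survive. The $|S|=j$ term is $(-1)^j y_{i_1}\cdots y_{i_j} = (-1)^j\,\mu(y_1,\ldots,y_{n-1})$, and the $|S|=j-1$ terms arise by omitting a single index $s$, so their total contribution is
\[
x_n(-1)^{j-1}\sum_{s=1}^{j}\prod_{t\neq s}y_{i_t}.
\]

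Next I would identify this inner sum with a derivative: since $\mu(y_1,\ldots,y_{n-1})=y_{i_1}\cdots y_{i_j}$ is square-free, $\partial/\partial y_i$ simply deletes $y_i$ when $i\in\{i_1,\ldots,i_j\}$ and annihilates the monomial otherwise, whence $D(\mu(y)) = \sum_{s=1}^{j}\prod_{t\neq s}y_{i_t}$. Plugging in yields, modulo $(x_n^2)$,
\[
\mu \equiv (-1)^j\mu(y) + (-1)^{j-1}x_n\,D(\mu(y)) = (-1)^j\bigl(\mu(y) - x_n\,D(\mu(y))\bigr),
\]
which is exactly the claimed formula for $P^j=\mu$. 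Summing over the monomials of a general square-free $P^j$ finishes the argument, using that $P^j(y_1,\ldots,y_{n-1})$ is again square-free of degree $j$ so that $D$ acts on it termwise as above.

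I do not anticipate a real obstacle here: the computation is essentially a truncated binomial (Taylor) expansion, and the only things needing care are the bookkeeping of the signs $(-1)^j$ versus $(-1)^{j-1}=-(-1)^j$, and the observation that truncating mod $(x_n^2)$ is precisely what isolates the ``value plus $x_n$ times first-order term'' shape, with the first-order coefficient given by $D$ acting on the square-free leading term.
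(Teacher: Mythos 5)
Your proof is correct and follows essentially the same approach as the paper: reduce by linearity to a single square-free monomial, expand the product $\prod_t(x_n-y_{i_t})$, and read off the constant and $x_n$-linear terms mod $(x_n^2)$, identifying the latter with $-D$ applied to the $y$-monomial. The only cosmetic difference is that you keep a general index set $\{i_1,\ldots,i_j\}$ where the paper additionally invokes symmetry to take $\mu=x_1\cdots x_j$; the computation is otherwise identical.
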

\begin{proof}
	By linearity of $D$ it suffices to assume that $P^j$ is a square-free monomial, and by symmetry we may assume is $P^j(x_1,\ldots,x_{n-1})=x_1\cdots x_j$.  Then we have 
	\begin{align*}
	P^j(x_1,\ldots,x_{n-1})= & x_1\cdots x_j= (x_n-y_1)\cdots(x_n-y_j)\\
	= & x_n^2\cdot \left(\text{stuff}\right)+(-1)^{j-1}\cdot x_n\left(\sum_{i=1}^j y_1\cdots \hat{y_i}\cdots y_j\right)+(-1)^j\cdot y_1\cdots y_j\\
	 & (\text{where $\hat{y}$ means omission})\\
	 = & (-1)^j\left(y_1\cdots y_j - x_nD\left(y_1\cdots y_j\right)\right)+x_n^2\cdot \left(\text{stuff}\right)\\
	 \equiv & (-1)^j\left(P^j(y_1,\ldots,y_{n-1})-x_nD\left(P^j(y_1,\ldots,y_{n-1})\right)\right) & \text{mod} \ (x_n^2)
	\end{align*}
	as claimed.
	The second statement follows from the first since $D(\alpha)=0$.
\end{proof}

Next, for each pair of positive integers $n,k$ satisfying $n\geq 2k+1$ let us form the new ideal
\begin{equation}
\label{eq:JNK}
J(n,k)=I(n-1,k-1)+\ydeal.
\end{equation}

\begin{lemma}
	\label{lem:JNK}
	Assume that $p=0$ or $p\geq k+1$.  Then the ideal $J(n,k)$ satisfies
	$$J(n,k)+(x_n)=I(n-1,k-1)+(x_n), \ \ \text{and} \ \ (J(n,k):x_n)=(x_1,\ldots,x_{n-1})^{(k-1)}+(x_n).$$
\end{lemma}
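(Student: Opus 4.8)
The plan is to prove the two displayed equalities separately. The first, $J(n,k)+(x_n)=I(n-1,k-1)+(x_n)$, is elementary and independent of $p$: since $(x_n^2)\subseteq(x_n)$ we have $J(n,k)+(x_n)=I(n-1,k-1)+(y_1,\ldots,y_{n-1})^{(k)}+(x_n)$, and modulo $(x_n)$ each $y_i=x_n-x_i$ reduces to $-x_i$, so $(y_1,\ldots,y_{n-1})^{(k)}+(x_n)=(x_1,\ldots,x_{n-1})^{(k)}+(x_n)$; finally $(x_1,\ldots,x_{n-1})^{(k)}\subseteq\mathfrak{a}(n-1,k-1,k-1)+(x_1,\ldots,x_{n-1})^{(k)}=I(n-1,k-1)$, which gives the claim.

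For the colon ideal I would prove the two inclusions of $(J(n,k):x_n)=(x_1,\ldots,x_{n-1})^{(k-1)}+(x_n)$ in turn. For $\supseteq$: since $x_n^2\in J(n,k)$ already gives $x_n\in(J(n,k):x_n)$, it suffices to show $x_n\,x_\mu\in J(n,k)$ for every square-free degree-$(k-1)$ monomial $x_\mu$ in $x_1,\ldots,x_{n-1}$. Writing $x_T=\prod_{i\in T}x_i$ for a $k$-subset $T\subseteq\{1,\ldots,n-1\}$ and $D'(x_T):=\sum_{i\in T}x_{T\setminus\{i\}}$, a short computation from Lemma~\ref{lem:Dlem} yields $x_n\,D'(x_T)\equiv x_T-(-1)^k\prod_{i\in T}y_i\pmod{(x_n^2)}$; since $x_T\in(x_1,\ldots,x_{n-1})^{(k)}\subseteq I(n-1,k-1)$, $\prod_{i\in T}y_i\in(y_1,\ldots,y_{n-1})^{(k)}$, and $x_n^2$ all lie in $J(n,k)$, we conclude $x_n\,D'(x_T)\in J(n,k)$ for every such $T$. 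Now $x_T\mapsto D'(x_T)$ is precisely the derivative map $D\colon A_k\to A_{k-1}$ of Section~\ref{sec:Lefschetz} for the monomial complete intersection in the $n-1$ variables $x_1,\ldots,x_{n-1}$; since $p=0$ or $p\geq k+1$ and $2k\leq n-1$, Lemma~\ref{lem:Vnkk} (applied with $n$ replaced by $n-1$) makes this map surjective, so each $x_\mu$ with $|\mu|=k-1$ is an $\F$-linear combination of the $D'(x_T)$, whence $x_n\,x_\mu\in J(n,k)$. This establishes $(x_1,\ldots,x_{n-1})^{(k-1)}+(x_n)\subseteq(J(n,k):x_n)$, and this is the only place where the hypothesis on $p$ is used.

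For $\subseteq$: since $x_n\in(J(n,k):x_n)$, it is enough to show that the image of $(J(n,k):x_n)$ in $R/(x_n)\cong S:=\F[x_1,\ldots,x_{n-1}]$ is contained in $(x_1,\ldots,x_{n-1})^{(k-1)}$. Given $f$ with $x_n f\in J(n,k)$, let $\bar f\in S$ be the image of $f$ under $x_n\mapsto 0$; then $x_n\bar f=x_n f-x_n^2(\cdots)\in J(n,k)$ as well, so I may work with $\bar f$. Writing $x_n\bar f=\sum_jF_{T_j}h_j+\sum_\ell x_{\sigma_\ell}h_\ell'+\sum_m(\prod_{i\in U_m}y_i)p_m+x_n^2q$, where the $F_{T_j}$ are degree-$(k-1)$ Specht polynomials, the $x_{\sigma_\ell}$ are square-free degree-$k$ monomials in $x_1,\ldots,x_{n-1}$, $|U_m|=k$, and $h_j,h_\ell',p_m,q\in R$, I apply to both sides the $\F$-linear operator ``$\partial_{x_n}$ then $x_n\mapsto 0$''. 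The left side becomes $\bar f$. On the right, $\partial_{x_n}$ is a derivation that kills the $F_{T_j}$ and the $x_{\sigma_\ell}$ (no $x_n$ occurs in them) and, after the substitution, kills $x_n^2q$, while $\partial_{x_n}\prod_{i\in U_m}y_i=\sum_{i\in U_m}\prod_{j\in U_m\setminus\{i\}}y_j$ specializes to $(-1)^{k-1}\sum_{i\in U_m}x_{U_m\setminus\{i\}}$. Thus $\bar f$ is exhibited as a sum of elements of $\mathfrak{a}(n-1,k-1,k-1)$, of $(x_1,\ldots,x_{n-1})^{(k)}$, and of $(x_1,\ldots,x_{n-1})^{(k-1)}$ (the last from the terms $\sum_{i\in U_m}x_{U_m\setminus\{i\}}$); since every monomial of a degree-$(k-1)$ Specht polynomial is square-free of degree $k-1$, all three are contained in $(x_1,\ldots,x_{n-1})^{(k-1)}$, so $\bar f\in(x_1,\ldots,x_{n-1})^{(k-1)}$, as needed.

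I expect the inclusion $\subseteq$ to be the main obstacle. The substance there is the bookkeeping after applying $\partial_{x_n}$ and specializing, and the key point is that $\partial_{x_n}$ leaves the Specht generators untouched (they involve no $x_n$) and only ``activates'' the $y$-monomials, converting them into combinations of square-free degree-$(k-1)$ monomials, which automatically lie in $(x_1,\ldots,x_{n-1})^{(k-1)}$. By contrast the first equality and the inclusion $\supseteq$ are short, and the weak Lefschetz input of Lemma~\ref{lem:Vnkk} supplies the single use of the characteristic hypothesis.
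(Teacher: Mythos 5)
Your proof is correct and in essence reproduces the paper's argument: both directions of the colon identity rest on Lemma~\ref{lem:Dlem} together with the surjectivity of the derivative map from Lemma~\ref{lem:Vnkk}, and the first equality is the same one-line reduction modulo $(x_n)$. The only real difference is cosmetic: for $\subseteq$, the paper writes $x_n\cdot G$ in $J(n,k)$, uses Lemma~\ref{lem:Dlem} to replace each $y$-monomial by an $x$-monomial minus $x_n$ times its derivative, and then cancels the visible factor of $x_n$ using that $x_n$ is a non-zerodivisor modulo $I(n-1,k-1)$; you instead apply the operator ``$\partial_{x_n}$, then $x_n\mapsto 0$'' to extract the coefficient of $x_n$, which is the same bookkeeping packaged differently. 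One small caution: when you differentiate $\bigl(\prod_{i\in U_m}y_i\bigr)p_m$, the product rule also produces $\bigl(\prod_{i\in U_m}y_i\bigr)\partial_{x_n}(p_m)$, which you do not mention; after specialization this lands in $(x_1,\ldots,x_{n-1})^{(k)}\subseteq(x_1,\ldots,x_{n-1})^{(k-1)}$, so the conclusion is unaffected, but the term should be accounted for. Your $\supseteq$ argument and the paper's are the same up to working with $D'$ in the $x$-variables versus $D$ in the $y$-variables.
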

\begin{proof}
	We have 
	\begin{align*}
	J(n,k)+(x_n)= & I(n-1,k-1)+\ydeal+(x_n)\\
	= & \mathfrak{a}(n-1,k-1,k-1)+(x_1,\ldots,x_{n-1})^{(k)}+(y_1,\ldots,y_{n-1})^{(k)}+(x_n)\\
	= & \mathfrak{a}(n-1,k-1,k-1)+(x_1,\ldots,x_{n-1})^{(k)}+(x_n)\\
	= & I(n-1,k-1)+(x_n).
	\end{align*}
	For the other equality, note first that containment $(J(n,k):x_n)\supseteq (x_1,\ldots,x_{n-1})^{(k-1)}+(x_n)=(y_1,\ldots,y_{n-1})^{(k-1)}+(x_n)$ follows from Lemma \ref{lem:Dlem}.  Indeed identifying the space of square-free polynomials with the monomial complete intersection $B=\F[x_1,\ldots,x_{n-1}]/(y_1^2,\ldots,y_{n-1}^2)$, Lemma \ref{lem:Vnkk} implies that the derivative map $D\colon B_{k}\rightarrow B_{k-1}$ is surjective, and hence  for any square-free monomial of degree $k-1$ in variables $y_1,\ldots,y_{n-1}$, say $P=y_1\cdots y_{k-1}$, we know by Lemma \ref{lem:Vnkk} there is square-free polynomial of degree $k$ for which
	$$D(Q(y_1,\ldots,y_{n-1}))=P(y_1,\ldots,y_{n-1})=y_1\cdots y_{k-1}.$$
	Then Lemma \ref{lem:Dlem} implies that  
	\begin{align*}
	P\cdot x_n= & y_1\cdots y_{k-1}\cdot x_n=x_n\cdot D\left(Q(y_1,\ldots,y_{n-1})\right)\\
	\equiv & Q(x_1,\ldots,x_{n-1})\pm Q(y_1,\ldots,y_{n-1}) & \text{mod} \ (x_n^2)	\end{align*}  
	Since $Q(x_1,\ldots,x_{n-1})\in I(n-1,k-1)$ and $Q(y_1,\ldots,y_{n-1})\in (y_1,\ldots,y_{n-1})^{(k)}$, it follows that $x_n\cdot P=x_n\cdot D(Q(y_1,\ldots,y_{n-1}))\in J(n,k)$, and hence $P=y_1\cdots y_{k-1}\in (J(n,k):x_n)$.  
	For the reverse containment, fix $G\in (J(n,k):x_n)$.  Then for each $S\in\stab(n-1,0,k)$ there exists polynomials $d_S\in R$ for which 
	$$x_n G-\sum_{S\in \stab(n-1,0,k)}d_SM_S\in I(n-1,k-1)+(x_n^2)$$
	where $M_S=M_S(y_1,\ldots,y_{n-1})\in (y_1,\ldots,y_{n-1})^{(k)}$ are square-free monomials of degree $k$ in the $y$-variables.  By Lemma \ref{lem:Dlem} we have for each $S\in\stab(n-1,0,k)$ 
	\begin{align*}
	M_S(y_1,\ldots,y_{n-1})\equiv & \pm \left(M_S(x_1,\ldots,x_{n-1})-x_n D(M_S(x_1,\ldots,x_{n-1}))\right) & \text{mod} \ (x_n^2)
	\end{align*}
	and since $M_S(x_1,\ldots,x_{n-1})\in (x_1,\ldots,x_{n-1})^{(k)}\subset I(n-1,k-1)$, we see that 
	\begin{align*}
	x_nG-\sum_{S\in \stab(n-1,0,k)}d_SM_S(y_1,\ldots,y_{n-1})\equiv & x_nG- \sum_{S\in \stab(n-1,0,k)}d_Sx_nD(M_S(y_1,\ldots,y_{n-1}))\\
	\equiv & 0 \ \  \text{mod} \ I(n-1,k-1)+(x_n^2)
	\end{align*}
	Since $x_n$ is a non-zero divisor for $I(n-1,k-1)$, it follows that 
	$$G-\sum_{S\in \stab(n-1,0,k)}d_SD(M_S(x_1,\ldots,x_{n-1}))\in I(n-1,k-1)+(x_n)\subset (x_1,\ldots,x_{n-1})^{(k-1)}+(x_n)$$
	and since $D(M_S(x_1,\ldots,x_{n-1}))\in (x_1,\ldots,x_{n-1})^{(k-1)}$ for all $S\in\stab(n-1,0,k)$, we see also that $G\in (x_1,\ldots,x_{n-1})^{(k-1)}+(x_n)$, as desired.
\end{proof}

We are now in a position to prove implication (1.) $\Rightarrow$ (2.) in Theorem \ref{thm:perfect}.
\begin{proof}[Proof of (1.) $\Rightarrow$ (2.) in Theorem \ref{thm:perfect}]
	Assume that $p=0$ or $p\geq k+1$.  We prove by induction on $n\geq 3$ for each integer $k$ satisfying $1\leq k<k+1\leq n-k$ then the ideal $I(n,k)$ is perfect.  The base case is $n=3$ where the only possible $k$ value is $k=1$.  Here the assumption on $p$ is vacuous, and we have  
	\begin{align*}
	I(3,1)= & \mathfrak{a}(3,1,1)+(x_1,x_2,x_3)^{(2)}=(x_1-x_3,x_2-x_3,x_1x_2,x_1x_3,x_2x_3).
	\end{align*}
	Note that $\F[x_1,x_2,x_3]/I(3,1)\cong \F[z]/(z^2)$ is Cohen-Macaulay, which implies that $I(3,1)$ is perfect of grade $n-k+1=3$.  
	
	For the inductive step, assume that $I(n-1,j)$ is perfect for all integers $j$ satisfying $1\leq j<j+1\leq n-1-j$.  Fix $k$ satisfying $1\leq k<k+1\leq n-k$, and consider the Specht-monomial ideal
	\begin{align*}
	I(n,k)= & \mathfrak{a}(n,k,k)+(x_1,\ldots,x_n)^{(k+1)}.
	\end{align*}
	Consider the sum $J(n,k)$ as in \eqref{eq:JNK}, i.e.
	$$J(n,k)=I(n-1,k-1)+(y_1,\ldots,y_{n-1})^{(k)}+(x_n^2).$$  
	Note that $I(n-1,k-1)$ is perfect by the induction hypothesis.  Also $(y_1,\ldots,y_{n-1})^{(k)}+(x_n^2)$ is perfect since $(y_1,\ldots,y_{n-1})^{(k)}$ is perfect (by Lemma \ref{lem:PRSperfect}), and $x_n^2$ is $(y_1,\ldots,y_{n-1})^{(k)}$-regular.  Therefore by Lemma \ref{lem:HE}, $I(n,k)$ is perfect if and only if $J(n,k)$ is perfect.  But according to Lemma \ref{lem:JNK} $J(n,k)+(x_n)=I(n-1,k-1)+(x_n)$ and $(J(n,k):x_n)=(x_1,\ldots,x_{n-1})^{(k-1)}+(x_n)$ which are both perfect of the same grade $g=n-k+2$, and hence by Lemma \ref{lem:PRSperfect}, it follows that $J(n,k)$ is also perfect (of grade $g=n-k+2$).  Thus it follows that $I(n,k)$ is perfect completing the induction step, and hence the proof.
\end{proof}

Next we prove the reverse implication (2.) $\Rightarrow$ (1.) in Theorem \ref{thm:perfect} which also requires a bit of a set up.  First note that Theorem \ref{thm:perfectD} implies that if $p=0$ or $p\geq k+1$, then $I(n,k)$ has the following irredundant primary decomposition:
\begin{equation}
\label{eq:InkPD}
I(n,k)=\mathfrak{a}(n,k,k)+(x_1,\ldots,x_n)^{(k+1)}=\bigcap_{\sigma\in \mathfrak{S}_n}\underbrace{\sigma.\left(x_1-x_2,\ldots,x_1-x_{n-k+1},x_1^2\right)}_{Q_\sigma}
\end{equation}
Note that for $\sigma=e$, we have $Q_e=\mathfrak{a}(n-k+1,1,1)+(x_1,\ldots,x_{n-k+1})^{(2)}$; in particular, $Q_e$ contains all quadratic forms in the variables $x_1,\ldots,x_{n-k+1}$.
\begin{lemma}
	\label{lem:Nagata}
	Over any field $\F$, and for any positive integers $n,k$ satisfying $n\geq 2k+1$, if $I(n,k)$ is perfect then $I(n,k)$ must satisfy the decomposition \eqref{eq:InkPD}.
\end{lemma}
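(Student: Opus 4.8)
The plan is to pin down, for each minimal prime of $I(n,k)$, its primary component by a direct localization computation, and then to exploit that a perfect (hence unmixed) ideal is the intersection of its components at the minimal primes. Throughout write $m=n-k+1$; for an $m$-element subset $S\subseteq\{1,\ldots,n\}$ set $P_S=(x_i\mid i\in S)$, $\mathfrak p_S=(x_i-x_j\mid i,j\in S)$, and $Q_S=\mathfrak p_S+(x_\ell^2)$ for any $\ell\in S$. This last ideal is independent of $\ell$, since $x_\ell^2-x_{\ell'}^2\in\mathfrak p_S$, and it is exactly the ideal $\sigma.(x_1-x_2,\ldots,x_1-x_m,x_1^2)$ occurring in \eqref{eq:InkPD} for any $\sigma\in\mathfrak S_n$ with $\sigma(\{1,\ldots,m\})=S$; thus the right-hand side of \eqref{eq:InkPD} is $\mathfrak b:=\bigcap_{\lvert S\rvert=m}Q_S$, and the goal is to show $I(n,k)=\mathfrak b$ whenever $I(n,k)$ is perfect.

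First I would record three characteristic-free facts. (i) $R/Q_S$ is a polynomial ring in $k$ variables modulo the square of one of them (collapse the $x_i$, $i\in S$, to a single variable $z$, then kill $z^2$), so $\Ass(R/Q_S)=\{P_S\}$ and $Q_S$ is $P_S$-primary. (ii) By Theorem \ref{thm:rad} together with \eqref{eq:Inh}, $\mathfrak a(n,k,k)=\bigcap_{\lvert S\rvert=m}\mathfrak p_S$; and since a squarefree monomial of degree $k+1$ must use at least two variables from any fixed $m$-subset $S$ (its complement has only $k-1$ elements), every such monomial lies in $Q_S$, whence $I(n,k)\subseteq\bigcap_S Q_S=\mathfrak b$. (iii) Conversely $\mathfrak b\subseteq Q_S\subseteq\sqrt{Q_S}=P_S$, so $I(n,k)\subseteq P_S$ for every $S$; and a point in $\mathbf V(I(n,k))$ has some $m$ coordinates equal and at least $n-k$ coordinates zero, forcing those equal coordinates to be zero because $k-1<n-k$. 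Hence $\mathbf V(I(n,k))=\bigcup_{\lvert S\rvert=m}\mathbf V(P_S)$, so $\sqrt{I(n,k)}=(x_1,\ldots,x_n)^{(k)}$ and $\operatorname{Min}(I(n,k))=\{P_S:\lvert S\rvert=m\}$.

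The key step — and the one I expect to be the main obstacle, though it is entirely elementary — is the claim $I(n,k)R_{P_S}=Q_SR_{P_S}$ for every $m$-subset $S$. By $\mathfrak S_n$-symmetry it suffices to take $S=\{1,\ldots,m\}$; write $P=P_S$ and $\mathfrak p_e=\mathfrak p_S=(x_1-x_2,\ldots,x_1-x_m)$. First one checks $\mathfrak a(n,k,k)R_P=\mathfrak p_eR_P$: among the primes $\sigma.\mathfrak p_e$ ($\sigma\in\mathfrak S_n$) whose intersection is $\mathfrak a(n,k,k)$, the linear space $\{x_{\sigma(1)}=\cdots=x_{\sigma(m)}\}$ contains $\{x_1=\cdots=x_m=0\}$ only when $\{\sigma(1),\ldots,\sigma(m)\}=\{1,\ldots,m\}$, so $\mathfrak p_e$ is the unique one contained in $P$, all others becoming the unit ideal in $R_P$. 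Thus $I(n,k)R_P=\mathfrak p_eR_P+(x_1,\ldots,x_n)^{(k+1)}R_P$, and I pass to $R_P/\mathfrak p_eR_P$, which is the discrete valuation ring $\F[z,x_{m+1},\ldots,x_n]_{(z)}$ with $z$ the common image of $x_1,\ldots,x_m$ and with $x_{m+1},\ldots,x_n$ units. There the image of a squarefree monomial of degree $k+1$ is $z^a$ times a unit with $a\ge 2$, and $a=2$ is attained (for instance by $x_1x_2x_{m+1}\cdots x_n$), so the image of $(x_1,\ldots,x_n)^{(k+1)}$ equals $(z^2)$, which is also the image of $(x_1^2)$. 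Hence $I(n,k)R_P=\mathfrak p_eR_P+x_1^2R_P=Q_SR_P$.

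Finally, suppose $I(n,k)$ is perfect, so that $R/I(n,k)$ is Cohen--Macaulay; then $R/I(n,k)$ is unmixed (e.g.\ \cite{Matsumura}), i.e.\ $\Ass(R/I(n,k))=\operatorname{Min}(I(n,k))=\{P_S:\lvert S\rvert=m\}$ with no embedded components. Therefore $I(n,k)=\bigcap_{\lvert S\rvert=m}\bigl(I(n,k)R_{P_S}\cap R\bigr)$, and by the previous paragraph together with the $P_S$-primariness of $Q_S$ we get $I(n,k)R_{P_S}\cap R=Q_SR_{P_S}\cap R=Q_S$. Hence $I(n,k)=\bigcap_{\lvert S\rvert=m}Q_S=\bigcap_{\sigma\in\mathfrak S_n}Q_\sigma$, which is exactly the decomposition \eqref{eq:InkPD}.
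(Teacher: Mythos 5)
Your proof is correct and takes essentially the same route as the paper: both identify the minimal primes $P_S$ of $I(n,k)$, invoke perfection to rule out embedded primes (so $I(n,k)$ is the intersection of its localizations $I(n,k)R_{P_S}\cap R$), and then verify $I(n,k)R_{P_S}=Q_SR_{P_S}$ by a localization computation. The only cosmetic difference is that you carry out that computation by passing to the DVR $R_P/\mathfrak p_e R_P\cong\F[z,x_{m+1},\ldots,x_n]_{(z)}$, whereas the paper shows $Q_SR_{P_S}\subseteq I(n,k)R_{P_S}$ by exhibiting, for each generator of $Q_S$, a Specht polynomial or monomial in $I(n,k)$ that becomes that generator times a unit in $R_{P_S}$.
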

\begin{proof}
	Assume that $I(n,k)$ is perfect, and consider the intersection of primary ideals as in \eqref{eq:InkPD}, i.e.
	$$I'(n,k)=\bigcap_{\sigma\in \mathfrak{S}_n}Q_{\sigma}$$
	with minimal associated prime divisors given by 
	$$P_\sigma=\sqrt{Q_{\sigma}}=\sigma.\left(x_1,\ldots,x_{n-k+1}\right).$$
	We would like to show that $I(n,k)=I'(n,k)$.  Note first that they have the same radical:
	\begin{equation}
	\label{eq:INKRad}
	\sqrt{I'(n,k)}=\bigcap_{\sigma\in\mathfrak{S}_n}P_{\sigma}=(x_1,\ldots,x_n)^{(k)}=\sqrt{I(n,k)}.
	\end{equation}
	For the last equality, the containment $I(n,k)\subseteq (x_1,\ldots,x_n)^{(k)}$ is clear, and the other containment follows from the containment: 
	$$x_{i_1}^2\cdots x_{i_k}^2=x_{i_1}\cdots x_{i_k}\cdot F_T+\left(\text{stuff in} \ (x_1,\ldots,x_n)^{(k+1)}\right)\in I(n,k)$$
	where 
	$$T=\begin{ytableau}
	\none & \none & \none & i_1 & \cdots & i_k\\
	i_{k+1} & \cdots & i_{n-k} & j_1 & \cdots & j_k\\
	\end{ytableau}$$
	It follows that $\{P_\sigma \ | \ \sigma\in\mathfrak{S}_n\}$ is a complete list of minimal prime divisors of $I(n,k)$, which implies equation \eqref{eq:INKRad}.
	
	Set $I'=I'(n,k)$ and $I=I(n,k)$.  Suppose that a primary decomposition of $I$ is given by 
	$I=U_1\cap\cdots\cap U_m$.  Since $I$ is perfect, all of its associated prime divisors must be minimal, and hence the primary components must be indexed by the symmetric group and we can write 
	$$I=\bigcap_{\sigma\in \mathfrak{S}_n}U_{\sigma}$$
	where $\sqrt{U_\sigma}=P_{\sigma}$.
	Fix $\sigma\in\mathfrak{S}_n$, and set $U=U_\sigma$, $P=P_\sigma$, and $Q=Q_\sigma$.  We want to show that $U_\sigma=Q_\sigma$.  Let $R_P$ be the polynomial ring $R=\F[x_1,\ldots,x_n]$ localized at the prime ideal $P$.  By a theorem of Nagata \cite[Theorem 8.7]{Nagata} we have 
	$$U=IR_P\cap R \ \ \text{and} \ \ Q=JR_P\cap R.$$
	We will prove that in the local ring $R_P$, the ideals are equal $IR_P=JR_P$.  Certainly because of the containment $I\subseteq J$, we have also $IR_P\subseteq JR_P$.  In the other direction we observe that $JR_P=QR_P$.  Without loss of generality we may assume that $\sigma=e$, and $Q=(x_1-x_2,\ldots,x_{1}-x_{n-k+1},x_1^2)$.  For each pair $1\leq r<s\leq n-k+1$ we may choose $1\leq i_1<\cdots<i_{k-1}\leq n-k+1$ such that $r,s\notin\left\{i_1,\ldots,i_{k-1}\right\}$ and setting $j_i=n-k+1+i$ define the tableau
	$$T=\begin{ytableau}
	\none & \none & \none & i_1 & \cdots & i_{k-1} & r\\
	i_{k+1} & \cdots & i_{n-k} & j_1 & \cdots & j_{k-1} & s\\
	\end{ytableau}$$
	Then in the local ring $R_P$ the Specht polynomial $F_T\in I$ has the form $F_T=u\cdot (x_r-x_s)$ where $u\in R_P$ is a unit.  Also consider the monomial
	$$M_S=x_{n-k+2}\cdots x_n\cdot x_r\cdot x_s\in (x_1,\ldots,x_n)^{(k)}\subset I.$$
	Then in the local ring $R_P$ it has the form $M=w\cdot x_r\cdot x_s$ where $w\in R_P$ is a unit.  Therefore the generators of $QR_P$ satisfy $(x_r-x_s)\in IR_P$ and $x_r\cdot x_s\in IR_P$ it follows that $QR_P\subset IR_P$ and hence that $JR_P\subset IR_P$, as desired.
\end{proof}

In particular Lemma \ref{lem:Nagata} implies that if $I(n,k)$ is not perfect then it must have an embedded prime divisor.  The upshot here is that embedded prime divisors seem to be an easier check for imperfection than, say, extraneous syzygies in a minimal free resolution.  Note that Theorem \ref{thm:rad} implies that the Specht ideals $\mathfrak{a}(n+1,k+1,k+1)$ never have embedded prime divisors in any characteristic, and this is one of the main reasons for the disparity between Theorems \ref{thm:perfect} and \ref{thm:perfectSpecht}.  We are now in a position to prove the remainder of Theorem \ref{thm:perfect}.

\begin{proof}[Proof of (2.) $\Rightarrow$ (1.) in Theorem \ref{thm:perfect}]
	Assume that $p$ satisfies $0<p<k+1$.  We will show that $I(n,k)$ is not perfect by showing it does not satisfy Decomposition \eqref{eq:InkPD}, or equivalently that  
	\begin{equation}
	\label{eq:ineq}
	\underbrace{\mathfrak{a}(n,k,k)+(x_1,\ldots,x_n)^{(k+1)}}_{I(n,k)}\neq \underbrace{\mathfrak{a}(n-1,k-1,k-1)+(x_1,\ldots,x_{n-1})^{(k)}}_{I(n-1,k-1)}\cap\left(\ydeal\right).
	\end{equation}
	We identify the space of square-free $x$-monomials with the monomial complete intersection $B=\F[x_1,\ldots,x_{n-1}]/(x_1^2,\ldots,x_{n-1}^2)$
	and $D=D_B=\frac{\partial}{\partial x_1}+\cdots+\frac{\partial}{\partial x_{n-1}}$ the associated lowering operator.  From our assumptions on $p$, Lemma \ref{lem:Vnkk} implies that the derivative map $D\colon B_k\rightarrow B_{k-1}$ is not surjective, and hence (by Lemma \ref{lem:Dsurj}), $V(n-1,k,k)=\ker(D)\cap B_k$.  In particular, there must exist a square-free polynomial of degree $k$, say $f=f(x_1,\ldots,x_{n-1})\in (x_1,\ldots,x_{n-1})^{(k)}$ with the property that $D(f)=0$ but $f\notin V(n-1,k,k)$.  Since $D(f)=0$, we deduce from Lemma \ref{lem:Dlem} that 
	$$f(x_1,\ldots,x_{n-1})\equiv f(y_1,\ldots,y_{n-1}) \equiv 0\ \ \text{mod} \ (y_1,\ldots,y_{n-1})^{(k)}+ (x_n^2).$$
	Therefore, $f\in I(n-1,k-1)\cap \left(\ydeal\right)$.  On the other hand, since $f\notin V(n-1,k,k)$ it follows from Lemma \ref{lem:restrict} that $f\notin V(n,k,k)$ either, and it follows that $f\notin I(n,k)$.  This shows that Inequality \eqref{eq:ineq} holds, and hence by Lemma \ref{lem:Nagata}, the Specht-monomial ideal $I(n,k)$ is not perfect.
\end{proof}

\subsection{Proof of Theorem \ref{thm:SpechtP} or Theorem \ref{thm:perfectSpecht}}
\label{sec:perfectSpecht}

\begin{proof}
	First assume that $p=0$ or $p\geq k+1$.  We prove by induction on $n\geq 3$ that for each integer $k$ satisfying $1\leq k<k+1\leq n-k$, and each $i$ satisfying $1\leq i\leq k$ the Specht ideal $\mathfrak{a}(n+1,i+1,i+1)$ is perfect.  First recall that  Corollary \ref{cor:CoC} says the change of coordinates map $\Phi$ gives a ring isomorphism
$$\mathfrak{a}(n+1,i+1,i+1)\cong \mathfrak{a}(n,i,i+1)=\mathfrak{a}(n,i,i)\cap (x_1,\ldots,x_n)^{(i+1)}$$
where the second equality follows from Theorem \ref{thm:B1} or Theorem \ref{thm:radD}.

For the base case $n=3$ and the only possible $k=1$ gives 
$$\mathfrak{a}(4,2,2)\cong \mathfrak{a}(3,1,2)=\mathfrak{a}(3,1,1)\cap (x_1,x_2,x_3)^{(2)}.$$
Note that $\mathfrak{a}(3,1,1)$ and $(x_1,x_2,x_3)^{(2)}$ are both perfect of grade $g=2$
Also note that the Specht-monomial ideal $I(3,1)=\mathfrak{a}(3,1,1)+(x_1,x_2,x_3)^{(2)}$ has grade $g+1=3$ and its quotient
$$\frac{\F[x_1,x_2,x_3]}{I(3,1)}\cong\frac{\F[z]}{(z^2)}$$
is Cohen-Macaulay.  Therefore $I(3,1)$ is perfect and hence by Lemma \ref{lem:HE}, so is $\mathfrak{a}(4,2,2)$.

For the inductive step, assume that $\mathfrak{a}(n,i,i)$ is perfect for every $1\leq i\leq k$.  We have 
$$\mathfrak{a}(n+1,i+1,i+1)\cong \mathfrak{a}(n,i,i)\cap (x_1,\ldots,x_n)^{(i+1)}.$$
Also note that since $1\leq i<i+1\leq k+1\leq n-k\leq n-i$, Theorem \ref{thm:perfect} implies that the Specht monomial ideal 
$$I(n,i)=\mathfrak{a}(n,i,i)+(x_1,\ldots,x_n)^{(i+1)}$$
is perfect of grade $g+1=n-i+1$.  Since $\mathfrak{a}(n,i,i)$ and $(x_1,\ldots,x_n)^{(i+1)}$ are both perfect of grade $g=n-i$.  Then it follows from Lemma \ref{lem:HE} that $\mathfrak{a}(n+1,i+1,i+1)\cong \mathfrak{a}(n,i,i)\cap (x_1,\ldots,x_n)^{(i+1)}$ is also perfect.  This proves implication (1.).

For (2.), assume that $n\geq 2p+1$.  By (1.), we see that $\mathfrak{a}(n,p,p)$ is perfect.  Also $(x_1,\ldots,x_n)^{(p)}$ is perfect by Lemma \ref{lem:PRSperfect}.  Therefore by Lemma \ref{lem:HE}, it follows that the Specht ideal $\mathfrak{a}(n+1,p+1,p+1)$ and the Specht-monomial ideal $I(n,p)$ are perfect or not, alike.  But Theorem \ref{thm:perfect} implies that the Specht-monomial ideal  
$$I(n,p)=\mathfrak{a}(n,p,p)+(x_1,\ldots,x_n)^{(p+1)}$$
is not perfect, and hence the Specht monomial ideal $\mathfrak{a}(n+1,p+1,p+1)$ is not perfect either.  
\end{proof}
As stated in the Introduction, we conjecture that Theorem \ref{thm:SpechtP}(2.) can be improved.  
\begin{conjecture}
	\label{conj:WM1}
	If $p=\operatorname{char}(\F)$ and $n,k$ are positive integers satisfying $0<p<k+1$ and $n\geq 2k+1$, then the Specht ideal $\mathfrak{a}(n+1,k+1,k+1)$ is not perfect. 
\end{conjecture}

	\begin{example}
		\label{ex:p23}
		Taking $p=2$, Theorem \ref{thm:SpechtP}(2.) says that $\mathfrak{a}(n+1,3,3)$ is not perfect for every $n\geq 5$, a result also obtained by Yanagawa \cite[Theorem 5.3]{Yana}.  For example if $n=5$, then $\mathfrak{a}(6,3,3)$ is not perfect, and one obstruction to perfection is the elementary symmetric polynomial $\alpha=e_2(x_1,\ldots,x_4)$ which lies in the intersection
		\begin{align*}
		e_2(x_1,\ldots,x_4)\in  & \mathfrak{a}(4,1,1)\cap \left((y_1,\ldots,y_4)^{(2)}+(x_5^2)\right)	= \bigcap_{\sigma\in \mathfrak{S}_5}\sigma.(x_1-x_2,x_1-x_3,x_1-x_4,x_1^2)
		\end{align*}
		but $e_2(x_1,\ldots,x_4)\notin I(5,2)$.  This indicates that the Specht-monomial ideal $I(5,2)$ must have an embedded prime divisor in characteristic $p=3$, which does not appear in higher characteristics.  Macaulay2 reveals that the primary decomposition of $I(5,2)$ over the field $\F=\Z/2\Z$ is 
		$$I(5,2)=\bigcap_{\sigma\in \mathfrak{S}_5}\sigma.(x_1-x_2,x_1-x_3,x_1-x_4,x_1^2)\cap Q$$
		where $Q$ is primary and satisfies $(x_1^2,\ldots,x_5^2)\subseteq Q$; in particular the maximal ideal $\mathfrak{m}=(x_1,\ldots,x_5)$ is an associated prime divisor of the Specht-monomial ideal $I(5,2)=\mathfrak{a}(5,2,2)+(x_1,\ldots,x_5)^{(3)}$ in characteristic $p=2$.  Since $\mathfrak{a}(5,2,2)$ and $(x_1,\ldots,x_5)^{(3)}$ are both perfect of grade $g=3$ in characteristic $p=2$ (and all characteristics), we deduce that $\mathfrak{a}(6,3,3)\cong \mathfrak{a}(5,2,3)=\mathfrak{a}(5,2,2)\cap (x_1,\ldots,x_5)^{(3)}$ is also not perfect by Lemma \ref{lem:HE}.  
		
		Similarly, for $p=3$, Theorem \ref{thm:SpechtP}(2.) implies that $\mathfrak{a}(n+1,4,4)$ is not perfect for $n\geq 7$ in characteristic $p=3$.  For example if $n=7$ then $\mathfrak{a}(8,4,4)$ is not perfect, and, as in the previous case, one can see that the Specht-monomial ideal $I(7,3)$ has an extra primary component that contains all the squared variables.  Conjecture \ref{conj:WM1} would imply that, for example, $\mathfrak{a}(n+1,5,5)$ is not perfect for all $n\geq 9$ in characteristic $p=3$.  Computations in Macaulay2 show that $\mathfrak{a}(10,5,5)$ is indeed not perfect, supporting this claim.  
	\end{example}

\subsection{Proof of Theorem \ref{thm:C}(3.) or Theorem \ref{thm:perfectD}}
The proof of Theorem \ref{thm:perfectD} is surprisingly similar to that of Theorem \ref{thm:radD}, and it too comes by way of several lemmas.
We want to show that for every $1\leq k<k+1\leq n-k$ the Specht-monomial ideal $I(n,k)=\mathfrak{a}(n,k,k)+(x_1,\ldots,x_n)^{(k+1)}$ satisfies
$$I(n,k)=I(n-1,k-1)\cap \left(\ydeal\right).$$
Of course there is an assumption about characteristic here, but we will not assume it yet, and try to point out exactly where we need it.  
\begin{lemma}
	\label{lem:easyC}
	Assume that $1\leq k<k+1\leq n-k$.  Then 
	$$I(n,k)\subseteq I(n-1,k-1)\cap \left(\ydeal\right).$$
\end{lemma}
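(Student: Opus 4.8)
The plan is to verify the containment on an explicit generating set of $I(n,k)$, checking separately that each generator lies in $I(n-1,k-1)$ and in $\left(\ydeal\right)$. By Theorem \ref{thm:basis} (with $d=k$) the Specht ideal $\mathfrak{a}(n,k,k)$ is generated by the standard Specht polynomials $F_T$, $T\in\stab(n,k,k)$, so together with the square-free monomials of degree $k+1$ in $x_1,\ldots,x_n$ these form a generating set of $I(n,k)$. Restricting to \emph{standard} tableaux is what makes the Specht part go smoothly, since — as recalled in the discussion before Corollary \ref{cor:CoC} — every $T\in\stab(n,k,k)$ has $n$ in its bottom-right box, so that $F_T=\bigl(\prod_{t=1}^{k-1}(x_{i_t}-x_{j_t})\bigr)(x_{i_k}-x_n)$.

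For such an $F_T$, I would first observe that $\prod_{t=1}^{k-1}(x_{i_t}-x_{j_t})$ is the Specht polynomial of the tableau obtained from $T$ by deleting the box containing $n$; it therefore lies in $V(n-1,k-1,k-1)\subseteq\mathfrak{a}(n-1,k-1,k-1)$, and multiplying by $x_{i_k}-x_n$ keeps it there, so $F_T\in\mathfrak{a}(n-1,k-1,k-1)\subseteq I(n-1,k-1)$. For the second containment I would pass to the $y$-coordinates: $x_{i_t}-x_{j_t}=y_{j_t}-y_{i_t}$ and $x_{i_k}-x_n=-y_{i_k}$, so $F_T=\pm\,y_{i_k}\prod_{t=1}^{k-1}(y_{i_t}-y_{j_t})$; since $i_1,j_1,\ldots,i_{k-1},j_{k-1},i_k$ are pairwise distinct entries of a tableau, every monomial in the expansion is a square-free monomial of degree $k$ in $y_1,\ldots,y_{n-1}$, whence $F_T\in(y_1,\ldots,y_{n-1})^{(k)}\subseteq\left(\ydeal\right)$.

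It remains to treat a square-free monomial $m$ of degree $k+1$ in $x_1,\ldots,x_n$. If $x_n\nmid m$, then $m$ is divisible by a square-free monomial of degree $k$ in $x_1,\ldots,x_{n-1}$, so $m\in(x_1,\ldots,x_{n-1})^{(k)}\subseteq I(n-1,k-1)$; and Lemma \ref{lem:Dlem} applied to $P^{k+1}=m$ gives $m\equiv\pm\bigl(m(y_1,\ldots,y_{n-1})-x_n D(m(y_1,\ldots,y_{n-1}))\bigr)\pmod{x_n^2}$, with both $m(y_1,\ldots,y_{n-1})$ and $D(m(y_1,\ldots,y_{n-1}))$ in $(y_1,\ldots,y_{n-1})^{(k)}$, so $m\in\left(\ydeal\right)$. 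If $x_n\mid m$, write $m=x_n m'$ with $m'$ square-free of degree $k$ in $x_1,\ldots,x_{n-1}$; then $m'\in(x_1,\ldots,x_{n-1})^{(k)}$ gives $m\in I(n-1,k-1)$, and Lemma \ref{lem:Dlem} applied to $m'$ yields $m=x_n m'\equiv\pm\,x_n\,m'(y_1,\ldots,y_{n-1})\pmod{x_n^2}$, which lies in $(y_1,\ldots,y_{n-1})^{(k)}$ because $m'(y_1,\ldots,y_{n-1})$ does. As this is the straightforward half of the decomposition of Theorem \ref{thm:perfectD}, I do not expect a real obstacle; the only points calling for a little care are the reduction to standard Specht polynomials (so that $F_T$ always factors through the box holding $n$) and the bookkeeping for monomials divisible by $x_n=y_n$, which is not among $y_1,\ldots,y_{n-1}$ and so must be dispatched by reducing modulo $x_n^2$. (When $k=1$ the empty product in $F_T$ is $1$ and the containment into $\mathfrak{a}(n-1,0,0)$ is read as the unit ideal, which is harmless.)
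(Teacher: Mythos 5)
Your proof is correct and follows essentially the paper's route: check each type of generator (standard Specht polynomial, square-free monomial) separately against each factor of the intersection, using the fact that a standard $T\in\stab(n,k,k)$ has $n$ in its bottom-right box and invoking Lemma~\ref{lem:Dlem} to move between $x$- and $y$-variables modulo $x_n^2$. One place where your argument is actually cleaner than the paper's text: for $\mathfrak{a}(n,k,k)\subseteq\mathfrak{a}(n-1,k-1,k-1)$ you use the factorization $F_T=G\cdot(x_{i_k}-x_n)$ with $G\in V(n-1,k-1,k-1)$ directly, which is exactly right, whereas the paper asserts the chain $\mathfrak{a}(n,k,k)\subset\mathfrak{a}(n-1,k,k)\subset\mathfrak{a}(n-1,k-1,k-1)$, whose first inclusion is not literally true (already $(x_1-x_3)\notin(x_1-x_2)$ when $n=3$, $k=1$); only the composite containment, which your factorization establishes, is what is needed.
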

\begin{proof}
	Certainly we have that $\mathfrak{a}(n,k,k)\subset\mathfrak{a}(n-1,k,k)\subset\mathfrak{a}(n-1,k-1,k-1)$, and also $(x_1,\ldots x_n)^{(k+1)}\subset (x_1,\ldots,x_{n-1})^{(k)}$, hence $I(n,k)\subset I(n-1,k-1)$ (if $k=1$, we should regard $I(n-1,0)$ as $R$).  It remains to see why $I(n,k)\subset\ydeal$.  For $T\in\stab(n,k,k)$ we have $$F^k_T(x_1,\ldots,x_n)=(x_{i_1}-x_{j_1})\cdots(x_{i_k}-x_n)=(y_{j_1}-y_{i_1})\cdots (y_{j_{k-1}}-y_{i_{k-1}})\cdot (-y_{i})\in \ydeal.$$
	For $S\in \stab(n,0,k+1)$, if $x_n\in\supp(M_S^{k+1})$ then $M_S^{k+1}=x_n\cdot M_{S'}^k$ for $S'\in\stab(n-1,0,k)$ and by Lemma \ref{lem:Dlem} we have 
	$$M_{S'}^k(x_1,\ldots,x_{n-1})=M_{S'}^k(y_1,\ldots,y_{n-1})+x_n\cdot D(M_{S'}^k(y_1,\ldots,y_{n-1})) \ \ \text{mod} \ (y_1,\ldots,y_{n-1})^{(k)}.$$
	It follows that $M_S^{k+1}=x_n\cdot M_{S'}^k\in \ydeal$.  If $x_n\notin \supp(M_S^{k+1})$, then it is obvious that $M_S^{k+1}\in\ydeal$.  Hence $I(n,k)\subseteq I(n-1,k-1)\cap \left(\ydeal\right)$, as desired. 
\end{proof}
We have added a superscript to our notation for the shifted Specht polynomials to help the reader remember degrees.
The other containment 
\begin{equation}
\label{eq:InkC}
I(n,k)\supseteq I(n-1,k-1)\cap \left(\ydeal\right)
\end{equation}
is harder to prove.

Note that the ideal $I(n-1,k-1)$ is generated in degrees $k-1$ and $k$ by the following subspaces of forms:
\begin{align*}
V= & V(n-1,k-1,k-1)= & \left\langle F_T^{k-1} \ | \ T\in\stab(n-1,k-1,k-1)\right\rangle\\
U= & U(n-1,k-1,k)= & \left\langle x_nF_T^{k-1} \ | \ T\in\stab(n-1,k-1,k-1)\right\rangle+\left\langle M^k_S \ | \ S\in\stab(n-1,0,k)\right\rangle\\
 =& & x_n\cdot V(n-1,k-1,k-1)+V(n-1,0,k) 
\end{align*}
We make some preliminary observations, but first some notation.  Denote by $\N^n(m)$ the set of exponent vectors of degree, i.e. $\a=(a_1,\ldots,a_n)$ with $a_1+\cdots+a_n=m$.  A monomial in the $y$-variables (resp. the $x$-variables) will be denoted by $\y^\a=y_1^{a_1}\cdots y_n^{a_n}$ (resp. $\x^\a=x_1^{a_1}\cdots x_n^{a_n}$), and its radical is the square-free monomial $\sqrt{\y^\a}=\prod_{a_i>0}y_i$.  We also define the \emph{weight} of a monomial to be $\operatorname{wt}(\y^\a)=\#\{a_i>0\}$, the number of non-zero entries in its exponent vector.
\begin{lemma}
	\label{lem:VUP}
	With $U$ and $V$ as above, we have
	\begin{enumerate}
		\item $x_n\cdot V\subseteq U$, 
		\item $x_n\cdot U\subseteq I(n,k)$, and 
		\item for every $P\in I(n-1,k-1)$, and for all exponent vectors $\a\in\N^{n-1}(m)$ and $\b\in\N^{n-1}(m-1)$, there exists elements $\nu_\a\in V$ and $\mu_\b\in U$ such that 
		\begin{align}
		\label{eq:P}
		P\equiv & \sum_{\a\in\N^{n-1}(m)}\y^\a\nu_\a +\sum_{\b\in\N^{n-1}(m-1)}\y^\b\mu_\b & \text{mod} \ I(n,k).
		\end{align} 
	\end{enumerate}
\end{lemma}
\begin{proof}
	(1.) is obvious from the definitions.  For (2.), note that for $S\in\stab(n-1,0,k-1)$, $x_n\cdot M_S^k\in V(n,0,k+1)\subset I(n,k)$.  Also for $T\in\stab(n-1,k-1,k-1)$ and for index $1\leq i\leq n-1$ such that $i\notin \supp(T)$, which exists because $n-1\geq 2k>2(k-1)$, we have 
	$$x_n^2\cdot F_T^{k-1}=x_n(x_n-x_i)\cdot F_T^{k-1}+x_nx_i\cdot F_T^{k-1}$$
	and since $(x_n-x_i)\cdot F_T^{k-1}\in V(n,k,k)$ and $x_nx_i\cdot F_T^{k-1}\in V(n,0,k+1)$, it follows that $x_n^2\cdot F_T^{k-1}\in I(n,k)$, and (2.) follows.  Finally fix a homogeneous polynomial $P\in I(n-1,k-1)$.  Then for each $T\in\stab(n-1,k-1,k-1)$ and each $S\in\stab(n-1,0,k)$, there exist polynomials, which we may take in the $y$-variables, $g_T(\y)$ of degree $m$ and $h_S(\y)$ of degree $m-1$ for which 
	$$P=\sum_{T \in\stab(n-1,k-1,k-1)}g_T(\y)\cdot F_T^{k-1}+\sum_{S\in \stab(n-1,0,k)}h_S(\y)\cdot M_S.$$
	Writing $g_T(\y)=g^0_T(y_1,\ldots,y_{n-1})+x_ng^1_T(y_1,\ldots,y_{n-1})+x_n^2g^2_T(\y)$ and also $h_S(\y)=h^0_S(y_1,\ldots,y_{n-1})+x_nh^1_S(\y)$, it follows from (2.) that $x_n^2g^2_T(\y)\cdot F_T^{k-1}\in I(n,k)$, $x_nh^1_S(\y)\cdot M_S^k\in I(n,k)$, and also that $x_ng^1_T(y_1,\ldots,y_{n-1})\cdot F_T^{k-1}\in U$.  Then taking monomial expansions, reversing orders of summations, and grouping like monomial terms, we get 
	\begin{align*}
	P\equiv  & \sum_{T \in\stab(n-1,k-1,k-1)}\sum_{\a\in\N^{n-1}(m)}c_T^0(\a)\y^\a\cdot F_T^{k-1}\\
	& +\sum_{T \in \stab(n-1,k-1,k-1)}\sum_{\b\in\N^n(m-1)}c_T^1(\b)\y^\b\cdot x_nF_T^{k-1}+\sum_{S\in \stab(n-1,0,k)}\sum_{\b\in\N^n(m-1)}d_S^0(\b)\y^\b\cdot M_S\\
	= & \sum_{\a\in\N^n(m)}\y^\a\cdot\left(\sum_{T \in \stab(n-1,k-1,k-1)}c_T^0(\a)F_T^{k-1}\right)\\
	&  + \sum_{\b\in\N^{n-1}(m-1)}\y^\b\cdot \left(\sum_{T \in \stab(n-1,k-1,k-1)}c_T^1(\b)F_T^{k-1}+\sum_{S\in \stab(n-1,0,k)}d_S^0(\b)M_S^k\right) & \text{mod} \ I(n,k)
	\end{align*}
	and (3.) follows.
\end{proof}

The following Lemma is analogous to Lemma \ref{lem:monomialrad} in Section \ref{sec:radical}.
\begin{lemma}
	\label{lem:monomialD}
	If $P\in I(n-1,k-1)\cap \left(\ydeal\right)$ is expressed as in \eqref{eq:P}, then each of its monomial summands must lie in $\ydeal$ is too, i.e. $\y^\a\cdot \mu_\a\in \ydeal$ and $\y^\b\cdot\nu_\b\in\ydeal$ for all $\a\in\N^{n-1}(m)$ and $\b\in\N^{n-1}(m-1)$.  
\end{lemma}
\begin{proof}
	By Lemma \ref{lem:VUP}, we may write 
	$$P=\underbrace{\sum_{\a\in\N^{n-1}(m)}\y^\a\nu_\a}_{P_1} +\underbrace{\sum_{\b\in\N^{n-1}(m-1)}\y^\b\mu_\b}_{P_2}+P_3$$
	for some $\nu_\a\in V$, some $\mu_\b\in U$, and some $P_3\in I(n,k)$.  Next note that if $P\in\ydeal$ then both $P_1\in\ydeal$ and $P_2\in\ydeal$.  Indeed note that in their respective $y$-monomial expansions, those monomials in $P_1$ are all independent of $y_n$, whereas all monomials in the $y$-monomial expansion of $P_2$ are either divisible by $y_n=x_n$, or in $\ydeal$ already, by Lemma \ref{lem:Dlem}.  
	
	For the monomial products in $P_1$ we assume by way of contradiction that $\y^\a\cdot\nu_\a\notin\ydeal$ for some $\a\in\N^{n-1}$.  Then since $\ydeal$ is a monomial ideal, it follows that in the $y$-monomial expansion of $\y^\a\nu_\a$, there must be some monomial say $\y^\d$ which is not in $\ydeal$.  Since $\y^\a\nu_\a$ is independent of $x_n$, so is $\y^\d$.  Since $(y_1,\ldots,y_{n-1})^{(k)}$ consists of all $y$-monomials of weight at least $k$, it follows that $\operatorname{wt}(\y^\d)\leq k-1$.  On the other hand, every $y$-monomial in the monomial expansion of $\nu_\a$ has weight equal to $k-1$, hence it follows that $\operatorname{wt}(\y^\d)=k-1$.  But then we can deduce, as in the proof of Lemma \ref{lem:monomialrad}, that 
	$$\frac{\y^\d}{\sqrt{\y^\d}}=\y^\a$$
	and hence the exponent vector $\d$ uniquely determines the exponent vector $\a$.  This implies that $\y^\d$ occurs with the same coefficient in the monomial expansion of the term $\y^\a\nu_\a$ as it does in the entire sum
	$$P_1=\sum_{\a\in\N^{n-1}(m)}\y^\a\nu_\a,$$
	contradicting the fact that $P_1\in\ydeal$.
	The argument for $P_2$ is similar.	
\end{proof}

Lemma \ref{lem:monomialD} says that to check the containment 
$$I(n,k)\supseteq I(n-1,k-1)\cap \left(\ydeal\right)$$
it suffices to check on products of monomials and forms in either $V$ or $U$, i.e.  for any $\a,\b\in\N^{n-1}$ and for any $\nu\in V$ and any $\mu\in U$
\begin{align}
\label{eq:impV}
\y^\a\nu\in \ydeal & \Rightarrow & \y^\a\nu\in I(n,k)\\
\label{eq:impU}
\y^\b\mu\in \ydeal & \Rightarrow & \y^\b\mu\in I(n,k) 
\end{align}
The next lemma verifies implications \eqref{eq:impV} and \eqref{eq:impU} in the special case where $\a,\b=\mathbf{0}$.  This seems to be where we need our assumptions on $p$.
\begin{lemma}
	\label{lem:zero}
	Fix $\nu\in V(n-1,k-1,k-1)$ and $\mu\in U(n-1,k-1,k)$.  If $\nu\in \ydeal$ then $\nu=0$.  If $p=0$ or $p\geq k+1$ and if $\mu\in \ydeal$ then $\mu\in \mathfrak{a}(n,k,k)\subset I(n,k)$.
\end{lemma}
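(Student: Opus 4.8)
The statement bundles two claims of rather different character; I would treat them separately, and only the second needs the hypothesis on $p$.

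\smallskip\noindent\textbf{The claim about $\nu$.} This is a pure degree count, no assumption on $p$ needed. The element $\nu\in V(n-1,k-1,k-1)$ is homogeneous of degree $k-1$ and involves only $x_1,\dots,x_{n-1}$, whereas $\ydeal$ is generated in degrees $k$ and $2$. Hence its graded piece in degree $k-1$ lies in $(x_n^2)$ (and is $0$ when $k\le 2$), and any polynomial in $(x_n^2)$ has all of its monomials divisible by $x_n^2$. A polynomial free of $x_n$ that lies in $(x_n^2)$ must be $0$, so $\nu=0$, and in particular $\nu\in\mathfrak a(n,k,k)$.

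\smallskip\noindent\textbf{The claim about $\mu$.} I would proceed in three steps.

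\emph{Step 1 (normal form for $\mu$).} Since $U(n-1,k-1,k)=x_n\cdot V(n-1,k-1,k-1)\oplus V(n-1,0,k)$ — the sum being direct, one summand being divisible by $x_n$ and the other not — write $\mu=x_n\nu_1+\mu_0$ uniquely, with $\nu_1\in V(n-1,k-1,k-1)$ and $\mu_0\in V(n-1,0,k)$. Both $\nu_1$ and $\mu_0$ are square-free polynomials in $x_1,\dots,x_{n-1}$ (a Specht polynomial is a product of differences of \emph{distinct} variables), so Lemma~\ref{lem:Dlem} applies to each; a short computation (applying it to $\mu_0$ in degree $k$ and to $\nu_1$ in degree $k-1$, the $x_n^2$-term of the latter being absorbed) gives
$$\mu\equiv(-1)^k\Phi(\mu_0)+(-1)^{k-1}x_n\bigl(\Phi(\nu_1)+D(\Phi(\mu_0))\bigr)\pmod{(x_n^2)},$$
where $\Phi$ is the coordinate change $x_i\mapsto y_i$ and $D=\partial_{y_1}+\dots+\partial_{y_{n-1}}$. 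On the other hand $\mu\in\ydeal$ is homogeneous of degree $k$, so $\mu\equiv g\pmod{(x_n^2)}$ for some homogeneous degree-$k$ element $g$ of $(y_1,\dots,y_{n-1})^{(k)}$, necessarily free of $y_n=x_n$. Since $R=\F[y_1,\dots,y_{n-1}][x_n]$, the quotient $R/(x_n^2)$ is free of rank $2$ over $\F[y_1,\dots,y_{n-1}]$ with basis $\{1,x_n\}$; comparing the $x_n$-components in the two expressions for $\mu$ forces $\Phi(\nu_1)+D(\Phi(\mu_0))=0$. Applying $\Phi$ and using $\Phi^2=\mathrm{id}$, $\Phi(x_n)=x_n$, and that $\Phi$ carries $\partial_{y_1}+\dots+\partial_{y_{n-1}}$ to $D:=\partial_{x_1}+\dots+\partial_{x_{n-1}}$, this becomes $\nu_1=-D(\mu_0)$, whence $\mu=\mu_0-x_n\,D(\mu_0)$.

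\emph{Step 2 ($\mu_0$ lies in the $k$-shifted Specht module).} Because $\nu_1=-D(\mu_0)$ lies in the Specht module $V(n-1,k-1,k-1)$, which is annihilated by $D$, we get $D^2(\mu_0)=0$. Here the characteristic enters: writing $B=\F[x_1,\dots,x_{n-1}]/(x_1^2,\dots,x_{n-1}^2)$ and noting $2k\le n-1$ (since $k+1\le n-k$), Lemma~\ref{lem:Vnkk} gives that $D\colon B_i\to B_{i-1}$ is surjective for all $1\le i\le k$, hence $D^2\colon B_k\to B_{k-2}$ is surjective, and therefore
$$\dim\ker\bigl(D^2|_{B_k}\bigr)=\binom{n-1}{k}-\binom{n-1}{k-2}=\dim V(n-1,k-1,k)$$
by Theorem~\ref{thm:dimMinGenSpecht}. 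Since every $k$-shifted Specht polynomial $F_T(k)=\bigl(\prod_{t=1}^{k-1}(x_{i_t}-x_{j_t})\bigr)x_{i_k}$ has $D(F_T(k))=\prod_{t=1}^{k-1}(x_{i_t}-x_{j_t})$, which is again killed by $D$, we have $V(n-1,k-1,k)\subseteq\ker(D^2|_{B_k})$; comparing dimensions gives equality. As $\mu_0\in V(n-1,0,k)=B_k$ with $D^2(\mu_0)=0$, we conclude $\mu_0=\sum_T c_T\,F_T(k)$.

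\emph{Step 3 (lifting to $\mathfrak a(n,k,k)$).} For each tableau $T\in\tab(n-1,k-1,k)$ occurring above, let $\widehat T\in\tab(n,k,k)$ be obtained by placing $n$ at the bottom of the last two-box column, i.e. directly below $i_k$; then
$$F_{\widehat T}=\Bigl(\prod_{t=1}^{k-1}(x_{i_t}-x_{j_t})\Bigr)(x_{i_k}-x_n)=F_T(k)-x_n\,D\bigl(F_T(k)\bigr)\in\mathfrak a(n,k,k).$$
Summing with the coefficients $c_T$,
$$\mu=\mu_0-x_n\,D(\mu_0)=\sum_T c_T\bigl(F_T(k)-x_n\,D(F_T(k))\bigr)=\sum_T c_T\,F_{\widehat T}\in\mathfrak a(n,k,k)\subseteq I(n,k).$$

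\smallskip\noindent The step I expect to be the crux is Step~2, and specifically the identification $\ker(D^2|_{B_k})=V(n-1,k-1,k)$: both the inclusion and the dimension count take care, and it is precisely the dimension count that uses $p=0$ or $p\ge k+1$, via Lemma~\ref{lem:Vnkk} (the Kustin--Vraciu theorem). Steps~1 and~3 are essentially bookkeeping with Lemma~\ref{lem:Dlem} modulo $(x_n^2)$ and with the explicit shape of the Specht polynomials.
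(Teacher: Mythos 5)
Your proof is correct, and for the second claim it takes a genuinely different route from the paper.

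Both arguments open the same way: decompose $\mu=x_n\nu_1+\mu_0$, apply Lemma~\ref{lem:Dlem} modulo $(x_n^2)$, and use that $\left((y_1,\ldots,y_{n-1})^{(k)}+(x_n^2)\right):x_n=(y_1,\ldots,y_{n-1})^{(k)}+(x_n)$ to conclude, for degree reasons, that $\nu_1=-D(\mu_0)$. After that point the paper and you diverge. The paper works in the $y$-variables and invokes the full $\mathfrak{sl}_2$-triple from Section~\ref{sec:Lefschetz}: from $D_B(\beta_{\y})=-\alpha_{\y}\in P_{B,k-1}$ it applies the commutators $[D_B,L_B]=H_B$ and $[D_A,L_A]=H_A$ to exhibit two primitive vectors $\mu_B=L_B(\alpha_{\y})+(n-2k+1)\beta_{\y}\in V_{\y}(n-1,k,k)$ and $\mu_A=L_A(\alpha_{\y})+(n-2k+2)\beta_{\y}\in V_{\y}(n,k,k)$, and then observes $\mu=\mu_A-\mu_B$, leaning on Lemma~\ref{lem:restrict} to place $V_{\y}(n-1,k,k)$ inside $V_{\y}(n,k,k)$. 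You instead stay in the $x$-variables: from $D(\nu_1)=0$ you get $D^2(\mu_0)=0$, identify $\ker(D^2)\cap B_k$ with the shifted Specht module $V(n-1,k-1,k)$ via the dimension count $\binom{n-1}{k}-\binom{n-1}{k-2}$ (Theorem~\ref{thm:dimMinGenSpecht}, with surjectivity of $D^2$ supplied by Lemma~\ref{lem:Vnkk}), and then lift explicitly at the level of tableaux by appending $n$ below the free box, using $F_{\widehat T}=F_T(k)-x_nD(F_T(k))$. Both routes spend the hypothesis $p=0$ or $p\geq k+1$ in the same place — via Lemma~\ref{lem:Vnkk}/Kustin--Vraciu — but your version avoids the raising operator and the semisimple weight constants $n-2k+1$, $n-2k+2$ entirely, replacing them with a single dimension count and a transparent tableau-level identity. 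As a byproduct you isolate the clean fact $\ker(D^2)\cap B_k=V(n-1,k-1,k)$ under the Lefschetz hypothesis, which the paper never states explicitly (Lemma~\ref{lem:SLP}(3) gives an $L$-side analogue but only under the stronger hypothesis $p\geq n+1$).
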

\begin{proof}
	The first statement for $\nu$ is obvious for degree reasons.  For the second statement, we can write $\mu=\mu(x_1,\ldots,x_n)=x_{n}\alpha+\beta$ for polynomials $\alpha=\alpha(x_1,\ldots,x_{n-1})\in V_{\x}(n-1,k-1,k-1)$ and $\beta=\beta(x_1,\ldots,x_{n-1})\in V_{\x}(n-1,0,k)$.  Here it will be important to distinguish between polynomials in $x$-variables and those in the $y$-variables, hence we adopt the notation $V_{\x}(m,j,j)$ and $V_{\y}(m,j,j)$ to denote the $\F$-span of Specht polynomials in the $x$-variables and $y$-variables, respectively; note that if $m\leq n-1$ these two subspaces coincide.  
	
	Consider inclusions of monomial complete intersections, $B\subset A$ defined in the $y$-variables by:
	$$B=\frac{\F[y_1,\ldots,y_{n-1}]}{(y_1^2,\ldots,y_{n-1}^2)}\hookrightarrow A= \frac{\F[y_1,\ldots,y_{n}]}{(y_1^2,\ldots,y_n^2)}$$
	and let $L_B,D_B,H_B$, and $L_A,D_A,H_A$  be their respective raising, lowering, and semi-simple operators, respectively, as in Section \ref{sec:Lefschetz}.  Then since $p=0$ or $p\geq k+1$, Lemma \ref{lem:Vnkk} implies that the lowering maps for $B$ and $A$,
	$$D_B\colon B_k\rightarrow B_{k-1}, \ \ \text{and} \ \ D_A\colon A_k\rightarrow A_{k-1}$$
	are both surjective, which by Lemma \ref{lem:Dsurj} is equivalent to their primitive subspaces $P_{B,k}=\ker(D_B)\cap B_k$ and $P_{A,k}=\ker(D_A)\cap A_k$ satisfying 
	$$P_{B,k}=V_{\y}(n-1,k,k), \ \ \text{and} \ \ P_{A,k}=V_{\y}(n,k,k).$$
	We will identify $B$ (resp. $A$) with the subspace spanned by square-free monomials in variables $y_1,\ldots,y_{n-1}$ (resp. $y_1,\ldots,y_n$).
	%Recall the formula:  For each square-free polynomial $f(x_1,\ldots,x_{n-1})\in R$ we have 
	%\begin{equation}
	%\label{eq:equiv}
	%f(x_1,\ldots,x_{n-1})\equiv (-1)^k\left(f(y_1,\ldots,y_{n-1})-x_nD_B\left(f(y_1,\ldots,y_{n-1})\right)\right).
	%\end{equation}
	
	Then if $\mu=x_n\alpha(x_1,\ldots,x_{n-1})+\beta(x_1,\ldots,x_{n-1})\in \ydeal$, by Lemma \ref{lem:Dlem} we have 
	\begin{align}
	\label{eq:ab}
	\mu\equiv &  (-1)^{k-1}x_n\left(\alpha(y_1,\ldots,y_{n-1})+D_B(\beta(y_1,\ldots,y_{n-1}))\right)\equiv 0 & \text{mod} \ \ydeal
	\end{align}
	(note that $D_B(\alpha(y_1,\ldots,y_{n-1}))=0$ and $\beta(y_1,\ldots,y_{n-1})\in \ydeal$ automatically).  Dividing by $x_n$ in \eqref{eq:ab} we see that 
	$$\alpha(y_1,\ldots,y_{n-1})+D_B(\beta(y_1,\ldots,y_{n-1}))\in \left((y_1,\ldots,y_{n-1})^{(k)}+x_n^2\right):x_n=(y_1,\ldots,y_{n-1})^{(k)}+(x_n)$$
	which, for degree reasons, implies that 
	$$\alpha(y_1,\ldots,y_{n-1})+D_B(\beta(y_1,\ldots,y_{n-1}))=0.$$
	Therefore $\beta_{\y}=\beta(y_1,\ldots,y_{n-1})\in B_k$ is a square-free polynomial in $y_1,\ldots,y_{n-1}$ such that $D_B(\beta_{\y})=-\alpha_{\y}\in V(n-1,k-1,k-1)=P_{B,k-1}$.  Applying the commutator relation 
	$$\left[D_B,L_B\right]=H_B$$
	to $\alpha_\y=\alpha(y_1,\ldots,y_{n-1})$, we find that 
	$$D_B\circ L_B\left(\alpha_{\y}\right)=H\left(\alpha_{\y}\right)=\left(n-1-2(k-1)\right)\cdot \alpha_{\y}$$
	which implies that 
	\begin{equation}
	\label{eq:Lb}
	\mu_B(y_1,\ldots,y_{n-1})\coloneqq L_B\left(\alpha_{\y}\right)+(n-2k+1)\cdot \beta_{\y}\in P_{B,k}=V_{\y}(n-1,k,k).
	\end{equation}
	Note that Equation \eqref{eq:Lb} also implies that $\mu_B\coloneqq \mu_B(x_1,\ldots,x_{n-1})\in V_{\x}(n-1,k,k)$.  We can also apply the commutator relations for those operators on $A$.  Note that the restriction of $D_A=D_B+\partial/\partial y_n$ to $B$ is $D_B$, and that by Lemma \ref{lem:restrict}, we have 
	$$V_{\y}(n-1,k,k)=V_{\y}(n,k,k)\cap B_k.$$
	It follows that $D_A(\beta_{\y})=D_B(\beta_{\y})=-\alpha_{\y}\in V_{\y}(n-1,k-1,k-1)\subset V_{\y}(n,k-1,k-1)$, and hence applying the commutator relation 
	$$\left[D_A,L_A\right]=H_A$$
	to $D_A(\beta_{\y})=-\alpha_{\y}$ we also find that
	\begin{equation}
	\label{eq:La}
	\mu_A(y_1,\ldots,y_{n-1},y_n)\coloneqq L_A(\alpha_{\y})+(n-2k+2)\cdot\beta_{\y}\in P_{A,k}=V_{\y}(n,k,k)
	\end{equation}
	and hence $\mu_A=\mu_A(x_1,\ldots,x_n)\in V_{\x}(n,k,k)$.
	Noting that $L_A=L_B+x_n$ we see that adding $\mu$ to $\mu_B$ yields $\mu_A$, i.e. 
	\begin{align*}
	\underbrace{\left(x_n\alpha+\beta\right)}_{\mu}+\underbrace{\left((x_1+\cdots+x_{n-1})\cdot \alpha+(n-2k+1)\beta\right)}_{\mu_B}=\underbrace{(x_1+\cdots+x_n)\cdot\alpha+(n-2k+2)\beta}_{\mu_A}.
	\end{align*}
	It follows from \eqref{eq:Lb} and \eqref{eq:La} that $\mu=x_n\cdot\alpha+\beta=\mu_A-\mu_B\in V_{\x}(n,k,k)=V(n,k,k)\subset\mathfrak{a}(n,k,k)$, as claimed.
\end{proof}

Next we check containment \eqref{eq:InkC} for monomials which are not contained in the support of $\nu$ and $\mu$.  The following Lemma is analogous to Lemma \ref{lem:suppNo} in Section \ref{sec:radical}.  Recall that the support of the (shifted) Specht polynomial for $T$ is the set of square-free monomials indexed by subsets of numbers in the support of $T$, no two of which lie in the same column of $T$.  
\begin{lemma}
	\label{lem:supportNo}
	Fix tableaux $T\in\stab(n-1,k-1,k-1)$ and $S\in\stab(n-1,0,k)$, and exponent vectors $\a,\b\in\N^{n-1}$.  
	\begin{enumerate}
		\item If $\sqrt{\x^\a}\not\in \operatorname{supp}(F_T^{k-1})$ then $\y^\a\cdot F_T^{k-1}\in \mathfrak{a}(n,k,k)\subset I(n,k)$.
		\item If $\sqrt{\x^\b}\notin \operatorname{supp}(M_S^k)$ then $\y^\b\cdot M_S^k\in (x_1,\ldots,x_n)^{(k+1)}\subset I(n,k)$.
	\end{enumerate}
\end{lemma}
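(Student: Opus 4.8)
The plan is to imitate the proof of Lemma~\ref{lem:suppNo}, the one new ingredient being a ``base case'' describing the effect of multiplying a single $y$-variable into $F_T^{k-1}$. Precisely, I would first check: given any $k-1$ pairwise disjoint two-element subsets $\{i_1,j_1\},\ldots,\{i_{k-1},j_{k-1}\}$ of $\{1,\ldots,n-1\}$ and any index $\ell\in\{1,\ldots,n-1\}$ lying outside their union, one has
$$y_\ell\cdot(x_{i_1}-x_{j_1})\cdots(x_{i_{k-1}}-x_{j_{k-1}})\in\mathfrak{a}(n,k,k).$$
Indeed, since $y_\ell=x_n-x_\ell$, the left-hand side equals $-(x_\ell-x_n)(x_{i_1}-x_{j_1})\cdots(x_{i_{k-1}}-x_{j_{k-1}})$, which is, up to sign, the Specht polynomial $F_{T'}^k$ of a tableau $T'\in\tab(n,k,k)$ whose size-two columns are $\{i_1,j_1\},\ldots,\{i_{k-1},j_{k-1}\},\{\ell,n\}$ and whose remaining $n-2k$ boxes are filled by the leftover indices --- these indices exist and are pairwise distinct exactly because the listed ones are pairwise distinct, $n\notin\{1,\ldots,n-1\}$, and $n\geq 2k+1$. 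Since $\mathfrak{a}(n,k,k)$ is generated by the $F_{T'}^k$, $T'\in\tab(n,k,k)$ (Theorem~\ref{thm:basis}), the claim follows; this is essentially the computation already made in the proof of Lemma~\ref{lem:easyC}.

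For part (1) I would then argue exactly as in Lemma~\ref{lem:suppNo}. Write $F_T^{k-1}=(x_{i_1}-x_{j_1})\cdots(x_{i_{k-1}}-x_{j_{k-1}})$, where $\{i_t,j_t\}$ are the size-two columns of $T\in\stab(n-1,k-1,k-1)$. If $\sqrt{\x^\a}\notin\supp(F_T^{k-1})$, then either some $x_\ell$ dividing $\sqrt{\x^\a}$ has $\ell\notin\supp(T)$, in which case $y_\ell\mid\y^\a$ and $\y^\a F_T^{k-1}=(\y^\a/y_\ell)\cdot y_\ell F_T^{k-1}\in\mathfrak{a}(n,k,k)$ by the base case; or every variable of $\sqrt{\x^\a}$ lies in $\supp(T)$, so two of them, $x_i$ and $x_j$, sit in a common size-two column of $T$. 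Choosing $r\in\{1,\ldots,n-1\}\setminus\supp(T)$ --- possible since $|\supp(T)|=2k-2<n-1$ --- and splitting $x_i-x_j=(x_i-x_r)-(x_j-x_r)$ yields a relation $F_T^{k-1}=\pm F_{(i,r)\cdot T}^{k-1}\pm F_{(j,r)\cdot T}^{k-1}$ in which $i$ occupies a size-one box of $(i,r)\cdot T$ and $j$ a size-one box of $(j,r)\cdot T$; the base case then gives $y_iF_{(i,r)\cdot T}^{k-1}$ and $y_jF_{(j,r)\cdot T}^{k-1}$ in $\mathfrak{a}(n,k,k)$, and writing $\y^\a=y_iy_j\cdot\y^{\a'}$ gives $\y^\a F_T^{k-1}\in\mathfrak{a}(n,k,k)$.

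Part (2) is simpler. Here $\supp(M_S^k)$ is just the set of square-free divisors of $M_S^k=x_{i_1}\cdots x_{i_k}$, so $\sqrt{\x^\b}\notin\supp(M_S^k)$ means some $x_\ell$ dividing $\sqrt{\x^\b}$ satisfies $\ell\notin\{i_1,\ldots,i_k\}$ (and $\ell\leq n-1$, hence $\ell\neq n$). Then $y_\ell M_S^k=x_nx_{i_1}\cdots x_{i_k}-x_\ell x_{i_1}\cdots x_{i_k}$ is a difference of two square-free monomials of degree $k+1$, hence lies in $(x_1,\ldots,x_n)^{(k+1)}$, and therefore so does $\y^\b M_S^k=(\y^\b/y_\ell)\cdot y_\ell M_S^k$. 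I expect the only delicate point in the whole argument to be the base case above --- specifically the bookkeeping showing that the indices $i_1,\ldots,i_{k-1},j_1,\ldots,j_{k-1},\ell,n$ are distinct and leave precisely $n-2k$ free indices, so that $F_{T'}^k$ really is a generator of $\mathfrak{a}(n,k,k)$; once that is in place, the rest is the same column-straightening bookkeeping as in Lemma~\ref{lem:suppNo}.
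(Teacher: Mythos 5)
Your proof is correct and follows essentially the same route as the paper's: for part (1) you split into the case where some $x_\ell\mid\sqrt{\x^\a}$ has $\ell\notin\supp(T)$ (handled by observing that $y_\ell F_T^{k-1}=(x_n-x_\ell)F_T^{k-1}$ is, up to sign, a Specht polynomial in $V(n,k,k)$) and the case where two indices $i,j$ of $\x^\a$ share a column of $T$ (handled by the Garnir-type relation $F_T^{k-1}=F_{(i,r).T}^{k-1}+F_{(j,r).T}^{k-1}$ for $r\notin\supp(T)$, reducing to the first case), and for part (2) you note that a missing index $\ell$ makes $y_\ell M_S^k$ a difference of square-free monomials of degree $k+1$. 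The only difference is presentational: you isolate the first case as an explicit ``base case'' lemma and spell out the tableau bookkeeping (distinctness of $i_1,\ldots,j_{k-1},\ell,n$ and the count of leftover indices), whereas the paper handles this inline with a ``clearly.''
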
 
\begin{proof}
	For (1.) assume $\sqrt{\x^\a}\notin\supp(F_T^{k-1})$.  If some variable $x_i\in\supp(\y^\a)$ but $x_i\notin\supp(F_T^{k-1})$, then clearly 
	$$\y^\a\cdot F_T^{k-1}=\y^{\a'}\cdot y_i\cdot F_T^{k-1}=\y^{\a'}\cdot (x_n-x_i)\cdot F_{T}^{k-1}\in V(n,k,k).$$
	We may therefore assume that every variable of $\x^\a$ lies in the support of $F_T^{k-1}$.  But since $\sqrt{\x^\a}\notin\supp(F_T^{k-1})$ there must be two distinct indices $i,j$ for which $y_i,y_j\in\supp(\y^\a)$ and $i,j$ lie in the same column in $T$.  Choose any index $r\neq i,j$ such that $x_r\notin\supp(F_T^{k-1})$--presumably there is such an $r$ since we are assuming that $k+1\leq n-k$--and let $(i,r), (j,r)\in\mathfrak{S}_n$ be the transpositions swapping $i,r$ and $j,r$, respectively.  Then we have 
	$$\y^\a\cdot F_T^{k-1}=\y^\a\cdot\left(F_{(i,r).T}^{k-1}+F_{(j,r).T}^{k-1}\right)$$
	and since $x_i\notin\supp(F_{(i,r).T}^{k-1})$, we must have 
	$$\y^\a\cdot F_{(i,r).T}^{k-1}=\y^{\a'}\cdot y_i\cdot F_{(i,r).T}^{k-1}=\y^{\a'}\cdot (x_n-x_i)\cdot F_{(i,r).T}^{k-1}\in \mathfrak{a}(n,k,k)\subset I(n,k)$$
	and similarly for $F^{k-1}_{(j,r).T}$.  This proves that $\y^\a\cdot F_T^{k-1}\in I(n,k)$.  Item (2.) is easier and left to the reader.
\end{proof}
	
	Finally, we need to check what containment \eqref{eq:InkC} for monomials which are contained in the support of $\nu$ or $\mu$.  Since the ideal $\ydeal$ is generated by square-free monomials in $(y_1,\ldots,y_{n-1})$ and the monomial $x_n^2$, it suffices to prove implications \eqref{eq:impV} and \eqref{eq:impU} for square free $\y^\a$.  Also, by symmetry, it will suffice to assume that $\y^\a=\y^\m=y_1\cdots y_m$.  As in Corollary \ref{cor:inductive}, we define the subset of standard tableau on a shape $\lambda$ as those which contain the set of integers $\{1,\ldots,m\}$ in their support, denoted by $\stab_m(\lambda)\subset \stab(\lambda)$, and define the subspaces
	\begin{align*}
	V_m= & V_m(n-1,k-1,k-1)= &\left\langle F_T^{k-1} \ | \ T\in\stab_m(n-1,k-1,k-1)\right\rangle\\
	U_m= & U_m(n-1,k-1,k)= &\left\langle x_nF_T^{k-1} \ | \ T\in\stab_m(n-1,k-1,k-1)\right\rangle+\left\langle M^k_S \ | \ S\in\stab_m(n-1,0,k)\right\rangle 
	\end{align*}
	We also define their complimentary subspaces
	\begin{align*}
	V^m= & V^m(n-1,k-1,k-1)= &\left\langle F_T^{k-1} \ | \ T\in\stab(n-1,k-1,k-1)\setminus \stab_m(n-1,k-1,k-1)\right\rangle\\
	U^m= & U^m(n-1,k-1,k)= & \left\langle x_nF_T^{k-1} \ | \ T\in\stab(n-1,k-1,k-1)\setminus \stab_m(n-1,k-1,k-1)\right\rangle\\
	 & & +\left\langle M^k_S \ | \ S\in\stab(n-1,0,k)\setminus \stab_m(n-1,0,k)\right\rangle. 
	\end{align*}
	Then we have vector space decompositions $V= V_m\oplus V^m$ and $U= U_m+ U^m$, and Lemma \ref{lem:supportNo} implies that $\y^\m\cdot\alpha\in I(n,k)$ for $\alpha\in V^m\sqcup U^m$.  Hence it only remains to check $\alpha\in V_m\sqcup U_m$.  
	As in Corollary \ref{cor:inductive} we have the following bijective map of vector spaces:
	$$\xymatrixrowsep{.5pc}\xymatrix{V_m(n-1,k-1,k-1)\ar[r] & V([n-1]_m,k-1-m,k-1-m)\\
	F_T^{k-1}\ar@{|->}[r] & F_{T'}^{k-1-m}\\}$$
	where 
	\begin{equation}
	\label{eq:T}
	T=\begin{ytableau}
	\none & \none & \none & 1 & \cdots & m & i_{m+1} & \cdots & i_{k}\\
	i_{k+1} & \cdots & i_{n-k} & j_1 & \cdots & j_m & j_{m+1} & \cdots & j_k\\
	\end{ytableau}
	\end{equation}	
	and 
	\begin{equation}
	\label{eq:T'}
	T'=\begin{ytableau}
	\none & \none & \none & \none & \none & \none & i_{m+1} & \cdots & i_{k}\\
	i_{k+1} & \cdots & i_{n-k} & j_1 & \cdots & j_m & j_{m+1} & \cdots & j_k\\
	\end{ytableau}
	\end{equation}
	We also have the (possibly non-injective) linear map 
	$$\xymatrixrowsep{.5pc}\xymatrix{U_m(n-1,k-1,k-1)\ar[r] & U([n-1]_m,k-1-m,k-m)\\
		x_nF_T^{k-1}\ar@{|->}[r] & x_nF_{T'}^{k-1-m}\\
		M_S^k\ar@{|->}[r] & M_{S'}^{k-m}\\}$$
	where $T\in\stab_m(n-1,k-1,k-1)$ and $T'\in\stab([n-1]_m,k-1-m,k-1-m)$ are the tableau in \eqref{eq:T} and \eqref{eq:T'}, respectively, and where 
	where $S\in\stab(n-1,0,k)$ and $S'\stab([n-1]_m,0,k-m)$ are given by
	\begin{equation}
	\label{eq:S}
	S=\begin{ytableau}
	\none & \none & \none & 1 & \cdots & m & i_{m+1} & \cdots & i_{k}\\
	i_{k+1} & \cdots & i_{n-k} \\
	\end{ytableau}
	\end{equation}	
	and 
	\begin{equation}
	\label{eq:S'}
	S'=\begin{ytableau}
	\none & \none & \none &  i_{m+1} & \cdots & i_{k}\\
	i_{k+1} & \cdots & i_{n-k}\\
	\end{ytableau}
	\end{equation}
	
	The following two lemmas, will be useful.
	\begin{lemma}
		\label{lem:squared}
		For any $\beta\in V([n-m],k-m,k-m)$ where $[n-m]=\{m+1,\ldots,n\}$, we have
		$$\left(\x^\m\right)^2\cdot\beta\in I(n,k).$$ 
	\end{lemma}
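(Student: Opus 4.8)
The plan is to prove the claim directly: for a single generator $\beta$ of $V([n-m],k-m,k-m)$ I will exhibit $(\x^\m)^2\cdot\beta$ as a Specht polynomial of $\mathfrak{a}(n,k,k)$ times a monomial, corrected by square-free monomials that all have weight $\geq k+1$ and hence lie in $(x_1,\ldots,x_n)^{(k+1)}$. Both pieces lie in $I(n,k)$, so $(\x^\m)^2\cdot\beta$ does too; by linearity in $\beta$ this suffices.

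Concretely, first I would reduce to $\beta=F_{T'}^{k-m}$ for a tableau $T'\in\tab([n-m],k-m,k-m)$ and write $F_{T'}^{k-m}=\prod_{s=1}^{k-m}(x_{a_s}-x_{b_s})$, where the $2(k-m)$ indices $a_s,b_s$ are pairwise distinct and all lie in $\{m+1,\ldots,n\}$; note that every monomial of $F_{T'}^{k-m}$ is square-free of degree $k-m$ with support contained in $\{a_1,\ldots,a_{k-m},b_1,\ldots,b_{k-m}\}$. Since $k+1\leq n-k$ we have $n-2k+m>m$, so I may choose $m$ \emph{distinct} indices $j_1,\ldots,j_m$ in $\{m+1,\ldots,n\}\setminus\{a_1,\ldots,a_{k-m},b_1,\ldots,b_{k-m}\}$. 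I then form the tableau $T\in\tab(n,k,k)$ (not necessarily standard, which is harmless since $F_T(k)\in V(n,k,k)$ for every tableau $T$) whose $k$ two-row columns are $\binom{1}{j_1},\ldots,\binom{m}{j_m},\binom{a_1}{b_1},\ldots,\binom{a_{k-m}}{b_{k-m}}$ and whose remaining $n-2k$ bottom-row cells carry the leftover indices in any order; by construction $F_T(k)=\prod_{t=1}^m(x_t-x_{j_t})\cdot F_{T'}^{k-m}\in\mathfrak{a}(n,k,k)$.

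Next I would multiply by $x_1\cdots x_m$, obtaining $x_1\cdots x_m\cdot F_T(k)=\prod_{t=1}^m(x_t^2-x_tx_{j_t})\cdot F_{T'}^{k-m}$; expanding the product over subsets $S\subseteq\{1,\ldots,m\}$ displays this as $(\x^\m)^2\cdot F_{T'}^{k-m}$ (the summand $S=\emptyset$) plus a sum, over nonempty $S$, of terms $\pm\bigl(\prod_{t\in S}x_tx_{j_t}\bigr)\bigl(\prod_{t\notin S}x_t^2\bigr)\cdot F_{T'}^{k-m}$. For nonempty $S$, every monomial of such a term has radical $\{1,\ldots,m\}\cup\{j_t:t\in S\}\cup\supp(\mu)$ for some monomial $\mu$ of $F_{T'}^{k-m}$; these three sets are pairwise disjoint — the $j_t$ were chosen outside $\supp(F_{T'}^{k-m})$, and $1,\ldots,m$ are smaller than all other indices involved — so the weight is $m+|S|+(k-m)\geq k+1$, and the term lies in $(x_1,\ldots,x_n)^{(k+1)}\subseteq I(n,k)$. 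Rearranging, $(\x^\m)^2\cdot F_{T'}^{k-m}=x_1\cdots x_m\cdot F_T(k)-\bigl(\text{terms in }(x_1,\ldots,x_n)^{(k+1)}\bigr)\in I(n,k)$, as claimed.

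There is no deep obstacle; the computation is elementary once the right tableau $T$ is chosen. The only subtlety — and precisely where the hypothesis $k+1\leq n-k$ is used — is that $\{m+1,\ldots,n\}$ must contain enough indices outside $\supp(F_{T'}^{k-m})$ to serve as $j_1,\ldots,j_m$, which is exactly what forces all the ``cross terms'' arising from the $x_t$ multiplications to have weight at least $k+1$. This generalizes the identity already used (in the case $m=k$, $\beta=1$) in the proof of Lemma \ref{lem:Nagata}.
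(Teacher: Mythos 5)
Your proof is correct and takes essentially the same approach as the paper: choose $m$ fresh indices $j_1,\ldots,j_m$ disjoint from $\supp(F_{T'}^{k-m})$, slot $1,\ldots,m$ above them to build a tableau $T\in\tab(n,k,k)$ with $F_T^k=\prod_{t=1}^m(x_t-x_{j_t})\cdot F_{T'}^{k-m}$, and expand $x_1\cdots x_m\cdot F_T^k$ to peel off $(\x^\m)^2 F_{T'}^{k-m}$ plus cross terms of weight $\geq k+1$. The paper absorbs the choice of $j_1,\ldots,j_m$ into the given shape of $T'$ (whose bottom row already contains them), while you select them explicitly, but the argument is the same.
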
	
	\begin{proof}
		Fix a tableau ${T'}\in\stab([n-m],k-m,k-m)$ as in \eqref{eq:T'} and let $T\in \stab_m(n-1,k-1,k-1)$ be the corresponding tableau as in \eqref{eq:T}.  Then it suffices to show that we have 
		$$\left(\x^\m\right)^2\cdot F_{T'}^{k-m}=x_1^2\cdots x_m^2\cdot F_{T'}^{k-m}\in I(n,k)=\mathfrak{a}(n,k,k)+(x_1,\ldots,x_n)^{(k+1)}.$$
		On the other hand we clearly have 
		$$x_1\cdots x_m\cdot F_T^k=x_1^2\cdots x_m^2\cdot F_{T'}^{k-m}+\left(\text{stuff in} \ (x_1,\ldots,x_n)^{(k+1)}\right)\in \mathfrak{a}(n,k,k)$$
		and the result follows.
	\end{proof}
		
	\begin{lemma}
		\label{lem:mdiff1}
		Fix elements $\nu\in V_m(n-1,k-1,k-1)$ and $\mu\in U_m(n-1,k-1,k)$, and let $\nu'\in V([n-1]_m,k-1-m,k-1-m)$ and $\mu'\in U([n-1]_m,k-1-m,k-m)$ their respective images under the maps above.  Then we have 
		\begin{enumerate}
			\item $\y^\m\left(\nu-(-1)^m\y^\m\nu'\right)\in \ydeal$,
			\item $\y^\m\left(\mu-(-1)^m\y^\m\mu'\right)\in \ydeal$,
			\item $\x^\m\left(\mu-\x^\m\mu'\right)\in I(n,k)$, and
			\item $\y^\m\cdot \mu-(-1)^m\x^\m\cdot \mu\in I(n,k)$.
		\end{enumerate}
	\end{lemma}
\begin{proof}
	For (1.) note that it suffices to see that for $T\in\stab_m(n-1,k-1,k-1)$ and $T'\in\stab([n-1]_m,k-1-m,k-1-m)$ as in \eqref{eq:T} and \eqref{eq:T'} we have
	$$\y^\m\left(F_T^{k-1}-(-1)^m\y^\m\cdot F_{T'}^{k-1}\right)\in \ydeal.$$ 
	By Lemma \ref{lem:Dlem}, we have $F^{k-1}_{T,\y}=F_T^{k-1}(y_1,\ldots,y_{n-1})=(-1)^{k-1}\cdot F_T^{k-1}(x_1,\ldots,x_{n-1})=F_T^{k-1}$ and $F_{T',\y}^{k-1-m}=(-1)^{k-1-m}\cdot F_{T'}^{k-1}$.  Since the difference $F_{T,\y}^{k-1}-\y^\m\cdot F_{T',\y}^{k-1}$ is a combination of $y$-monomials which do not contain $\y^\m$ in their support, it follows that the product
	$$\y^\m\left(F_T^{k-1}-(-1)^m\y^m\cdot F_{T'}^{k-1}\right)\in\ydeal.$$

	For (2.), write $\mu=x_n\cdot \alpha+\beta$ where $\alpha\in V_m(n-1,k-1,k-1)$ and $\beta\in V_m(n-1,0,k)$.  By (1.) we know that 
	$$\y^\m\left(x_n\cdot\alpha-(-1)^m x_n\cdot\alpha'\right)\in\ydeal$$
	hence it suffices to take $\mu=\beta=M_S^k\in V_m(n-1,0,k)$ and show that 
	$$\y^\m\left(M_S^k-(-1)^m\y^\m M_{S'}^{k-m}\right)\in\ydeal$$
	for $S\in\stab_m(n-1,0,k)$ and $S'\in\stab([n-1]_m,0,k-m)$ as in \eqref{eq:S} and \eqref{eq:S'}.  In this case, we have 
	$$M_S^k=x_1\cdots x_m\cdot M_{S'}^{k-m}$$
	and Lemma \ref{lem:Dlem} implies that the difference satisfies 
	\begin{align*}
	M_S^k-(-1)^m\y^\m\cdot M_{S'}^{k-m}= & (-1)^kM_{S,\y}^k+(-1)^{k-1}x_nD(M^k_{S,\y})\\
	& -(-1)^m\y^\m\left((-1)^{k-m}M_{S',\y}^{k-m}+(-1)^{k-1-m}x_nD(M_{S',\y}^{k-m})\right) & \text{mod} \ (x_n^2)\\
	= & (-1)^{k-1}x_n\cdot\left(D(M_{S,\y}^k)-\y^\m\cdot D(M_{S'}^{k-m})\right) & \text{mod} \ (x_n^2)\\
	= & (-1)^{k-1}x_n\cdot \left(\sum_{i=1}^my_1\cdots \hat{y_i}\cdots y_m\cdot M_{S',\y}^{k-m}\right)	& \text{mod} \ (x_n^2)
	\end{align*}
	Therefore it follows that the product satisfies
	$$\y^m\left(M_S^k-(-1)^m\y^\m\cdot M_{S'}^{k-m}\right)\in\ydeal.$$
		
	For (3.), we write $\mu=x_n\alpha+\beta$ where $\alpha\in V_m(n-1,k-1,k-1)$ and $\beta\in V_m(n-1,0,k)$, and also $\mu'=x_n\alpha'+\beta'$.  Then we have 
	$$\x^\m\left(x_n\alpha-\x^\m x_n\alpha'\right)\in (x_1,\ldots,x_n)^{(k+1)}\subset I(n,k)$$
	and also 
	$$\x^m\left(\beta-\x^\m\beta'\right)=0\in I(n,k)$$
	and the result follows.
	
	For (4.), note that 
	\begin{align*}
	\y^\m-(-1)^m\x^\m= & (x_n-x_1)\cdots(x_n-x_m)-(-1)^mx_1\cdots x_m=x_n\left(\text{stuff}\right)
	\end{align*}
	and since $x_n\cdot \mu\in I(n,k)$ by Lemma \ref{lem:VUP}, the result follows.
\end{proof}

The following is an analogue of Lemma \ref{lem:suppYes} in Section \ref{sec:radical}.
\begin{lemma}
	\label{lem:supportYes}
	If $\y^\m\cdot\nu\in \ydeal$ for some $\nu\in V_m$, then $\nu=0$, and in particular, $\y^\m\cdot \nu\in I(n,k)$.  If $p=0$ or $p\geq k+1$, then if $\y^\m\cdot\mu\in\ydeal$ for some $\mu\in U_m$, then $\y^\m\cdot\mu\in I(n,k)$.
\end{lemma}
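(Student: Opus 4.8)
The plan is to imitate the two-step structure of Lemma~\ref{lem:suppYes} (for the $\nu$-statement) and of Lemma~\ref{lem:zero} (for the $\mu$-statement), using Corollary~\ref{cor:inductive} to descend to the smaller variable set $[n-m]=\{m+1,\dots,n\}$. Throughout I use the identity $y_i-y_j=x_j-x_i$ freely, so a form in $V(n-1,\dots)$ or $U(n-1,\dots)$ may be written in either set of variables, and I abbreviate by $P_\y$ the polynomial obtained from $P$ by the substitution $x_i\mapsto y_i$.

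\emph{The $\nu$-statement.} Write $\nu=\sum_T c_TF_T^{k-1}$ over $T\in\stab_m(n-1,k-1,k-1)$ and let $\nu'=\sum_T c_TF_{T'}^{k-1-m}$ be its image under Corollary~\ref{cor:inductive}, a form of degree $k-1-m$ in $x_{m+1},\dots,x_{n-1}$. Since the top row of such a $T$ begins $1,2,\dots,m$ and every $j_t$ it uses exceeds $m$, we have $F_{T,\y}^{k-1}=\prod_{i=1}^m(y_i-y_{j_i})\cdot F_{T',\y}^{k-1-m}$, hence $\y^\m\cdot F_{T,\y}^{k-1}=\prod_{i=1}^m y_i(y_i-y_{j_i})\cdot F_{T',\y}^{k-1-m}$; expanding $\prod_i y_i(y_i-y_{j_i})$ and counting distinct variables exactly as in the proof of Lemma~\ref{lem:suppYes}, every term other than $(\y^\m)^2F_{T',\y}^{k-1-m}$ has weight $\ge k$, so $\y^\m\cdot\nu\equiv\pm(\y^\m)^2\cdot\nu'_\y\pmod{(y_1,\dots,y_{n-1})^{(k)}}$. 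Now if $\y^\m\cdot\nu\in\ydeal$, then since $\y^\m\cdot\nu$ is independent of $y_n=x_n$, setting $y_n=0$ forces $\y^\m\cdot\nu\in(y_1,\dots,y_{n-1})^{(k)}$; therefore $(\y^\m)^2\cdot\nu'_\y\in(y_1,\dots,y_{n-1})^{(k)}$, so $\nu'_\y\in\big((y_1,\dots,y_{n-1})^{(k)}:\y^\m\big)=(y_{m+1},\dots,y_{n-1})^{(k-m)}$, and since $\deg\nu'=k-1-m<k-m$ this gives $\nu'=0$ and then $\nu=0$ by the injectivity in Corollary~\ref{cor:inductive}.

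\emph{The $\mu$-statement.} Assume $p=0$ or $p\ge k+1$ and write $\mu=x_n\alpha+\beta$ with $\alpha\in V_m(n-1,k-1,k-1)$, $\beta\in V_m(n-1,0,k)$; let $\alpha',\beta'$ be their Corollary~\ref{cor:inductive} images (so $\beta=\x^\m\beta'$) and set $\mu'=x_n\alpha'+\beta'$, which lies in the analogue of the space $U$ for the variable set $[n-m]$ (with $x_n$ as the distinguished last variable). The case $m=k$ is degenerate: then $\alpha=0$ and $\mu$ is a scalar multiple of $x_1\cdots x_k$, and by Lemma~\ref{lem:xychange} together with Lemma~\ref{lem:squared} one gets $\y^\m\mu\equiv\pm(x_1\cdots x_k)^2\in I(n,k)$ with no hypothesis on $p$; so assume $m\le k-1$. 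The argument has two parts. First, the same expansion as above gives $\x^\m\alpha\equiv(\x^\m)^2\alpha'\pmod{(x_1,\dots,x_{n-1})^{(k)}}$ while $\x^\m\beta=(\x^\m)^2\beta'$ exactly; since $x_n\cdot(x_1,\dots,x_{n-1})^{(k)}\subseteq(x_1,\dots,x_n)^{(k+1)}\subseteq I(n,k)$, this yields $\x^\m\mu\equiv(\x^\m)^2\mu'\pmod{I(n,k)}$, and by Lemma~\ref{lem:xychange} (valid since $\mu\in U$) also $\y^\m\mu\equiv\pm(\x^\m)^2\mu'\pmod{I(n,k)}$. Second, I claim $\y^\m\mu\in\ydeal$ forces $\mu'$ into the corresponding ideal $(y_{m+1},\dots,y_{n-1})^{(k-m)}+(x_n^2)$ for the variable set $[n-m]$: applying Lemma~\ref{lem:Dlem} to $\alpha$ and to $\beta$ rewrites $\mu\equiv\pm\big(\beta_\y-x_n(\alpha_\y+D\beta_\y)\big)\pmod{x_n^2}$ with $D=\sum_{i=1}^{n-1}\partial/\partial y_i$; reducing $\y^\m\mu\in\ydeal$ modulo $(x_n^2)$ and comparing the coefficients of $y_n^0$ and $y_n^1$, each of which must separately lie in $(y_1,\dots,y_{n-1})^{(k)}$, leaves the single condition $\y^\m\cdot(\alpha_\y+D\beta_\y)\in(y_1,\dots,y_{n-1})^{(k)}$; writing $\beta_\y=\y^\m\beta'_\y$, using Leibniz, and applying the same weight counts as before (the terms from differentiating $\y^\m$, and the off-diagonal terms from the $\alpha$-expansion, all have weight $\ge k$), this becomes $(\y^\m)^2\cdot(\alpha'_\y+D'\beta'_\y)\in(y_1,\dots,y_{n-1})^{(k)}$ with $D'=\sum_{i=m+1}^{n-1}\partial/\partial y_i$, so $\alpha'_\y+D'\beta'_\y\in(y_{m+1},\dots,y_{n-1})^{(k-m)}$, and a degree count ($\deg<k-m$) forces $\alpha'_\y+D'\beta'_\y=0$; by Lemma~\ref{lem:Dlem} for the variable set $[n-m]$ this says $\mu'\equiv\pm\beta'_\y\pmod{x_n^2}$, i.e.\ $\mu'\in(y_{m+1},\dots,y_{n-1})^{(k-m)}+(x_n^2)$. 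Now the $\mu$-part of Lemma~\ref{lem:zero}, applied to the variable set $[n-m]$ with $k-m$ in place of $k$ (its hypothesis ``$p=0$ or $p\ge k-m+1$'' being implied by ours), gives $\mu'\in\mathfrak{a}([n-m],k-m,k-m)$; then Lemma~\ref{lem:squared} gives $(\x^\m)^2\cdot\mathfrak{a}([n-m],k-m,k-m)\subseteq I(n,k)$, so $(\x^\m)^2\mu'\in I(n,k)$, and combining with the first part, $\y^\m\mu\in I(n,k)$.

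The main obstacle is the second part of the $\mu$-statement: threading $\mu$ through Corollary~\ref{cor:inductive} and Lemma~\ref{lem:Dlem} and checking that every error term — from replacing $\y^\m$ by $(\y^\m)^2$, from the Leibniz rule applied to $D\beta_\y$, and from the off-diagonal part of the $\alpha$-expansion — really lands in the monomial ideal $(y_1,\dots,y_{n-1})^{(k)}$, leaving the clean identity $\alpha'_\y+D'\beta'_\y=0$. This is the one place where the hypothesis on $p$ enters, via the smaller instance of Lemma~\ref{lem:zero}.
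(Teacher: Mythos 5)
Your proof is correct and follows the same overall strategy as the paper: descend via Corollary~\ref{cor:inductive} to the variable set $[n-m]$, reduce to the smaller instance, and invoke Lemma~\ref{lem:zero} and Lemma~\ref{lem:squared}. The $\nu$-case is essentially identical to the paper's. For the $\mu$-case the paper reaches the key intermediate conclusion $\mu'\in(y_{m+1},\dots,y_{n-1})^{(k-m)}+(x_n^2)$ by a quicker route: it establishes the chain $\y^\m\mu\equiv(-1)^m\x^\m\mu\equiv(-1)^m(\x^\m)^2\mu'\equiv(-1)^m(\y^\m)^2\mu'$ modulo $I(n,k)$ (quoting Lemma~\ref{lem:xychange} twice), then uses $I(n,k)\subseteq\ydeal$ from Lemma~\ref{lem:easyC} to get $(\y^\m)^2\mu'\in\ydeal$ and colons by $(\y^\m)^2$. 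You instead unwind $\mu=x_n\alpha+\beta$ through Lemma~\ref{lem:Dlem} and the Leibniz rule, track the error terms by weight counting, and arrive at the clean identity $\alpha'_\y+D'\beta'_\y=0$, which gives $\mu'\equiv\pm\beta'_\y\pmod{x_n^2}$ directly. This is longer, but it makes explicit a point the paper glosses over: the second application of Lemma~\ref{lem:xychange} there is to $\mu'$, which lives in the $U$-space of the smaller variable set $[n-m]$, not in $U(n-1,k-1,k)$ as the lemma's statement requires, so it needs exactly the kind of re-verification you supply. Your separate treatment of the degenerate case $m=k$ (where $\alpha=0$ and $\mu$ is a multiple of $x_1\cdots x_k$) is a sensible precaution, though the paper's route handles it without comment since $\mu'$ is then a nonzero constant in $V([n-k],0,0)$. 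Minor cosmetic point: Lemma~\ref{lem:zero} actually gives $\mu'\in V([n-m],k-m,k-m)$, not merely $\mu'\in\mathfrak{a}([n-m],k-m,k-m)$; since $\mu'$ has degree $k-m$ this makes no difference, but quoting the module version lets you apply Lemma~\ref{lem:squared} without the extra remark that $I(n,k)$ is an ideal.
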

\begin{proof}
	First assume that for some $\nu\in V_m$ we have $\y^\m\cdot \nu\in\ydeal$.  By Lemma \ref{lem:mdiff1}(1.) we have 
	$$\y^\m\left(\nu-\y^\m\cdot\nu'\right)\in \ydeal$$
	and it follows also that 
	$$\left(\y^\m\right)^2\cdot\nu'\in (y_1,\ldots,y_{n-1})^{(k)}+(x_n^2).$$
	Therefore we have  
	$$\nu'\in \left(\ydeal:\left(\y^\m\right)^2\right)=\left(\ydeal:\y^\m\right)=(y_{m+1},\ldots,y_{n-1})^{k-m}+(x_n^2).$$
	and hence that $\nu'\in V([n-1]_m,k-1-m,k-1-m)\cap \left((y_{m+1},\ldots,y_{n-1}\right)^{(k-m)}+(x_n^2)$.  By Lemma \ref{lem:zero}, it follows that $\nu'=0$, and therefore also $\nu=0$, which proves the first statement.
	
	Next, assume that $p=0$ or $p\geq k+1$, and assume that for some $\mu=\mu(x_1,\ldots,x_{n-1})\in U_m$ we have $\y^\m\cdot\mu\in \ydeal$.  Then by Lemma \ref{lem:mdiff1}(2.), we must also have $\left(\y^\m\right)^2\mu'\in\ydeal$, and hence also we must have 
	$$\mu'\in\left(\ydeal:\left(\y^\m\right)^2\right)=\left(\ydeal:\y^\m\right)=(y_{m+1},\ldots,y_{n-1})^{k-m}+(x_n^2).$$
	Therefore we have $\mu'\in U([n-1]_m,k-1-m,k-m)\cap (y_{m+1},\ldots,y_{n-1})^{k-m}+(x_n^2)$, and it follows from Lemma \ref{lem:zero} that $\mu'\in V([n]_m,k,k)$.  Therefore by Lemma \ref{lem:squared}, it follows that $\left(\x^\m\right)^2\cdot\mu'\in I(n,k)$.  Then by Lemma \ref{lem:mdiff1}(3.) we must also have $\x^m\cdot \mu\in I(n,k)$, and from Lemma \ref{lem:mdiff1}(4.) it follows that $\y^\m\cdot \mu\in I(n,k)$, as desired.
\end{proof}	

Finally we are in a position to prove Theorem \ref{thm:perfectD}:
\begin{proof}[Proof of Theorem \ref{thm:perfectD}]
	Assume that $p=0$ or $p\geq k+1$.  By Lemma \ref{lem:easyC}, we have 
	$$I(n,k)\subseteq I(n-1,k-1)\cap\left(\ydeal\right).$$
	For the reverse containment, Lemma \ref{lem:monomialD} implies we only have to check on products of monomials in $(y_1,\ldots,y_{n-1})$ and forms in the subspaces $V=V(n-1,k-1,k-1)$ and $U=U(n-1,k-1,k)$.  Since $\ydeal$ is generated by square-free monomials in $(y_1,\ldots,y_{n-1})$ it follows that we may assume our monomials are square-free, and by symmetry we may assume that our monomial is $\y^\m=y_1\cdots y_m$.  Write $V=V_m\oplus V^m$ and $U=U_m+ U^m$ as above.  Then Lemma \ref{lem:supportNo} implies that $\y^\m\cdot \alpha\in I(n,k)$ for any $\alpha\in V^m\sqcup U^m$.  Also if $\beta\in V_m\sqcup U_m$, then Lemma \ref{lem:supportYes} implies that if $\y^\m\cdot\beta\in \ydeal$, then $\y^\m\cdot\beta \in I(n,k)$, and the result follows. 
	
	Conversely, assume that $0<p<k+1$.  Then as in the proof of Theorem \ref{thm:perfect} and in particular \eqref{eq:ineq}, we have $I(n,k)\neq I(n-1,k-1)\cap \left(\ydeal\right)$.
\end{proof}

\end{document}